\definecolor{refkey}{gray}{.75}
\definecolor{labelkey}{gray}{.5}
\definecolor{ballblue}{rgb}{0.13, 0.90,0.8}
\newtheorem{Theorem}{Theorem}[section]
\newtheorem{Lemma}[Theorem]{Lemma}
\newtheorem{Proposition}[Theorem]{Proposition}
\newtheorem{Corollary}[Theorem]{Corollary}
\newtheorem{Remark}[Theorem]{Remark}
\newtheorem{Claim}[Theorem]{Claim}
\newtheorem{Definition}[Theorem]{Definition}
\newtheorem{Warning}{Warning}[section]
 \definecolor{darkgreen}{rgb}{0,0.7,0.2}
\newcommand{\cA}{\ensuremath{\mathcal A}}
\newcommand{\cB}{\ensuremath{\mathcal B}}
\newcommand{\cC}{\ensuremath{\mathcal C}}
\newcommand{\cD}{\ensuremath{\mathcal D}}
\newcommand{\cE}{\ensuremath{\mathcal E}}
\newcommand{\cF}{\ensuremath{\mathcal F}}
\newcommand{\cG}{\ensuremath{\mathcal G}}
\newcommand{\cH}{\ensuremath{\mathcal H}}
\newcommand{\cL}{\ensuremath{\mathcal L}}
\newcommand{\cM}{\ensuremath{\mathcal M}}
\newcommand{\cN}{\ensuremath{\mathcal N}}
\newcommand{\cO}{\ensuremath{\mathcal O}}
\newcommand{\cP}{\ensuremath{\mathcal P}}
\newcommand{\cQ}{\ensuremath{\mathcal Q}}
\newcommand{\cS}{\ensuremath{\mathcal S}}
\newcommand{\cV}{\ensuremath{\mathcal V}}
\newcommand{\cW}{\ensuremath{\mathcal W}}
\newcommand{\bbB}{{\ensuremath{\mathbb B}} }
\newcommand{\bbE}{{\ensuremath{\mathbb E}} }
\newcommand{\bbG}{{\ensuremath{\mathbb G}} }
\newcommand{\bbI}{{\ensuremath{\mathbb I}} }
\newcommand{\bbL}{{\ensuremath{\mathbb L}} }
\newcommand{\bbN}{{\ensuremath{\mathbb N}} }
\newcommand{\bbQ}{{\ensuremath{\mathbb Q}} }
\newcommand{\bbR}{{\ensuremath{\mathbb R}} }
\newcommand{\bbZ}{{\ensuremath{\mathbb Z}} }
 \let\b=\beta   \let\d=\delta  \let\e=\varepsilon
 \let\g=\gamma     \let\k=\kappa  \let\l=\lambda
      \let\o=\omega      
  \let\s=\sigma \let\t=\tau   
  \let\z=\zeta
\let\D=\Delta   \let\G=\Gamma   
\let\O=\Omega      
\newcommand{\da}{\downarrow}
\newcommand{\rrr}{\textcolor{black}}
\newcommand{\utto}{\textcolor{black}}
\newcommand{\ovo}{\textcolor{black}}
\newcommand{\iva}{\textcolor{black}}
\newcommand{\green}{\textcolor{black}}
\newcommand{\vvv}{\textcolor{black}}
\newcommand{\toup}{\rightharpoonup}
\newcommand{\be}{\begin{equation}}
\newcommand{\en}{\end{equation}}
\newcommand{\bee}{\begin{multline}}
\newcommand{\ene}{\end{multline}}
\author[A.~Faggionato]{Alessandra Faggionato}
\address{Alessandra Faggionato.
  Dipartimento di Matematica, Universit\`a di Roma `La Sapienza'
  P.le Aldo Moro 2, 00185 Roma, Italy}
\email{faggiona@mat.uniroma1.it}
\newcommand{\ra}{\rangle}
\newcommand{\la}{\langle}
\thanks{This work has been partially supported by the ERC Starting Grant 680275 MALIG}
\title[Stochastic homogenization  of  random walks on point processes]{\ovo{Stochastic homogenization  of  random walks on point processes}}
\begin{document}

\begin{abstract}
We consider   random walks on the support of a random  purely atomic   measure on $\mathbb{R}^d$ with random jump probability rates.  The  jump range can be unbounded. The purely atomic measure is reversible for the random walk and  stationary for the action of the group $\mathbb{G}=\mathbb{R}^d$ or $\mathbb{G}=\mathbb{Z}^d$.  By combining two-scale convergence   and Palm theory for $\mathbb{G}$-stationary random measures and by developing a cut-off procedure, under  suitable second moment conditions  we prove for almost all environments the  homogenization  for the massive Poisson equation of the associated  Markov generator.   In addition, we obtain the quenched convergence of the $L^2$-Markov semigroup  and resolvent of the diffusively rescaled random walk to the corresponding ones  of the  Brownian motion with covariance matrix $2D$. For symmetric jump rates,  the above convergence   
plays a crucial role in the derivation of hydrodynamic limits when considering multiple random walks  with site-exclusion or zero range interaction. We do not require any ellipticity assumption, neither non-degeneracy of the homogenized matrix $D$. Our  results cover a large family of models, including   e.g. random conductance models on $\mathbb{Z}^d$  and on general lattices (possibly with long conductances), Mott variable range hopping, simple random walks on Delaunay triangulations,  simple random walks on  supercritical percolation clusters.

\smallskip

\noindent {\em Keywords}:
random measure, point process, Palm distribution, random walk in random environment, stochastic homogenization, two-scale convergence,  Mott variable range hopping, conductance model, hydrodynamic limit.

\smallskip

\noindent{\em AMS 2010 Subject Classification}: 
60G55, 
60H25, 
60K37, 
35B27.   

\end{abstract}


\maketitle
%

\section{Introduction}
Stochastic homogenization of random walks in random environment is a rich  research field initiated  in the Western school by Varadhan, Papanicolaou and coauthors and in the Russian school by Kozlov  (cf. e.g. \cite{AKM,Bi,EGMN,FH,FHS,KV,KLO,Ko85,Ko,Kum,Ku,MP,PR,PV} and references therein). Hence,
before describing our results, we illustrate some questions motivating the present work, thus providing a reading key to this article. 
  A first question concerns hydrodynamic limits (HL's)  of interacting particle systems in random environment. In \cite{F1,F2}  we proved the quenched HL for the simple exclusion process and the zero range process, respectively,  on the supercritical percolation cluster with random conductances.  The proof relies (between other) on a  weak form of  quenched convergence  of the Markov semigroup  and resolvent for a single random walk towards  their counterparts  for the  Brownian motion. This convergence was obtained from the homogenization of the massive Poisson equation in  \cite{F1}.  It  is then natural  to ask  if  these results  hold for  a much  larger class of models.
A second question motivating the present work concerns Mott's law, which predicts the anomalous conductivity decay at low temperature in  amorphous solids   with electron transport given by  Mott variable range hopping \cite{POF}. A mean--field model  is given by Mott random walk   on 
a marked simple\footnote{\ovo{We follow the terminology of   \cite{DV,DV1,DV2},  hence  \emph{simple} just means that points have unit multiplicity.}} point process in $\bbR^d$ \cite{FSS}. By invoking Einstein's relation, Mott's law can be stated  in terms of the dependence of the  effective diffusion matrix $D(\b)$ on the temperature $\b^{-1}$.  In \cite{CF2,CFP} we  proved a quenched invariance principle for Mott random walk  under suitable conditions on the point process with a  homogenization--type characterization of $D(\b)$, while in 
\cite{FM,FSS} we proved   bounds on $D(\b)$  in agreement with Mott's law.
 As Mott's law has a large class of universality, it is natural  to ask if a weak form of quenched  CLT with the same matrix  $D(\b)$ holds  for a much larger (compared to
  \cite{CF2,CFP})   class of point processes.  
In the present work   (cf. Theorems \ref{teo1} and \ref{teo2}) we give a positive answer to the above questions. In our companion work \cite{F_SE_RE} we then derive the quenched HL for  simple exclusion processes in symmetric random environments. The zero range process will be treated in a future work.

 Let us  briefly describe our results.
  We  consider
  here general random walks with state space given by  the support of a given random \ovo{purely atomic} measure  on  $\bbR^d$, possibly contained in $\bbZ^d$ or  a generic $d$--dimensional discrete lattice as e.g.  the hexagonal one. Also the  jump probability rates can be random.
    The above randomness of the environment is supposed to be stationary with respect to the action of the  group $\bbG=\bbR^d$ or $\bbG=\bbZ^d$. We assume some second moment conditions and that the above  random \ovo{purely atomic} measure  is reversible for the random walk.
 Our first main result is then given by the  quenched  
   homogenization of   the massive Poisson equation associated to  the diffusively rescaled random walk, leading to an  effective diffusive equation and a variational characterization of the homogenized matrix $D$  (see Theorem \ref{teo1}).   Our  second  
 main result concerns a quenched convergence of the Markov semigroup and  resolvent  of 
 the diffusively rescaled random walk   to their counterparts   for the Brownian motion with covariance matrix  $2 D$ (see Theorem \ref{teo2}).
  This form of convergence includes a spatial ``average'' on the initial point of the random walk, and it is  the proper form relevant to get the above  quenched HL's. It  can also  be thought of   as a weak form of quenched CLT.  
We point out that we do not require any ellipticity assumption and our results cover the case of  degenerate homogenized matrix as well.  
 
%
%
%
Our results cover  a broad class  of models. Just to list some examples: the random conductance models on $\bbZ^d$  and on general lattices (possibly with long conductances), Mott variable range hopping, simple random walks on Delaunay triangulations,  simple random walks on  supercritical percolation clusters with random conductances.  To gain such a generality we  have used  the theory of  $\bbG$-stationary random measures \cite{Ge,GL,Km}, thus allowing to have a common language for all models and to formulate  the 2-scale ergodic properties of the environment in terms of Palm distributions.   
 We have also  used   the method of 2-scale convergence  (\cite{A,AKM,Nu,ZP} and references therein). 

Our proof of Theorem \ref{teo1} is inspired by the strategy developed in \cite{ZP}, dealing with  random differential operators on singular structures, but 
 key technical obstructions due to possible arbitrarily long jumps (and therefore not present in \cite{ZP}) 
 have emerged. One main technical effort  here has been to deal with   2-scale convergence using only  $L^2$--concepts. Let us explain this issue. First, we observe that the standard gradient has to be replaced by an amorphous gradient, keeping knowledge of the variation of a function along all possible jumps. While the gradient of a regular function $\varphi$ on $\bbR^d$ with compact support maintains the same properties, the amorphous gradient of $\varphi$ is an  irregular object, which cannot be bounded in uniform norm. Hence, the 2-scale  method developed for random differential operators does not work properly, since several limits become now illegal.
  We have been able to overcome this difficulty by enhancing the standard method with suitable cut-off procedures (cf. Sections \ref{cut-off1} and \ref{cut-off2}).   Another main difference with \cite{ZP} is the following. In order to prove that for almost any environment homogenization holds for all massive Poisson equations, we need to restrict to countable dense families of testing objects in the definitions of 2-scale convergence and not always, in our context and differently from \cite{ZP}, one can reduce for free to continuous objects when  depending  on the environment,  as for  the space of solenoidal forms (cf. Section \ref{sec_quadro}). To overcome this obstruction, we have been able to deal only with countable families of    $L^2$ environment--dependent testing objects. This has also the advantage to avoid topological assumptions on the probability space with exception of the separability of $L^2(\cP_0)$, $\cP_0$ being the Palm distribution of the random \ovo{purely atomic} measure. On the other hand, a special care has been necessary in defining the right class of testing objects  (cf. Section \ref{topo}).

 We \ovo{now give}  some comments on reference \cite{FHS}
containing results on spectral homogenization for the discrete Laplacian on $\bbZ^d$ with random conductances and  unbounded length range. The conditions assumed  in \cite{FHS} are more complex, implying  both Poincar\'e and Sobolev inequalities (on the other hand, the authors face with a  different problem). Note that for the model in \cite{FHS} the  assumptions of our  Theorem \ref{teo1} and Theorem \ref{teo2} reduce to Assumption 1.1-(a),(b) in \cite{FHS}. In particular, we avoid the more sophisticated   Assumptions 1.1-(c) and 1.2 in \cite{FHS}.
  The conditions in \cite{FHS}  guarantee the  uniform  boundedness in $L^\infty$--norm of the solution of the Poisson equation with Dirichlet boundary conditions there (cf. \cite[Proposition 3.4]{FHS}).  As a consequence,  the  derivation of the structure result stated  in Lemma 5.15 (and somehow corresponding  to \cite[Lemma 5.4]{ZP}  and to Proposition \ref{oro} here) is  much simplified by the $L^\infty$--boundedness of the solutions and the obstruction mentioned above (solved by the  cut--off procedure) does not emerge in \cite{FHS}. Similarily, homogenization results for random walks on Delaunay triangulations have been obtained also in \cite{H} under the condition that  the diameters of the Voronoi cells  are uniformly bounded both  from below and from above (see Condition 1.2 in \cite{H}). 
Our analysis does not need such uniform bounds. 
 We also point out that some stronger assumptions  in \cite{FHS} imply the non--degeneracy of the homogenized matrix, and this property enters in their proof of the structure result given by \cite[Lemma 5.15]{FHS}. Finally, we remark that, when working with $\bbZ^d$ as in \cite{FHS}, the form of the Palm distribution is   much simpler  (compare \eqref{zazzera} with \eqref{palm_classica} and \eqref{Palm_Z}) and therefore several manipulations concerning square integrable forms become simple, differently from the general case (see e.g. Lemma \ref{lemma_siou}).

\ovo{Knowing the   non-degeneracy of the diffusion matrix  $D$ gives important information on the effective homogenized equation.  As possible techniques to prove the non-degeneracy we mention the use of lower Gaussian kernel bounds and  the sublinearity of the corrector (cf.~e.g.~\cite{Bi},\cite[Section~6.1]{BB},  \cite[Prop.~2.5]{DNS}, \cite{MP}). Alternatively, one can use  electrostatic  arguments by  combining the results  of   \cite{F_resistor} for resistor networks on point processes   with  Remark \ref{virale} below  (see \cite[Section 3]{F_resistor} and also \cite{demasi} for a special case with a more complex construction of the resistor network).  Further details are given in Section \ref{sec_esempi}.}
 
 \ovo{We conclude with some remarks on the quenched invariance principle of  random walks on point processes. The results contained in this work, and in particular Proposition \ref{oro}, allow to extend the analysis  of the corrector 
 in \cite{MP} to our general  class of random walks on point processes, for the part concerning 2-scale convergence. Mainly the proof of tightness would then require a further analysis. We also mention \cite{BCKW,CF2,CFP,Rou} where the  quenched invariance principle of random walks with long-range jumps is proved.}

\smallskip

{\bf Outline of the paper}. In Section \ref{flauto} we describe our setting and main assumptions. In Section \ref{setting2} we introduce the massive Poisson equation, the homogenized equation  and discuss convergence types. In Section \ref{sec_MR_hom} we present our main results. In Section \ref{sec_esempi} we discuss some examples. In Section \ref{sec_fluidifico} we show  that it is enough to consider the case $\bbG=\bbR^d$.
  In Section \ref{sec_minecraft} we derive some key properties of the Palm distribution.  In Sections \ref{sec_quadro}, \ref{sec_diff_matrix}, \ref{sec_cinghia} and \ref{hermione} we study square integrable forms.  In Section \ref{topo} we describe the set $\O_{\rm typ}$ of typical environments appearing in Theorems \ref{teo1} and \ref{teo2}. In Section \ref{sec_2scale} we discuss 2-scale convergence in our setting. \ovo{In Section \ref{mappa} we provide a roadmap for the proof of Theorem \ref{teo1}}.  In Sections \ref{cut-off1} and \ref{cut-off2} we develop the basis of our cut-off procedure. In  Sections \ref{sec_risso}  and \ref{sec_oro} we study bounded families of functions  in the Hilbert space $H^1_{\o,\e}$ (cf. Definition \ref{bic}).
          Having developed all the necessary machinery, in Sections \ref{robot} and \ref{dim_teo2} we prove respectively Theorems \ref{teo1} and \ref{teo2}. Finally, several technical facts have been collected in Appendixes \ovo{\ref{misurino78}}, \ref{app_bell}, \ref{gennaro}, \ref{app_no_TNT}, \ref{app_fluidifico}, \ref{app_compatto}. 

\section{Setting}\label{flauto}
In this section we introduce the basic concepts in our modelisation: the  group $\bbG$ ($\bbG=\bbR^d$ or $\bbG=\bbZ^d$)  acting  on a given   probability space  $(\O, \cF, \cP)$  and on the space $\bbR^d$;  the  random  $\bbG$--stationary \ovo{purely atomic} measure $\mu_\o$ on $\bbR^d$ and the family of transition rates $r_{x,y}(\o)$.  We also list our main assumptions, given by   (A1),...,(A8) below.  In Section \ref{sec_MR_hom} we will introduce  assumption (A9) for Theorem \ref{teo2}.

First we  fix 
 some basic notation. 
 We denote by $e_1, \dots, e_d$ the canonical basis of $\bbR^d$.  We denote by $\ell (A)$   the Lebesgue measure of the Borel set $A\subset \bbR^d$.  The standard scalar product of $a,b\in \bbR^d$ is denoted by $a\cdot b$.
Given a measure $\nu $, we  denote by $\la \cdot, \cdot \ra _{\nu}$   the scalar product   in $L^2(\nu)$.
 Given a topological space $W$,  without further mention, $W$ will be  thought of as measurable space endowed with the $\s$--algebra  $\cB(W) $   of its Borel subsets.

\subsection{Actions of the group $\bbG$}
$\bbG$ will be the abelian group $\bbR^d$ or $\bbZ^d$, which  are  endowed with the standard Euclidean topology and the discrete topology, respectively.

$\bullet$ {\sl Action of $\bbG$ on $(\O, \cF, \cP)$}.
The action of $\bbG$ on $(\O, \cF, \cP)$ 
is given by 
 a family of measurable maps  $(\theta_g)_{g\in \bbG}$ \ovo{with $\theta_g:\O\to\O$} such that 
\begin{itemize}
\item[(P1)] $ \theta_0=\mathds{1}$, 
\item[(P2)] $ \theta _g \circ \theta _{g'}= \theta_{g+g'}$ for all $g,g'\in \bbG$,
\item[(P3)] the map $\bbG\times \O \ni (g,\o) \mapsto \theta _g \o \in \O$ is measurable,
\item[(P4)] $\cP$ is left invariant by the $\bbG$--action, i.e. $\cP\circ \theta_g^{-1}=\cP$ \ovo{for all $g\in \bbG$}.
\end{itemize}
The last property (P4) corresponds to the so--called $\bbG$--stationarity of $\cP$.
A subset  $A\subset \O$ is called  translation invariant if  $\theta_g A =A$ for all $g\in \bbG$.

$\bullet$ {\sl Action of $\bbG$ on $\bbR^d$}.
The action  $(\t_g )_{g\in \bbG}$ of $\bbG$ on $\bbR^d$ is given  by  translations. More precisely, we assume that,  for a basis   $v_1, \dots, v_d$ in $\bbR^d$,
\be\label{trasferta}
\t_g x=x+ g_1v_1 + \cdots +g_d v_d \qquad \forall  x\in \bbR^d\,,\; g=(g_1,\dots, g_d) \in \bbG\,.
\en
Equivalently, by  thinking of $g$ as a column vector, we can write
\be\label{trasferta1}
\t_g x = x + V g\,,  \qquad V:=\left[v_1|v_2|\dots|v_d\right]
\en
($V$ is the $d\times d $--matrix with columns $v_1,v_2, \dots, v_d$).

$\bullet$ {\sl Orbits and representatives}.
 We set 
\be\label{simplesso}
\D:= \{t_1 v_1 + \cdots + t_d v_d \,:\, (t_1,\cdots, t_d) \in [0,1)^d\}\,.
\en
Given $x\in \bbR^d$, the  orbit of $x$ is set $\{\t_g x\,:\, g\in \bbG\}$. 

If $\bbG=\bbR^d$, then the orbit  of the origin of $\bbR^d$ equals $\bbR^d$ and we set
\be\label{attimino}
g(x):=g \text{ if } x= \t_g 0\,.
\en 
When $V=\bbI$ (as in many applications), we have $\t_g x=x+g$ and $g(x)=x$.

If  $\bbG=\bbZ^d$,  $\D$ is a set of  orbit representatives
for the action $(\t_g)_{g\in \bbG}$. We introduce the functions $\b: \bbR^d \to \D$ and $g: \bbR^d \to \bbG$ as follows:
\be\label{attimo}
x= \t_g a \text{ and } a\in \D \; \Longrightarrow \; \b(x):=a \,, \; g(x) := g\,.
\en

$\bullet$ {\sl Induced action of $\bbG$ on $\cM$}.
We denote by $\cM$ the metric space of locally finite nonnegative measures on $\bbR^d$ \cite[App.~A2.6]{DV1}.  
The action of $\bbG$ on $\bbR^d$ naturally induces an action of $\bbG$ on  $\cM$, which (with some abuse of notation) we still denote by $(\t_g)_{g\in \bbG}$. In particular,  $\t_g : \cM\to \cM$ is given by 
 \be\label{mignolo}
 \t_g \mathfrak{m} (A):= \mathfrak{m} (\t_{g} A)\,, \qquad \forall A\in \cB(\bbR^d)\,.
  \en
Setting  $ \mathfrak{m}[f]:=\int f d \mathfrak{m}$ for all $\mathfrak{m}\in \cM $, we get 
$
\t_g \mathfrak{m}[ f]= \int  f( \t_{-g} x )d \mathfrak{m}(x)$.
\subsection{$\bbG$--stationary random measure $\mu_\o$}
We suppose now to have a  random  \ovo{locally finite nonnegative} measure $\mu_\o$ \ovo{on $\bbR^d$}, i.e. a measurable  map $\O \ni \o \mapsto \mu_\o \in \cM$. We assume that $\mu_\o$ is a \ovo{purely atomic (i.e. pure point)} measure  \ovo{with locally finite support}  for any $\o\in \O$. In particular, we have 
\be\label{cibo}
\mu_\o = \sum _{x\in \hat \o} n_x(\o) \d_{x}\,, \;\;\; \; n_x(\o) :=\mu_\o(\{x\})\,,\;\;\;\;\hat \o:=\{x \in \bbR^d\,:\, n_x(\o) >0\}
\en
\ovo{and $\hat \o$ is  a locally finite  set. The map $\o\mapsto \hat \o$ then defines a simple point process}. 
 The fundamental relation between the above two actions of $\bbG$ and  the random measure $\mu_\o$ is given by the assumption that $\mu_\o$ is  $\bbG$--stationary (cf. \cite[Section~2.4.2]{Ge}, \cite[Eq.~(21)]{GL}): $
\mu_{\theta_g\o}= \t_g \mu_\o$ for $\cP$--a.a.  $ \o \in \O$ and for all  $g \in \bbG$.


\subsection{Palm distribution}\label{subsec_palm} We introduce the Palm distribution $\cP_0$ by distinguishing between two main cases and a special subcase. We will write $\bbE[\cdot]$ and $\bbE_0[\cdot]$ for the expectation w.r.t. $\cP$ and $\cP_0$, respectively\footnote{With some abuse, when $f$ has  a complex form, we will write $\bbE[f(\o)]$ instead of $\bbE[f]$, and similarly for $\bbE_0[f(\o)]$.}.

$\bullet$ 
{\sl Case $\bbG=\bbR^d$}. The intensity of the random measure $\mu_\o$ is defined as
\be\label{puffo1}
m:= \bbE\left[ \mu_\o \left( [0,1)^d \right)\right]\,.\en
As stated below, $m$ is assumed to be finite and positive.
By the $\bbG$-stationarity of $\cP$ we have $m \ell(U)=\bbE\left[ \mu_\o \left( U\right)\right]$ for any  $U\in \cB (\bbR^d)$.
Then  (see e.g. \cite{DV2,Ge,Km} and  \ovo{Appendixes \ref{app_bell},} \ref{gennaro}) the Palm distribution $\cP_0$ is the probability measure on $(\O,\cF)$ such that, for any $U\in \cB(\bbR^d)$  with $0<\ell (U)<\infty$, 
\be\label{palm_classica}
\cP_0(A):=\frac{1}{m  \ell(U) }\int _\O d\cP(\o) \int _{U} d\mu_\o (x)  \mathds{1}_A(\theta_{g(x)} \o)\,, \qquad \forall A\in \cF
\,.\en
$\cP_0$ has support inside the set $ \O_0:=\{\o\in \O\,:\, n_0(\o)>0\}$ (see Remark \ref{jolie}).

%
%
%
 %
 %

$\bullet$ {\sl  Case $\bbG=\bbZ^d$}.
The intensity of the random measure $\mu_\o$ is defined as
\be\label{puffo2}
m:= \bbE\left[ \mu_\o \left( \D \right)\right]/ \ell(\D) \,.
\en
 By the $\bbG$-stationarity of $\cP$,  $m \ell(A)=\bbE\left[ \mu_\o \left( A \right)\right]$ for any $A\in \cB(\bbR^d)$ which is an overlap of translated cells $\t_g \D$ with $g\in \bbG$. As stated below, $m$ is assumed to be finite and positive. 
 Then (see \ovo{Appendixes \ref{app_bell},} \ref{gennaro}) the  Palm distribution $\cP_0$  is the probability measure on $\left(\O\times \D,\cF\otimes \cB(\D)\right)$ such that
\be\label{Palm_Z}
\cP_0(A):=\frac{1}{m\, \ell(\D)}\int _\O d\cP(\o) \int _{\D} d\mu_\o (x)  \mathds{1}_A(  \o,x )\,, \qquad \forall A\in \cF\otimes \cB(\D)
\,.\en
 $\cP_0$ has support inside $\O_0:=\{(\o, x)\in \O\times \D\,:\,n_x(\o)>0\}$ (see Remark \ref{jolie}).

%
%
%
%


$\bullet$ {\sl Special discrete case: $\bbG=\bbZ^d$, $V=\bbI$ and $\hat \o \subset \bbZ^d$  $\forall \o\in \O$}. This is a subcase of the case $\bbG=\bbZ^d$ and in what follows we will call it simply  \emph{special discrete case}. As this case is very frequent in discrete probability, we discuss it apart pointing out some simplifications. As $\D=[0,1)^d$ intersects $\bbZ^d$ only at the origin, $\cP_0$ (see case   $\bbG=\bbZ^d$) is concentrated on $\{ \o \in \O:n_0(\o)>0\}\times \{0\}$. Hence  we can think of $\cP_0$ as a probability measure 
concentrated on the set $\O_0:=\{ \o \in \O:n_0(\o)>0\}$.
Formulas \eqref{puffo2} and \eqref{Palm_Z} 
then read
\be\label{zazzera}
 m:= \bbE[n_0]\,, \qquad 
 \cP_0(A):= \bbE\left[ n_0\,   \mathds{1}_A\right] / \bbE[n_0] \qquad \forall A\in \cF
 \,.
 \en
In what follows, when treating the special discrete case, we will use the above identifications without explicit mention.

%
%
%
%


\subsection{Rate jumps and assumptions}\label{tassometro}
All objects introduced so far concern the environment and not the particle dynamics. The latter is encoded in the \rrr{measurable} function 
\be r: \O\times \bbR^d \times \bbR^d \ni (\o, x, y)\mapsto r_{x,y}(\o) \in [0,+\infty)\,.\en 
As it will be clear below, only the value of $ r_{x,y}(\o)$ with $x\not =y$ in  $ \hat \o$ will be relevant.  Hence, 
without loss of generality, we take  
\be \label{lino}
r_{x,x}(\o)\equiv 0\,, \qquad  r_{x,y}(\o)\equiv 0 \qquad \forall  \{x,y\}\not \subset \hat \o\,.
\en

By identifying  the support $\hat \o$ of $\mu_\o$ with the measure $\sum_{x\in \hat \o} \d_x$,  
we define the function 
$\l_k:\O_0 \to [0,+\infty]$  (for $k\in[0,\infty)$) as follows:\\
 \begin{equation}\label{altino15}
 \begin{split}
 & 
 \begin{cases}
  \l_k(\o):=\int _{\bbR^d} d\hat \o  (x) r_{0,x}(\o)|x|^k\\
 \O_0=\{\o\in \O\,:\, n_0(\o)>0\}
 \end{cases}  \qquad
\Large{\substack{ \text{\;\;Case $\bbG=\bbR^d$ and}\\\text{\;\;\;\;\;\;special discrete case}}\,,}
\\
& \begin{cases}
  \l_k(\o,a):=
 \int _{\bbR^d} d\hat \o  (x) r_{a,x}(\o)|x-a|^k   \\
   \O_0:=\{(\o, x)\in \O\times \D\,:\,n_x(\o)>0\}
\end{cases}
 \text{\;\;Case $\bbG=\bbZ^d$\,.}
\end{split}
\end{equation}

%

\medskip

We list our  assumptions (including the ones introduced above).

\smallskip
\noindent
{\bf Assumptions}. \emph{We make the following assumptions \utto{where $\O_*$ is some translation invariant measurable subset of $\O$ with $\cP(\O_*)=1$}:
\begin{itemize}
\item[(A1)] $\cP$ is stationary and ergodic w.r.t. the action $(\theta_g)_{g\in \bbG}$ of the group $\bbG$; 
 \item[(A2)] the  $\cP$--intensity $m$ of the random measure  $\mu_ \o $ 
   is finite and positive;
\item[(A3)]  \utto{for all $\o \in \O_*$  and for all  $g\not =g' $ in $ \bbG $, it holds 
  $ \theta_g\o\not = \theta _{g'} \o$};
\item[(A4)]  \utto{for all $\o \in \O_*$, 
for all   $g\in \bbG$ and  $x,y \in \bbR^d$, it holds  }
\begin{align}
& \mu_{\theta_g \o}= \t_g \mu_\o \label{base}\\
& r_{x,y} (\theta_g\o)= r_{\t_g x, \t_g y} (\o) \,;\label{montagna}
 \end{align}
 \item[(A5)]  \utto{for all $\o \in \O_*$  and} for all  $x,y\in \ovo{\hat \o}$,  it holds \be\label{rubini99}
c_{x,y}(\o):=n_x(\o)  r_{x,y}(\o) = n_y(\o) r_{y,x}(\o)\,;
\en
\item[(A6)]  \utto{for all $\o \in \O_*$ and for all} $x,y \in  \hat \o$, there exists a  path $x=x_0$, $x_1$,$ \dots, x_{n-1}, x_n =y$ such that $x_i \in \hat \o$ and  $r_{x_i, x_{i+1}}(\o) >0$ for all $i=0,1, \dots, n-1$;
\item[(A7)]   $ \l_0, \l_2 \in L^1(\cP_0)$;
  \item[(A8)]  $L^2(\cP_0)$ is separable.
\end{itemize}
}

\ovo{We conclude this section with some comments on the above assumptions}.
We observe that, by  Zero-Infinity Dichotomy (see \cite[Proposition~12.1.VI]{DV2}) and Assumptions (A1), (A2), for $\cP$--a.a.~$\o$ the support $\hat \o$ of  $\mu_\o$ is an infinite set.

\utto{For $k=3,4,5,6$ we  call $\O_k$ the set of environments $\o$ satisfying the properties stated in Assumption (Ak) (for example $\O_3:=\{\o \in \O\,:\, \theta_g \o \not = \theta_{g'} \o \; \forall g\not =g' \text{ in } \bbG\}$). All $\O_k$'s are always translation invariant. If $\bbG=\bbZ^d$,  they are also measurable. Therefore,  for $\bbG=\bbZ^d$, 
we can simply take   $\O_*:=\cap_{k=3}^6\O_k$, which is automatically  a measurable and translation invariant set. 
  When  $\bbG=\bbR^d$  and, as common in applications,  $\O_4=\O_5=\O_6=\O$, we can prove that $\O_3$ is  measurable. Therefore, in the above case, we can simply take   $\O_*:=\O_3$, which is automatically a  measurable and translation invariant set.  The above proof and  the discussion of further cases are provided  in Appendix \ref{misurino78}.}

We point out that (A3) is \ovo{usually} a rather fictitious assumption. Indeed, by free one can   add  some randomness  enlarging $\O$ to assure (A3) (similarly to \cite[Remark 4.2-(i)]{demasi}). For example, if $(\O,\cF, \cP)$ describes  a random simple point process on $\bbR^d$ obtained by periodizing a random simple point process on $[0,1]^d$, then to gain (A3) it would be enough to mark points by i.i.d. random variables with non-degenerate distribution. 

Considering  the random walk $X_t^\o$ introduced in Section \ref{nicolino} below, (A5) and   (A6) correspond   to  reversibility of the measure $\mu_\o$ and to irreducibility \utto{for all $\o\in \O_*$}. 

We observe that (A7) implies
\be\label{maracas}
\bbE_0[\l_1]\leq \bbE_0[\l_0]+ \bbE_0[\l_2]<+\infty\,.
\en

We point out that,    by  \cite[Theorem~4.13]{Br}, (A8) is fulfilled if $(\O_0,\cF_0,\cP_0)$ is a separable  measure space where $\cF_0:=\{A\cap \O_0\,:\, A\in \cF\}$ (i.e. there is a countable family $\cG\subset  \cF_0$ such that  the $\s$--algebra  $\cF_0$ is generated by $\cG$). For example, if $\O_0$ is a separable metric space and 
$\cF_0= \cB(\O_0)$ (which is valid if $\O$ is a separable metric space and 
$\cF= \cB(\O)$) then (cf. \cite[p.~98]{Br}) $(\O_0,\cF_0,\cP_0)$ is a separable  measure space  and (A8) is valid.

\begin{Remark}\label{jolie}  We report some useful identities \ovo{restricting to the   $\o$'s which fulfill the properties in (A4) and (A5) (i.e. $\o\in \O_4\cap \O_5$ with the above notation).}  For all $a,b \in \bbR^d$ it holds $\t_{g(a)} b= a+b$. For all $x \in \bbR^d$ and $g\in \bbG$
it holds $
n_x(\theta _g \o)=n_{\t_g x}(\o)$. Given $a,b \in \hat \o$ we have 
$n_a(\o)= n_0 ( \theta_{g(a)} \o )$, $r_{a,b}(\o) = r_{0,b-a}  ( \theta_{g(a)} \o )$,  $c_{a,b}(\o) = c_{0,b-a} ( \theta_{g(a)} \o)$ and $\widehat{\theta_{g(a)} \o}= \hat \o -a$.
\end{Remark}

\section{Massive Poisson equation, random walk, homogenized matrix, homogenized equation, convergence in $L^2(\mu_\o^\e)$, $L^2(\nu_\o^\e)$}\label{setting2}
Recall that we identify  the support $\hat \o$ of $\mu_\o$ with the measure $\sum_{x\in \hat \o} \d_x$. 
\subsection{Measures $\mu_\o^\e$ and $\nu_\o^\e$} Given  $\e>0$ and $\o \in \O$,  we define  $\mu _\o^\e $ as the \ovo{measure} on $\bbR^d$  (cf. \eqref{cibo})
 \begin{equation}\label{franci}
 \mu_\o^\e :=     \sum _{ x\in \hat \o}  \e^d n_x(\o)  \d_{\e x}\,.\end{equation}
\ovo{We write $\la\cdot, \cdot \ra_{\mu^\e_\o}$ for the scalar product in $L^2(\mu^\e_\o)$.}
By ergodicity, for  
 $\cP$--a.a. $\o $ the measure $ \mu _\o^\e $ converges vaguely to  $m dx $,  where $m$ is the intensity of $\mu_\o$. The above convergence is a special case  of  the following  stronger ergodicity result,    where the interplay   between the  microscale and macroscale   emerges:
 \begin{Proposition}\label{prop_ergodico} 
 Let  $f: \O_0\to \bbR$ be a \rrr{measurable} function with $\|f\|_{L^1(\cP_0)}<\infty$. Then there exists a translation invariant   measurable subset $\cA[f]\subset \O$  such that $\cP(\cA[f])=1$ and such that,  for any $\o\in \cA[f]$ and any  $\varphi \in C_c (\bbR^d)$, it holds
\begin{equation}\label{limitone}
\lim_{\e\da 0} \int  d  \mu_\o^\e  (x)  \varphi (x ) f(\theta_{g( x/\e)} \o )=
\int  dx\,m\varphi (x) \cdot \bbE_0[f]\,.
\end{equation}
\end{Proposition}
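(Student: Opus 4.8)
The plan is to recognize the left-hand side of \eqref{limitone} as the test function $\varphi$ paired with the diffusive rescaling of a suitable re-weighted random measure, and then to invoke the spatial ergodic theorem for stationary ergodic random measures. First I would reduce to $f\ge 0$ (split $f=f^+-f^-$, both in $L^1(\cP_0)$, and take $\cA[f]:=\cA[f^+]\cap\cA[f^-]$) and carry everything out for $\bbG=\bbR^d$, with $V$ the matrix of \eqref{trasferta1}; the case $\bbG=\bbZ^d$ is entirely analogous once one also records the cell representative in the argument of $f$, and it is simplest in the special discrete case.

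Next I would introduce the re-weighted measure $\tilde\mu_\o:=\sum_{x\in\hat\o}n_x(\o)\,f(\theta_{g(x)}\o)\,\d_x$, which for $\cP$--a.a.\ $\o$ is a well-defined element of $\cM$: indeed $\theta_{g(x)}\o\in\O_0$ for every $x\in\hat\o$ by Remark~\ref{jolie}, $f$ is finite-valued, and $\hat\o$ is locally finite; moreover $\o\mapsto\tilde\mu_\o$ is measurable. Two facts have to be checked. (i) $\tilde\mu_\o$ is $\bbG$--stationary, $\tilde\mu_{\theta_h\o}=\t_h\tilde\mu_\o$: via $\mu_{\theta_h\o}=\t_h\mu_\o$ this reduces to the cocycle identity $\theta_{g(\t_{-h}x)}\theta_h\o=\theta_{g(x)}\o$, itself a consequence of $g(\t_{-h}x)=g(x)-h$ together with property (P2), so that re-weighting commutes with the translation (the atom of $\mu_{\theta_h\o}$ at $\t_{-h}x$ carries mass $n_x(\o)$ and acquires weight $f(\theta_{g(\t_{-h}x)}\theta_h\o)=f(\theta_{g(x)}\o)$, exactly the weight of the matching atom of $\t_h\tilde\mu_\o$). (ii) The intensity of $\tilde\mu_\o$ equals $m\,\bbE_0[f]$: this is the refined Campbell identity $\bbE\big[\int_{[0,1)^d}d\mu_\o(x)\,f(\theta_{g(x)}\o)\big]=m\,\bbE_0[f]$, obtained by extending \eqref{palm_classica} (with $U=[0,1)^d$) from indicators to non-negative measurable functions on $\O_0$ by linearity and monotone convergence, and the left-hand side is $\bbE[\tilde\mu_\o([0,1)^d)]$.

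With this set up, I would denote by $\tilde\mu_\o^\e$ the measure $\e^d$ times the push-forward of $\tilde\mu_\o$ under $x\mapsto\e x$ (the same rescaling that produces $\mu_\o^\e$ from $\mu_\o$ in \eqref{franci}); a one-line computation then identifies $\tilde\mu_\o^\e[\varphi]$ with the left-hand side of \eqref{limitone}. The spatial ergodic theorem for $\bbG$--stationary ergodic random measures (cf.~\cite{DV2}; concretely, the multidimensional ergodic theorem applied to $\o\mapsto\tilde\mu_\o(\D)$ and its $\bbG$--translates, exhausting $\bbR^d$ by the cells $\t_g\D$) yields, for $\cP$--a.a.\ $\o$, the vague convergence $\tilde\mu_\o^\e\to m\,\bbE_0[f]\,\ell(dx)$ as $\e\da0$, hence \eqref{limitone} for every $\varphi\in C_c(\bbR^d)$. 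To obtain a \emph{translation invariant} exceptional set I would test the vague convergence only along a countable family of test functions that is uniformly dense in each $C_c$--ball, and observe that for a fixed test function $\psi$ the value of $\lim_\e\tilde\mu_\o^\e[\psi]$, when it exists, is unchanged on replacing $\o$ by $\theta_h\o$: this only shifts the argument of $\psi$ by $\e Vh\to0$, while the mass near the support of $\psi$ stays bounded; hence the convergence event is $\bbG$--invariant and, being an event on $\O$, measurable.

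The main obstacle is the careful bookkeeping in (i)--(ii) above: proving that the re-weighted measure $\tilde\mu_\o$ is genuinely $\bbG$--stationary — which rests entirely on the transformation/cocycle identities collected in Remark~\ref{jolie} — and pinning down its intensity through Palm calculus. If one prefers a self-contained argument in place of the citation to \cite{DV2}, the analytic heart then becomes a F{\o}lner exhaustion by the cells $\t_g\D$ (combined with the ergodic theorem for the $\bbG$--action on $\O$) together with a boundary-layer estimate allowing one to pass from cells to arbitrary $\varphi\in C_c(\bbR^d)$; this is routine and introduces no phenomenon specific to the present, possibly long-range, setting.
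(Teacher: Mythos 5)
Your proposal is correct and carries out precisely the standard ergodic-theorem argument the paper delegates to \cite{T} and \cite[App.~B]{Fhom}: form the re-weighted $\bbG$-stationary random measure with atoms $n_x(\o)f(\theta_{g(x)}\o)\d_x$, identify its intensity as $m\,\bbE_0[f]$ via Campbell's formula, and invoke the spatial ergodic theorem for stationary ergodic random measures to get vague convergence of the diffusively rescaled versions. The one place needing a touch more bookkeeping is the translation-invariance of $\cA[f]$: your shift-by-$\e Vh$ argument relies on a uniform local mass bound, so the countable test family must be chosen to include suitable dominating functions (e.g.\ nonnegative bumps $\geq\mathds{1}_{K'}$ for a countable cofinal family of compacts $K'$), after which the argument is airtight and standard.
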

Note that  in \eqref{limitone} $x/\e$ and $x$ are respectively at the microscopic and macroscopic scale. The proof of Proposition~\ref{prop_ergodico} can be obtained by standard arguments from \cite{T} (see also \cite[App.~B]{Fhom}).

  We   define $\nu _\o ^\e $ as the \ovo{measure} on $\bbR^d\times \bbR^d$ given by (cf. Remark \ref{jolie})
\begin{equation}\label{nunu}
\begin{split}
 \nu_\o^\e
:&=
 \e ^d \int d \hat \o (a) \int d \hat \o (b) c_{a,b}(\o) \d_{(\e a, b-a)}\\
&=
  \int d \mu^\e_\o (x) \int d(
  \widehat{\theta_{g(x/\e)}\o} )(z)  
 r_{0,z}( \theta_{g(x/\e)}\o)   \d _{(x  ,z)}\,.
\end{split}
 \end{equation}
 \ovo{We write $\la\cdot, \cdot \ra_{\nu^\e_\o}$ for the scalar product in $L^2(\nu^\e_\o)$.}
\subsection{Microscopic gradient and space $H^1_{\o,\e}$}

 Given $\o \in \O$  and a real function $v$ whose domain contains  $\e \hat \o$,  we define the \emph{microscopic gradient} $\nabla_\e  v $  as the function 
 \begin{equation}\label{ricola}
 \nabla_\e v (x,z)= \frac{ v(x+\e z)- v(x)  }{\e}\,, \qquad x \text{ and } x+\e z \in \e \hat \o\,.
 \end{equation}
 By Remark \ref{jolie},  \ovo{given $x\in \e \hat \o$},  $ x+\e z\in \e \hat \o$   if and only if 
   $ z \in \widehat{\theta_{g(x/\e)}\o}$.
 \begin{Definition}\label{bic}
 We say that $v \in  H^1_{\o,\e}  $ if $v \in  L^2(\mu _\o ^\e) $ and $\nabla_\e  v  \in L^2( \nu_\o ^\e)$. Moreover, we endow the space  $H^1_{\o,\e} $ \ovo{with the scalar product  $\la v,w \ra _{H^1_{\o, \e} }:=\la v, w\ra _{ \mu _\o ^\e }+\la \nabla v,\nabla w\ra _{  \nu_\o ^\e}$ and write $ \|\cdot\|_{H^1_{\o, \e} }$ for the norm in $ H^1_{\o,\e} $.}
 \end{Definition}
 It is simple to check that $H^1_{\o,\e}  $ is a Hilbert space.

\subsection{\ovo{Space $H^{1,{\rm f}}_{\o,\e}$},  massive Poisson equation, self-adjoint operator $\bbL^\e _\o$, random walks $X_t^\o$ and $\e X_{\e^{-2}t} ^\o$}\label{nicolino}
We introduce the set
\begin{equation}\label{alba_chiara}
\begin{split}
\O_1:=\{\o\in \O\,:\, &\ovo{r_x(\o):=}\sum_{y \in \hat \o} r_{x,y}(\o) <\infty \; \forall x \in \hat \o\,,\\
& \ovo{ c_{x,y}(\o)=c_{y,x}(\o) \;\forall x,y \in \hat \o}\}\,.
\end{split}
\end{equation}
Then the set  $\O_1\subset \O $ is translation invariant  and it holds $\cP(\O_1=1)$ as can be checked  at cost to  reduce to the case $\bbG=\bbR^d$ by the method outlined in Section \ref{sec_fluidifico} and applying Corollary \ref{eleonora} below  together with the bound $\bbE_0[\l_0]<\infty$. 
We restrict to $\o\in \O_1$.  We call  $\cC (\e \hat \o)$ the set of real functions on $\e\hat \o$ which  are zero outside a finite set. Then $\cC(\e \hat \o)\subset H^1_{\o, \e}$. \ovo{Indeed,  by symmetry of $c_{x,y}(\o)$, for any $v:\e \hat \o\to \bbR$ it holds}
\be\label{aldo85}
\begin{split}
 \ovo{\e^{2} \int d \nu^\e_\o (x,z) \nabla_\e v(x,z)^2}
 & \ovo{\leq 
2 \e^d  \int d \hat \o(x) \int d\hat \o(y) c_{x,y}(\o) [v(\e x )^2 + v(\e y)^2]}\\
& \ovo{ = 4  \e^d \int d\mu_\o (x)  r_x(\o) v(\e x)^2 \,.}
\end{split}
\en
\ovo{When $v\in \cC(\e \hat \o)$, the last term is a finite sum (as $\hat \o$ is locally finite and $\o\in \O_1$)}.
\begin{Definition} \ovo{Given $\o \in \O_1$, the Hilbert space $H^{1,{\rm f}}_{\o, \e}$ is defined as the closure of $\cC(\e \hat \o)$ inside the Hilbert space $H^1_{\o, \e}$.}
\end{Definition}
\ovo{The index ${\rm f}$ in $H^{1,{\rm f}}_{\o, \e}$ refers to finite support, as the functions in $\cC(\e \hat \o)$ are the ones with finite support on $\e \hat \o$}. The symmetric form $(f,h) \mapsto  \frac{1}{2} \la \nabla_\e f, \nabla_\e h \ra _{\nu_\o ^\e}$ with domain  $\ovo{H^{1,{\rm f}}_{\o, \e}}\subset L^2 (\mu^\e_\o)$ is a regular Dirichlet form with core $\cC(\e \hat \o)$ \ovo{(consider \cite[Example~1.2.5]{FOT} with  $E, q_{x,y}, k_x, m^0,m$ there defined as 
$E:=\e \hat{\o}$, $q_{x,y}:= \e^{-2} r_{x/\e,y/\e}$ for $x\not=y$ in $\e \hat \o$, $q_{x,x}:= -\sum_{y:y\not =x} q_{x,y}$, $k_x:=0$, $m^0_x:= \e^d n_{x/\e}(\o)$, $m:=m^0$)}. In particular, there exists a unique nonpositive  \ovo{definite} self-adjoint operator  $\bbL^\e_\o$   in $L^2(\mu^\e_\o)$    such  that $\ovo{H^{1,{\rm f}}_{\o,\e}}$ equals the domain of $\sqrt{-{\bbL}^\e_\o}$ and  $ \frac{1}{2} \la \nabla_\e f, \nabla_\e f \ra _{\nu_\o ^\e}= \|\sqrt{-\bbL^\e_\o} f\|^2_{L^2(\mu^\e_\o)}$ for any $f \in \ovo{H^{1,{\rm f}}_{\o, \e}}$ (see
 \cite[Theorem~1.3.1]{FOT}).
 \ovo{Note that}, if $h \in \cD( \bbL_\o^\e)\ovo{\subset \cD(\sqrt{-{\bbL}^\e_\o})=  H^{1,{\rm f}}_{\o, \e}}$ and  $f\in \ovo{H^{1,{\rm f}}_{\o, \e}}$,  we have 
\begin{align}
\la f, - \bbL ^\e_\o h \ra _{ \mu^\e_\o}  & =
\frac{\ovo{\e^{d-2}}}{2} \int d\hat \o (x) \int d\hat \o (y) c_{x,y}(\o) \bigl( f(\e y)-f(\e x) \bigr) \bigl( h(\e y)-h(\e x)\bigr)\nonumber \\
&= \frac{1}{2} \la \nabla_\e f, \nabla_\e h \ra _{\nu_\o ^\e}\,.\label{dir_form}
\end{align}

%
%
 Identity \eqref{dir_form} suggests a weak formulation of the equation
$-\bbL_\o^\e u+\l u =f$:
\begin{Definition} \ovo{Let $\o \in \O_1$.}
Given $f\in L^2 (\mu_\o^\e)$  and $\l>0$,  a weak solution $u$ of the equation
\begin{equation}\label{strepitio}
-\bbL^\e_\o u+\l u =f
\end{equation}
is a function $u \in \ovo{H^{1,{\rm f}}_{\o,\e}}$ such that 
\begin{equation}\label{salvoep}
\frac{1}{2} \la \nabla_\e v, \nabla_\e u \ra _{\nu_\o ^\e}+\l  \la v, u \ra _{\mu_\o^\e}= \la v, f \ra_{\mu_\o^\e}\qquad \forall v \in \ovo{H^{1,{\rm f}}_{\o,\e}} \,.
\end{equation} 
\end{Definition}
By the  Lax--Milgram theorem \cite{Br}, given  $f\in L^2 ( \mu_\o^\e)$  the weak  solution $u $ of \eqref{strepitio} exists and is unique.
 As any $\l>0$ belongs to the resolvent set of the nonpositive self-adjoint operator $\bbL^\e_\o$, equation \eqref{strepitio} is equivalent to $u=(-\bbL^\e_\o +\l)^{-1} f$.

We point out that  \ovo{(cf. \cite[Lemma 1.3.2 and  Exercise~4.4.1]{FOT})} the self-adjoint operator $\bbL^\e_\o$  is the infinitesimal generator of the \ovo{strongly continuous} Markov semigroup  in $L^2(\mu^\e_\o)$ associated to the diffusively rescaled random walk $(\e X_{\e^{-2}t} ^\o)_{t\geq 0}$, $X^\o _t$  being the random walk on $\hat \o $ with 
 probability rate $r_{x,y}(\o)$  for a jump from $x $ to $ y$ in $ \hat \o$ (possibly with explosion).  
 We can  indeed show (cf.  Appendix \ref{app_no_TNT})  that, for $\cP$--a.a. $\o$, explosion does not take place \ovo{(one needs weaker assumptions for this result)}: 
  \begin{Lemma}\label{lemma_no_TNT} Assume (A1),...,(A6),   $\l_0\in L^1(\cP_0)$ \utto{(for some translation invariant measurable set $\O_*$ with $\cP(\O_*)=1$)}. Then there exists  a translation invariant \ovo{measurable} set $\cA \subset\O$ with $\cP(\cA)=1$ such that, for all $\o \in \cA$, (i) $r_x(\o):=\sum_{y\in \hat \o} r_{x,y}(\o)\in (0,+\infty)$  $\forall x \in \hat \o$, (ii)  the  continuous--time Markov chain on $\hat \o$ starting at any $x_0\in \hat \o$, with waiting time parameter $r_x(\o)$ at $x\in \hat \o$ and  with  probability $r_{x,y}(\o)/r_x(\o) $ for a jump from $x$ to $y$, is non-explosive.
\end{Lemma}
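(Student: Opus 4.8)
The plan is to prove non-explosiveness via a classical criterion for continuous-time Markov chains: if the embedded discrete-time chain visits a fixed site only finitely often on any bounded time interval, or more precisely, if for $\cP$-a.a.\ $\o$ the sum of holding-time means along the jump chain diverges almost surely, then no explosion occurs. Rather than control a single trajectory pathwise, I would exploit reversibility: the measure $\mu_\o$ is reversible and stationary, so the environment viewed from the particle is a stationary (though possibly not ergodic-looking a priori) process under $\cP_0$, and expectations of additive functionals of the walk can be computed via $\bbE_0$.

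\textbf{Step 1: reduce to $\bbG=\bbR^d$ and establish the basic finiteness.} By the method outlined in Section~\ref{sec_fluidifico} it suffices to treat $\bbG=\bbR^d$. Since $\l_0\in L^1(\cP_0)$, the function $\o\mapsto r_0(\o)=\l_0(\o)$ is $\cP_0$-integrable, hence finite $\cP_0$-a.s.; by the definition \eqref{palm_classica} of the Palm distribution and translation covariance (Remark~\ref{jolie}, $r_x(\o)=r_0(\theta_{g(x)}\o)$), for $\cP$-a.a.\ $\o$ one has $r_x(\o)<\infty$ for every $x\in\hat\o$. Positivity $r_x(\o)>0$ follows from the irreducibility assumption (A6): if $r_x(\o)=0$ then $x$ is isolated in the connectivity graph, contradicting (A6) since $\hat\o$ is $\cP$-a.s.\ infinite. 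This gives part (i) and shows the jump chain is well defined.

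\textbf{Step 2: the explosion criterion via the Palm expectation of the clock.} The chain started at $x_0$ is non-explosive iff $\sum_{n\ge 0} 1/r_{X_n}(\o)=\infty$ a.s., where $(X_n)$ is the embedded discrete chain on $\hat\o$. I would instead use the stronger and cleaner route: the walk $X^\o_t$ with generator $\bbL_\o$ (at scale $\e=1$) has, under the Palm measure $\cP_0$, the environment-seen-from-the-particle process $\o_t:=\theta_{g(X^\o_t)}\o$ as a stationary Markov process, because $\mu_\o$ is reversible for $X^\o_t$ (this is the standard consequence of (A5), cf.\ the Dirichlet-form construction already given with $m^0_x=n_{x/\e}(\o)$). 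Stationarity of $\o_t$ under $\cP_0$ together with the finite-speed bound $\bbE_0[\l_0]<\infty$ (the mean jump rate seen from the particle is $\bbE_0[r_0]<\infty$) lets me control the expected number of jumps in a bounded time interval: for the stationary process the expected number of jumps by time $t$ equals $t\,\bbE_0[r_0]<\infty$, so the number of jumps is a.s.\ finite on compacts, which is exactly non-explosiveness for $\cP_0$-a.a.\ $\o$.

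\textbf{Step 3: transfer from $\cP_0$-a.a.\ to $\cP$-a.a.\ and to all starting points.} A statement holding for $\cP_0$-a.a.\ $\o$ and starting point $0$ transfers to: for $\cP$-a.a.\ $\o$, the chain started at $0$ (in the shifted environment) is non-explosive; using translation covariance of $r_{x,y}$ under $\theta_g$ and the fact that $\hat\o$ is countable, I intersect over a countable dense family of shifts to conclude that for $\cP$-a.a.\ $\o$ the chain started at \emph{every} $x_0\in\hat\o$ is non-explosive. The resulting good set $\cA$ is translation invariant and measurable by construction. The precise packaging of ``stationary Markov process under $\cP_0$'' — in particular that $\o_t$ is well defined and stationary despite possible non-uniqueness issues of the minimal process before explosion is ruled out — is the main technical obstacle; I would handle it by first working with a truncated chain (rates capped at level $N$, which is genuinely non-explosive and honestly stationary under $\cP_0$), proving the uniform-in-$N$ bound $\bbE_0[\#\{\text{jumps in }[0,t]\}]\le t\,\bbE_0[\l_0]$ on the number of \emph{small} jumps, and then letting $N\to\infty$ via monotone convergence, using the details carried out in Appendix~\ref{app_no_TNT}.
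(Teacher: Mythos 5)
Your Step 1 is correct and matches the paper. The crux is Step 2, and you correctly identify its circularity: stationarity of the environment-seen-from-the-particle process $(\o_t)_{t\ge 0}$ under $\cP_0$ is essentially \emph{equivalent} to non-explosion (if explosion had positive probability the minimal process would leak mass to the cemetery and could not be $\cP_0$-stationary), so you cannot assume it to prove it. Your proposed repair — truncation — does not close the gap. If you cap the total rate at level $N$ (i.e.\ replace $r_x$ by $r_x\wedge N$ while keeping the jump distribution $r_{x,y}/r_x$), the new rates $\tilde r_{x,y}=r_{x,y}(1\wedge N/r_x)$ no longer satisfy $n_x\tilde r_{x,y}=n_y\tilde r_{y,x}$, so $\mu_\o$-reversibility is destroyed and the truncated environment process is \emph{not} ``honestly stationary under $\cP_0$'' as you claim; the expected-number-of-jumps identity $\bbE_0[\#\{\text{jumps in }[0,t]\}]=t\,\bbE_0[\tilde\l_0]$ is then unjustified. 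If you instead truncate symmetrically, $\tilde c_{x,y}=c_{x,y}\wedge N$, reversibility is preserved but the total rate $\tilde\l_0(\o)=n_0(\o)^{-1}\sum_z (c_{0,z}(\o)\wedge N)$ is still unbounded in $\o$ (the number of neighbours is not uniformly bounded), so non-explosion of the truncated chain is itself not free. Finally, the monotone limit $N\to\infty$ needs a coupling argument to compare jump counts across $N$, and this is not spelled out.

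The paper avoids all of this by working with the \emph{discrete-time} jump chain $(\o_n)_{n\ge 0}$ on the environment, which is always defined for all $n$ with no explosion issue; the key observation you are missing is that its stationary reversible ergodic measure is not $\cP_0$ but the tilted measure $\cQ_0(d\o)\propto\l_0(\o)\,\cP_0(d\o)$ (reversibility of $\cQ_0$ follows from Lemma~\ref{lemma_siou}, ergodicity from (A6)). Then Birkhoff's ergodic theorem under $\cQ_0$ gives $\frac1n\sum_{k=0}^{n}\l_0(\o_k)^{-1}\to\int\l_0^{-1}\,d\cQ_0=\bbE_0[\l_0]^{-1}>0$ a.s., hence $\sum_{k}\l_0(\o_k)^{-1}=\infty$ a.s., which is exactly the classical criterion for non-explosion of the time-changed continuous-time chain. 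This is where $\bbE_0[\l_0]<\infty$ enters (it makes the tilting well defined and the ergodic limit strictly positive), and the transfer to $\cP$-a.a.\ $\o$ and all starting points is done as you sketch, via $\cQ_0\sim\cP_0$ and Lemma~\ref{matteo}. So the route is genuinely different: you attempted the continuous-time/Campbell argument, while the paper uses the discrete-time embedded chain plus the ergodic theorem under the rate-tilted Palm measure, which sidesteps the circularity entirely.
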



\subsection{Homogenized matrix $D$ and  homogenized equation}\label{diario_schiappa}
\begin{Definition}\label{def_D} We define the \emph{homogenized  matrix}  $D$  as 
the unique $d\times d$ symmetric matrix such that:

\smallskip

$\bullet$ {\bf Case $\bbG=\bbR^d$ and special discrete case\footnote{In the special discrete case, 
$\int_{\bbR^d} d\hat \o (x) $ can be replaced by  a sum among $x\in \bbZ^d$.}}
 \begin{equation}\label{def_D_R}
 a \cdot Da =\inf _{ f\in L^\infty(\cP_0) } \frac{1}{2}\int _{\O_0} d\cP_0(\o)\int_{\bbR^d} d\hat \o (x) r_{0,x}(\o) \left
 (a\cdot x - \nabla f (\o, x) 
\right)^2\,,
 \end{equation}
 for any $a\in \bbR^d$, 
 where $\nabla f (\o, x) := f(\theta_{g(x)} \o) - f(\o)$.
 
 \smallskip
 
$\bullet$   {\bf Case $\bbG=\bbZ^d$ }
\begin{align}
& a \cdot Da= \label{def_D_Z}  \\
& \inf _{ f\in L^\infty(\cP_0) } \frac{1}{2} \int_{\O\times \D}d\cP_0(\o,x)  \int_{\bbR^d} d\hat \o (y) r_{x,y}(\o) \left
 (a\cdot (y-x)  -
 \nabla f (\o, x,y-x)
\right)^2\,,\nonumber
 \end{align}
 for any $a\in \bbR^d$,
 where $\nabla f (\o, x,y-x) := f(\theta_{g(y)} \o, \beta(y) ) - f(\o,x)$.
 \end{Definition}
%
%
%
\ovo{Since $\l_2 \in L^1(\cP_0)$ by (A7),   $a\cdot Da$ is indeed finite for any $a\in \bbR^d$.}

\begin{Remark}\label{virale}
\ovo{Consider  the new random measure $\tilde{\mu}_\o:= \hat \o$ and the new  rates   $\tilde{r}_{x,y}(\o):= c_{x,y}(\o)$, under the assumption that the intensity $\tilde{m} $ of $\tilde{\mu}_\o$ is finite and positive. One can then check that Assumptions (A1),...,(A8) imply  the analogous assumptions in the new setting with   $\tilde{\mu}_\o$ and $\tilde{r}_{x,y}(\o)$. Moreover, writing $\tilde{\cP}_0$ and $\tilde{D}$ for the Palm distribution and the effective homogenized matrix in the new setting  respectively, one easily gets  that $m\, d \cP_0(\o) = \tilde{m} \, n_0(\o ) d\tilde{\cP}_0(\o)$ for $\bbG=\bbR^d$ and in the special discrete case,    $m\,d \cP_0(\o,x) = \tilde{m}\, n_x(\o ) d\tilde{\cP}_0(\o,x)$ for $\bbG=\bbZ^d$ and $m D= \tilde{m} \tilde{D}$. As a consequence we get that   $m/\tilde{m}= \tilde{\bbE}_0[ n_0]$, $\tilde{\bbE}_0[\cdot]$ being the expectation w.r.t. $\tilde{\cP}_0$.}
\end{Remark}

\begin{Definition}\label{divido}
We fix  an orthonormal basis
$\mathfrak{e}_1$,...,$ \mathfrak{e}_d$ of  eigenvectors of $D$ (which  is symmetric) and  we let   $\g_i$ be the eigenvalue of $ \mathfrak{e}_i$.  
At  cost of a relabelling,  
 $ \g_1,\dots, \g_{d_*}$ are all positive  and  $ \g_{d_*+1},\dots, \g_{d}$  are all zero.   In particular, $d_*\in \{0,1,\dots, d\}$ and,  if $D$ is strictly positive, it holds $d_*=d$. Given a \ovo{unit} vector $v$,  we write $\partial_v f $ for the weak derivative of $f$ along the direction  $v$ (if $v=e_i$, then $\partial_v f $ is simply the standard weak derivative $\partial_i f$). \end{Definition}

 \begin{Definition}\label{degenerare}
 We introduce the space  $H^1_*(m dx)$ given by the functions $f \in L^2(m dx)$  such that the weak derivative $\partial_{\mathfrak{e}_i} f$ belongs to  $L^2(m dx)$  $\forall i\in \{1,\dots, d_*\}$.
  \ovo{We endow $H^1_*(m dx)$ with the scalar product $\la f, h \ra  _{H^1_*( m dx)}:= \la f, h \ra_{L^2(mdx)} + \sum_{i=1}^{d_*} \la \partial _{\mathfrak{e}_i} f, \partial _{\mathfrak{e}_i} h \ra_{L^2(m dx)} $}.  
 Moreover, given $f\in H^1_*(m dx)$, we set
 \be
 \nabla_* f:= \sum_{i=1}^{d_*}  \left( \partial _{\mathfrak{e}_i} f \right) \mathfrak{e}_i\in L^2(m dx)^d\,.
 \en 
\end{Definition} 
We stress  that, if $\mathfrak{e}_i = e_i$ for all $i=1,\dots, d_*$ (as in many applications), then 
\be
 \nabla_* f:= (\partial_1 f, \dots, \partial _{d_*} f , 0,\dots, 0)\in L^2(m dx)^d\,.
 \en 
We point out that $H^1_*(m dx)$ is an Hilbert space (adapt the standard proof for $H^1(dx)$). Moreover, $C_c^\infty (\bbR^d)$ is dense in $H^1_*(m dx)$ (adapt the arguments in the proof of \cite[Thm.~9.2]{Br}).  

We now move to the effective  homogenized equation, where $D$ denotes the  homogenized matrix introduced in Definition  \ref{def_D}.
\begin{Definition}
Given  $f\in L^2 (m dx)$  and $\l>0$, a weak solution $u$ of the equation
\begin{equation}\label{strepitioeff}
- \nabla_* \cdot  D \nabla_*  u+\l u =f
\end{equation}
is a function $u \in  H^1_*( m dx) $ such that 
\begin{equation}\label{delizia}
   \int   D  \nabla_* v(x) \cdot   \nabla_* u (x) dx + \l \int   v(x) u(x) dx = \int  v(x) f(x)dx  \,, \qquad \forall v \in H^1_*( m dx )\,.
\end{equation} 
\end{Definition}
 Again, by the   Lax--Milgram theorem, given  $f\in  L^2 (m dx)$  the weak  solution $u $ of \eqref{strepitio} exists and is unique. 


\subsection{Weak and strong convergence for $L^2(\mu_\o^\e)$ and $L^2(\nu_\o^\e)$}
 \begin{Definition}\label{debole_forte} Fix $\o\in \O$ and a 
 family of $\e$--parametrized  functions $v_\e \in L^2( \mu^\e_\o)$. We say  that the family $\{v_\e\}$  \emph{converges weakly} to the function  $v\in L^2( m dx)$, and write  $v_\e \rightharpoonup v$, if
\begin{equation}\label{recinto}
\limsup  _{\e \da 0} \| v_\e\| _{L^2(\mu^\e_\o) } <+\infty
\end{equation}and 
\begin{equation}\label{deboluccio}
\lim _{\e \da 0} \int d  \mu^\e _\o (x)   v_\e (x) \varphi (x)= 
\int dx\,m  v(x) \varphi(x) \end{equation}
for all $\varphi \in C_c(\bbR^d)$. 
We  say that  the  family $\{v_\e\}$  \emph{converges strongly} to  $v\in L^2( m dx)$, and write $v_\e\to v$,  if  in addition to \eqref{recinto}  it holds
\begin{equation}\label{fortezza}
\lim _{\e \da 0} \int  d  \mu^\e _\o (x)   v_\e (x) g_\e (x)= 
\int dx\, m  v(x) g(x) \,,\end{equation}
for any family of functions $g_\e \in  L^2(\mu^\e_\o)$ weakly converging to $g\in L^2( m dx) $.
\end{Definition}

In general, when \eqref{recinto} is satisfied, one simply says that the family $\{v_\e\}$ is bounded.

\begin{Remark}\label{forte} One  can prove (cf.~\cite[Prop.~1.1]{Z}) that $v_\e \to v$ if and only if 
$v_\e \rightharpoonup v$ and $\lim _{\e\downarrow 0} \int_{\bbR^d}
 v_\e(x)^2 d \mu_\o^\e(x)=
\int _{\bbR^d} v(x)^2 m dx $.
\end{Remark}
 
 
 We introduce now a special form of convergence of microscopic  gradients (note that the testing  objects are gradients as \ovo{well}).
\begin{Definition}\label{debole_forte_grad}   Fix $\o\in \O$ and a 
 family of $\e$--parametrized  functions $v_\e \in L^2( \mu^\e_\o)$. We say  that the family $\{\nabla_\e v_\e\}$  \emph{converges weakly} to the vector-valued function  $w$   belonging to the product space $ L^2( m dx)^d$  and with values in $\bbR^{d}$, and write  $\nabla_\e v_\e \rightharpoonup w$, if
\begin{equation}\label{recinto_grad}
\limsup  _{\e \da 0} \| \nabla_\e v_\e\| _{L^2(\nu^\e_\o) } <+\infty
\end{equation}and 
\begin{equation}\label{deboluccio_grad}
\lim _{\e \da 0} \frac{1}{2} \int   d \nu^\e _\o (x,z ) \nabla_\e v_\e (x,z)  \nabla_\e \varphi  (x,z )  = 
\int   dx\,m D w(x) \cdot  \nabla_* \varphi(x)   \end{equation}
for all $\varphi \in C^1_c(\bbR^d)$. 
We  say that   family $\{\nabla _\e v_\e\}$  \emph{converges strongly} to  $w$ as above, and write $\nabla_\e v_\e\to w$,  if  in addition to \eqref{recinto_grad}  it holds
\begin{equation}\label{fortezza_grad}
\lim _{\e \da 0}  \frac{1}{2} \int d  \nu^\e _\o (x,z)    \nabla_\e v_\e (x,z)  \nabla_\e g_\e (x,z )  = 
\int  dx\,m D w(x) \cdot  \nabla_* g(x) 
 \end{equation}
for any family of functions $g_\e \in  L^2(\mu^\e_\o)$ with $g_\e \toup g \in L^2(m dx)$ such that $g_\e \in \ovo{H^{1,{\rm f}}_{\o,\e}}$ and $g\in   H^1_* ( m dx)$.
\end{Definition}
\begin{Remark}\label{sanremo2022}
Denoting by $\varphi_\e$  the restriction of $\varphi$ to $\e \hat \o$, 
\ovo{for all $\o \in \O_1$  any $\varphi\in C_c(\bbR^d)$ has the property that   $\varphi _\e \in \cC(\e \hat \o) \subset  H^{1,{\rm f}}_{\o,\e}$, as $\hat \o$ is locally finite. Moreover, given $\o \in \cA[1]$ (cf. Prop.~\ref{prop_ergodico}), by Remark \ref{forte}  we get that}  $L^2(\mu^\e_\o)\ni \varphi_\e \to \varphi \in L^2( mdx )$. In particular, for \ovo{environments $\o \in \O_1\cap \cA[1]$ (as the ones in $\O_{\rm typ}$ appearing in Theorem \ref{teo1}}),  if $\nabla _\e v_\e \to w$ then $\nabla_\e v_\e\toup w$. 
\end{Remark}

%
%
%
%
%

\section{Main results}\label{sec_MR_hom}

We can now state our  first main result  (recall the definition   of $\O_1$ at the beginning of Section \ref{nicolino}):
\begin{Theorem} \label{teo1} Let Assumptions (A1),...,(A8) be satisfied. Then
there exists a \rrr{measurable}  subset $\O_{\rm typ}\subset \utto{\O_1 \cap \O_*} \subset \O$, of so called \emph{typical environments}, fulfilling the following properties. $\O_{\rm typ}$ is translation invariant  and   $\cP(\O_{\rm typ})=1$. Moreover, given $\o \in \O_{\rm typ}$,  $\l>0$, $f_\e \in L^2(\mu^\e_\o)$ and  \ovo{$f\in L^2( m dx)$},  let  $u_\e$ and $u$ be defined  as the weak solutions,    respectively in $\ovo{H^{1,{\rm f}}_{\o, \e}}$ and  $H^1_*(mdx)$, of the equations 
\begin{align}
&-\bbL^\e_\o u_\e+\l u_\e =f_\e\,,\label{eq1}
\\
&
- \nabla_*\cdot  D \nabla_* u +\l u  =f\,.\label{eq2}
\end{align}
Then we have:\begin{itemize}
\item[(i)] {\bf Convergence of solutions} (cf. Def. \ref{debole_forte}):
\begin{align}
f_\e \toup f \;\Longrightarrow \; u_\e \toup u \,,
\label{limite1}
\\
f_\e \to f  \;\Longrightarrow \; u_\e \to u\,.
\label{limite2}
\end{align}

\item[(ii)]  {\bf Convergence of flows} (cf. Def. \ref{debole_forte_grad}):
\begin{align}
f_\e \toup f \;\Longrightarrow \; \nabla_\e u_\e \toup \nabla_*  u \,,
\label{limite3}
\\
f_\e \to f  \;\Longrightarrow \; \nabla_\e u_\e \to   \nabla_*  u\,.
\label{limite4}
\end{align}

\item[(iii)] {\bf Convergence of energies}:
\begin{equation}\label{limite5}
f_\e \to f  \;\Longrightarrow \; \frac{1}{2}\la \nabla_\e u_\e , \nabla_\e u_\e \ra_{\nu_\o ^\e} \to \int dx\, m  \nabla_* u (x) \cdot D \nabla_* u (x) \,. 
\end{equation}
\end{itemize}
\end{Theorem}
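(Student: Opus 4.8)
The plan is to follow the two-scale convergence strategy of \cite{ZP}, suitably modified by the cut-off procedure announced in the introduction, which is needed precisely because the amorphous gradient $\nabla_\e \varphi$ of a smooth compactly supported test function $\varphi$ is not bounded in uniform norm when jumps are unbounded. The central structural tool is Proposition \ref{oro} (the counterpart of \cite[Lemma~5.4]{ZP}): if $\{v_\e\}$ is a bounded family in $H^{1,{\rm f}}_{\o,\e}$ with $v_\e \toup v$, then $v \in H^1_*(m\,dx)$ and, along subsequences, $\nabla_\e v_\e$ two-scale converges to an object of the form $\nabla_* v(x) + w(x,\cdot)$ with $w$ a square-integrable potential form, which projected onto the effective direction gives exactly $D\nabla_* v(x)$ when one tests against gradients. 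Together with the ergodic Proposition \ref{prop_ergodico} controlling the microscale-macroscale interplay, this is what lets one pass to the limit in the weak formulation \eqref{salvoep}.

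First I would establish the a priori bound: testing \eqref{salvoep} with $v=u_\e$ and using $f_\e \toup f$ (so $\limsup_\e \|f_\e\|_{L^2(\mu_\o^\e)} < \infty$) gives
\begin{equation*}
\tfrac12 \|\nabla_\e u_\e\|_{L^2(\nu_\o^\e)}^2 + \l \|u_\e\|_{L^2(\mu_\o^\e)}^2 \le \|f_\e\|_{L^2(\mu_\o^\e)} \|u_\e\|_{L^2(\mu_\o^\e)},
\end{equation*}
hence $\{u_\e\}$ is bounded in $H^{1,{\rm f}}_{\o,\e}$ uniformly in $\e$. By the compactness of weak convergence in $L^2(\mu_\o^\e)$ (and two-scale compactness for the gradients), any subsequence has a further subsequence along which $u_\e \toup u_*$ for some $u_* \in L^2(m\,dx)$ and $\nabla_\e u_\e$ two-scale converges; by Proposition \ref{oro} we get $u_* \in H^1_*(m\,dx)$ and a limiting form. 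Next I would identify $u_* = u$: take a test function $v_\e = \varphi_\e$ the restriction of $\varphi \in C_c^\infty(\bbR^d)$ (which lies in $\cC(\e\hat\o)\subset H^{1,{\rm f}}_{\o,\e}$ for $\o\in\O_1$), plug into \eqref{salvoep}, and pass to the limit: the mass term $\l\la\varphi_\e,u_\e\ra_{\mu_\o^\e} \to \l\int m\varphi u_*\,dx$ by $u_\e\toup u_*$ and $\varphi_\e\to\varphi$ (Remark \ref{sanremo2022}), the right side $\la\varphi_\e,f_\e\ra_{\mu_\o^\e} \to \int m\varphi f\,dx$ similarly, and the crucial gradient term $\tfrac12\la\nabla_\e\varphi_\e,\nabla_\e u_\e\ra_{\nu_\o^\e}$ converges to $\int m\, D\nabla_*u_*(x)\cdot\nabla_*\varphi(x)\,dx$ — this is where the cut-off of $\nabla_\e\varphi_\e$ and the structure result enter, since $\nabla_\e\varphi_\e$ is not an admissible smooth two-scale test object as it stands. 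One then recognizes the limit as the weak formulation \eqref{delizia}, so $u_* = u$ by uniqueness (Lax--Milgram), and since the limit is independent of the subsequence, the whole family converges: this proves \eqref{limite1}. For \eqref{limite3}, the same limiting identity applied with general $\varphi\in C_c^1$ is precisely \eqref{deboluccio_grad} with $w = \nabla_* u$, once the bound \eqref{recinto_grad} is in hand from the a priori estimate.

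For the strong statements I would upgrade using convergence of energies. Testing \eqref{salvoep} with $v = u_\e$ gives $\tfrac12\|\nabla_\e u_\e\|_{\nu_\o^\e}^2 + \l\|u_\e\|_{\mu_\o^\e}^2 = \la u_\e, f_\e\ra_{\mu_\o^\e}$; if $f_\e \to f$ strongly and $u_\e\toup u$, the right side converges to $\int m\, u f\,dx = \tfrac12\int m\,\nabla_* u\cdot D\nabla_* u\,dx + \l\int m\, u^2\,dx$ by \eqref{delizia} with $v=u$. The quantity on the right is, by weak lower semicontinuity of each of the two nonnegative terms (for the gradient term this uses the two-scale lower semicontinuity built into Proposition \ref{oro}), at most $\liminf$ of each corresponding term on the left; matching the sums forces $\|u_\e\|_{\mu_\o^\e}^2 \to \int m\, u^2\,dx$ and $\tfrac12\|\nabla_\e u_\e\|_{\nu_\o^\e}^2 \to \tfrac12\int m\,\nabla_* u\cdot D\nabla_* u\,dx$, which is exactly \eqref{limite5}. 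The first of these, combined with $u_\e\toup u$, gives $u_\e\to u$ strongly by Remark \ref{forte}, proving \eqref{limite2}; the second, combined with the weak gradient convergence \eqref{limite3} already established, gives the strong gradient convergence \eqref{limite4} by the standard Hilbert-space argument that weak convergence plus norm convergence implies strong convergence (here in the appropriate two-scale sense, i.e.\ testing against arbitrary $g_\e\toup g$ with $g_\e\in H^{1,{\rm f}}_{\o,\e}$, $g\in H^1_*(m\,dx)$, as in \eqref{fortezza_grad}).

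The main obstacle I expect is the passage to the limit in the gradient pairing $\tfrac12\la\nabla_\e\varphi_\e,\nabla_\e u_\e\ra_{\nu_\o^\e}$: because $\nabla_\e\varphi_\e$ cannot be bounded in sup norm (arbitrarily long jumps probe $\varphi$ at far-apart points divided by $\e$), one cannot directly substitute $\nabla_\e\varphi_\e \approx \nabla\varphi(x)\cdot z$ uniformly as in the differential-operator case, and several two-scale limits that are routine in \cite{ZP} become illegitimate here. The resolution is the cut-off procedure of Sections \ref{cut-off1}–\ref{cut-off2}: one truncates the jump length at a large parameter $N$, carries out the two-scale analysis with the truncated (now essentially bounded) amorphous gradient, and then sends $N\to\infty$ using the second moment control $\bbE_0[\l_2]<\infty$ from (A7) — encoded in $\nu_\o^\e$ via Proposition \ref{prop_ergodico} — to show the truncation error is uniformly small. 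Getting the order of limits ($\e\to0$ then $N\to\infty$, with uniform-in-$\e$ control of the tail) correct, and ensuring everything is done with purely $L^2$ objects over a single good set $\O_{\rm typ}$ of environments, is the technical heart of the argument; all the other steps are, modulo Proposition \ref{oro} and the ergodic theorem, soft functional analysis.
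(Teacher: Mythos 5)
Your high-level strategy for \eqref{limite1} and \eqref{limite3} matches the paper's: a priori bound from testing \eqref{salvoep} with $u_\e$, two-scale structure via Proposition \ref{oro}, identification of the limiting form (the paper's Claim \ref{vanity}, which you gesture at when you invoke ``the effective direction''), and passage to the limit in the weak formulation. Your route to the strong statements is genuinely different and, for \eqref{limite2} and \eqref{limite5}, admissible: you use the energy identity $\tfrac12\|\nabla_\e u_\e\|^2_{L^2(\nu_\o^\e)}+\l\|u_\e\|^2_{L^2(\mu_\o^\e)}=\la u_\e, f_\e\ra_{\mu_\o^\e}$, whose right side converges to $\int m\,uf\,dx$ since $f_\e\to f$ strongly and $u_\e\toup u$ weakly, and you match it against the two lower semicontinuity bounds supplied by two-scale compactness and by the fact that the fiberwise $L^2(\nu)$-energy of $w(x,\cdot,\cdot)=\nabla_*u(x)\cdot z + u_1(x,\cdot,\cdot)$ equals $\nabla_*u(x)\cdot D\nabla_*u(x)$ by \eqref{giallo}; this forces each term to converge separately, yielding \eqref{limite5} and, via Remark \ref{forte}, \eqref{limite2}. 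The paper instead proves \eqref{limite2} by a duality trick (auxiliary solutions $v_\e$, $v$ with sources $u_\e$, $u$, one more application of \eqref{limite1}, and a comparison of the four scalar-product identities), and only then derives \eqref{limite4} and, from it, \eqref{limite5}.

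The genuine gap is in your derivation of \eqref{limite4}. Definition \ref{debole_forte_grad} requires \eqref{fortezza_grad} to hold for every family $g_\e\in H^{1,{\rm f}}_{\o,\e}$ with $g_\e\toup g$ and $g\in H^1_*(m\,dx)$, with no assumed bound on $\|\nabla_\e g_\e\|_{L^2(\nu_\o^\e)}$. Hence $\nabla_\e g_\e$ need not be a bounded family in $L^2(\nu_\o^\e)$, and the ``weak convergence plus norm convergence implies strong convergence'' argument does not apply — one cannot test the two-scale limit of $\nabla_\e u_\e$ against objects that are not controlled in $L^2(\nu_\o^\e)$, and strong two-scale convergence of $\nabla_\e u_\e$ by itself does not dominate the pairing $\tfrac12\la\nabla_\e u_\e,\nabla_\e g_\e\ra_{\nu_\o^\e}$. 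What makes \eqref{limite4} (and, for that matter, \eqref{limite3}) go through is the equation itself: for any $v\in H^{1,{\rm f}}_{\o,\e}$, \eqref{salvoep} with $\l=1$ (to which one reduces without loss) gives the exact identity $\tfrac12\la\nabla_\e v,\nabla_\e u_\e\ra_{\nu_\o^\e}=\la v, f_\e-u_\e\ra_{\mu_\o^\e}$. Taking $v:=g_\e$, the right side pairs a weakly convergent family with a strongly convergent one ($f_\e-u_\e\to f-u$, the latter strong by the already-established \eqref{limite2}) and therefore converges to $\la g, f-u\ra_{m\,dx}=\int m\,D\nabla_*g\cdot\nabla_*u\,dx$ by \eqref{delizia}. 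You should replace the Hilbert-space heuristic for \eqref{limite4} by this identity; note in particular that the two-scale identification of $\nabla_\e u_\e$ and Claim \ref{vanity} are needed only to establish \eqref{limite1}, after which the flow and energy items follow from the weak formulation and the effective equation alone.
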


\begin{Remark}\label{gelatino}
Let $\o \in \O_{\rm typ}$.  Then, \ovo{as $\O_{\rm typ}\subset \cA[1]$ (cf. Section \ref{topo})}, \ovo{by Remark \ref{sanremo2022} for any $f \in C_c(\bbR^d)$ it holds}   $L^2(\mu^\e _\o ) \ni f \to f \in L^2(mdx)$. By taking $f_\e:=f$ and using  \eqref{limite2}, we get that   $u_\e \to u$, where $u_\e$ and $u$ are defined as the weak solutions of \eqref{eq1} and \eqref{eq2}, respectively.
 \end{Remark}


 We write $(P^\e _{\o ,t} )_{t \geq 0}$ for the $L^2(\mu^\e _\o)$--Markov semigroup associated to the random walk $(\e X^\o _{ \e^{-2} t} )_{t\geq 0}$ on $\e \hat \o$. In particular, $P^\e _{\o ,t}=e^{t \bbL^\e _\o }$. Similarly we write $( P_t  )_{t \geq 0} $ for the  Markov semigroup on $L^2( m dx)$ associated to the  (possibly degenerate)  Brownian motion on $\bbR^d$  with diffusion matrix $2 D$ given in Definition \ref{def_D}.  Note that  this Brownian motion is not degenerate when projected on $\text{span}(\mathfrak{e}_1, \dots, \mathfrak{e}_{d_*})$, while no motion is present along 
 $\text{span}(\mathfrak{e}_{d_*+1}, \dots, \mathfrak{e}_{d})$.
  In particular, in the case  $\mathfrak{e}_i=e_i$,  writing $p_t(\cdot,\cdot )$ for the probability transition kernel of the Brownian motion on $\bbR^{d_*}$ with non--degenerate diffusion matrix $(2 D_{i,j})_{1\leq i,j \leq d_*}$, it holds 
 \be\label{pesciolini}
 P_t f(x',x'') = \int _{\bbR^{d_*}} p_t ( x', y) f( y, x'' ) dy\,\qquad (x', x'')\in \bbR^{d_*} \times \bbR^{d-d_*}= \bbR^d\,. 
 \en 
Given $\l>0$ we write  $R^\e _{\o ,\l}: L^2(\mu^\e _\o)\to L^2(\mu^\e _\o) $ for the $\l$--resolvent  associated to the random walk $\e X^\o _{ \e^{-2} t} $, i.e.  $R^\e _{\o ,\l}:= (\l -\bbL^\e _\o )^{-1} =\int_0^\infty e^{- \l s} P^\e _{\o ,s} ds $. 
 We write $ R_\l : L^2(m dx) \to L^2(m dx)$ for the $\l$--resolvent associated to the above Brownian motion on $\bbR^d$  with diffusion matrix $2 D$. Note that \eqref{eq1} and \eqref{eq2} can be rewritten as     $u_\e= R^\e _{\o ,\l}  f_\e$  and $u = R_\l f$, respectively. 
   
%
%
%
%


\smallskip
We now show several forms of semigroup and resolvent convergence, whose derivation uses  Theorem \ref{teo1} (they play a fundamental role in  \cite{F_SE_RE}). To this aim we introduce a new Assumption \ovo{(recall definition \eqref{simplesso} of $\D$)}:

\smallskip

\noindent
{\bf Assumption (A9)}: \emph{At least one of the following properties  is fulfilled:
 \begin{itemize}
 \item[(i)] For $\cP$--a.a.~$\o$ 
  $\exists C(\o)>0$ such that $\mu_\o (\ovo{\t_k  \D}) \leq C(\o)$ for all $k\in \bbZ^d$.
  \item[(ii)] Setting $N_k(\o):=\mu_\o (\ovo{\t_k\D})$ for $k\in \bbZ^d$, for some $C_0\geq 0$  it holds     $\bbE[N_0^2]<\infty$ and 
   \be\label{intinto}
| \text{Cov}\,(N_k, N_{k'})| \leq  
  C_0 |k-k'| ^{-1} 
   \en
  for any $k \not = k'$ in $\bbZ^d$. More generally,  we assume that, at cost to enlarge the probability space, one can define random  variables $(N_k) _{k\in \bbZ^d}$ with $\mu_\o (\ovo{\t_k \D})\leq N_k$,  such that $\bbE[N_k],\bbE[N_k^2]$ are bounded uniformly in $k$ and  such that \eqref{intinto} holds for  all $k\not=k'$.
 \end{itemize}
 }
\begin{Remark} \ovo{As follows from the proof of Theorem \ref{teo2} below, when $\bbG=\bbR^d$ one can  replace $\t_k\D$ by $k+[0,1)^d$ as well. In general, for $\bbG=\bbR^d$, one can replace the cells $\{\t_k \D\}_{k \in \bbZ^d}$ by the cells of any  lattice partition of  $\bbR^d$.}
\end{Remark}
\begin{Theorem}\label{teo2}  Let Assumptions (A1),...,(A8)  be satisfied. 
Take  $\o \in \O_{\rm typ}$ and $f \in C_c (\bbR^d)$. Then for any  $t\geq 0$ and $\l>0$, it holds
\begin{align}
&  L^2(\mu^\e _\o ) \ni P^\e_{\o,t} f \to P_t f \in L^2(mdx)\,, \label{marvel0}
 \\
&   L^2(\mu^\e _\o ) \ni R^\e_{\o,\l} f \to R_\l  f \in L^2(mdx)\,. \label{vinello}
 \end{align}
   Suppose in addition that  Assumption (A9) holds. Then there exists a \iva{translation invariant} measurable set $\O_\sharp\subset \O $ with  $\cP( \O_\sharp)=1$  such that for any $ \o \in \O_\sharp \cap \O_{\rm typ}$, any $f \in C_c(\bbR^d)$, $\l>0$, $t\geq 0$ it holds:
\begin{align}
& \lim_{\e \da 0} \int \bigl | P^\e_{\o,t} f(x) - P_t f (x) \bigr|^2 d \mu^\e_\o (x)=0\,,\label{marvel1}\\
& \lim_{\e \da 0} \int \bigl | P^\e_{\o,t} f(x) - P_t f (x) \bigr| d \mu^\e_\o (x)=0\,,\label{marvel2}\\
& \lim_{\e \da 0} \int \bigl | R^\e_{\o,\l} f(x) - R_\l f (x) \bigr|^2 d \mu^\e_\o (x)=0\,,\label{ondinoA}\\
&   \lim_{\e \da 0} \int \bigl | R^\e_{\o,\l} f(x) -R_\l f (x) \bigr| d \mu^\e_\o (x)=0\,.\label{ondinoB}
\end{align}
\end{Theorem}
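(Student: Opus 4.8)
The plan is to deduce Theorem \ref{teo2} from Theorem \ref{teo1} via the abstract relation between resolvent convergence, semigroup convergence, and the notion of strong $L^2(\mu^\e_\o)$-convergence introduced in Definition \ref{debole_forte}. First I would establish \eqref{vinello}: since $u_\e = R^\e_{\o,\l}f$ and $u = R_\l f$ are precisely the weak solutions of \eqref{eq1}--\eqref{eq2} with $f_\e := f$ (the restriction of $f$ to $\e\hat\o$), and since by Remark \ref{gelatino} (resp.\ Remark \ref{sanremo2022}) we have $f \to f$ strongly in the sense of Definition \ref{debole_forte} whenever $\o \in \O_{\rm typ}\subset\cA[1]\cap\O_1$, the implication \eqref{limite2} of Theorem \ref{teo1} gives exactly $R^\e_{\o,\l}f \to R_\l f$. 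This is essentially immediate. To pass to the semigroup statement \eqref{marvel0}, the plan is to use the standard fact that strong resolvent convergence in the present ``varying Hilbert space'' sense implies strong semigroup convergence, proved via the Laplace-transform/functional-calculus argument: write $P^\e_{\o,t} = \lim_{n}(\tfrac{n}{t}R^\e_{\o,n/t})^n$ uniformly in $\e$ (a Hille--Yosida-type estimate, using that $\|P^\e_{\o,t}\|\le 1$ and $\|\l R^\e_{\o,\l}\|\le 1$ on $L^2(\mu^\e_\o)$), then iterate the resolvent convergence; alternatively, one approximates $e^{-t\l}$ uniformly by combinations of resolvents $R_{\mu,\l}$. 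The monotonicity/contractivity of both semigroups on their respective $L^2$ spaces, together with $\|f\|_{L^2(\mu^\e_\o)}\to\|f\|_{L^2(mdx)}$, makes all the error terms uniformly controllable. This yields \eqref{marvel0}.

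Next I would upgrade the strong-convergence statements \eqref{marvel0}--\eqref{vinello} to the genuine $L^2$- and $L^1$-norm convergences \eqref{marvel1}--\eqref{ondinoB}, and this is where Assumption (A9) enters. By Remark \ref{forte}, strong convergence $v_\e \to v$ is equivalent to weak convergence plus convergence of norms $\int v_\e^2\, d\mu^\e_\o \to \int v^2\, m\,dx$. Applying this with $v_\e = P^\e_{\o,t}f$ and $v = P_tf$ gives $\int (P^\e_{\o,t}f)^2 d\mu^\e_\o \to \int (P_tf)^2 m\,dx$, and separately $\int P^\e_{\o,t}f\cdot(P_tf)_\e\, d\mu^\e_\o \to \int (P_tf)^2 m\,dx$ (using that $(P_tf)_\e$, the restriction of the smooth bounded function $P_tf$, converges strongly by an argument like Remark \ref{sanremo2022}, which requires knowing $P_tf$ and its restrictions behave well on $\e\hat\o$ — this is where we need $P_tf \in C_c$ or at least rapidly decaying and smooth, which holds since $f\in C_c(\bbR^d)$ and $p_t$ is a Gaussian kernel, cf.\ \eqref{pesciolini}). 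Expanding the square $\int|P^\e_{\o,t}f - (P_tf)_\e|^2\, d\mu^\e_\o$ into the three pieces then gives \eqref{marvel1}. The same scheme with the resolvent gives \eqref{ondinoA}. The $L^1$-bounds \eqref{marvel2} and \eqref{ondinoB} follow from the $L^2$-bounds by Cauchy--Schwarz, $\int|g|\,d\mu^\e_\o \le \|g\|_{L^2(\mu^\e_\o)}\,\mu^\e_\o(\mathrm{supp})^{1/2}$, provided one can localize: $P^\e_{\o,t}f - (P_tf)_\e$ is not compactly supported, so one must first show that the mass of $P^\e_{\o,t}f$ outside a large ball is uniformly small — a tightness estimate. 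This is precisely what (A9) buys: it controls $\mu_\o(\t_k\D) = N_k(\o)$ uniformly (case (i)) or in $L^2$ with summable-enough decorrelation (case (ii)), so that via a Borel--Cantelli / maximal-inequality argument on the set $\O_\sharp$ one has $\mu^\e_\o(K^c)$ and the escaped mass of the random walk both negligible uniformly in small $\e$, for $\o\in\O_\sharp$.

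More concretely for the tightness step: for the random walk one uses that $\e X^\o_{\e^{-2}t}$ started from a point in a compact set stays, with high probability uniformly in $\e$, inside a slightly larger compact set — this is a consequence of the $L^2$-convergence of semigroups already obtained plus a Markov/Chebyshev bound, or directly of the heat-kernel tail. For the measure one writes $\int_{|x|>R}|P^\e_{\o,t}f(x)|\, d\mu^\e_\o(x) \le \bigl(\int |P^\e_{\o,t}f|^2 d\mu^\e_\o\bigr)^{1/2}\bigl(\mu^\e_\o(\{|x|>R\}\cap \mathrm{supp})\bigr)^{1/2}$ and needs $\mu^\e_\o(\{R<|x|\le R'\}) = \e^d\sum_{x\in\hat\o,\, R/\e<|x|\le R'/\e} n_x(\o) \approx m\,\ell(\{R<|x|\le R'\})$ — a law-of-large-numbers statement that (A9) makes hold $\cP$-a.s.\ uniformly, defining $\O_\sharp$. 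Then one combines: inside a large ball, use \eqref{marvel1}/\eqref{ondinoA} and Cauchy--Schwarz against $\mu^\e_\o$ of the ball (bounded by $\approx m\,\ell(\text{ball})$); outside, use the uniform tail bound. The main obstacle I anticipate is exactly this last point — proving the uniform-in-$\e$ tightness of both the empirical measure $\mu^\e_\o$ on annuli and the diffusively rescaled walk simultaneously, for a.e.\ fixed $\o$, under the weak decorrelation hypothesis \eqref{intinto}; the semigroup/resolvent convergence itself is comparatively soft once Theorem \ref{teo1} and Remark \ref{forte} are in hand. One should also be careful that $P_tf$ and $R_\l f$, though not compactly supported, decay fast enough (Gaussian tails, resp.\ exponential) that their own contribution outside large balls is controlled by the same $\O_\sharp$-estimate.
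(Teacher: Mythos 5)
Your plan matches the paper's structure in its first half: you obtain \eqref{vinello} by applying Theorem~\ref{teo1}\eqref{limite2} with $f_\e$ the restriction of $f$, you pass from resolvent to semigroup convergence (the paper cites \cite[Thm.~9.2]{ZP} where you propose a Hille--Yosida/Trotter--Kato iteration — both are the standard abstract route, with contractivity on each $L^2(\mu^\e_\o)$ doing the work), and you then upgrade from strong convergence in the sense of Definition~\ref{debole_forte} to genuine $L^2$-norm convergence \eqref{marvel1}/\eqref{ondinoA} by controlling the tail of $\mu^\e_\o$, which is exactly where Assumption~(A9) and a Borel--Cantelli argument (the paper's Lemma~\ref{pre_teo2}, applied with $\psi(r)=1/(1+r^{d+1})$, together with an argument as in \cite[Lemma~6.1]{F1}) enter. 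You correctly note that the non-compact support of $P_t f$, $R_\l f$ is the reason the three terms in the expansion of $\int|P^\e_{\o,t}f-P_tf|^2\,d\mu^\e_\o$ do not come for free from Remark~\ref{forte}.

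Where you genuinely diverge — and where your argument has a real gap — is the passage from $L^2$ to $L^1$ in \eqref{marvel2}, \eqref{ondinoB}. You propose to prove a tightness estimate for the diffusively rescaled walk (``$\e X^\o_{\e^{-2}t}$ started in a compact set stays in a slightly larger one, uniformly in $\e$'') via heat-kernel tails or Markov's inequality, but no heat-kernel bounds for the random walk are available under (A1)--(A9): the rates may have heavy tails, and the only spatial control you have is on $\mu^\e_\o$, not on $P^\e_{\o,t}f$ itself. The Markov/Chebyshev route against $\int (P^\e_{\o,t}f)^2\,d\mu^\e_\o$ doesn't localize $P^\e_{\o,t}f$ because its support is all of $\e\hat\o$. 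The paper avoids this entirely with a mass-conservation trick: restricting to $f\ge0$, one has the \emph{exact} identity $\|R^\e_{\o,\l}f\|_{L^1(\mu^\e_\o)}=\mu^\e_\o(f)/\l$ (since $\mu^\e_\o$ is invariant for $P^\e_{\o,s}$), and likewise $\|R_\l f\|_{L^1(m\,dx)}=m\int f\,dx/\l$; since the $L^1$-norm restricted to $B_n$ converges (by Cauchy--Schwarz from \eqref{ondinoA} and $\mu^\e_\o(B_n)\to m\ell(B_n)$), subtracting forces $\|(R^\e_{\o,\l}f)\mathds{1}_{B_n^c}\|_{L^1(\mu^\e_\o)}\to\|(R_\l f)\mathds{1}_{B_n^c}\|_{L^1(m\,dx)}$, and the latter is small as $n\uparrow\infty$ by Lemma~\ref{pre_teo2}. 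This is softer than what you attempt and uses only what is already proved. One last point you leave untouched: $\O_\sharp$ must not depend on $f$; the paper handles this up front with a density reduction in $C_c(\bbR^d)$ using the a-priori bounds $\|P^\e_{\o,t}f\|_{L^p(\mu^\e_\o)}\le \|f\|_\infty\mu^\e_\o(B_n)^{1/p}$ for $f$ supported in $B_n$, $p=1,2$.
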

The proof of Theorem \ref{teo2} is given in Section \ref{dim_teo2}. 
\begin{Remark} Assumption (A9) is used only to derive Lemma \ref{pre_teo2} \iva{in Section \ref{dim_teo2}},  which is applied in the proof of Theorem \ref{teo2} only with $\psi(r):=1/(1+ r^{d+1})$.
\end{Remark}

\section{Some examples}\label{sec_esempi} 
The class of reversible random walks in random environment is very large. We discuss just some popular examples. \utto{Below, when we state that assumptions (A3),...,(A6) are satisfied, we understand that this holds with $\o$ in a suitable translation invariant measurable set $\O_*\subset \O$ with $\cP(\O_*)=1$.} 
\subsection{Nearest-neighbor random conductance model on $\bbZ^d$}\label{nonno}
We take $\bbG:=\bbZ^d$ and $V:=\bbI$. Let $\bbE^d:=\left \{ \{x,y\} \,:\, x,y \in \bbZ^d, \; |x-y|=1\right\}$. We  take  $\O:=(0,+\infty)^{\bbE^d}$ endowed with the product topology. We write $\o=( \o_b\,:\, b \in \bbE^d)$ for a generic element of $\O$ and we write $\o_{x,y}$ instead of $\o_{\{x,y\}}$. \ovo{Note that $\o_{x,y}=\o_{y,x}$}. The action $(\theta _x)_{x\in \bbZ^d}$  is  the standard one: $\theta_x$ shifts the environment along the vector $-x$. 
 We set $\mu_\o:= \sum_{x\in \bbZ^d}\d_x $ and  $r_{x,y}(\o):=\o_{x,y}$ if  $\{x,y\}\in \bbE^d$ and  $r_{x,y}(\o):=0$ otherwise.   Then, given the environment $\o$,  the random walk $X_t^\o$ has state space $\bbZ^d$ and  jumps from $x$ to $y$, with $|x-y|=1$,   with  rate $\o_{x,y}$.
Assumption (A1),...,(A9) are satisfied whenever $\cP$ is stationary and ergodic, it satisfies (A3)  and $\bbE[\o_{x,y}]<+\infty$ for all $|x-y|=1$ \ovo{(i.e., by stationarity, $\bbE[\o_{0,e_i}]<+\infty$ for $i=1,\dots,d$)}. \ovo{Due to \cite[Prop.~4.1]{Bi}, if in addition $\bbE[1/\o_{0,e_i}]<+\infty$ for $i=1,\dots,d$, then the matrix $D$ is non-degenerate}.

If one wants the version with waiting times of parameter $1$, then one has to set $n_x(\o): = \sum_{y:|x-y|=1} \o_{x,y}$, $\mu_\o:= \sum_{x\in \bbZ^d}n_x(\o) \d_x $ and $r_{x,y}(\o):=\o_{x,y}/ n_x(\o)$.   Then 
Assumptions (A1),...,(A8) are satisfied whenever $\cP$ is stationary and ergodic, it satisfies (A3)  and $\bbE[\o_{x,y}]<+\infty$ for all $|x-y|=1$ (use \eqref{zazzera}). \ovo{As in the previous case, $D$ is non-degenerate if $\bbE[1/\o_{0,e_i}]<+\infty$ for $i=1,\dots,d$ (see Remark \ref{virale}).}
  To satisfy Assumption (A9) it is enough e.g. that the conductances $\o_{x,y}$ are uniformly bounded or that  the covariance between $\o_{x,y}$ and $\o_{x',y'}$ decays at least as the inverse of the distance between $\{x,y\}$ and $\{x',y'\}$.
\subsection{Random conductance model on $\bbZ^d$ with long conductances}
We take $\bbG:=\bbZ^d$ and $V:=\bbI$.  We set  $\bbB^d:=\left \{ \{x,y\} \,:\, x,y \in \bbZ^d, \; x\not = y \right\}$ and take  $\O:=(0,+\infty)^{\bbB^d}$ endowed with the product topology.  We set $\o_{x,y}:=\o_{\{x,y\}}$. The action $(\theta _x)_{x\in \bbZ^d}$  is  the standard one.  We take $\mu_\o:= \sum_{x\in \bbZ^d}\d_x $,  $r_{x,y}(\o):=\o_{ x,y}$ if  $\{x,y\}\in \bbB^d$ and  $r_{x,y}(\o):=0$ otherwise.   Then, given the environment $\o$,  the random walk $X_t^\o $ has state space $\bbZ^d$ and  jumps from $x$ to $y$    with probability rate $\o_{x,y}$.  
Assumptions (A1),...,(A9) are satisfied whenever $\cP$ is stationary and ergodic, it satisfies (A3)
and it satisfies   $\bbE[\sum_{z\in \bbZ^d} \o_{0,z}|z|^2]<+\infty$ (which implies $\bbE[\sum_{z\in \bbZ^d} \o_{0,z}]<+\infty$). \ovo{Reasoning as in the proof of \cite[Prop.~4.1]{Bi}, for all $a\in \bbR^d$ one can lower bound the scalar product  $a\cdot Da$  by $C \sum_{x\in \bbZ^d} (a\cdot x)^2 / \bbE[ 1/\o_{0,x}]$ with $C>0$. Hence, $D$ is non-degenerate if the set $\{x\in \bbZ^d\,:\, \bbE[ 1/\o_{0,x}]<+\infty\}$ is not contained in a subspace of $\bbR^d$ with dimension smaller than $d$}.
\subsection{Random walk with random conductances on infinite clusters}

 We take $\bbG:=\bbZ^d$ and $V:=\bbI$. Let $\bbE^d$ be as in Example \ref{nonno}.
  We  take  $\O:=[0,+\infty)^{\bbE^d}$ with the product topology. The action $(\theta _x)_{x\in \bbZ^d}$  is  the standard one.   Let $\cP$ be a probability measure on $\O$ stationary,  ergodic and fulfilling (A3) for the above action. We assume that for $\cP$--a.a. $\o$ there exists a unique infinite connected component $\cC(\o)\subset \bbZ^d$ in the graph given by the edges $\{x,y\}$ in $\bbE^d$  with positive conductivity $\o _{x,y}$.

  We set $\mu_\o:= \sum _{x\in \cC(\o)} \d_x$,  $r_{x,y}(\o):= \o_{x,y}$ if $\{x,y\} \in \bbE^d$ and $r_{x,y}(\o):= 0$ otherwise.
Note that $n_x(\o) = \mathds{1}(x\in \cC(\o))$. Then the random walk $X_t^\o$ has state space $\cC(\o)$ and jumps from $x$ to $y$ in $\cC(\o)$ (where $|x-y|=1$) with probability rate $\o_{x,y}$. Note that $d \cP_0 (\o)=  \mathds{1}(0\in \cC(\o)) d\cP(\o) / \cP( 0\in \cC(\o))$.  If, in addition, $\cP$ satisfies 
$\bbE[ \o_{x,y} ]<+\infty$ for all  $\{x,y\}\in \bbE^d$, then all Assumptions (A1),...,(A9) are satisfied (note that  we  need neither bounded conductances nor the non-degeneracy of the diffusion matrix, differently from    \cite{F1}). This holds for example  if $\cP$ is the Bernoulli product probability  on $\O$ such that $\cP( \o_{x,y}>0)>p_c$,  $p_c$ being  the bond percolation critical probability  on $\bbZ^d$, and  $\bbE[ \o_{x,y}]<+\infty$ for all $\{x,y\}\in \bbE^d$.

If  interested to the modified version with waiting times of parameter 1, then we set $n_x(\o):= \sum _{y:\{x,y\}\in \bbE^d} \o_{x,y} \mathds{1}(x\in \cC(\o))$,
$\mu_\o:= \sum _{x\in \cC(\o)} n_x(\o)  \d_x$ and $r_{x,y}(\o):= \o_{x,y}/n_x(\o)$ if $\{x,y\} \in \bbE^d$ and $x,y\in \cC(\o)$,  otherwise we set $r_{x,y}(\o):= 0$. All assumptions (A1),...,(A8) are satisfied whenever $\bbE[ \o_{x,y} ]<+\infty$ for $\{x,y\}\in \bbE^d$. For (A9) one can argue as in Example \ref{nonno}.

\ovo{We refer e.g. to \cite{BB,DNS,MP} for additional assumptions assuring the non-degeneracy of $D$.}


\subsection{Mott random walk}\label{ex_mott} Mott random walk  (see e.g. \cite{CF2,CFP, FM,FSS}) is a mean-field model for  Mott variable range hopping in amorphous solids \cite{POF}.
We take $\bbG:= \bbR^d$ and $V:=\bbI$. $\O$ is given by the space of marked simple counting measures with marks in $\bbR$. By identifying $\o$ with its support, we have $\o=\{(x_i,E_i)\}$ where $E_i\in \bbR$ and the set $\{x_i\}$ is locally finite.  $\O$ is a metric space, being  a subset 
of the metric space $\cN$ of counting measures  $\mu=\sum _{i} k_i \d_{(x_i,E_i)}$,  where $k_i\in \bbN$  and $\{(x_i,E_i)\} $ is a locally finite subset of $\bbR^d\times \bbR$.  See \ovo{\cite[Eq.~(A2.6.1) in App.~A2.6]{DV}} for the metric  $d$ associated to $\cN$.  One can prove that $(\cN,d)$ is a Polish space having $\O$ as Borel subset \cite[Cor.~7.1.IV, App.~\ovo{A2.6}]{DV}.
The action $\theta_x$ on $\O$ is given by $\theta _x \o:=\{ (x_i-x, E_i)\}$ if $\o=\{(x_i, E_i)\}$.

 Given the environment $\o=\{(x_i,E_i)\}$, to get Mott random walk   we take $\mu_\o := \sum _i \d_{x_i}$ (hence $\hat \o := \{x_i\}$ and  $n_{x_i}(\o) :=1$) and 
\be\label{vento}
r_{x_i,x_j}(\o):= \exp\bigl\{ -|x_i-x_j| - |E_{x_i}|-|E_{x_j}|-|E_{x_i}-E_{x_j}|\bigr\}\,\qquad x_i\not =x_j \,.
\en
Hence, $X_t^\o$ walks on $\{x_i\}$ with jump probability rates given by \eqref{vento}.
\utto{Note that the properties in (A4), (A5), (A6) are automatically satisfied by  all  $\o\in \O$ (i.e. $\O_4=\O_5=\O_6=\O$ with the notation of Section \ref{tassometro}).} 
Suppose that $\cP$ satisfies (A1), (A2) and \utto{$\cP( \theta_g \o \not = \theta_{g'} \o \; \forall g\not =g' \text{ in } \bbG)=1$ (as discussed in Section \ref{tassometro} the set $\{ \o\,:\, \theta_g \o \not = \theta_{g'} \o \; \forall g\not =g' \text{ in } \bbG\}$ is measurable). Then (A3) is satisfied by taking $\O_*=\O_3$ and requiring $\cP(\O_*)=1$.}  $\cP_0$ is simply the  standard  Palm distribution associated to the marked simple point process with law $\cP$ \cite{DV2}.  As the above space $\cN$ is Polish and $\O_0:=\{\o\,:\, 0 \in \hat \o\} $ is a Borel subset of $\cN$, $\O_0$   is separable and therefore (A8) is satisfied \ovo{(see the comment on (A8)  in Section \ref{tassometro})}.

We claim that  the bound  $\bbE \bigl[ |\hat \o \cap [0,1]^d | ^{2} \bigr]<\infty$  implies (A7). To prove   our claim we observe that, by \cite[Lemma 2]{FSS}, given a positive integer $k$ it holds $\l_0 \in L^k(\cP_0)$ if and only if  $\bbE \bigl[ |\hat \o \cap  [0,1]^d | ^{k+1} \bigr]<\infty$. The proof provided there remains true when substituting $\l_0$ by any function $f $  such that  $|f(\o) | \leq C \int d\hat \o (x) e^{-c|x|} $ with $C,c>0$. Hence we can take     $f=\l_2$. Then the above bound on the second moment of  $\hat \o \cap [0,1]^d$ implies that $\l_0,\l_2 \in L^1(\cP_0)$.

For (A9) we observe that $\mu_\o ( k + [0,1)^d)$ equals the number of points $x_i$ in $k + [0,1)^d$. Hence, there are plenty of  simple point processes satisfying (A9).

\ovo{We refer  to \cite{CF2,CFP,FSS} for additional assumptions assuring the non-degeneracy of $D$.}

%
%
%
%
 
 \subsection{Simple random walk on Delaunay triangulation}
We take $\bbG:= \bbR^d$ and $V:=\bbI$. $\O$ is given by the space of simple counting measures on $\bbR^d$. We set $\mu_\o:=\o$. We take $r_{x,y}(\o)=\mathds{1}( x\stackrel{\o}{\sim } y)$, where $x\stackrel{\o}{\sim }y$ means that $x,y$ are adjacent in the  $\o$--Delaunay triangulation. 
Then, given $\o$, the random walk $X^\o_t$ is the simple random walk on the $\o$--Delaunay triangulation. 
By taking $\cP$ stationary with finite intensity,  $\cP_0$ becomes  the standard Palm distribution associated to the stationary simple point process on $\bbR^d$ with law $\cP$ \cite{DV2}. If for example $\cP$ is a  \ovo{homogeneous} Poisson point process, using the \ovo{results} in \cite{Rou} it is simple to conclude  that all  Assumptions (A1),...,(A9) are satisfied (for (A8)  reason as for Mott random walk) \ovo{and that $D$ is non-degenerate}.

 More general cases, also with random conductances, will be discussed in \cite{FT}.

%

\subsection{Nearest--neighbor random conductance models on lattices} To have a concrete example  let us consider the nearest-neighbor  random conductance model on the hexagonal lattice $\cL=(\cV,\cE)$, partially drawn in Fig.~\ref{apetta} (hexagons have  edges \ovo{of length one}). 
\begin{figure}[!ht]
    \begin{center}
     \centering
  \mbox{\hbox{
  \includegraphics[width=0.5\textwidth]{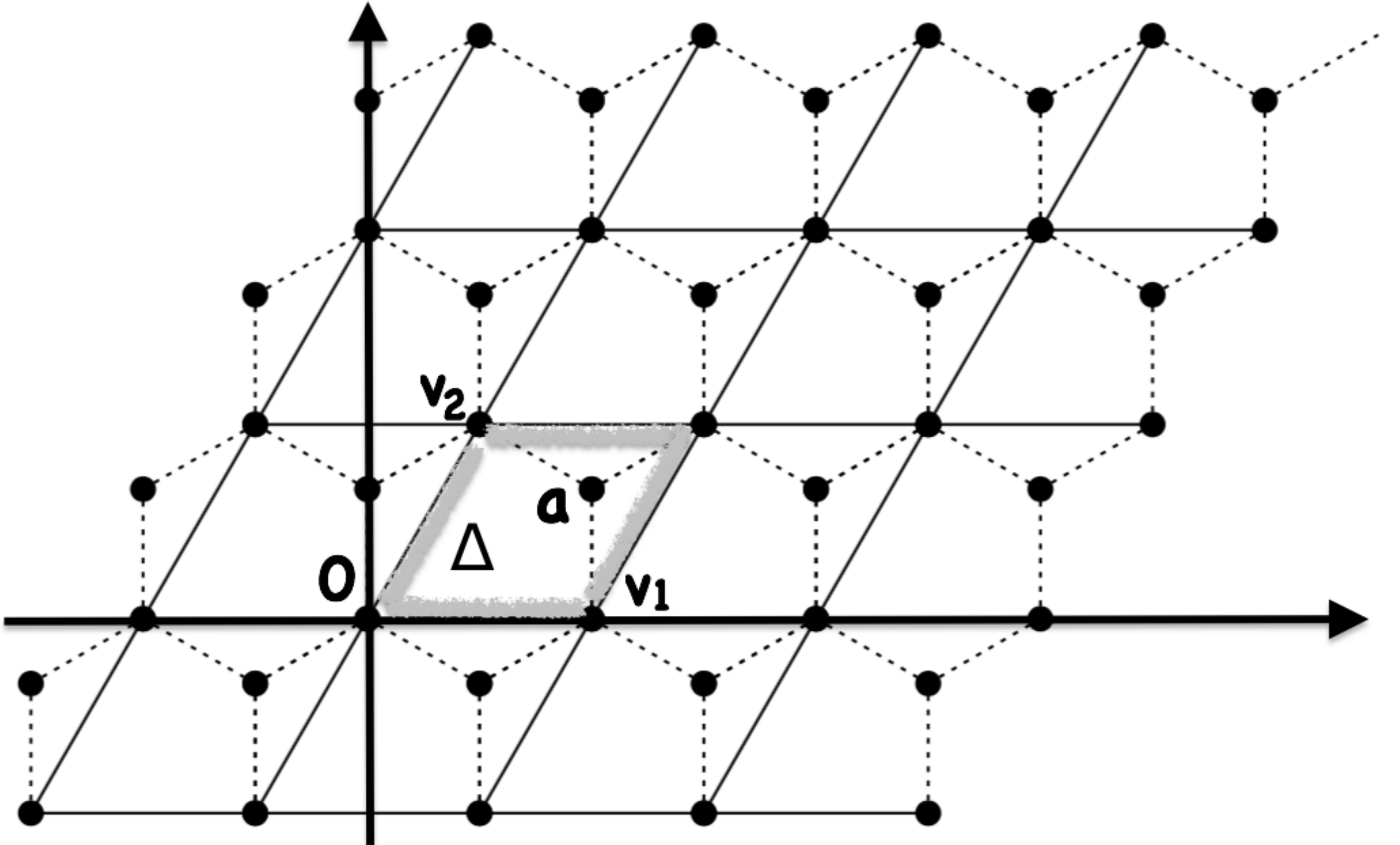}}}
         \end{center}
         \caption{Hexagonal lattice, fundamental cell $\D$, basis $\{v_1,v_2\}$}\label{apetta}
  \end{figure}

Let $v_1,v_2$ be the vectors  $v_1=\bigl( 2 \cos\frac{\pi}{6},0\bigr)$, $v_2=\bigl(2 \cos\frac{\pi}{6} \cos\frac{\pi}{3} ,2 \cos\frac{\pi}{6} \sin\frac{\pi}{3} \bigr)$. 
We take  $\O:= (0,+\infty)^{\cE}$ endowed with the product topology and set $\o_{x,y}:=\o_{\{x,y\}}$.
  Let $\bbG:=\bbZ^d$ and  let $V$ be  the matrix with columns $v_1,v_2$ respectively. 
 The action of $(\theta_z)_{z\in \bbZ^d}$ and $(\t_z)_{z\in \bbZ^d}$ of  $\bbZ^d$ on $\O$ and $\bbR^d$, respectively, are given by
\begin{align*}
&\theta _z \o: =(\o_{x-Vz,y-Vz }\,:\, \{x,y\}\in \cE)  \text{ if } \o=(\o_{x,y}\,:\,\{x,y\}\in \cE)\,, \; z\in \bbZ^d\,,\\
& \t_z x:=x+ Vz  \text{ for } x \in \bbR^d\,, \; z\in \bbZ^d\,.
\end{align*}
Moreover, for any $\o \in \O$,  we set $\mu_\o:= \sum _{x\in \cV} \d_x$.
The fundamental cell $\D$ is given by $\D=\{t_1v_1+ t_2 v_2\,:\, 0\leq t_1,t_2<1\}$. Note that the set  $\O_0$ introduced after  \eqref{Palm_Z} equals $\O\times \{0,a\}$ and that, by \eqref{puffo2}, $m \ell (\D)= 2 $. Hence (see \eqref{Palm_Z})
$\cP_0(d\o,dx)= \cP(d\o)  \otimes  {\rm Av}_{u\in\{0,a\} } \d_u(dx) $.
Setting $r_{x,y}:= \o_{x,y}$ if $\{x,y\}\in \cE$ and $r_{x,y}:=0$ otherwise, the random walk $X^\o_t$  has state space $\cV$ and jumps from $x$ to a nearest-neighbor site $y$ with 
probability rate $\o_{x,y}$. If, for example, $\cP$ is given by a Bernoulli product probability measure with  $\bbE[\o_{x,y} ]<+\infty$  for all  $\{x,y\}\in \cE$, then all assumptions (A1),\dots,(A9) are satisfied.

\section{From $\bbZ^d$-actions to $\bbR^d$--actions}\label{sec_fluidifico}
Suppose $\bbG=\bbZ^d$ and call $\cS[1]$ the setting described by $\bbG$, $(\O,\cF,\cP)$, $(\theta_g)_{g\in \bbG}$, $(\t_g)_{g\in \bbG}$, $\mu_\o$, $r_{x,y}(\o)$, \utto{$\O_*$}. We now introduce a new setting $\cS[2]$ described by new objects $\bar\bbG=\bbR^d$, $(\bar\O,\bar\cF,\bar\cP)$, $(\bar{\theta}_g)_{g\in \bar \bbG}$, $(\bar{\t}_g)_{g\in \bar{\bbG}}$, $\mu_{\bar \o}$, $\bar r_{x,y}(\bar \o)$, $\utto{\bar{\O}_*}$ such that if $\cS[1]$  satisfies the main assumptions (A1),...,(A8), then the same holds for $\cS[2]$, and  if the conclusion of Theorem \ref{teo1}  holds for $\cS[2]$, then the same holds for $\cS[1]$. 
 As a consequence, 
to prove Theorem \ref{teo1} it is enough to consider the case $\bbG=\bbR^d$.   Many identities pointed out below will be proved in Appendix \ref{app_fluidifico}.


We consider the extended  probability space $(\bar \O, \bar \cF, \bar\cP)$ defined as
\[ \bar\O:=\O \times \D\,, \qquad \bar \cP := \ell(\D)^{-1} \cP \otimes \ell\,,\qquad \bar \cF:= \cF \otimes \cB(\D)\,.
\]
We define   the action $(\bar \theta _x)_{x\in\bar \bbG} $ of $\bar \bbG=\bbR^d$ on $\bar \O$ as 
\be \label{cetriolo}
\bar \theta_x \bigl(\o, a\bigr) =\bigl(\theta_{g(x+a)} \o, \beta(x+a)\bigr)\,.
\en
One can easily check that $(\bar \theta _x)_{x\in\bbR^d} $  satisfies the properties analogous to  (P1), ...,(P4) in Section \ref{flauto}, when replacing $\bbG$ by $\bar\bbG$ (for the validity of (P4) concerning the $\bar \bbG$-stationary of $\bar \cP$  see Lemma \ref{LB1} in Appendix  \ref{app_fluidifico}).  Moreover,  $\bar \cP$ is ergodic w.r.t. the action of $(\bar \theta_x)_{x\in \bar \bbG}$ (cf. Lemma \ref{LB5}). Hence, (A1) is fulfilled by $\cS[2]$.

 We define 
\be\label{limone} 
\bar \O \ni \bar \o  \mapsto  \mu _{\bar \o} \in \cM \,, \qquad   \mu_{(\o,a)}(\cdot):= \mu_\o (\cdot +a ) \,.
\en
The intensity  $\bar m $ and $m$   of the random measure $\mu_{\bar \o}$ and 
 $\mu_\o$, respectively, coincide  (cf. Lemma \ref{LB3}). Hence, (A2) is fulfilled by $\cS[2]$.
 
\utto{We set $\bar{\O}_*:= \O_* \times \D$.  It is simple to check that  $\bar{\O}_*$ is a translation invariant measurable set with $\bar \cP(\bar{\O}_*)=1$ and that    (A3) is fulfilled by $\cS[2]$ for any $\bar \o \in \bar{\O}_*$}.
The action $(\bar \t_x)_{x\in \bar \bbG}$ on $\bbR^d$ is given by
\be\label{pinzimonio}
\bar \t_x z:= z +x\,.
\en 
By writing  $n_x(\bar \o)= \mu_{\bar \o}(\{x\})$ for $x\in \bbR^d$, the above definition \ovo{\eqref{limone}} implies that 
\be\label{limoncello}
 \bar\o=(\o, a) \; \Longrightarrow \; 
\hat{\bar{\o}}=\hat \o -a \,, \qquad n_x(\bar \o)=n_{x+a}(\o)\,.
\en
Then we \ovo{have} (cf. Lemma \ref{LB2})
 \be \label{arancia}
 \mu_{\bar \theta _x \bar \o}(\cdot) = \mu_{\bar \o}(\bar \t_x \cdot)
  \qquad \forall \bar \o \in \ovo{\bar{\O}_*}, \;x\in \bar\bbG\,.
  \en
We define the \rrr{measurable} function 
\be 
\bar r: \bar \O \times \bbR^d \times \bbR^d \ni (\bar \o, x, y)\mapsto \bar r_{x,y}(\bar \o) \in [0,+\infty)
\en
as 
\be\label{normale}
\bar r_{x,y}( \o, a):= r_{x+a, y+a}(\o)\,.
\en
 The analogous of \eqref{montagna}  still holds  \utto{ for $\bar \o \in \bar{\O}_*$} (cf. Lemma \ref{lemma_mont}). Note that, by \eqref{limoncello} and \eqref{normale}, we have 
$\bar c_{x,y}(\o, a)= c_{x+a,y+a}(\o)$. Hence, (A4) and (A5) are fulfilled by $\cS[2]$ \utto{for all $\bar \o \in \bar{\O}_*$}

The Palm distributions $\cP_0, \bar\cP_0$ associated  respectively to $\cP,\bar \cP$ coincide   (cf. Lemma \ref{LB4}). Hence, (A8) is trivially satisfied by $\cS[2]$.  
Note moreover that ${\bar \O}_0=\{ (\o,x) \in \bar \O\,:\, n_0(\o,x)>0\}= \{ (\o,x)\in \O\times \D\,:\, n_x(\o)>0\}\ovo{=\O_0}$. 

 Recall \eqref{altino15}. \ovo{We write $ \l_k $ and $\bar \l_k$ for the function corresponding to \eqref{altino15}  in setting $\cS[1]$ and $\cS[2]$, respectively. Note that $\l_k, \bar \l_k$  are defined on the same set ${\bar \O}_0$.
Given $\bar\o= (\o,a) \in {\bar \O}_0=\O_0$, we have  (using \eqref{limoncello} and \eqref{normale})
\be
\begin{split}
\bar\l_k (\bar \o) = \int _{\bbR^d} d\hat{\bar{\o}} (x) \bar{r}_{0,x} (\bar \o) |x|^k & =  \int _{\bbR^d} d\hat{\o} (y) \bar{r}_{0,y-a} (\bar \o) |y-a|^k\\& =\int _{\bbR^d} d\hat{\o} (y) r_{a,y} ( \o) |y-a|^k  =\l_k (\bar \o)\,.
\end{split}
\en}
 In particular  (A7) implies  that   $\bar \l_0,  \bar \l_2 \in L^1(\bar \cP_0)$. In conclusion we have:
If  (A1),...,(A8) are satisfied by $\cS[1]$, then   (A1),...,(A8) are satisfied  by $\cS[2]$. \ovo{Finally}, 
as the integral in the r.h.s. of \eqref{def_D_Z} equals
\[ \frac{1}{2} \int _{\bar\O} d\bar \cP_0(\bar \o) \int _{\bbR^d} d \hat{\bar \o} (z) \bar r_{0,z} (\bar \o) \left( a\cdot z - \bigl[ f(\bar \theta _z \bar \o)- f(\bar \o) \bigr] \right)^2 \,,
\]
one can easily check that  Theorem \ref{teo1} for $\cS[2]$ implies 
Theorem \ref{teo1} for $\cS[1]$.

\begin{Warning}\label{marinaio}
  Due to the above discussion it is enough to prove Theorem  \ref{teo1} only for $\bbG=\bbR^d$.  Due to its relevance in discrete probability,  we will also treat simultaneously the special discrete case. Moreover, to slightly simplify the notation, we will take $V=\bbI$, thus implying that $g(x)=x$. The reader interested to formulas with generic  matrix $V$  has only to replace $\theta_x$ by $\theta_{g(x)}$ in what follows. Indeed, the manipulations behind our formulas rely on Remark \ref{jolie}, which holds in the general case. In conclusion,   from now on (with exception of \ovo{Section \ref{dim_teo2} and} the appendixes)  and without further mention,  we restrict to the case $\bbG=\bbR^d$ and to the special discrete case,  we take  $V=\bbI$ and  we understand that Assumptions (A1),...,(A8) are satisfied.
  \end{Warning}

\section{Some key properties of the Palm distribution $\cP_0$}\label{sec_minecraft}
 For the results of this section  it would be  enough to require (A1) and (A2) and, for Lemma \ref{lemma_siou}, $\bbE_0[\l_0]<\infty$.
Given   \green{a measurable set  $A\subset \O$}, we define
\be\label{tildino}
\tilde A:=\{ \o \in \O\,:\, \theta _{x} \o \in A\; \forall x \in \hat \o \}\,.
\en
Note that $\tilde A$ is  translation invariant \ovo{ and measurable}. 

\begin{Lemma}\label{matteo}
Given \green{$A\subset \O$ measurable}, the following facts are equivalent:
(i) $\cP_0(A)=1$; (ii) $\cP(\tilde A)=1$; (iii) $\cP_0(\tilde A)=1$.
Given  a translation invariant \green{measurable set $A\subset \O$}, it  holds $\cP(A)=1 $ if and only if $ \cP_0(A)=1$ and it holds $\cP(A)=0 $ if and only if $ \cP_0(A)=0$.
\end{Lemma}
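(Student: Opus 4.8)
\textbf{Proof proposal for Lemma \ref{matteo}.}

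The plan is to relate the $\cP$-mass of $\tilde A$ to the $\cP_0$-mass of $A$ by unwinding the definition of the Palm distribution, and then to use the ergodicity/translation-invariance hypothesis (A1) to close the loop. First I would establish the equivalence $\cP_0(A)=1\iff\cP(\tilde A)=1$. Working in the case $\bbG=\bbR^d$ (the case $\bbG=\bbZ^d$ and the special discrete case being handled by the same bookkeeping, using \eqref{Palm_Z} resp. \eqref{zazzera} in place of \eqref{palm_classica}), I write, for $U=[0,1)^d$,
\be
\cP_0(A^c)=\frac{1}{m}\int_\O d\cP(\o)\int_U d\mu_\o(x)\,\mathds{1}_{A^c}(\theta_{g(x)}\o)\,.
\en
By definition $\o\in\widetilde{A}$ means $\theta_{g(x)}\o\in A$ for every $x\in\hat\o$, i.e. the inner integrand $\mathds{1}_{A^c}(\theta_{g(x)}\o)$ vanishes $\mu_\o$-a.e. on the whole of $\bbR^d$ (the support of $\mu_\o$ is exactly $\hat\o$). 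Hence if $\cP(\widetilde A)=1$ the double integral is zero, giving $\cP_0(A)=1$. Conversely, suppose $\cP_0(A)=1$. By the translation covariance $\mu_{\theta_g\o}=\t_g\mu_\o$ (property in (A4), valid on the full-measure invariant set $\O_*$) one has, for each fixed $g\in\bbG$,
\be
\cP_0(A^c)=\frac{1}{m}\int_\O d\cP(\o)\int_{\t_g U} d\mu_\o(x)\,\mathds{1}_{A^c}(\theta_{g(x)}\o)\,,
\en
so the measure $g\mapsto\int_\O d\cP\int_{\t_gU}d\mu_\o(x)\mathds{1}_{A^c}(\theta_{g(x)}\o)$ vanishes for every translate; covering $\bbR^d$ by countably many translates $\t_gU$, $g\in\bbZ^d$, and using $\cP_0(A^c)=0$, we get $\int_\O d\cP(\o)\int_{\bbR^d}d\mu_\o(x)\mathds{1}_{A^c}(\theta_{g(x)}\o)=0$, i.e. for $\cP$-a.a.\ $\o$ the set $\{x\in\hat\o:\theta_{g(x)}\o\in A^c\}$ is $\mu_\o$-null, hence empty since $\mu_\o$ charges every point of $\hat\o$; that is exactly $\cP(\widetilde A)=1$.

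Next, since $\widetilde A$ is translation invariant, the last sentence of the lemma (once proved) gives $\cP(\widetilde A)=1\iff\cP_0(\widetilde A)=1$, which supplies the remaining equivalence (iii). So it remains to prove: for $A$ translation invariant and measurable, $\cP(A)=1\iff\cP_0(A)=1$, and likewise with $1$ replaced by $0$. For the mass-$1$ statement, if $A$ is translation invariant then $\o\in A\Rightarrow\theta_{g(x)}\o\in A$ for all $x$, so $A\subset\widetilde A$; conversely $\widetilde A\subset A$ because $0\in\hat\o$ for $\o$ in the support of the relevant configuration — more carefully, for a translation invariant set one checks directly $\widetilde A=A$ up to the issue of whether $\hat\o\ni 0$, but in fact translation invariance plus $\hat\o\neq\emptyset$ (true $\cP$-a.s.\ by the Zero–Infinity Dichotomy noted after (A8)) forces $\widetilde A=A$ modulo $\cP$-null sets. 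Then by the first part $\cP_0(A)=\cP_0(\widetilde A)=1\iff\cP(\widetilde A)=1\iff\cP(A)=1$. The mass-$0$ statement follows by applying the mass-$1$ statement to the complement $\O\setminus A$, which is again translation invariant and measurable.

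The step I expect to be the main obstacle is the careful justification that "$\mu_\o$-null subset of $\hat\o$" equals "empty subset of $\hat\o$", i.e. the passage that uses $n_x(\o)=\mu_\o(\{x\})>0$ for every $x\in\hat\o$ together with $\sigma$-finiteness to conclude that a $\mu_\o$-a.e.\ statement on $\bbR^d$ is a genuine pointwise statement on $\hat\o$ — and, relatedly, making the reduction from a single cell $U$ to all of $\bbR^d$ via the covariance relation fully rigorous (one must quantify "for $\cP$-a.a.\ $\o$" uniformly over the countable family of translates, which is fine, but needs to be said). Everything else is routine unwinding of \eqref{palm_classica} and its $\bbZ^d$-analogues, plus the observation that translation-invariant events are insensitive to the $\cP$-vs-$\cP_0$ distinction, which is the standard Palm-calculus fact.
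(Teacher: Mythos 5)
Your proof is correct and follows essentially the same route as the paper's: both unwind \eqref{palm_classica} over a countable family of cells covering $\bbR^d$ (the paper invokes Campbell's identity with indicators of growing cubes $[-\ell,\ell]^d$, you shift unit cells by hand --- the same computation) and use that every $x\in\hat\o$ carries a strictly positive atom $n_x(\o)$ to upgrade a $\mu_\o$--a.e.\ statement to a genuine pointwise statement on $\hat\o$, yielding $\cP(\tilde A)=1$. The remaining differences are purely organizational (the paper derives (ii)$\iff$(iii) from the identity $\tilde{\tilde A}=\tilde A$ before treating the translation-invariant case, while you reverse the order), and you additionally flag, correctly, that $\tilde A=A$ holds only modulo the $\cP$-- and $\cP_0$--null event $\{\hat\o=\emptyset\}$, a point the paper states a little loosely.
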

\begin{proof} 
We first prove the equivalence between (i), (ii) and (iii).
By \eqref{palm_classica} \ovo{for $\bbG=\bbR^d$ and \eqref{zazzera} for the special discrete case},   (ii) implies (i). If   (i) holds, then we get (ii)  by  \eqref{puffo1} and \eqref{zazzera}  and Campbell's identities   \eqref{campanello}  and \eqref{campanelloZ_sp} with
 $ f(x,\o):=(2\ell)^{-d} \mathds{1}_{[-\ell,\ell]^d}  (x) \mathds{1}_A (\o )$ and $\ell\in\bbN_+$. 
 Note that $\tilde{\tilde A}= \tilde A$. Hence, by applying the equivalence between (i) and (ii) with $A$ replaced by $\tilde A$, we get the equivalence between (ii) and (iii).

 Now let $A$ be  translation invariant. To  prove that   $\cP(A)=1 $ if and only if $ \cP_0(A)=1$, 
 it is enough to observe that $\tilde A=A$  and to apply the above  equivalence between (ii) and (iii). To prove that $\cP(A)=0 $ if and only if $ \cP_0(A)=0$, it is enough to take the complement.
\end{proof}
 As an immediate consequence of Lemma \ref{matteo} we get:
 \begin{Corollary}\label{eleonora}
 Let $f\in L^1(\cP_0)$. Let $B:= \{ \o\in \O\,:\, |f(\theta_x \o  )|<+\infty \; \forall x \in \hat \o \}$. Then $B$ is translation invariant, $\cP(B)=1$ and $\cP_0(B)=1$.
  \end{Corollary}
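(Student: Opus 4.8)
The plan is to deduce Corollary~\ref{eleonora} directly from Lemma~\ref{matteo} by choosing the set $A$ appropriately. First I would let
\[
A:=\{\o\in\O\,:\,|f(\o)|<+\infty\}\,.
\]
Since $f\in L^1(\cP_0)$, the function $f$ is in particular $\cP_0$-a.e. finite, so $\cP_0(A)=1$. Now I would form the set $\tilde A$ as in \eqref{tildino}, namely $\tilde A=\{\o\in\O\,:\,\theta_x\o\in A\ \forall x\in\hat\o\}$; unwinding the definition of $A$, this is exactly $\tilde A=\{\o\in\O\,:\,|f(\theta_x\o)|<+\infty\ \forall x\in\hat\o\}=B$. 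By the remark following \eqref{tildino}, $\tilde A=B$ is automatically translation invariant and measurable, which gives the first assertion of the corollary.

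For the remaining two assertions, I would invoke the equivalence of (i), (ii), (iii) in Lemma~\ref{matteo}. Since $\cP_0(A)=1$, statement (i) holds, hence so do (ii) and (iii), i.e. $\cP(\tilde A)=1$ and $\cP_0(\tilde A)=1$. Rewriting $\tilde A=B$, this is precisely $\cP(B)=1$ and $\cP_0(B)=1$, which completes the proof.

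There is essentially no obstacle here: the entire content is bookkeeping, matching the set $B$ of the corollary statement with the operation $A\mapsto\tilde A$ of Lemma~\ref{matteo}, and recalling that $\|f\|_{L^1(\cP_0)}<\infty$ forces $f$ to be $\cP_0$-a.e. finite. The only point requiring the tiniest bit of care is the observation that $\{|f(\theta_x\o)|<\infty\ \forall x\in\hat\o\}$ is genuinely the set $\tilde A$ associated with $A=\{|f|<\infty\}$, rather than with some other candidate set; once that identification is made, the corollary is immediate from the lemma. (One could equivalently take $A$ to be any $\cP_0$-full measurable set on which $f$ is finite, e.g. $A=\{|f|<\infty\}\cap\O_0$, without changing anything.)
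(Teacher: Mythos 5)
Your proof is correct and is precisely the argument the paper has in mind: the paper presents the corollary as an "immediate consequence" of Lemma~\ref{matteo}, and your choice $A=\{|f|<\infty\}$ (or $A=\{|f|<\infty\}\cap\O_0$, as you note, since $f$ is only defined on $\O_0$) together with the identification $\tilde A=B$ and the equivalence (i)$\Leftrightarrow$(ii)$\Leftrightarrow$(iii) is exactly the intended reasoning.
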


The  following result generalizes \cite[Lemma 1--(i)]{FSS}:
\begin{Lemma}\label{lemma_siou}  Let $f:\O_0\times \O_0 \to \bbR$ be a \rrr{measurable} function. Suppose that   (i)  at least one of the functions   $\o \mapsto  \int d \hat{\o} (x)  r_{0,x}(\o)  |f(\o, \theta_x \o) | $ and $\o \mapsto \int d\hat{\o}(x)  r_{0,x}(\o)   |f(\theta_x\o,\o)|$ is  in $L^1(\cP_0)$, or (ii) $f\geq 0$. Then it holds
\begin{equation}\label{siou}
\bbE_0\Big[ \int_{\bbR^d} d \hat \o(x) r_{0,x}(\o) f(\o, \theta_x \o)\Big]=\bbE_0\Big[ 
 \int_{\bbR^d} d \hat \o(x) r_{0,x}(\o) f(\theta_x \o,\o)\Big]\,.
\end{equation}
\end{Lemma}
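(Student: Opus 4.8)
The plan is to prove \eqref{siou} by a change-of-variables argument at the level of the Palm distribution, reducing it to a symmetry of the ``stationary'' expectation $\bbE[\cdot]$ that comes from the reversibility relation \eqref{rubini99} combined with a re-rooting (mass-transport) identity. First I would reduce to case (ii): if $f\geq0$ both sides are well defined in $[0,+\infty]$ and there is nothing to check about integrability, while in case (i) I would apply the case (ii) identity to $|f|$ first to see that both integrands are simultaneously in $L^1(\cP_0)$, and then run the same computation for $f^+$ and $f^-$ separately and subtract. So from now on assume $f\geq0$.

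Next I would unfold the left-hand side using the definition of $\cP_0$. In the case $\bbG=\bbR^d$ with $V=\bbI$ (which, by Warning \ref{marinaio}, is the only case we need, the special discrete case being analogous via \eqref{zazzera}), write $g(x)=x$ and use \eqref{palm_classica} with $U=[0,1)^d$ to get
\begin{equation*}
m\,\bbE_0\Big[\int d\hat\o(x)\, r_{0,x}(\o)\, f(\o,\theta_x\o)\Big]
= \bbE\Big[\int_{[0,1)^d} d\mu_\o(y)\int d\widehat{\theta_y\o}(x)\, r_{0,x}(\theta_y\o)\, f(\theta_y\o,\theta_x\theta_y\o)\Big].
\end{equation*}
By Remark \ref{jolie}, for $y\in\hat\o$ one has $\widehat{\theta_y\o}=\hat\o-y$, $r_{0,x}(\theta_y\o)=r_{y,\,y+x}(\o)$, and $\theta_x\theta_y\o=\theta_{x+y}\o$; substituting $z=y+x$ turns the inner double integral into $\int_{[0,1)^d}d\mu_\o(y)\int d\hat\o(z)\,r_{y,z}(\o)\,f(\theta_y\o,\theta_z\o)$. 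Now I use the reversibility identity \eqref{rubini99}: since $\mu_\o=\sum_{y\in\hat\o}n_y(\o)\delta_y$, the measure $d\mu_\o(y)\,r_{y,z}(\o)=n_y(\o)r_{y,z}(\o)=c_{y,z}(\o)=c_{z,y}(\o)$ is symmetric in $(y,z)$. Hence the above equals $\bbE\big[\int_{[0,1)^d}d\mu_\o(y)\int d\hat\o(z)\,c_{z,y}(\o)n_y(\o)^{-1}f(\theta_y\o,\theta_z\o)\big]$ — here I must be slightly careful: the clean object to symmetrize is $\bbE[\int d\hat\o(y)\mathds{1}_{[0,1)^d}(y)\int d\hat\o(z)\,c_{y,z}(\o)\,n_y(\o)^{-1}f(\theta_y\o,\theta_z\o)]$, and I would instead rewrite everything directly in terms of $\hat\o$ and $c_{y,z}$ from the start to avoid dividing by $n_y$.

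The cleanest route: using $d\mu_\o(y)=n_y(\o)\,d\hat\o(y)$ and $n_y(\o)r_{y,z}(\o)=c_{y,z}(\o)$, the left side of \eqref{siou} times $m$ equals $\bbE\big[\int d\hat\o(y)\,\mathds{1}_{[0,1)^d}(y)\int d\hat\o(z)\,c_{y,z}(\o)\,f(\theta_y\o,\theta_z\o)\big]$, and likewise the right side times $m$ equals the same expression with $f(\theta_z\o,\theta_y\o)$ in place of $f(\theta_y\o,\theta_z\o)$. By the symmetry $c_{y,z}(\o)=c_{z,y}(\o)$, the map $(y,z)\mapsto(z,y)$ shows these two expressions agree \emph{provided} the indicator $\mathds{1}_{[0,1)^d}(y)$ can be moved from $y$ to $z$; that is where a Campbell/mass-transport step is needed: one invokes the translation invariance of $\cP$ (a ``mass transport principle'' on the points of $\hat\o$, i.e. $\bbE\big[\int d\hat\o(y)\mathds{1}_{[0,1)^d}(y)\,F(\o;y,\cdot)\big]=\bbE\big[\int d\hat\o(y)\mathds{1}_{[0,1)^d}(y)\,F(\theta_y\o;0,\cdot-y)\big]$, applied with the symmetric kernel $c$) to replace the weight $\mathds{1}_{[0,1)^d}(y)$ by $\mathds{1}_{[0,1)^d}(z)$ without changing the expectation, using again $\bbG$-stationarity and the identities of Remark \ref{jolie}. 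The main obstacle is precisely this bookkeeping step — carefully justifying the re-rooting/Campbell manipulation so that the indicator localizing the ``root'' can be swapped between the two endpoints — together with the measurability and Fubini/Tonelli justifications (legitimate here because $f\geq0$ in case (ii), and reducible to that case under hypothesis (i)). Everything else is the symmetry $c_{y,z}=c_{z,y}$ plus the change of variables $z\leftrightarrow y$.
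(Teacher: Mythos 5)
Your reduction of \eqref{siou} to a mass--transport identity is correct and is indeed the heart of the matter: after unfolding the Palm distribution via \eqref{palm_classica} with $U=[0,1)^d$, using Remark \ref{jolie}, and exploiting the symmetry $c_{y,z}=c_{z,y}$ from \eqref{rubini99}, the claim becomes exactly
\begin{equation*}
\bbE\Big[\int d\hat\o(y)\,\mathds{1}_{[0,1)^d}(y)\int d\hat\o(z)\,c_{y,z}(\o)\,f(\theta_y\o,\theta_z\o)\Big]
=
\bbE\Big[\int d\hat\o(y)\int d\hat\o(z)\,\mathds{1}_{[0,1)^d}(z)\,c_{y,z}(\o)\,f(\theta_y\o,\theta_z\o)\Big],
\end{equation*}
i.e.\ the indicator localizing the root must be transferred from $y$ to $z$. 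However, you explicitly acknowledge that ``the main obstacle is precisely this bookkeeping step'' and do not carry it out; invoking ``the mass transport principle'' is here circular, since for a stationary random measure on $\bbR^d$ that principle is essentially the statement you are trying to prove, and it is not a formality. So as written the proposal has a genuine gap: the crux is named but not closed.

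The paper closes this gap by an explicit box--averaging argument rather than by citing a re-rooting principle. It replaces the window $[0,1)^d$ by the large box $B(n)=[-n,n]^d$ (using that \eqref{palm_classica} holds for any $U$ of positive finite measure), writes both sides of \eqref{siou} as in \eqref{canto}--\eqref{cantino}, and then shows that their difference --- a ``boundary'' term involving pairs with $x\in\hat\o\cap B(n)$, $z\in\hat\o\setminus B(n)$ (and the complementary pairs) --- tends to $0$ after dividing by $(2n)^d$. This requires a non-trivial split of the boundary term into a ``far'' part, $|z-x|_\infty\geq\sqrt n$, controlled by the $L^1(\cP_0)$-hypothesis and dominated convergence, and a ``near'' part, $z\in U(n)=B(n+\sqrt n)\setminus B(n)$, controlled by $\ell(U(n))/n^d\to 0$ together with another application of the Palm formula. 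This is exactly where the $L^1$ integrability is used, and why the paper first proves case (i) under the assumption that \emph{both} functions are in $L^1(\cP_0)$, then obtains case (ii) by truncating $f\wedge n$ (each $f\wedge n$ bounded, so both truncated functions are in $L^1(\cP_0)$ by (A7)) and passing to the limit by monotone convergence, and only then returns to case (i) in full generality. Your proposed ordering --- prove case (ii) first, then deduce (i) --- would be fine logically if you had a direct proof of (ii), but you do not; in the paper's route the truncation is precisely what makes the $n\to\infty$ estimates legitimate. In short: your reformulation is right and the swap $y\leftrightarrow z$ after symmetrizing $c$ is right, but the justification of moving the indicator across the kernel is not bookkeeping; it is a tail estimate, and it is the part of the proof that is actually doing the work.
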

We note that, by \eqref{zazzera}, in the special discrete case  \eqref{siou} reads
 \be \label{siouZ} \sum_{x\in \bbZ^d} \bbE\Big[  c_{0,x}(\o) f(\o, \theta_x \o)\Big]=\sum_{x\in \bbZ^d} 
 \bbE \Big[ 
 c_{0,x}(\o) f(\theta_x \o,\o)\Big]\,.
 \en
\begin{proof}[Proof of Lemma \ref{lemma_siou}] 
We first sketch  the proof in the 
 special discrete case, which is trivial.  Given $\o \in \O_0$,  due to Remark \ref{jolie} and  (A5), $c_{0,x}(\o)=c_{0,-x} (\theta_x \o)$ \ovo{$\cP$--a.s.}
 By the translation invariance of $\cP$ we get $\bbE\big[  c_{0,x}(\o)|f|(\o, \theta_x \o)\big]=\bbE \bigl[ 
 c_{0,{-x}}(\o) |f|(\theta_{-x }\o ,\o)\bigr]$, and  the same then must hold  in Case (i) with $f$ instead of $|f|$. This allows 
  to conclude \ovo{the proof of} \eqref{siouZ}.
 
 We move to the setting $\bbG=\bbR^d$. We start with Case (i)
 supposing first that both functions there 
are  in $L^1(\cP_0)$.
We set $B(n):=[-n,n]^d$.
 Due to \eqref{palm_classica} and using also (A5) for \eqref{cantino} we get 
 \begin{align}
&  \text{l.h.s. of \eqref{siou}}  =\frac{1}{m  (2n)^d  }\bbE\Big[ \sum _{x\in \hat \o \cap B(n) }  \sum_{z \in \hat \o}
c_{x,z}( \o)f\bigl( \theta_x \o, \theta _z \o \bigr) \Big] \,,\label{canto}\\
&  \text{r.h.s. of \eqref{siou}}  =\frac{1}{m  (2n)^d }\bbE\Big[  \sum _{x\in \hat \o  }  \sum_{z \in \hat \o\cap B(n) }
c_{x,z}( \o)f\bigl( \theta_x \o, \theta _z\o \bigr)\Big]\,.\label{cantino}
 \end{align}
 To prove that \eqref{canto} equals \eqref{cantino} it is enough to show that 
 \be\label{titino}
 \lim _{n\to \infty}\frac{1}{ m(2n)^d  }\bbE\Big[  \sum _{x\in \hat \o \cap  B(n)  }  \sum_{z \in \hat \o \setminus B(n) }
c_{x,z}( \o)|f|\bigl( \theta_x \o, \theta _z \o \bigr) \Big]=0
 \en
 and that the same limit  holds when summing among $x \in \hat \o \setminus B(n)$ and $z\in \hat \o \cap B(n)$. We prove \eqref{titino}, the other limit can be treated similarly.
 By \eqref{palm_classica}
  \be
  \begin{split}
&  \frac{1}{ m (2 n)^d  }\bbE\Big[  \sum _{x\in \hat \o \cap  B(n)  }  \sum_{z \in \hat \o \setminus B(n+\sqrt{n}) }
c_{x,z}( \o)|f|\bigl( \theta_x \o, \theta _z \o \bigr)\Big]\leq  \\
&   \frac{1}{ m (2 n)^d  }\bbE\Big[  \sum _{x\in \hat \o \cap  B(n)  }  \sum_{z \in \hat \o: |z-x|_\infty\geq \sqrt{n} }
c_{x,z}( \o)|f|\bigl( \theta_x \o, \theta _z\o \bigr)\Big]=\\
&  \bbE_0\Big[ \int_{\bbR^d} d \hat \o(x) r_{0,x}(\o) |f|(\o, \theta_x \o)\mathds{1}(|x|_\infty \geq \sqrt{n})\Big]\,.
\end{split}
 \en
Due to our $L^1$-hypothesis and  dominated convergence, the last member goes to zero as $n\to \infty$. Hence, it remains to prove  \eqref{titino} with ``$z \in \hat \o \setminus B(n)$'' replaced by ``$z \in \hat \o \cap U(n)$'' where  $U(n):=B(n+\sqrt{n})\setminus B(n)$. 
 To this aim, by \eqref{palm_classica} and  (A5),  we get 
 \be\label{vespa}\begin{split}
&  \bbE_0\Big[ \int_{\bbR^d} d \hat \o(x) r_{0,x}(\o)| f|( \theta_{x}  \o, \o) \Big] =\\
 &
 \frac{1}{m  \ell(U(n))  }\bbE\Big[ \sum_{x \in \hat \o}\sum _{z\in \hat \o \cap U(n)}  
c_{x,z}( \o)|f|\bigl( \theta_{x} \o, \theta _{z} \o \bigr)\Big]\,. 
\end{split}
 \en
 By hypothesis, the  above l.h.s. is finite.  Hence  \eqref{titino} with ``$z \in \hat \o \cap U(n)$''
  follows 
by using \eqref{vespa}  and that $ \ell (U(n) ) /n^d \to 0$. This concludes the proof of \eqref{siou} when both functions in Case (i)  are  in $L^1(\cP_0)$. 
We interrupt with Case (i) and move to Case (ii) with $f\geq 0$. By the above result and since $\bbE_0[\l_0]<+\infty$, identity \eqref{siou} holds with $f$ replaced   by $f\wedge n$.
By taking the limit $n\to \infty$ and using monotone convergence, we get \eqref{siou} for a generic $f\geq 0$. Let us come back to Case (i). Since (by Case (ii)) \eqref{siou} holds with $f$ replaced by $|f|$, we get that in Case (i)  both  functions there  belong to $L^1(\cP_0)$ as soon as at least one does.
The conclusion then follows from our first result.
\end{proof}


%
%
%
\section{Space of square integrable forms}\label{sec_quadro}
We define $\nu$ as the Radon measure on $\O_0 \times \bbG$ such that 
\begin{equation}\label{labirinto}
 \int d \nu (\o, z) g (\o, z) = \int  d \cP_0 (\o)\int d  \hat \o (z) r_{0,z}(\o) g( \o, z) 
 \end{equation} 
 for any nonnegative \rrr{measurable} function $g(\o,z)$.  
 \begin{Remark}
 When considering   the special discrete case  in the r.h.s. of \eqref{labirinto} one can replace the integral $\int d  \hat \o (z)$ with the series $\sum_{z\in \bbZ^d}$ (recall that  $r_{x,x}(\o)=0$ and $r_{x,y}(\o)=0$ if $\{x,y\}\not \subset \hat \o$). A similar rewriting holds in the formulas presented in the rest of the paper. 
 \end{Remark}

  We point out that, by Assumption (A7), $\nu$ has finite total mass:
$ \nu(\O\times \bbR^d )=\bbE_0[\l_0]<+\infty$.
  Elements of  $L^2 ( \nu)$
 are called \emph{square integrable forms}.

 \smallskip
 
 Given a  function $u:\O_0\to \bbR$  we define the   function $\nabla u: \O_0 \times \bbG \to \bbR$ as 
 \begin{equation}\label{cantone}
 \nabla u (\o, z):= u (\theta_z \o)-u (\o)\,.
 \end{equation}
 Note that,  by Lemma \ref{matteo} with $A:=\{\o\in \O_0\,:\, u(\o)=f(\o)\}$, if $u,f :\O_0\to \bbR$  are such that  $u=f$  $\cP_0$--a.s., then $\nabla u= \nabla f $ $\nu$--a.s..
  In particular, 
 if $u$ is defined  $\cP_0$--a.s., then $\nabla u$ is well defined $\nu$--a.s.
 
  If $u $ is bounded and \rrr{measurable}, then $\nabla u \in L^2(\nu)$.
The subspace of \emph{potential forms} $L^2_{\rm pot} (\nu)$ is defined as   the following closure in $L^2(\nu)$:
 \[ L^2_{\rm pot} (\nu) :=\overline{ \{ \nabla u\,:\, u \text{ is  bounded and measurable} \}}\,.
 \]
 The subspace of \emph{solenoidal forms} $L^2_{\rm sol} (\nu)$ is defined as the orthogonal complement of $L^2_{\rm pot} (\nu)$ in $L^2(\nu)$.
  
    \subsection{The subspace $H^1_{\rm env} $}\label{h1omega}
 We define 
 \be \label{anita} H^1 _{\rm env} :=\{   u \in L^2(\cP_0)\,:\,  \nabla u \in L^2(\nu) \}\,.
 \en We endow $ H^1 _{\rm env} $ with  the norm
 $\| u\|_{H^1 _{\rm env}} :=  \|u\|_{L^2(\cP_0)} +\|\nabla u \|_{L^2(\nu)}$. It is simple  to check that  $H^1_{\rm env} $ is a  Hilbert space.

\subsection{Divergence}\label{div_omega}

\begin{Definition}\label{def_div}
Given a square integrable form $v\in L^2(\nu)$  we define its divergence  ${\rm div} \, v \in L^1(\cP_0)$ as 
\begin{equation}\label{emma}
{\rm div}\, v(\o)= \int d \hat{\o} (z)  r_{0,z}(\o) ( v(\o,z)-  v(\theta_z \o, -z) )\,.
\end{equation}
\end{Definition}
By applying Lemma \ref{lemma_siou} with $f$ such that $f(\o, \theta_z\o) =|v( \o, z)| $  for $\cP_0$--a.a. $\o$ (such a function  $f$ exists  by (A3) and Lemma \ref{matteo}) and by  Schwarz inequality, one gets for any $v\in L^2(\nu)$    that 
\begin{equation}\label{fuoco1}
\begin{split}
&\int d\cP_0(\o)
\int  d\hat{\o} (z)  r_{0,z}(\o) \bigl(| v(\o,z)|+|  v(\theta_z \o, -z)| \bigr)\\
& =
2  \|v\|_{L^1(\nu )} \leq 2 \bbE_0[\l_0] ^{1/2} \|v\|_{L^2(\nu)} < +\infty\,.
\end{split}
\end{equation}
In particular, the definition of divergence  is well posed and  the map $L^2(\nu) \ni v\mapsto {\rm div} v \in L^1(\cP_0)$ is continuous.

\smallskip

\begin{Lemma}\label{ponte}  For any  $v \in L^2(\nu)$ and any  bounded and measurable function $u:\O_0 \to \bbR$, it holds
\begin{equation}\label{italia}
\int  d \cP_0(\o)   {\rm div} \,v(\o)  u (\o)= - \int d \nu(\o, z) v( \o, z) \nabla u (\o, z) \,.
\end{equation}
 \end{Lemma}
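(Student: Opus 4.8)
The plan is to expand both sides of \eqref{italia} using the definition \eqref{emma} of the divergence and the definition \eqref{cantone} of $\nabla u$, and then to observe that the resulting identity is a direct instance of Lemma \ref{lemma_siou}. Writing out the left-hand side,
\begin{equation*}
\int d\cP_0(\o)\,{\rm div}\,v(\o)\,u(\o)=\int d\cP_0(\o)\,u(\o)\int d\hat\o(z)\,r_{0,z}(\o)\bigl(v(\o,z)-v(\theta_z\o,-z)\bigr),
\end{equation*}
and the right-hand side,
\begin{equation*}
-\int d\nu(\o,z)\,v(\o,z)\nabla u(\o,z)=-\int d\cP_0(\o)\int d\hat\o(z)\,r_{0,z}(\o)\,v(\o,z)\bigl(u(\theta_z\o)-u(\o)\bigr),
\end{equation*}
one sees, after cancelling the common term $\int d\cP_0(\o)\int d\hat\o(z)\,r_{0,z}(\o)\,v(\o,z)\,u(\o)$ (which is absolutely convergent since $u$ is bounded and $\|v\|_{L^1(\nu)}<\infty$ by \eqref{fuoco1}), that the claim reduces to
\begin{equation}\label{da_provare}
\int d\cP_0(\o)\int d\hat\o(z)\,r_{0,z}(\o)\,v(\theta_z\o,-z)\,u(\o)=\int d\cP_0(\o)\int d\hat\o(z)\,r_{0,z}(\o)\,v(\o,z)\,u(\theta_z\o).
\end{equation}

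To establish \eqref{da_provare}, I would first use (A3) and Lemma \ref{matteo} to fix a measurable $\phi:\O_0\times\O_0\to\bbR$ with $\phi(\o,\theta_z\o)=v(\o,z)$ for $\cP_0$-a.a.\ $\o$ and every $z\in\hat\o$ — exactly the device already invoked just before \eqref{fuoco1}. Since the $\bbG$-action is free on $\O_*$, the value $z$ is recovered from the pair $(\o,\theta_z\o)$, and hence $\phi(\theta_z\o,\o)=\phi(\theta_z\o,\theta_{-z}(\theta_z\o))=v(\theta_z\o,-z)$. Setting $F(\o_1,\o_2):=\phi(\o_1,\o_2)\,u(\o_2)$, one has $F(\o,\theta_z\o)=v(\o,z)u(\theta_z\o)$ and $F(\theta_z\o,\o)=v(\theta_z\o,-z)u(\o)$, so that \eqref{da_provare} is precisely \eqref{siou} applied with $f=F$.

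It remains to verify the hypothesis of Lemma \ref{lemma_siou}. Using that $u$ is bounded, the Schwarz inequality, and $\nu(\O\times\bbR^d)=\bbE_0[\l_0]<\infty$ (Assumption (A7)),
\begin{equation*}
\bbE_0\Bigl[\int d\hat\o(z)\,r_{0,z}(\o)\,|F(\o,\theta_z\o)|\Bigr]\le\|u\|_\infty\,\|v\|_{L^1(\nu)}\le\|u\|_\infty\,\bbE_0[\l_0]^{1/2}\,\|v\|_{L^2(\nu)}<\infty,
\end{equation*}
so case (i) of Lemma \ref{lemma_siou} applies and \eqref{da_provare} follows. The same bound, together with $|\nabla u|\le 2\|u\|_\infty$ and $v\in L^1(\nu)$, also shows that the integral on the right-hand side of \eqref{italia} is absolutely convergent, which legitimizes the splitting and cancellation performed above; likewise ${\rm div}\,v\in L^1(\cP_0)$ (by \eqref{fuoco1}) and $u$ bounded make the left-hand side well defined. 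I do not expect a genuine obstacle here: the only delicate point is the measurable selection of $\phi$, which is however already supplied by (A3) and Lemma \ref{matteo}.
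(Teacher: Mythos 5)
Your proof is correct and follows essentially the same route as the paper's: both reduce \eqref{italia}, after cancelling the common term, to an application of Lemma \ref{lemma_siou} with the choice $f(\o,\theta_z\o)=v(\o,z)\,u(\theta_z\o)$ (so that $f(\theta_z\o,\o)=v(\theta_z\o,-z)\,u(\o)$), using (A3) and Lemma \ref{matteo} for the measurable selection. You additionally spell out the $L^1$ verification of the hypothesis of Lemma \ref{lemma_siou} (which the paper leaves implicit, having effectively done it already in \eqref{fuoco1}), but the underlying argument is identical.
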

 \begin{proof} 
 If is enough to apply Lemma  \ref{lemma_siou}  to $f(\o, \o')$ such that  
$f(\o, \theta_z \o) = v( \o, z)  u (\theta_z\o) $  for $\cP_0$--a.a. $\o$ (such a function  $f$ exists  by (A3) and Lemma \ref{matteo}) and observe that $f(\theta_z \o,\o) = v( \theta_z \o,- z)  u (\o) 
$.
\end{proof}

Trivially, the above result implies the following:
\begin{Corollary}\label{grazioso} Given a square integrable form $v\in L^2(\nu)$, we have that  $v\in L^2_{\rm sol}(\nu)$ if and only if ${\rm div}\, v=0$ $\cP_0$--a.s.
\end{Corollary}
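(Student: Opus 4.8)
The plan is to prove the equivalence directly from Lemma \ref{ponte}, which states that for all $v\in L^2(\nu)$ and all bounded measurable $u:\O_0\to\bbR$,
\[
\int d\cP_0(\o)\,{\rm div}\,v(\o)\,u(\o) = -\int d\nu(\o,z)\,v(\o,z)\,\nabla u(\o,z).
\]
Recall that $L^2_{\rm sol}(\nu)$ is by definition the orthogonal complement in $L^2(\nu)$ of $L^2_{\rm pot}(\nu) = \overline{\{\nabla u : u \text{ bounded measurable}\}}$.

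First, suppose $v\in L^2_{\rm sol}(\nu)$. Then $\la v,\nabla u\ra_{L^2(\nu)}=0$ for every bounded measurable $u$, so by Lemma \ref{ponte} we get $\int d\cP_0(\o)\,{\rm div}\,v(\o)\,u(\o)=0$ for every bounded measurable $u$. Since ${\rm div}\,v\in L^1(\cP_0)$ by \eqref{fuoco1}, testing against $u:=\mathds{1}_A$ for $A:=\{\o\in\O_0:{\rm div}\,v(\o)>0\}$ and $A:=\{\o\in\O_0:{\rm div}\,v(\o)<0\}$ (both measurable, both valid bounded test functions) forces $\int_A {\rm div}\,v\,d\cP_0=0$ in each case, hence ${\rm div}\,v=0$ $\cP_0$-a.s.

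Conversely, suppose ${\rm div}\,v=0$ $\cP_0$-a.s. Then the left-hand side of \eqref{italia} vanishes for every bounded measurable $u$, so $\la v,\nabla u\ra_{L^2(\nu)}=0$ for all such $u$. The set $\{\nabla u : u \text{ bounded measurable}\}$ spans a dense subspace of $L^2_{\rm pot}(\nu)$ by definition, and $v\mapsto\la v,\cdot\ra_{L^2(\nu)}$ is continuous, so $v\perp L^2_{\rm pot}(\nu)$, i.e. $v\in L^2_{\rm sol}(\nu)$. This completes both directions.

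There is no real obstacle here — the statement is an immediate corollary of Lemma \ref{ponte} together with the definitions, which is exactly why the paper labels it ``Trivially''. The only point requiring a word of care is that one genuinely does need ${\rm div}\,v\in L^1(\cP_0)$ (guaranteed by \eqref{fuoco1}) in order to legitimately test \eqref{italia} against indicator functions and conclude that a pairing vanishing against all bounded measurable $u$ forces the $L^1$ function ${\rm div}\,v$ to be zero a.s.; and in the converse direction that density of $\{\nabla u\}$ in $L^2_{\rm pot}(\nu)$ combined with continuity of the inner product is what upgrades orthogonality on the generating set to orthogonality on the whole closed subspace.
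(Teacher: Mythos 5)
Your proposal is correct and matches exactly the (unwritten but clearly intended) argument behind the paper's ``Trivially'': apply Lemma~\ref{ponte} together with the definitions of $L^2_{\rm pot}(\nu)$ and $L^2_{\rm sol}(\nu)$, noting that ${\rm div}\,v\in L^1(\cP_0)$ so that vanishing of $\int {\rm div}\,v\cdot u\,d\cP_0$ for all bounded measurable $u$ forces ${\rm div}\,v=0$ a.s. The only cosmetic slip is that $\{\nabla u: u \text{ bounded measurable}\}$ is already a (dense) subspace of $L^2_{\rm pot}(\nu)$ rather than merely spanning one, but this does not affect the argument.
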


%
\begin{Lemma}\label{garibaldi_100}
 If $\nabla u =0$  $\nu$--a.e.,  then   $u={\rm constant} $ $\cP_0$--a.s.
\end{Lemma}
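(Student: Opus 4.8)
I want to show that a function $u\in L^2(\cP_0)$ with $\nabla u=0$ $\nu$-a.e.\ is $\cP_0$-a.s.\ constant. The hypothesis $\nabla u = 0$ $\nu$-a.e.\ says $u(\theta_z\o)=u(\o)$ for $\nu$-a.a.\ $(\o,z)$, i.e.\ for $\cP_0$-a.a.\ $\o$ and all $z\in\hat\o$ with $r_{0,z}(\o)>0$. The natural strategy is to propagate this equality along paths of the random walk and invoke ergodicity: by the irreducibility assumption (A6), for $\cP_0$-a.a.\ $\o$ any two points of $\hat\o$ are joined by an $r$-path, so $u(\theta_x\o)$ takes the same value for every $x\in\hat\o$; then a stationarity/ergodicity argument upgrades this to $u$ being a.s.\ constant.

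First I would make the propagation rigorous. Set $A:=\{\o\in\O_0 : u(\theta_z\o)=u(\o)\ \forall z\in\hat\o\text{ with }r_{0,z}(\o)>0\}$. Since $\nabla u=0$ $\nu$-a.e., the definition \eqref{labirinto} of $\nu$ gives $\cP_0(A)=1$. Now apply Lemma~\ref{matteo}: the set $\tilde A=\{\o : \theta_x\o\in A\ \forall x\in\hat\o\}$ satisfies $\cP_0(\tilde A)=1$ (and $\cP(\tilde A)=1$). For $\o\in\tilde A$ and any edge $\{x,y\}$ with $x,y\in\hat\o$ and $r_{x,y}(\o)>0$, applying the defining property of $A$ to $\theta_x\o\in A$ together with $z=y-x\in\widehat{\theta_x\o}=\hat\o-x$ (Remark~\ref{jolie}) and $r_{0,y-x}(\theta_x\o)=r_{x,y}(\o)>0$ yields $u(\theta_y\o)=u(\theta_{y-x}\theta_x\o)=u(\theta_x\o)$. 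Chaining this along an $r$-path provided by (A6), which exists for $\o$ in a translation-invariant full-measure set, we get that $\o\mapsto u(\theta_x\o)$ is constant in $x\in\hat\o$; call this common value $c(\o)$. In particular $c(\o)=u(\o)$ for $\cP_0$-a.a.\ $\o$ (since $0\in\hat\o$ on $\O_0$).

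Next I would show $c$ is translation invariant and then constant. For $\o$ in the relevant full-measure translation-invariant set and any $g\in\bbG$ with $\theta_g\o\in\O_0$, i.e.\ $-g\in\hat\o$... — more cleanly, I work on $\O$ rather than $\O_0$: define $\bar c:\O\to\bbR$ by $\bar c(\o):=c(\theta_x\o)$ for any (equivalently every) $x\in\hat\o$; this is well-defined $\cP$-a.s.\ by the previous paragraph and because $\widehat{\theta_x\o}=\hat\o-x$ shows the orbit structure is compatible. Then $\bar c(\theta_g\o)=\bar c(\o)$ for all $g\in\bbG$, $\cP$-a.s.: indeed $\bar c(\theta_g\o)=c(\theta_x\theta_g\o)=c(\theta_{x}\theta_g\o)$ and picking $x$ so that $x\in\widehat{\theta_g\o}$ i.e.\ $\t_g x\in\hat\o$ identifies this with $c(\theta_{\t_g x}\o)=\bar c(\o)$. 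By ergodicity of $\cP$ (A1), $\bar c$ is $\cP$-a.s.\ equal to a constant $c_0$. Finally, on $\O_0$ we have $u(\o)=c(\o)=\bar c(\o)=c_0$ for $\cP_0$-a.a.\ $\o$ (using Lemma~\ref{matteo} to transfer the $\cP$-a.s.\ statement about the translation-invariant event $\{\bar c=c_0\}$ to a $\cP_0$-a.s.\ statement), which is the claim.

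\textbf{Main obstacle.} The only delicate point is bookkeeping with the various full-measure sets and the passage between $\cP$ and $\cP_0$: one must ensure the set of $\o$ for which (A6) holds, for which $\nabla u=0$ pointwise along all positive-rate edges, and for which the path-propagation closes up, is a single translation-invariant measurable set of full $\cP$- (equivalently $\cP_0$-) measure, so that Lemma~\ref{matteo} and ergodicity of $\cP$ may both be applied. The measurability of $c$ (resp.\ $\bar c$) also needs a word, but it follows since $c(\o)=u(\o)$ $\cP_0$-a.s.\ and $u$ is measurable. Everything else is routine chaining of the identity $u\circ\theta_z=u$ along walk trajectories.
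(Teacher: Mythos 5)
Your proposal is correct and follows essentially the same route as the paper's proof: define $A$ as the set where $u(\theta_z\o)=u(\o)$ along all positive-rate $z$, use $\cP_0(A)=1$, pass to $\tilde A$ via Lemma~\ref{matteo}, chain along $r$-paths using (A6) on $\O_*$ to obtain a constant $c(\o)$, note translation invariance, invoke ergodicity, and transfer back to $\cP_0$ with Lemma~\ref{matteo}. The only cosmetic difference is that the paper observes $c$ is already well-defined and translation invariant on the translation-invariant set $\tilde A\cap\O_*\subset\O$ (extended by $0$ elsewhere), so introducing a separate $\bar c$ on $\O$ is unnecessary.
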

\begin{proof} We define 
$
 A:=\{ \o \in \O_0\,:\, u(\theta_z \o)=u(\o) \;\; \forall z \in \hat \o \text{ with } r_{0,z}(\o)>0\}
 $.
Hence (recall \eqref{tildino}) 
$ \tilde A
  = \{ \o \in \O\,:\, u(\theta_y \o)=u(\theta_z \o) \; \forall y,z \in \hat \o \text{ with } r_{z,y}(\o)>0\}$.
The property that $\nabla u =0$  $\nu$--a.e. is equivalent to $\cP_0(A)=1$. By Lemma \ref{matteo} we get that $\cP( \tilde A)=1$.  \utto{Recall that  the property in  (A6) holds for $\o\in \O_*$,
$\O_*$ is translation invariant (as $\tilde A$) and $\cP(\O_*)=1$}. Moreover, given $\o \in \tilde A\cap \utto{\O_*}$, there exists a constant $c(\o)$ such that $u(\theta_y \o)=c(\o)$ for all $y \in \hat \o$.
Then we define $v(\o):=c(\o)$  if $\o\in \tilde A \cap \utto{\O_*}$  and $v(\o):= 0$ if $\o \not \in \tilde A\cap \utto{\O_*}$. As $v$ is translation invariant and $\cP$ is ergodic, there exists $c\in \bbR $ such that $\cP(v=c)=1$. By Lemma \ref{matteo} we get $\cP_0(v=c)=1$.
\end{proof}

The proof of the following lemma is similar to  the proof of  \cite[Lemma 2.5]{ZP} (see also
\cite[App.~B]{Fhom}). 
 Recall \eqref{anita}. 
\begin{Lemma}\label{minerale} Let $\z\in L^2(\cP_0)$ be orthogonal to  all functions 
$ g \in L^2(\cP_0) $ with $g = {\rm div} (\nabla u)$ for some  $ u \in H^1_{\rm env}$.
 Then $\z\in H^1_{\rm env}$ and 
$\nabla \z=0$ in $L^2(\nu)$.
\end{Lemma}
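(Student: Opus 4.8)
\textbf{Proof plan for Lemma \ref{minerale}.}

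The plan is to exploit the variational/Hilbert-space structure of $L^2(\nu)$ and its decomposition into potential and solenoidal forms. First I would consider the quadratic functional on $L^2(\nu)$ given by $\Phi(v) := \frac{1}{2}\|v\|_{L^2(\nu)}^2 + \langle \z, {\rm div}\, v\rangle_{L^2(\cP_0)}$; note $\Phi$ is well defined since ${\rm div}: L^2(\nu)\to L^1(\cP_0)$ is continuous (see \eqref{fuoco1}) and $\z\in L^2(\cP_0)\subset L^\infty$-dual pairing issue must be handled — instead it is cleaner to work directly with the orthogonality hypothesis. So the real first step is: by hypothesis $\z\perp {\rm div}(\nabla u)$ for every $u\in H^1_{\rm env}$, and by Lemma \ref{ponte} one has $\langle \z, {\rm div}(\nabla u)\rangle_{L^2(\cP_0)} = -\langle \nabla \z\text{-candidate}\,,\, \nabla u\rangle$; but $\nabla\z$ is not yet known to exist, which is precisely the point. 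So instead I would phrase it as: the linear functional $u\mapsto \langle \z, {\rm div}(\nabla u)\rangle_{L^2(\cP_0)}$ vanishes on $H^1_{\rm env}$, and I want to produce a form $w\in L^2_{\rm pot}(\nu)$ with $\langle \z, {\rm div}\, v\rangle = -\langle w, v\rangle_{L^2(\nu)}$ for all $v$ in a suitable dense class, and then identify $w = \nabla\z$.

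The cleaner route, following \cite[Lemma 2.5]{ZP}: minimize the functional $J(v) := \frac{1}{2}\|v\|^2_{L^2(\nu)} - \langle \z, {\rm div}\, v\rangle_{L^2(\cP_0)}$ over the closed subspace $L^2_{\rm pot}(\nu)$. (One must first check $J$ is well defined and coercive on $L^2_{\rm pot}(\nu)$; coercivity needs a lower bound on $\langle\z,{\rm div}\,v\rangle$ in terms of $\|v\|_{L^2(\nu)}$ — by \eqref{fuoco1}, $|\langle\z,{\rm div}\,v\rangle|$ is bounded by $\|\z\|_{L^2(\cP_0)}\,\|{\rm div}\,v\|_{L^\infty}$, which is not obviously finite; so one should rather restrict to $v=\nabla u$ with $u$ bounded, where ${\rm div}(\nabla u)$ pairs against $\z$ fine, and use density.) Concretely: on the dense subset $\{\nabla u : u \text{ bounded measurable}\}$ of $L^2_{\rm pot}(\nu)$, the map $\nabla u \mapsto \langle \z, {\rm div}(\nabla u)\rangle_{L^2(\cP_0)}$ is identically zero by hypothesis. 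Hence the bounded linear functional $v \mapsto \langle \z, {\rm div}\, v\rangle$ — once shown bounded on $L^2_{\rm pot}(\nu)$ — vanishes on a dense subset and therefore on all of $L^2_{\rm pot}(\nu)$. The boundedness on $L^2_{\rm pot}(\nu)$ is the technical crux and is where I expect to spend effort: one writes, for $v\in L^2_{\rm pot}(\nu)$ approximated by $\nabla u_n$, $\langle\z,{\rm div}\,\nabla u_n\rangle = -\langle\z\text{-paired via Lemma \ref{ponte}}\rangle$; but Lemma \ref{ponte} requires $u$ bounded, not $\z$. The trick (as in \cite{ZP}) is to truncate $\z$: set $\z_M := (\z\wedge M)\vee(-M)$, apply Lemma \ref{ponte} to get $\langle\z_M,{\rm div}\,\nabla u\rangle = -\langle\nabla u,\nabla\z_M\rangle_{L^2(\nu)}$, and then argue $\nabla\z_M$ is Cauchy / bounded in $L^2(\nu)$ using the vanishing hypothesis, finally passing $M\to\infty$.

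Here is the structure I would actually write. \textbf{Step 1:} For $M>0$ let $\z_M$ be the truncation of $\z$ at level $M$; it is bounded and measurable so $\z_M\in H^1_{\rm env}$ with $\nabla\z_M\in L^2(\nu)$. \textbf{Step 2:} For any bounded measurable $u$, Lemma \ref{ponte} gives $\langle\z_M,{\rm div}(\nabla u)\rangle_{L^2(\cP_0)} = -\langle\nabla u,\nabla\z_M\rangle_{L^2(\nu)}$. By the hypothesis on $\z$ and dominated convergence ($|\z_M|\le|\z|$, ${\rm div}(\nabla u)\in L^1$... here one needs $\langle\z,{\rm div}(\nabla u)\rangle=0$ and $\z_M\to\z$ boundedly against the fixed $L^1$ function ${\rm div}(\nabla u)$ — actually $\z_M\to\z$ in $L^2$ suffices since ${\rm div}(\nabla u)$ may not be in $L^\infty$; use $|\langle\z-\z_M,{\rm div}(\nabla u)\rangle|\le\|\z-\z_M\|_{L^2}\|{\rm div}(\nabla u)\|_{L^2}$ provided ${\rm div}(\nabla u)\in L^2$, which holds when $u$ is bounded with $\nabla u\in L^2(\nu)$ and $\lambda_0$-control gives ${\rm div}(\nabla u)\in L^2$ via Cauchy–Schwarz as in \eqref{fuoco1}... this needs care, possibly an extra moment), one gets $\langle\nabla u,\nabla\z_M\rangle_{L^2(\nu)}\to 0$. \textbf{Step 3:} Take $u=\z_N$ for $N\le M$: then $\langle\nabla\z_N,\nabla\z_M\rangle_{L^2(\nu)}\to 0$ as... hmm, this needs the right bookkeeping. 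The cleanest finish: test with $u=\z_M$ itself in Step 2 — but $\z_M$ is bounded, so legitimate — yielding $\langle\z_M,{\rm div}(\nabla\z_M)\rangle = -\|\nabla\z_M\|^2_{L^2(\nu)}$, while the left side tends to $\langle\z,{\rm div}(\nabla\z_M)\rangle$. If we knew the latter is small we'd be done; instead combine with testing $u=\z_M - \z_N$ and a Cauchy argument to show $\nabla\z_M$ is Cauchy in $L^2(\nu)$, with limit $w$; then $\z_M\to\z$ in $L^2(\cP_0)$ and $\nabla\z_M\to w$ in $L^2(\nu)$ identify $w=\nabla\z$ (closedness of the operator $\nabla$, which follows because $\nabla u(\o,z)=u(\theta_z\o)-u(\o)$ is continuous under $L^2(\cP_0)$-convergence along a subsequence converging $\cP_0$-a.e., combined with Fatou), so $\z\in H^1_{\rm env}$. \textbf{Step 4:} Finally, $\langle\nabla\z,\nabla u\rangle_{L^2(\nu)} = -\langle\z,{\rm div}(\nabla u)\rangle_{L^2(\cP_0)} = 0$ for all bounded $u$ by Lemma \ref{ponte} (now legitimate since $\z$ need not be bounded but we can re-truncate and pass to the limit, or invoke that $\nabla\z\in L^2(\nu)$ and density), hence $\nabla\z\perp L^2_{\rm pot}(\nu)$; but $\nabla\z\in L^2_{\rm pot}(\nu)$ by definition of that subspace (closure of gradients, and $\nabla\z$ is a limit of $\nabla\z_M$), so $\nabla\z=0$ in $L^2(\nu)$.

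\textbf{Main obstacle.} The delicate point is the truncation/limiting argument making Lemma \ref{ponte} applicable when $\z$ itself is only in $L^2(\cP_0)$ rather than bounded, and in particular verifying that ${\rm div}(\nabla u)$ lies in a space ($L^2(\cP_0)$, or at least is paired summably with $\z$) for which $\z_M\to\z$ passes to the limit — this is exactly the kind of integrability subtlety the paper flags as arising from unbounded jump ranges, and one likely needs to invoke $\bbE_0[\lambda_0]<\infty$ together with Cauchy–Schwarz (as in \eqref{fuoco1}) to control ${\rm div}(\nabla u)$, plus possibly first restricting $u$ to a further dense class (e.g. $u$ bounded and such that $\nabla u$ is bounded) before extending by density. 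Everything else is routine Hilbert-space manipulation.
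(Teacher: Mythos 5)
Your plan hinges on the claim that, on the dense class $\{\nabla u : u \text{ bounded, measurable}\}\subset L^2_{\rm pot}(\nu)$, the map $\nabla u\mapsto\langle\z,{\rm div}(\nabla u)\rangle_{L^2(\cP_0)}$ vanishes ``by hypothesis.'' That is not what the hypothesis says. The hypothesis only asserts orthogonality of $\z$ to those $g={\rm div}(\nabla u)$ that happen to lie in $L^2(\cP_0)$, and for a generic bounded $u$ with $\nabla u\in L^2(\nu)$ the divergence ${\rm div}(\nabla u)$ is a priori only in $L^1(\cP_0)$ (cf.~\eqref{fuoco1}); the pairing $\langle\z,{\rm div}(\nabla u)\rangle$ is then not even well defined, let alone zero. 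The same problem infects the truncation steps: you never verify that ${\rm div}(\nabla\z_M)\in L^2(\cP_0)$, so testing the hypothesis against $u=\z_M$ or $u=\z_M-\z_N$ is not licensed, and you yourself note that the Cauchy bookkeeping in Step 3 does not close. In short, the proposal nowhere produces a single admissible test function $u$ (one with ${\rm div}(\nabla u)\in L^2(\cP_0)$), which is the whole content of the hypothesis.

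The missing idea is to manufacture such a $u$ by Lax--Milgram, and this also makes the whole truncation machinery unnecessary. Solve the variational problem
\[
\langle\nabla u,\nabla v\rangle_{L^2(\nu)}+\langle u,v\rangle_{L^2(\cP_0)}=\langle \z,v\rangle_{L^2(\cP_0)}\qquad\forall v\in H^1_{\rm env}
\]
(the bilinear form is bounded and coercive on $H^1_{\rm env}$). Testing against bounded $v$ and using Lemma~\ref{ponte} shows $-{\rm div}(\nabla u)+u=\z$ $\cP_0$-a.s., so ${\rm div}(\nabla u)=u-\z\in L^2(\cP_0)$; the hypothesis now applies and gives $\langle\z,u\rangle=\|\z\|^2_{L^2(\cP_0)}$. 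On the other hand, taking $v=u$ above yields $\|\nabla u\|^2_{L^2(\nu)}+\|u\|^2_{L^2(\cP_0)}=\langle\z,u\rangle=\|\z\|^2$, so $\|u\|\le\|\z\|$, while Cauchy--Schwarz gives $\|\z\|\le\|u\|$. Hence $\|u\|=\|\z\|$, $\nabla u=0$ in $L^2(\nu)$, and equality in Cauchy--Schwarz forces $u=\z$, which delivers both $\z\in H^1_{\rm env}$ and $\nabla\z=0$. This resolvent argument is what \cite[Lemma~2.5]{ZP} actually runs, and it sidesteps every integrability subtlety you were struggling with.
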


By combining Lemma \ref{garibaldi_100} and Lemma \ref{minerale} we get:
\begin{Lemma}\label{santanna} 
 The  functions $g\in L^2(\cP_0)$ of the form $g ={\rm div}\, v $ with $v\in L^2(\nu)$ are dense in $\{w \in L^2(\cP_0)\,:\, \bbE_0[w]=0\}$.
\end{Lemma}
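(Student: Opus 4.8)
The plan is a short Hilbert--space duality argument, with all the real work delegated to the two preceding lemmas. Let $W:=\{w\in L^2(\cP_0):\bbE_0[w]=0\}$, a closed (codimension-one) subspace of $L^2(\cP_0)$, and let $S:=\{\,{\rm div}\,v:v\in L^2(\nu)\,\}$. First I would check the trivial inclusion $S\subseteq W$: feeding the bounded measurable function $u\equiv 1$ into Lemma \ref{ponte} gives $\bbE_0[{\rm div}\,v]=-\int d\nu(\o,z)\,v(\o,z)\,\nabla 1(\o,z)=0$, since $\nabla 1\equiv 0$. Hence $\overline S\subseteq W$, and it remains to prove the reverse inclusion $W\subseteq\overline S$.

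Since $W$ is a Hilbert space and $\overline S$ a closed subspace of it, it suffices to show that every $\zeta\in W$ which is orthogonal in $L^2(\cP_0)$ to all of $S$ vanishes. Given such a $\zeta$, in particular $\la\zeta,{\rm div}(\nabla u)\ra_{\cP_0}=0$ for every $u\in H^1_{\rm env}$, because $\nabla u\in L^2(\nu)$ by the definition of $H^1_{\rm env}$ and hence ${\rm div}(\nabla u)\in S$. Lemma \ref{minerale} then yields $\zeta\in H^1_{\rm env}$ with $\nabla\zeta=0$ in $L^2(\nu)$, and Lemma \ref{garibaldi_100} upgrades this to: $\zeta$ is $\cP_0$-a.s.\ equal to a constant. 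Finally, since $\zeta\in W$ we have $\bbE_0[\zeta]=0$, which forces that constant to be $0$, i.e.\ $\zeta=0$ $\cP_0$-a.s. This proves that $S$ is dense in $W$.

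I expect no genuine obstacle here, as the content is entirely contained in Lemmas \ref{minerale} and \ref{garibaldi_100}. The only point to keep track of is the zero-mean constraint: orthogonality to $S$ alone only pins $\zeta$ down to an arbitrary constant (constants are precisely the kernel obstruction, being annihilated by $\nabla$ and lying outside the range of ${\rm div}$ on $L^2(\nu)$ except for $0$), so it is essential to work inside the codimension-one subspace $W$ and to invoke $\bbE_0[\zeta]=0$ only at the very end. It is also worth noting that Lemma \ref{minerale} is used only with the smaller test family $\{{\rm div}(\nabla u):u\in H^1_{\rm env}\}\subseteq S$, so the orthogonality hypothesis actually available is stronger than what is needed.
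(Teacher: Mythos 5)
Your proof is correct and follows essentially the same route as the paper: orthogonality to $S$ feeds Lemma \ref{minerale} (via the sub-family $\{{\rm div}(\nabla u):u\in H^1_{\rm env}\}$), Lemma \ref{garibaldi_100} upgrades to a constant, and the zero-mean constraint kills it. The paper phrases this as a proof by contradiction and gets $S\subseteq W$ from Lemma \ref{ponte} exactly as you do, so the two arguments are the same modulo presentation.
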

\begin{proof}
 Lemma \ref{ponte} implies that  $\bbE_0[g]=0$ if  $g={\rm div}\,v $,  $v\in L^2(\nu)$.  
 Suppose that the claimed  density fails. Then there exists $\z\in  L^2(\cP_0)$ different from zero  with $ \bbE_0[\z]=0$ and such that $\bbE_0 [ \z g]=0$ for any $g \in L^2(\cP_0)$ of the form $g= {\rm div} v$ with $v\in L^2(\nu)$. 
By  Lemma \ref{minerale},  we know that $\z\in H^1_{\rm env}$ and $\nabla \z=0$ $\nu$--a.s. 
By Lemma \ref{garibaldi_100} we get that $\z$ is constant $\cP_0$--a.s.  Since $\bbE_0[\z]=0$ it must be $\z=0$ $\cP_0$--a.s., which is absurd.
\end{proof}
%
%


\section{The   diffusion matrix $D$ and the quadratic form $q$}\label{sec_diff_matrix}
Since $\l_2 \in L^1(\cP_0)$ (see Assumption (A7)), given $a\in \bbR^d$   the form 
\begin{equation}\label{def_ua}
u_a(\o,z):= a\cdot z
\end{equation}
 is square integrable, i.e. it belongs to $L^2(\nu)$. 
 We note  that the symmetric diffusion matrix $D$  defined in \eqref{def_D_R} satisfies, for any $a\in \bbR^d$,
 \begin{equation}\label{giallo}
 \begin{split}
q(a):= a \cdot Da  
&= \inf_{ v\in L^2 _{\rm pot}(\nu) } \frac{1}{2} \int d\nu(\o, x) \left(u_a(x)+v(\o,x) \right)^2\\
& =  \inf_{ v\in L^2 _{\rm pot}(\nu) } \frac{1}{2}\| u_a+v \|^2_{L^2(\nu)}=\frac{1}{2} \| u_a+v ^a \|^2_{L^2(\nu)}\,,
\end{split}
 \end{equation}
 where  $v^a=-\Pi u_a$ and 
 $\Pi: L^2(\nu) \to L^2_{\rm pot}(\nu)$ denotes the orthogonal projection of $L^2(\nu)$ on $L^2_{\rm pot}(\nu)$.   As a consequence, the map $\bbR^d \ni a \mapsto v^a\in L^2_{\rm pot}(\nu)$ is linear. 
Moreover, $v^a$ is characterized by the property
\begin{equation}\label{jung}
v^a \in  L^2_{\rm pot}(\nu)\,, \qquad v^a+u_a\in L^2_{\rm sol}(\nu)\,.
\end{equation}
Hence we can write
$ a\cdot Da=\frac{1}{2} \|  u_a+v^a \|_{L^2(\nu)}^2= \frac{1}{2}
 \la u_a, u_a + v^a \ra _{\nu}$.
As  the two symmetric bilinear forms $(a,b) \mapsto a\cdot Db$ and
$(a,b)\mapsto
  \frac{1}{2}\int d\nu (\o, z) a\cdot z \bigl(b\cdot z + v^{b}( \o, z) \bigr)=
\frac{1}{2}  \int d\nu \bigl (u_a+ v^{a}\bigr)
 \bigl( u_b + v^{b} \bigr) $ coincide on  diagonal terms, we get
 \begin{equation}\label{solare789}
 Da =\frac{1}{2}  \int d\nu (\o, z)  z \bigl(a\cdot z + v^a( \o, z) \bigr)\qquad \forall a \in \bbR^d\,.
\end{equation}

Let us come back to the 
 quadratic form $q$ on $\bbR^d$ defined in \eqref{giallo}. By \eqref{giallo}
 its  kernel  ${\rm Ker}(q)$  is given by 
\begin{equation}\label{rosone}
{\rm Ker}(q):=\{a\in \bbR^d\,:\, q(a)=0\}=\{ a\in \bbR^d\,:\, u_a \in L^2_{\rm pot}(\nu)\}\,.
\end{equation}

\begin{Lemma}\label{rock}
It holds
\begin{equation}\label{jazz}
{\rm Ker}(q)^\perp=\Big\{  \int  d \nu (\o,z) b(\o, z) z  \,:\, b\in L^2_{\rm sol} (\nu) \Big\}\,. 
\end{equation}
\end{Lemma}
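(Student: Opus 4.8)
The plan is to show the two sets in \eqref{jazz} are equal by a double inclusion, using \eqref{rosone} together with the orthogonal decomposition $L^2(\nu)=L^2_{\rm pot}(\nu)\oplus L^2_{\rm sol}(\nu)$ and the identity $Da=\frac12\int d\nu(\o,z)\,z\bigl(a\cdot z+v^a(\o,z)\bigr)$ from \eqref{solare789}. Write $S:=\{\int d\nu(\o,z)\,b(\o,z)\,z:\ b\in L^2_{\rm sol}(\nu)\}$ for the right-hand side of \eqref{jazz}; note $S$ is a linear subspace of $\bbR^d$ (the map $b\mapsto \int d\nu\,b(\o,z)\,z$ is linear, and is well defined because each component $z_i$ of $z$ lies in $L^2(\nu)$ by Assumption (A7), so the integral is a bona fide $L^2(\nu)$ pairing). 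It therefore suffices to prove $S^\perp={\rm Ker}(q)$, i.e.\ for $a\in\bbR^d$,
\begin{equation*}
a\perp S \quad\Longleftrightarrow\quad u_a\in L^2_{\rm pot}(\nu).
\end{equation*}

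For the direction ``$u_a\in L^2_{\rm pot}(\nu)\Rightarrow a\perp S$'': if $a\in{\rm Ker}(q)$ then $u_a\in L^2_{\rm pot}(\nu)$, so for every $b\in L^2_{\rm sol}(\nu)$ we have, by orthogonality of potential and solenoidal forms,
\begin{equation*}
a\cdot\!\int d\nu(\o,z)\,b(\o,z)\,z=\int d\nu(\o,z)\,b(\o,z)\,u_a(\o,z)=\la b,u_a\ra_\nu=0,
\end{equation*}
hence $a\perp S$. Conversely, suppose $a\perp S$. Decompose $u_a=\Pi u_a+(u_a-\Pi u_a)$ with $\Pi u_a\in L^2_{\rm pot}(\nu)$ and, by \eqref{jung}, $u_a-\Pi u_a=u_a+v^a\in L^2_{\rm sol}(\nu)$. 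Apply the hypothesis $a\perp S$ with the particular solenoidal form $b:=u_a+v^a$: since $u_a+v^a\in L^2_{\rm sol}(\nu)$,
\begin{equation*}
0=a\cdot\!\int d\nu(\o,z)\,\bigl(u_a+v^a\bigr)(\o,z)\,z=\la u_a+v^a,u_a\ra_\nu=2\,q(a),
\end{equation*}
where the last equality is the formula $a\cdot Da=\tfrac12\la u_a,u_a+v^a\ra_\nu$ recorded just after \eqref{jung}, together with symmetry of the pairing. Thus $q(a)=0$, i.e.\ $a\in{\rm Ker}(q)$, as desired. Taking orthogonal complements of the identity $S^\perp={\rm Ker}(q)$ gives $S=S^{\perp\perp}={\rm Ker}(q)^\perp$, which is \eqref{jazz}.

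The only genuine point to be careful about is the characterization $q(a)=0\iff u_a\in L^2_{\rm pot}(\nu)$, but this is already \eqref{rosone}, and the equivalence ``$q(a)=0\iff v^a+u_a=0$ in $L^2(\nu)$'' follows from $q(a)=\tfrac12\|u_a+v^a\|_{L^2(\nu)}^2$ together with the fact that $v^a=-\Pi u_a$ is the orthogonal projection onto $L^2_{\rm pot}(\nu)$; so $u_a+v^a=0$ exactly when $u_a=\Pi u_a\in L^2_{\rm pot}(\nu)$. Everything else is routine Hilbert-space bookkeeping, so I do not expect a real obstacle; the main thing is simply to feed the right solenoidal test form $b=u_a+v^a$ into the hypothesis $a\perp S$ in the nontrivial direction.
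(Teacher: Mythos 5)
Your proof is correct and proceeds by essentially the same route as the paper: both hinge on the identity $a\cdot\int d\nu(\o,z)\,b(\o,z)\,z=\la u_a,b\ra_\nu$, on \eqref{rosone}, and on the orthogonal decomposition $L^2(\nu)=L^2_{\rm pot}(\nu)\oplus L^2_{\rm sol}(\nu)$, followed by taking complements in $\bbR^d$. The only (cosmetic) difference is in the converse direction, where you feed in the specific solenoidal form $b=u_a+v^a$ and invoke the formula $q(a)=\tfrac12\la u_a,u_a+v^a\ra_\nu$, whereas the paper argues more directly that $a\perp S$ forces $u_a\in L^2_{\rm sol}(\nu)^\perp=L^2_{\rm pot}(\nu)$ and then applies \eqref{rosone}.
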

Note that, since $\l_2 \in L^1(\cP_0)$ by  (A7),   the integral in the r.h.s. of \eqref{jazz} is well defined. The above lemma corresponds to \cite[Prop.~5.1]{ZP}.
\begin{proof}[Proof of Lemma \ref{rock}]
Let $b\in  L^2_{\rm sol}(\nu) $ and $\eta_b:=\int  d \nu (\o,z)  b(\o, z) z $. Then, given $a\in \bbR^d$, 
$a\cdot \eta_b =\la u_a , b \ra_{\nu}$. By \eqref{rosone},  $a \in  {\rm Ker}(q)$ if and only if $u_a\in L^2_{\rm pot}(\nu)= L^2_{\rm sol }(\nu)^\perp $.
Therefore $a \in  {\rm Ker}(q)$ if and only if  $a\cdot \eta_b =0$ for any $b \in L^2_{\rm sol}(\nu)$. \end{proof}

Due to Lemma \ref{rock} and Definition  \ref{divido} we have:
\begin{Corollary}\label{jack}
$\text{Span}\{ \mathfrak{e}_1, \dots, \mathfrak{e}_{d_*}\}= \bigl\{  \int  d \nu (\o,z) b(\o, z) z  \,:\, b\in L^2_{\rm sol} (\nu) \bigr\}$.
\end{Corollary}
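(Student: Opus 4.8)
The plan is to read this off immediately from Lemma~\ref{rock}: that lemma already identifies the right-hand side of the claimed equality with ${\rm Ker}(q)^\perp$, so the only thing left to do is to check that ${\rm Ker}(q)^\perp=\text{Span}\{\mathfrak{e}_1,\dots,\mathfrak{e}_{d_*}\}$, which is a purely linear-algebra bookkeeping step using the spectral normalization fixed in Definition~\ref{divido}.

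First I would record that $q$ is a nonnegative quadratic form on $\bbR^d$: by \eqref{giallo} we have $q(a)=\tfrac12\|u_a+v^a\|^2_{L^2(\nu)}\ge 0$ for every $a\in\bbR^d$, equivalently $D$ is symmetric and positive semidefinite. Decomposing $a=\sum_{i=1}^d (a\cdot\mathfrak{e}_i)\,\mathfrak{e}_i$ in the orthonormal eigenbasis of $D$ and using $D\mathfrak{e}_i=\g_i\mathfrak{e}_i$, one gets $q(a)=a\cdot Da=\sum_{i=1}^d \g_i\,(a\cdot\mathfrak{e}_i)^2$. Since by Definition~\ref{divido} the eigenvalues satisfy $\g_i>0$ for $i\le d_*$ and $\g_i=0$ for $i>d_*$, this sum vanishes precisely when $a\cdot\mathfrak{e}_i=0$ for all $i\le d_*$; hence
\[{\rm Ker}(q)=\{a\in\bbR^d:\ a\cdot\mathfrak{e}_i=0,\ i=1,\dots,d_*\}=\text{Span}\{\mathfrak{e}_{d_*+1},\dots,\mathfrak{e}_d\}.\]
Taking orthogonal complements in $\bbR^d$ (legitimate since the $\mathfrak{e}_i$ form an orthonormal basis) yields ${\rm Ker}(q)^\perp=\text{Span}\{\mathfrak{e}_1,\dots,\mathfrak{e}_{d_*}\}$.

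Finally I would combine this with Lemma~\ref{rock}, which gives ${\rm Ker}(q)^\perp=\bigl\{\int d\nu(\o,z)\,b(\o,z)\,z:\ b\in L^2_{\rm sol}(\nu)\bigr\}$, and the two identities together are exactly the assertion. I do not expect any real obstacle here: the statement is a direct consequence of Lemma~\ref{rock} and the choice of eigenbasis, and the only point worth keeping in mind is that ${\rm Ker}(q)$ is genuinely a linear subspace, which is why it is convenient to start from the representation \eqref{giallo} exhibiting $q\ge 0$.
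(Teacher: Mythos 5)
Your argument is correct and is exactly the route the paper intends: the paper states the corollary as an immediate consequence of Lemma~\ref{rock} (identifying the right-hand side with ${\rm Ker}(q)^\perp$) and Definition~\ref{divido} (fixing the eigenbasis of $D$ so that ${\rm Ker}(q)^\perp=\text{Span}\{\mathfrak{e}_1,\dots,\mathfrak{e}_{d_*}\}$). You have simply spelled out the linear-algebra step that the paper leaves implicit.
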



\section{The contraction $b(\o,z)\mapsto \hat b(\o) $ and the set  $\cA_1[b]$}\label{sec_cinghia}

\begin{Definition}\label{artic}  Let $b: \O _0 \times \bbG\to \bbR$ be a \rrr{measurable} function  with $\|b\|_{L^1(\nu)}<+\infty$. We define the \rrr{measurable} function  $r_b:\O_0 \to [0,+\infty]$ as
\begin{equation}\label{magno}
r_b (\o):=  \int d \hat{\o}(z) r_{0,z}(\o)  |b(\o,z)|\,, \end{equation}
 the \rrr{measurable} function $\hat b: \O_0 \to \bbR$ as
\begin{equation}\label{zuppa}
\hat b(\o):= 
\begin{cases}
\int d \hat{\o}(z) r_{0,z}(\o) b(\o,z) & \text{ if } r_b (\o) <+\infty\,,\\
0 & \text{ if } r_b (\o) = +\infty\,,
\end{cases}
\end{equation}
and the \rrr{measurable} set $\cA_1[b]:= \{\utto{ \o\in \O_*}\,:\, r_{b } ( \theta _z \o) <+\infty\; \forall z\in \hat \o\}$.
\end{Definition}

\begin{Lemma}\label{cavallo}   Let $b: \O _0 \times \bbG \to \bbR$ be a \rrr{measurable} function  with $\|b\|_{L^1(\nu)}<+\infty$. 
 Then 
 \begin{itemize}
 \item[(i)] 
$\| \hat b\|_{L^1(\cP_0)} \leq  \| b\|_{L^1(\nu)}= \| r_b \|_{L^1(\cP_0)} $ and  $\bbE_0[\hat b]= \nu\ovo{[b]}$;
\item[(ii)]
given $\o \in  \cA_1[b]$ and 
   $\varphi \in C_c(\bbR^d)$, it holds\begin{equation}\label{rino}
\int     d\mu_\o^\e (x) \varphi ( x) \hat b (\theta_{x/\e} \o) = \int  d \nu_\o^\e (x, z) \varphi(x) b( \theta_{x/\e} \o, z) 
\end{equation}
(the  series in the l.h.s. and in the r.h.s.  are absolutely convergent);
\item[(iii)]    $\cP( \cA_1[b])=\cP_0( \cA_1[b])=1$ and 
$ \cA_1[b]$ is translation invariant. 
\end{itemize}
\end{Lemma}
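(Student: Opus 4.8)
The plan is to obtain all three items by unwinding the definitions, the only structural inputs being the disintegration identity \eqref{labirinto} defining $\nu$, the translation identities of Remark \ref{jolie}, and the transfer principle Lemma \ref{matteo} together with its corollary for translation invariant sets. I expect no genuine obstacle here — it is bookkeeping with the point process $\hat\o$ — the one point needing care being to keep the microscopic/macroscopic relabelings straight in (ii) and to justify the reordering of a double series via absolute convergence.

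\textbf{Part (i).} Applying \eqref{labirinto} to the nonnegative integrand $|b(\o,z)|$ gives at once $\|r_b\|_{L^1(\cP_0)}=\bbE_0\big[\int d\hat\o(z)\,r_{0,z}(\o)|b(\o,z)|\big]=\|b\|_{L^1(\nu)}$, which is finite by hypothesis; hence $r_b<+\infty$ $\cP_0$-a.s., so $\cP_0$-a.s.\ the first branch of \eqref{zuppa} applies and there $|\hat b(\o)|\le r_b(\o)$ pointwise, yielding $\|\hat b\|_{L^1(\cP_0)}\le\|r_b\|_{L^1(\cP_0)}=\|b\|_{L^1(\nu)}$. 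Since now $b\in L^1(\nu)$, the identity \eqref{labirinto} extends from nonnegative integrands to the signed $b$; as the inner integral appearing there agrees $\cP_0$-a.s.\ with $\hat b$, this gives $\bbE_0[\hat b]=\int d\nu(\o,z)\,b(\o,z)=\nu[b]$.

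\textbf{Part (iii).} Set $A:=\{\o\in\O_0:\,r_b(\o)<+\infty\}$, a measurable subset of $\O$ with $\cP_0(A)=1$ by Part (i). Comparing \eqref{tildino} with Definition \ref{artic} one sees directly that $\cA_1[b]=\O_*\cap\tilde A$. Lemma \ref{matteo} then upgrades $\cP_0(A)=1$ to $\cP(\tilde A)=\cP_0(\tilde A)=1$, and $\tilde A$ is translation invariant and measurable. Since $\O_*$ is translation invariant, measurable and of full $\cP$-measure (hence, by Lemma \ref{matteo}, of full $\cP_0$-measure), the intersection $\cA_1[b]=\O_*\cap\tilde A$ inherits translation invariance, measurability, and $\cP(\cA_1[b])=\cP_0(\cA_1[b])=1$.

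\textbf{Part (ii).} Fix $\o\in\cA_1[b]$ and $\varphi\in C_c(\bbR^d)$. Expanding $\mu_\o^\e$ as in \eqref{franci}, the left-hand side equals $\e^d\sum_{x\in\hat\o}n_x(\o)\,\varphi(\e x)\,\hat b(\theta_x\o)$, a sum over the finite set $\hat\o\cap\e^{-1}\mathrm{supp}\,\varphi$ by local finiteness of $\hat\o$. For each such $x$, Remark \ref{jolie} gives $\theta_x\o\in\O_0$, $\widehat{\theta_x\o}=\hat\o-x$, and $r_{0,y-x}(\theta_x\o)=r_{x,y}(\o)$, while $r_b(\theta_x\o)<+\infty$ by definition of $\cA_1[b]$; so the first branch of \eqref{zuppa} applies and $\hat b(\theta_x\o)=\sum_{y\in\hat\o}r_{x,y}(\o)\,b(\theta_x\o,y-x)$, an absolutely convergent series whose sum of absolute values is $r_b(\theta_x\o)$. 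Hence the left-hand side equals $\e^d\sum_{x\in\hat\o}n_x(\o)\varphi(\e x)\sum_{y\in\hat\o}r_{x,y}(\o)b(\theta_x\o,y-x)$. On the other side, expanding $\nu_\o^\e$ via the first line of \eqref{nunu} and using $c_{x,y}(\o)=n_x(\o)r_{x,y}(\o)$ from (A5) (valid since $\o\in\O_*$) produces exactly the same double sum, so the two sides agree. Absolute convergence of both series is immediate, the sum of absolute values being $\e^d\sum_{x\in\hat\o}n_x(\o)|\varphi(\e x)|\,r_b(\theta_x\o)$, a finite sum of finite terms.
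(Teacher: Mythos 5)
Your proof is correct and follows essentially the same route as the paper's: the paper dismisses (i) and (ii) as trivial unwindings of the definitions, and for (iii) it too derives $\cP_0(\{r_b<\infty\})=1$ from $\bbE_0[r_b]=\|b\|_{L^1(\nu)}<\infty$ and then invokes Lemma~\ref{matteo}. Your write-up supplies the bookkeeping the paper omits — the identification $\cA_1[b]=\O_*\cap\tilde A$ (which tacitly uses that $\theta_x\o\in\O_0$ for $x\in\hat\o$, $\o\in\O_*$, via Remark~\ref{jolie}), the explicit double-sum expansion with $c_{x,y}=n_x r_{x,y}$ in (ii), and the splitting $b=b^+-b^-$ to extend \eqref{labirinto} to signed integrands — all of which are accurate.
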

\begin{proof}  It is trivial to check Item (i) and Item (ii).
%
  We move to Item (iii). We have 
$\bbE_0[ r_b] = \| b\| _{L^1(\nu)} <\infty$. This implies that 
  $\cP_0(\{\o\,:\, r_{b} (\o) <+\infty\})=1$ and  therefore $\cP( \cA_1[b])=\cP_0( \cA_1[b])=1$  by Lemma \ref{matteo}. The last property of  $\cA_1[b]$ follows immediately from  the definition.
 \end{proof}

We point out that, since $\nu $ has finite mass, $L^2(\nu) \subset L^1(\nu)$ and therefore Lemma \ref{cavallo} can be applied to $b$ with $\|b \|_{L^2(\nu) }<+\infty$.

%
%
%
%

\section{The transformation $b(\o,z)\mapsto \tilde b(\o,z) $}\label{hermione}
\begin{Definition}\label{ometto} 
Given  a \rrr{measurable} function  $b :\O_0\times \bbG \to \bbR$ we define  $\tilde b :\O_0\times \bbG\to \bbR$  as
\be
\ovo{\tilde b (\o, z):=
\begin{cases} 
b (\theta_{z} \o, -z)  &\text{ if } z\in \hat \o\,,\\
0 &\text{ otherwise}\,.
\end{cases}
}
\en
\end{Definition}
%
%
%
%
\begin{Lemma}\label{gattonaZZ}  Given  a \rrr{measurable} function  $b :\O_0\times \bbG \to \bbR$,
it  holds  $\tilde{\tilde b}(\o,z)=b(\o,z)$  and 
 $ \| b \| _{L^p(\nu)} =  \| \tilde b \| _{L^p(\nu)}$ for any $p$.
 If 
 $b \in L^2(\nu)$, then 
  ${\rm div} \,\tilde b= - {\rm div}\, b$. \end{Lemma}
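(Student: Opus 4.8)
The plan is to verify the three claims in Lemma \ref{gattonaZZ} in order, each reducing to a direct computation using the definitions.

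\textbf{Involution property.} First I would check $\tilde{\tilde b}=b$ on $\O_0\times\bbG$. Fix $\o\in\O_0$. If $z\notin\hat\o$, then by definition $\tilde b(\o,z)=0$ and also $\tilde{\tilde b}(\o,z)=0$, so equality holds. If $z\in\hat\o$, then $\tilde{\tilde b}(\o,z)=\tilde b(\theta_z\o,-z)$; since $z\in\hat\o$ we have, by Remark \ref{jolie}, $\widehat{\theta_z\o}=\hat\o-z$, so $-z\in\widehat{\theta_z\o}$, and therefore $\tilde b(\theta_z\o,-z)=b(\theta_{-z}\theta_z\o,z)=b(\o,z)$ using (P1)--(P2). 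This gives $\tilde{\tilde b}=b$ pointwise.

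\textbf{Isometry in $L^p(\nu)$.} For the norm identity I would compute, for measurable $b\ge0$ (or $|b|^p$),
\begin{equation*}
\|\tilde b\|_{L^p(\nu)}^p=\int d\cP_0(\o)\int d\hat\o(z)\,r_{0,z}(\o)\,|b(\theta_z\o,-z)|^p,
\end{equation*}
where I used that $\tilde b(\o,z)=b(\theta_z\o,-z)$ for $z\in\hat\o$ and the integrating measure $d\hat\o(z)r_{0,z}(\o)$ vanishes off $\hat\o$. Now apply Lemma \ref{lemma_siou} in the form \eqref{siou} with the nonnegative function $f(\o,\o'):=|b|^p$ evaluated so that $f(\o,\theta_z\o)$ corresponds to $|b(\theta_z\o,-z)|^p$; concretely, taking $f$ with $f(\theta_z\o,\o)=|b(\o,z)|^p$ (which exists $\cP_0$-a.s.\ by (A3) and Lemma \ref{matteo}, since it is a function of $\theta_z\o$ and $z=-\,(-z)$ is recovered), the right-hand side of \eqref{siou} becomes $\int d\cP_0(\o)\int d\hat\o(z)r_{0,z}(\o)|b(\o,z)|^p=\|b\|_{L^1(\nu)}^p$-type expression, i.e.\ $\|b\|_{L^p(\nu)}^p$. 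Thus $\|\tilde b\|_{L^p(\nu)}=\|b\|_{L^p(\nu)}$. The slightly delicate point is matching the arguments of $f$ to $r_{0,z}(\o)$ versus $r_{0,-z}(\theta_z\o)$, which is exactly the content of (A5) together with Remark \ref{jolie} ($r_{0,z}(\o)=r_{z,0}$ up to the $n$-factors, and $c_{x,y}$ is symmetric); I would spell this out so that the change of variables $z\mapsto -z$ along the Palm shift is legitimate.

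\textbf{Divergence anti-symmetry.} Finally, for $b\in L^2(\nu)$, recall \eqref{emma}:
\begin{equation*}
{\rm div}\,b(\o)=\int d\hat\o(z)\,r_{0,z}(\o)\bigl(b(\o,z)-b(\theta_z\o,-z)\bigr)=\int d\hat\o(z)\,r_{0,z}(\o)\bigl(b(\o,z)-\tilde b(\o,z)\bigr).
\end{equation*}
Replacing $b$ by $\tilde b$ and using $\tilde{\tilde b}=b$ gives ${\rm div}\,\tilde b(\o)=\int d\hat\o(z)r_{0,z}(\o)(\tilde b(\o,z)-b(\o,z))=-{\rm div}\,b(\o)$, valid $\cP_0$-a.s.\ because the integrals are absolutely convergent by \eqref{fuoco1} (finiteness of $\bbE_0[\l_0]$ and $b,\tilde b\in L^2(\nu)$, the latter by the isometry just proved).

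\textbf{Main obstacle.} The only non-routine step is the $L^p$ isometry: one must invoke Lemma \ref{lemma_siou} with a carefully chosen nonnegative $f$ and check that the weight $r_{0,z}(\o)\,d\hat\o(z)$ transforms correctly under $z\mapsto -z$ composed with the Palm shift $\o\mapsto\theta_z\o$. This is where (A5) (reversibility, symmetry of $c_{x,y}$) and the identities in Remark \ref{jolie} do the real work; everything else is bookkeeping with the definitions.
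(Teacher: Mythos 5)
Your proposal is correct and follows the same route as the paper: the only substantive step is the $L^p$-isometry, which, exactly as in the paper, comes from applying Lemma \ref{lemma_siou} (case (ii), $f\geq 0$) with an $f$ built via (A3) so that one side of \eqref{siou} integrates $|b|^p$ and the other $|\tilde b|^p$; the involution and divergence identities are then the "trivial" direct computations, which you spell out explicitly. One small caveat: for $z\notin\hat\o$ you have $\tilde{\tilde b}(\o,z)=0$, but $b(\o,z)$ is unconstrained for a general measurable $b:\O_0\times\bbG\to\bbR$, so $\tilde{\tilde b}=b$ really holds only $\nu$-a.e.\ (equivalently, after adopting the harmless convention $b=0$ off $\{z\in\hat\o\}$); since $\nu$ is carried by $\{z\in\hat\o\}$, this costs nothing anywhere it is used, and the paper glosses over it in exactly the same way.
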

 \begin{proof}   To get that  $ \| b \| _{L^p(\nu)} =  \| \tilde b \| _{L^p(\nu)}$ it is enough to 
  apply Lemma \ref{lemma_siou} with  $f$ such that $f(\o, \theta_z \o ) =
| b (\o, z )|^p$ for $\cP_0$--a.a. $\o$ (to define $f$ use  
\utto{(A3)}).
 The  other identities are trivial.
\end{proof}


Recall 
Definition \ref{artic}.
 
 
  \begin{Lemma}\label{gattonaZ} 
(i) 
 Let $b: \O_0 \times \bbG \to [0, +\infty]$ and  $\varphi, \psi:\bbR^d \to [0,+\infty] $  be
  \rrr{measurable} functions. Then, for each $\o \in\utto{ \O_*}$,  it holds
  \begin{equation}\label{micio2}
 \int d \nu^\e_\o (x, z) \varphi ( x) \psi (  x+\e z ) b(\theta_{x/\e}\o, z)=
  \int d \nu^\e_\o (x, z) \psi (x) \varphi (  x+\e z  ) \tilde{b}(\theta_{x/\e}\o, z)\,.
 \end{equation}
 (ii)  Let $b: \O_0 \times \bbG\to \bbR$ be a \rrr{measurable} function
 and take $ \o \in \cA_1[b]\cap \cA_1[\tilde b]$.  
   Given functions  $\varphi, \psi:\bbR^d \to \bbR $   such that  at least one between $\varphi, \psi $ has  compact support  and the other is bounded,  identity \eqref{micio2} is still valid.
   Given now $\varphi $ with compact support and  $\psi$ bounded, it holds
 \begin{align}
&  \int d \nu^\e_\o (x, z) \nabla_\e  \varphi   ( x,z) \psi (   x+\e z ) b(\theta_{x/\e}\o, z) \nonumber\\
 & \qquad \qquad \qquad  =
  - \int d \nu^\e_\o (x, z) 
  \nabla _\e \varphi  (x,z) 
  \psi (x) \tilde{b}(\theta_{x/\e}\o, z)\,.\label{micio3}
   \end{align}
Moreover,  the above integrals  in  \eqref{micio2}, \eqref{micio3} (under the hypothesis of this  Item (ii)) 
 correspond to absolutely convergent  series and are therefore well defined. 
 
\end{Lemma}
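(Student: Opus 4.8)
The plan is to unfold each integral against the explicit expression \eqref{nunu} for $\nu^\e_\o$, which gives, for any nonnegative (or integrable) $F$,
\[
\int d\nu^\e_\o(x,z)\,F(x,z)=\e^d\sum_{a\in\hat\o}\sum_{b\in\hat\o}c_{a,b}(\o)\,F(\e a,\,b-a),
\]
so that in both identities the macroscopic point is $x=\e a$ and $x+\e z=\e b$. For Item (i), the left-hand side of \eqref{micio2} becomes $\e^d\sum_{a,b\in\hat\o}c_{a,b}(\o)\,\varphi(\e a)\psi(\e b)\,b(\theta_a\o,b-a)$. On the right-hand side I would use Remark \ref{jolie} to note that $b-a\in\hat\o-a=\widehat{\theta_a\o}$, whence by definition of the tilde operation $\tilde b(\theta_a\o,b-a)=b(\theta_{b-a}(\theta_a\o),a-b)=b(\theta_b\o,a-b)$; the right-hand side then reads $\e^d\sum_{a,b\in\hat\o}c_{a,b}(\o)\,\psi(\e a)\varphi(\e b)\,b(\theta_b\o,a-b)$. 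Relabelling $a\leftrightarrow b$ and invoking the symmetry $c_{a,b}(\o)=c_{b,a}(\o)$, valid on $\O_*$ by Assumption (A5), turns this back into the left-hand side. Since all summands are nonnegative, every rearrangement is legitimate by Tonelli, and the identity holds for each $\o\in\O_*$.

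For Item (ii), the first task is to check that both series in \eqref{micio2} are absolutely convergent under the stated hypothesis. If $\varphi$ has compact support and $\psi$ is bounded, then $\varphi(\e a)\neq0$ for only finitely many $a\in\hat\o$ (local finiteness of $\hat\o$), and for each such $a$ the inner sum is, by Remark \ref{jolie} and the computation just made, $\sum_{b\in\hat\o}c_{a,b}(\o)\,|b(\theta_a\o,b-a)|=n_a(\o)\,r_b(\theta_a\o)<\infty$ because $\o\in\cA_1[b]$ and $a\in\hat\o$. The mirror case, $\psi$ of compact support and $\varphi$ bounded, reduces in the same way to $n_b(\o)\,r_{\tilde b}(\theta_b\o)<\infty$, this time via $\o\in\cA_1[\tilde b]$ together with $\tilde{\tilde b}=b$ (Lemma \ref{gattonaZZ}). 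Once absolute convergence is in hand, the algebraic manipulation of Item (i) applies verbatim (now by Fubini/unconditional summation rather than Tonelli), yielding \eqref{micio2}.

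For \eqref{micio3} I would split
\[
\nabla_\e\varphi(x,z)\,\psi(x+\e z)=\tfrac1\e\,(\varphi\psi)(x+\e z)-\tfrac1\e\,\varphi(x)\psi(x+\e z),
\]
apply the identity \eqref{micio2} just established to each term — to the first with the admissible pair $(\mathds{1},\varphi\psi)$ and to the second with $(\varphi,\psi)$, which are covered by Item (ii) since $\varphi\psi$ has compact support while $\mathds{1}$ and $\psi$ are bounded — and recombine the two outputs. They collapse to $-\tfrac1\e\int d\nu^\e_\o(x,z)\,\psi(x)\bigl(\varphi(x+\e z)-\varphi(x)\bigr)\tilde b(\theta_{x/\e}\o,z)$, which is precisely the right-hand side of \eqref{micio3}; absolute convergence of the two auxiliary integrals, hence of the final one, is already supplied by the bounds of the previous paragraph applied to the pairs $(\mathds{1},\varphi\psi)$ and $(\varphi,\psi)$.

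The computations throughout are elementary finite-sum rearrangements performed $\o$ by $\o$; the only point demanding care is the bookkeeping of absolute convergence, namely keeping track of whether a given reduction to finitely many anchor points forces an appeal to $\cA_1[b]$ or to $\cA_1[\tilde b]$. This is exactly why Item (ii) is stated with $\o$ in the intersection $\cA_1[b]\cap\cA_1[\tilde b]$.
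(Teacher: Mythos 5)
Your proof is correct, and for Item (i) and the absolute‑convergence bookkeeping in Item (ii) it mirrors the paper's argument almost exactly: unfold $\nu^\e_\o$ against \eqref{nunu}, use $c_{a,a'}(\o)=c_{a',a}(\o)$ (Assumption (A5)) together with the identity $\tilde b(\theta_{a'}\o,a-a')=b(\theta_a\o,a'-a)$, and relabel the double sum; the finiteness needed for unconditional rearrangement comes from $r_b(\theta_a\o)<\infty$ and $r_{\tilde b}(\theta_{a'}\o)<\infty$ when $\o\in\cA_1[b]\cap\cA_1[\tilde b]$, which is precisely the paper's bound \eqref{tom2}. The one genuine variation is your treatment of \eqref{micio3}: you split $\nabla_\e\varphi(x,z)\psi(x+\e z)=\tfrac1\e(\varphi\psi)(x+\e z)-\tfrac1\e\varphi(x)\psi(x+\e z)$ and feed each piece through \eqref{micio2} (with the admissible pairs $(\mathds{1},\varphi\psi)$ and $(\varphi,\psi)$), then recombine; the paper instead expands \eqref{micio3} directly as a double sum and carries out the same relabelling in one step, absorbing the sign from $\varphi(\e a')-\varphi(\e a)=-(\varphi(\e a)-\varphi(\e a'))$. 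Both routes lead to the same finite‑sum rearrangement; your version is slightly longer but makes the logical dependence on \eqref{micio2}, and the role of each set $\cA_1[b]$, $\cA_1[\tilde b]$, more explicit, while the paper's is more compact. Either presentation is acceptable.
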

\begin{proof}  We check \eqref{micio2} in Item  (i).
  Since $c_{a,a'}(\o)= c_{a',a}(\o)$ and $b(\theta_a \o, a'-a)= \tilde b(\theta_{a'} \o, a-a')$ for all $a,a'\in \hat \o$ \utto{(as $\o\in \O_*$)}, we can write  
 \begin{equation*}
 \begin{split}
 \text{l.h.s. of \eqref{micio2}}& = \e^d \sum_{a\in \hat \o} \sum_{a'\in  \hat \o } c_{a,a'}(\o) \varphi (\e a ) \psi (\e a') b( \theta_a \o, a'-a)
 \\
 &= \e^d  \sum_{a'\in  \hat \o } \sum_{a\in \hat \o} c_{a',a}(\o)\psi (\e a')   \varphi (\e a ) \tilde b(\theta_{a'} \o, a-a')=\text{r.h.s. of \eqref{micio2}}\,.
  \end{split}
 \end{equation*}
 The above identities hold also in Item (ii) since one deals indeed with absolutely convergent sum. For example, when $\varphi$ has compact support and $\psi $ is bounded, it is enough to observe that (as $\o \in \cA_1[b]$) 
 \be\label{tom2}
  \int d \nu^\e_\o (x, z)| \varphi ( x)| | b(\theta_{x/\e}\o, z)| \leq  \int d \mu^\e_\o (x) | \varphi  ( x)| r_b (\theta_{x/\e}\o)<+\infty\,.
  \en
  

 We now prove \eqref{micio3}.  We have 
\begin{multline*}
\text{l.h.s. of \eqref{micio3}}  = \e^d \sum_{a\in \hat \o} \sum_{a' \in  \hat \o } c_{a,a'}(\o) \frac{ \varphi (\e a')-\varphi ( \e a)}{\e} \psi (\e a') b( \theta_a \o, a'-a)
 \\
  =-  \e^d  \sum_{a' \in  \hat \o }  \sum_{a\in \hat \o} c_{a',a }(\o) 
 \frac{ \varphi (\e a)-\varphi ( \e a ')}{\e}  \psi (\e a')   \tilde b(\theta_{a'} \o, a-a')
 =\text{r.h.s. of \eqref{micio3}}\,.
  \end{multline*}
 The above arrangements are indeed legal as one can easily prove that  the above \ovo{series are  absolutely convergent}.
 \end{proof}

\begin{Definition}\label{naviglio} Let $b: \O_0\times \bbG \to \bbR$ be a  \rrr{measurable} function.
 If $\o \in   \cA_1[b]\cap \cA_1[\tilde b]\cap \O_0$,  we set ${\rm div}_* b  (\o) := \hat b (\o) - \hat{\tilde{b}}(\o) \in \bbR$.
\end{Definition}
\begin{Lemma}\label{arranco}
Let  $b: \O_0\times \bbG  \to \bbR$ be a \rrr{measurable} function with $\|b\|_{ L^2(\nu)}<+\infty$. Then $\cP_0 ( \cA_1[b]\cap \cA_1[\tilde b]) =1$ and 
${\rm div}_* b  = {\rm div }\,b $ in $L^1(\cP_0)$.
\end{Lemma}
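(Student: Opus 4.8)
The plan is to obtain both assertions by unwinding Definitions \ref{artic}, \ref{ometto} and \ref{naviglio}, using Lemma \ref{cavallo} and Lemma \ref{gattonaZZ} as black boxes. First I would record that, since $\nu$ has finite total mass, $\|b\|_{L^2(\nu)}<+\infty$ forces $\|b\|_{L^1(\nu)}<+\infty$; by Lemma \ref{gattonaZZ} the transformation $b\mapsto\tilde b$ preserves all $L^p(\nu)$--norms, hence $\|\tilde b\|_{L^1(\nu)}=\|b\|_{L^1(\nu)}<+\infty$ as well. Therefore Lemma \ref{cavallo}-(iii) is applicable both to $b$ and to $\tilde b$ and yields $\cP_0(\cA_1[b])=\cP_0(\cA_1[\tilde b])=1$, whence $\cP_0(\cA_1[b]\cap\cA_1[\tilde b])=1$. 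This settles the first claim.

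For the identity ${\rm div}_* b={\rm div}\,b$ in $L^1(\cP_0)$ I would show that the two functions agree $\cP_0$--almost surely. Since $\cP_0$ is concentrated on $\O_0$, for $\cP_0$--a.a.\ $\o$ one has $\o\in\cA_1[b]\cap\cA_1[\tilde b]\cap\O_0$; taking $z=0\in\hat\o$ in the defining conditions of $\cA_1[b]$ and of $\cA_1[\tilde b]$ gives $r_b(\o)<+\infty$ and $r_{\tilde b}(\o)<+\infty$, so by \eqref{zuppa} both $\hat b(\o)$ and $\hat{\tilde b}(\o)$ are represented by absolutely convergent integrals. Using $\tilde b(\o,z)=b(\theta_z\o,-z)$ for $z\in\hat\o$ (Definition \ref{ometto}) I would then compute, for such $\o$,
\[
{\rm div}_* b(\o)=\hat b(\o)-\hat{\tilde b}(\o)=\int d\hat\o(z)\,r_{0,z}(\o)\bigl(b(\o,z)-b(\theta_z\o,-z)\bigr)={\rm div}\,b(\o),
\]
the last equality being exactly Definition \ref{def_div}. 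Both members belong to $L^1(\cP_0)$ --- the right-hand side by \eqref{fuoco1}, the left-hand side as the difference $\hat b-\hat{\tilde b}$ of two $L^1(\cP_0)$--functions by Lemma \ref{cavallo}-(i) --- so the $\cP_0$--a.s.\ identity is an identity in $L^1(\cP_0)$.

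I do not anticipate a genuine obstacle: the proof is essentially a careful bookkeeping exercise. The one point deserving attention is the splitting of the single integral defining ${\rm div}\,b(\o)$ into $\hat b(\o)-\hat{\tilde b}(\o)$, which requires that \emph{both} $r_b(\o)$ and $r_{\tilde b}(\o)=\int d\hat\o(z)\,r_{0,z}(\o)|b(\theta_z\o,-z)|$ be finite for $\cP_0$--a.a.\ $\o$. This is precisely what membership in $\cA_1[b]\cap\cA_1[\tilde b]$ guarantees (evaluate the defining condition at $z=0$), and it can also be read off directly from \eqref{fuoco1}, whose left-hand side equals $\bbE_0[r_b+r_{\tilde b}]=2\|b\|_{L^1(\nu)}<+\infty$ by Lemma \ref{lemma_siou}.
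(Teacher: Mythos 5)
Your argument is correct and follows essentially the same route as the paper: apply Lemma \ref{gattonaZZ} plus the finiteness of $\nu$ to see that both $b$ and $\tilde b$ are $\nu$--integrable, invoke Lemma \ref{cavallo}-(iii) to get $\cP_0(\cA_1[b]\cap\cA_1[\tilde b])=1$, and then check the identity pointwise on that set. The paper merely labels the second step ``trivial''; you have correctly supplied the bookkeeping — in particular the observation that for $\o\in\cA_1[b]\cap\cA_1[\tilde b]\cap\O_0$ both $r_b(\o)$ and $r_{\tilde b}(\o)$ are finite (take $z=0$), which is exactly what licenses splitting the single integral defining ${\rm div}\,b(\o)$ into $\hat b(\o)-\hat{\tilde b}(\o)$.
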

\begin{proof}
By Lemma \ref{gattonaZZ} we have $  \| \tilde b \| _{L^2(\nu)}<\infty$. Hence, both $b$ and $\tilde b$ are $\nu$--integrable. By Lemma \ref{cavallo}--(iii) we get that $\cP_0 ( \cA_1[b]\cap \cA_1[\tilde b]) =1$. The identity  ${\rm div}_* b   = {\rm div} b $ in $L^1(\cP_0)$ is trivial.
\end{proof}
\begin{Lemma}\label{tav}
Let  $b: \O_0\times \bbG \to \bbR$ be a \rrr{measurable} function with  $\|b\|_{ L^2(\nu)}<+\infty$ and such that  its class of equivalence in $L^2(\nu)$ belongs to $L^2_{\rm sol}(\nu)$. Let 
\be\label{eq_tav}
\cA_d [b]:= \{\o \in \cA_1[b] \cap \cA_1[ \tilde b]\,:\, {\rm div}_* b (\theta_z \o) =0 \; \forall z \in \hat \o\}\,.
\en
Then  $\cP( \cA_d [b])=1$ and  $\cA_d [b]$ is translation invariant.
\end{Lemma}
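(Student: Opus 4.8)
The plan is to transfer the almost-sure identity ``${\rm div}_* b=0$'' from the Palm measure $\cP_0$ to $\cP$ by means of the ``tilde'' operation of \eqref{tildino} and Lemma \ref{matteo}. First I would observe that, since the equivalence class of $b$ in $L^2(\nu)$ belongs to $L^2_{\rm sol}(\nu)$, Corollary \ref{grazioso} gives ${\rm div}\, b=0$ $\cP_0$--a.s.; and by Lemma \ref{arranco} (applicable because $\|b\|_{L^2(\nu)}<\infty$) we have $\cP_0(\cA_1[b]\cap\cA_1[\tilde b])=1$ together with ${\rm div}_* b={\rm div}\, b$ in $L^1(\cP_0)$. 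Hence, setting
\[
A:=\{\o\in \cA_1[b]\cap\cA_1[\tilde b]\cap\O_0 \ :\ {\rm div}_* b(\o)=0\}
\]
(a measurable set, the intersection with $\O_0$ ensuring, via Definition \ref{naviglio}, that ${\rm div}_* b$ is defined on it), one gets $\cP_0(A)=1$.

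Next I would compare $\cA_d[b]$ with $\tilde A=\{\o\in\O:\theta_z\o\in A\ \forall z\in\hat\o\}$. The inclusion $\cA_d[b]\subseteq\tilde A$ is immediate: if $\o\in\cA_d[b]$ and $z\in\hat\o$, then $\theta_z\o\in\cA_1[b]\cap\cA_1[\tilde b]$ by translation invariance (Lemma \ref{cavallo}(iii)), $\theta_z\o\in\O_0$ since $n_0(\theta_z\o)=n_z(\o)>0$ (Remark \ref{jolie}, using $V=\bbI$), and ${\rm div}_* b(\theta_z\o)=0$ by definition of $\cA_d[b]$, so $\theta_z\o\in A$. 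For the reverse inclusion, recall that $\hat\o\neq\emptyset$ for $\cP$--a.a.\ $\o$ (Zero--Infinity Dichotomy together with (A1) and (A2)): if $\o\in\tilde A$ has $\hat\o\neq\emptyset$, pick $z_0\in\hat\o$; then $\theta_{z_0}\o\in A\subseteq\cA_1[b]\cap\cA_1[\tilde b]\subseteq\O_*$, and since $\cA_1[b]$, $\cA_1[\tilde b]$, $\O_*$ are all translation invariant this forces $\o\in\cA_1[b]\cap\cA_1[\tilde b]$; moreover ${\rm div}_* b(\theta_z\o)=0$ for every $z\in\hat\o$ because $\theta_z\o\in A$, whence $\o\in\cA_d[b]$. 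Thus $\cA_d[b]$ and $\tilde A$ differ only on the $\cP$--null set $\{\hat\o=\emptyset\}$; since $\cP_0(A)=1$ implies $\cP(\tilde A)=1$ by Lemma \ref{matteo}, we conclude $\cP(\cA_d[b])=1$.

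For the translation invariance of $\cA_d[b]$ I would argue directly rather than through $\tilde A$: given $\o\in\cA_d[b]$ and $g\in\bbG$, translation invariance of $\cA_1[b]\cap\cA_1[\tilde b]$ gives $\theta_g\o\in\cA_1[b]\cap\cA_1[\tilde b]$, and since $\widehat{\theta_g\o}=\hat\o-g$ and $\theta_z(\theta_g\o)=\theta_{z+g}\o$, the requirement ``${\rm div}_* b(\theta_z(\theta_g\o))=0$ for all $z\in\widehat{\theta_g\o}$'' is exactly ``${\rm div}_* b(\theta_{z'}\o)=0$ for all $z'\in\hat\o$'', which holds because $\o\in\cA_d[b]$; hence $\theta_g\o\in\cA_d[b]$. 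I do not expect a genuine obstacle here: the computations are all bookkeeping, and the only point requiring care is that the defining condition of $\cA_d[b]$ constrains $\theta_z\o$ only for $z$ ranging over the random set $\hat\o$ (rather than all of $\bbG$) --- which is precisely the situation Lemma \ref{matteo} is designed for --- while the clause ``$\o\in\cA_1[b]\cap\cA_1[\tilde b]$'' is recovered from the translation invariance of those sets, after discarding the $\cP$--null event $\{\hat\o=\emptyset\}$.
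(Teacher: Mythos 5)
Your proof is correct and follows the same route as the paper's: establish $\cP_0(A)=1$ with $A:=\{\o\in\cA_1[b]\cap\cA_1[\tilde b]\cap\O_0\,:\,{\rm div}_*b(\o)=0\}$ via Corollary \ref{grazioso} and Lemma \ref{arranco}, pass to $\cP(\tilde A)=1$ via Lemma \ref{matteo}, and compare $\tilde A$ with $\cA_d[b]$. The only difference is a small refinement: the paper asserts $\tilde A=\cA_d[b]$ outright, whereas you point out that this identity can fail on part of $\{\hat\o=\emptyset\}$ (there $\o\in\tilde A$ vacuously, while $\cA_d[b]$ still requires $\o\in\cA_1[b]\cap\cA_1[\tilde b]\subseteq\O_*$); you correctly note that the discrepancy is $\cP$-negligible, and you give the direct (rather than ``trivial'') check of translation invariance. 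Both observations are accurate bookkeeping, and the substance of the argument is the same.
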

\begin{proof} By Corollary \ref{grazioso} and  Lemma \ref{arranco} we have $\cP_0(A)=1$ where   $A:= \{\o\in \cA_1[b] \cap \cA_1[ \tilde b]\cap\O_0\,:\, {\rm div}_*b ( \o) =0 \}$. By Lemma \ref{matteo} and \eqref{tildino}, $\cP(\tilde A)=1$. To get that  $\cP( \cA_d [b])=1$ it is enough to observe that $\tilde A=  \cA_d [b]$. The translation invariance  is trivial.\end{proof}
\begin{Lemma}\label{lunetta}
Suppose that $b: \O_0\times \bbG \to \bbR$ is a \rrr{measurable} function with $\|b\|_{ L^2(\nu)}<+\infty$. Take $\o \in   \cA_1[b]\cap \cA_1[\tilde b]$.
 Then for any $\e>0$ and any $u:\bbR^d\to \bbR$ with compact support it holds
\begin{equation}\label{sea}
\int d \mu^\e_\o (x) u(x) {\rm div}_* b (\theta_{x/\e} \o) = - \e \int d \nu ^\e_\o (x,z) \nabla_\e u(x,z) b ( \theta_{x/\e} \o, z) \,.
\end{equation}
\end{Lemma}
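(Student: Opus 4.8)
The plan is to reduce \eqref{sea} to the two symmetrisation identities already established, namely \eqref{rino} in Lemma~\ref{cavallo} and \eqref{micio2} in Lemma~\ref{gattonaZ}. First I would record that \eqref{sea} makes sense: since $b\in L^2(\nu)\subset L^1(\nu)$ and, by Lemma~\ref{gattonaZZ}, also $\tilde b\in L^2(\nu)\subset L^1(\nu)$, Lemma~\ref{cavallo} applies to both $b$ and $\tilde b$; the sets $\cA_1[b]$ and $\cA_1[\tilde b]$ are translation invariant, so for $\o\in\cA_1[b]\cap\cA_1[\tilde b]$ every atom $\theta_{x/\e}\o$ of $\mu^\e_\o$ (i.e.\ $x/\e\in\hat\o$, whence $\theta_{x/\e}\o\in\O_0\cap\cA_1[b]\cap\cA_1[\tilde b]$) is a point at which ${\rm div}_*b$ is well defined in the sense of Definition~\ref{naviglio}. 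Since $u$ has compact support, only finitely many atoms of $\mu^\e_\o$ enter the left integral, and on each of them $r_b(\theta_{x/\e}\o)<\infty$ and $r_{\tilde b}(\theta_{x/\e}\o)<\infty$; this makes all the sums below finite or absolutely convergent, so the rearrangements are legitimate. This convergence bookkeeping --- needing $\o\in\cA_1[b]$ for one orientation of the double sum and $\o\in\cA_1[\tilde b]$ for the other --- is the only delicate point, and it explains why both membership conditions on $\o$ are imposed.

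Then I would compute. Writing ${\rm div}_*b(\theta_{x/\e}\o)=\hat b(\theta_{x/\e}\o)-\hat{\tilde b}(\theta_{x/\e}\o)$ and applying \eqref{rino} once to $b$ (using $\o\in\cA_1[b]$) and once to $\tilde b$ (using $\o\in\cA_1[\tilde b]$) --- the identity \eqref{rino} holds verbatim with the compactly supported, possibly discontinuous, $u$ in place of $\varphi$, since the series involved are finite or absolutely convergent --- one gets
\[
\int d\mu^\e_\o(x)\,u(x)\,{\rm div}_*b(\theta_{x/\e}\o)=\int d\nu^\e_\o(x,z)\,u(x)\,b(\theta_{x/\e}\o,z)-\int d\nu^\e_\o(x,z)\,u(x)\,\tilde b(\theta_{x/\e}\o,z)\,.
\]
Next I would rewrite the last integral by \eqref{micio2} applied with $b$ replaced by $\tilde b$ (so that $\tilde{\tilde b}=b$ by Lemma~\ref{gattonaZZ}), with $\varphi:=u$ (compact support) and $\psi:=1$ (bounded):
\[
\int d\nu^\e_\o(x,z)\,u(x)\,\tilde b(\theta_{x/\e}\o,z)=\int d\nu^\e_\o(x,z)\,u(x+\e z)\,b(\theta_{x/\e}\o,z)\,.
\]
Substituting, the two terms combine into $\int d\nu^\e_\o(x,z)\,[u(x)-u(x+\e z)]\,b(\theta_{x/\e}\o,z)$, which by the definition \eqref{ricola} of the microscopic gradient equals $-\e\int d\nu^\e_\o(x,z)\,\nabla_\e u(x,z)\,b(\theta_{x/\e}\o,z)$; this is exactly \eqref{sea}.

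Alternatively, and more transparently, one can expand by hand: using \eqref{franci}, \eqref{nunu} and Remark~\ref{jolie} to write $\mu^\e_\o$ and $\nu^\e_\o$ as sums over $\hat\o$ and to identify $r_{0,a'-a}(\theta_a\o)=r_{a,a'}(\o)$ and $\tilde b(\theta_{a'}\o,a-a')=b(\theta_a\o,a'-a)$, the left-hand side of \eqref{sea} becomes
\[
\e^d\sum_{a,a'\in\hat\o}c_{a,a'}(\o)\,u(\e a)\,b(\theta_a\o,a'-a)\;-\;\e^d\sum_{a,a'\in\hat\o}c_{a,a'}(\o)\,u(\e a)\,b(\theta_{a'}\o,a-a')\,;
\]
relabelling $a\leftrightarrow a'$ in the second sum and using the symmetry $c_{a,a'}(\o)=c_{a',a}(\o)$ from (A5) turns it into $\e^d\sum_{a,a'\in\hat\o}c_{a,a'}(\o)\,[u(\e a)-u(\e a')]\,b(\theta_a\o,a'-a)$, which coincides with $-\e\int d\nu^\e_\o(x,z)\,\nabla_\e u(x,z)\,b(\theta_{x/\e}\o,z)$ after the same identification. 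Here convergence of the first double sum uses $\sum_{a'}c_{a,a'}(\o)|b(\theta_a\o,a'-a)|=n_a(\o)\,r_b(\theta_a\o)<\infty$ (i.e.\ $\o\in\cA_1[b]$) and, after relabelling, that of the second uses $\sum_{a}c_{a,a'}(\o)|b(\theta_a\o,a'-a)|=n_{a'}(\o)\,r_{\tilde b}(\theta_{a'}\o)<\infty$ (i.e.\ $\o\in\cA_1[\tilde b]$), the compact support of $u$ restricting both outer sums to finitely many indices. I do not anticipate any obstacle beyond this bookkeeping.
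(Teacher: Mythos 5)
Your proof is correct and follows essentially the same route as the paper's: both decompose the left-hand side via $\operatorname{div}_* b = \hat b - \hat{\tilde b}$ (which is exactly the expansion \eqref{rino} applied to $b$ and $\tilde b$, giving \eqref{carte}), and then apply the transposition identity \eqref{micio2} to the $\tilde b$-term to reassemble the microscopic gradient $\nabla_\e u$. Your explicit accounting of which membership ($\o\in\cA_1[b]$ vs.\ $\o\in\cA_1[\tilde b]$) justifies which absolute convergence, and the alternative hand computation, are just a more spelled-out version of the same argument.
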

\begin{proof} Note that $\theta_{x/\e} \o$ in the l.h.s. of \eqref{sea} belongs to $ \cA_1[b]\cap \cA_1[\tilde b]\cap \O_0$. Hence, by Definition \ref{naviglio},
we can write the l.h.s. of \eqref{sea} as  
\begin{equation}\label{carte}
 \int d \nu ^\e _\o (x,z) u(x) b (\theta_{x/\e} \o, z) - \int d \nu ^\e_\o (x,z) u(x) \tilde b ( \theta_{x/\e} \o, z) \,.
 \end{equation}
Due to our assumptions  we are dealing with absolutely convergent series, hence the above rearrangements are free. By applying \eqref{micio2} to the r.h.s. of \eqref{carte} (see Item (ii) of Lemma \ref{gattonaZ}), 
we can rewrite \eqref{carte} as 
$\int d \nu ^\e_\o (x,z)   b ( \theta_{x/\e} \o, z) [ u(x) - u(x+ \e z)]$ and this allows to conclude.
\end{proof}

\section{Typical environments}\label{topo}

%
%
%
%
%
%
%
%

We can now describe the set $\O_{\rm typ}$ of typical environments appearing in Theorem \ref{teo1} and Theorem \ref{teo2}.
We first fix some basic notation and observations, frequently used below. Given $M>0$ and $a \in \bbR$,  we define $[a]_M$ as 
  \begin{equation}\label{taglio}
  [a]_M= M \mathds{1}_{\{a>M\}} +a \mathds{1}_{\{ |a|\leq M \}} -M \mathds{1} _{\{a<-M\}}\,.
  \end{equation}
 Given $a\geq b$, it holds  $a-b \geq [a]_M - [b]_M\geq 0$. Hence, for any $a, b\in \bbR$, it holds
   \begin{align}
&   \left |  [a]_M - [b]_M   \right |\leq\left |a-b\right |\,,  \label{r101}\\
&  \left | [ a-b]- [[a]_M - [b]_M ]  \right| \leq \left|a-b\right|
   \,.\label{r102}
  \end{align}
Recall that, 
due to Assumption (A8), the space $L^2(\cP_0)$ is separable. 
\begin{Lemma}\label{separati_sole} The space $L^2(\nu)$ is separable. 
\end{Lemma}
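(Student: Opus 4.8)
The plan is to deduce separability of $L^2(\nu)$ from the assumed separability of $L^2(\cP_0)$ together with the structure of $\nu$ as described in \eqref{labirinto}. Recall that $\nu$ is a finite measure on $\O_0\times\bbG$ with $\nu(\O_0\times\bbG)=\bbE_0[\l_0]<\infty$, defined by $\int d\nu(\o,z)\,g(\o,z)=\int d\cP_0(\o)\int d\hat\o(z)\,r_{0,z}(\o)g(\o,z)$. The key observation is that $\nu$ is absolutely continuous, in a suitable sense, with respect to the ``Campbell-type'' measure $\hat\cP_0$ on $\O_0\times\bbG$ obtained by integrating the counting measure $d\hat\o(z)$ against $d\cP_0(\o)$; more precisely, for a measurable set $A\subset\O_0\times\bbG$ one has $\nu(A)=\int d\cP_0(\o)\int d\hat\o(z)\,r_{0,z}(\o)\mathds{1}_A(\o,z)$, so $\nu$ is a measure on the same $\sigma$-algebra as this reference measure and it suffices to show the latter is separable (since a sub-$\sigma$-algebra, or the measure-algebra of an absolutely continuous measure, of a separable measure algebra is again separable).

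Concretely, I would argue at the level of measure algebras. First I would recall the general fact (e.g.\ from \cite{Br}, in the same spirit as the comment on (A8) in Section~\ref{tassometro}) that $L^2$ of a measure space is separable if and only if the associated measure algebra is separable, i.e.\ countably generated modulo null sets. So it is enough to exhibit a countable family of measurable subsets of $\O_0\times\bbG$ that generates $\cB(\O_0\times\bbG)$ modulo $\nu$-null sets. Since $\bbG$ (either $\bbR^d$ or $\bbZ^d$) is a separable metric space, $\cB(\bbG)$ is countably generated; pick a countable generating family $\cH\subset\cB(\bbG)$. For the $\O_0$-factor, separability of $L^2(\cP_0)$ gives (again by \cite[Theorem~4.13]{Br}, as invoked in Section~\ref{tassometro}) that the measure algebra of $(\O_0,\cF_0,\cP_0)$ is separable; hence there is a countable family $\cG\subset\cF_0$ generating $\cF_0$ modulo $\cP_0$-null sets. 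Then the countable family $\{G\times H:G\in\cG,\ H\in\cH\}$ generates $\cF_0\otimes\cB(\bbG)$ modulo null sets of the reference measure $d\cP_0(\o)\,d\hat\o(z)$, and a fortiori modulo $\nu$-null sets (as $\nu$ is absolutely continuous with respect to it with density $r_{0,z}(\o)$). Therefore the measure algebra of $(\O_0\times\bbG,\nu)$ is separable, and $L^2(\nu)$ is separable.

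The one point requiring a little care — and the main potential obstacle — is the passage from ``$\cG$ generates $\cF_0$ modulo $\cP_0$-null sets'' to ``$\{G\times H\}$ generates the product $\sigma$-algebra modulo $\mu_{\mathrm{ref}}$-null sets'', since the reference measure $d\cP_0(\o)\,d\hat\o(z)$ is not a product measure (its $z$-marginal depends on $\o$), and one must check that enlarging by null sets is compatible with taking products. This is handled by noting that the map $(\o,z)\mapsto\o$ pushes $\nu$ forward to a finite measure on $\O_0$ absolutely continuous w.r.t.\ $\cP_0$ (namely $\l_0\,d\cP_0$), so $\cP_0$-null sets pull back to $\nu$-null sets; similarly the slices over each fixed $\o$ are controlled by $\cB(\bbG)$, which is genuinely countably generated (no null-set issue needed there). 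Combining, the $\sigma$-algebra generated by $\{G\times H:G\in\cG,H\in\cH\}$ together with all $\nu$-null sets equals $\cF_0\otimes\cB(\bbG)$ completed w.r.t.\ $\nu$, which is what is needed. Alternatively, and perhaps more cleanly, I would phrase the whole argument as: $L^2(\nu)$ embeds isometrically into $L^2(\cP_0\otimes\,\text{(something countably generated)})$ after a measurable change of variables, but the measure-algebra argument above is the most robust and avoids any topological hypotheses on $\O$ beyond (A8). I expect the write-up to be short once the measure-algebra characterization of separability is cited.
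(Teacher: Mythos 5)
Your proof is correct but takes a genuinely different route from the one in the paper. The paper argues directly at the level of functions: it takes a countable dense family $\{f_j\}$ in $L^2(\cP_0)$ of \emph{bounded} functions (via the truncations $[f_j]_M$) and the countable family $\{B_k\}$ of rational balls in $\bbR^d$, then checks that the only element of $L^2(\nu)$ orthogonal to all products $f_j(\o)\mathds{1}_{B_k}(z)$ is zero, so the countable set $\{f_j(\o)\mathds{1}_{B_k}(z)\}$ is total and separability follows. You instead work at the level of measure algebras, invoking the general equivalence between separability of $L^2$ and countable generation of the $\sigma$-algebra modulo null sets, and then checking that a countable generator $\cG$ of $\cF_0$ (mod $\cP_0$-null sets) together with a countable generator $\cH$ of $\cB(\bbG)$ produces a countable generator of $\cF_0\otimes\cB(\bbG)$ modulo $\nu$-null sets; the key observation, which you handle correctly, is that the $\o$-marginal of $\nu$ is $\l_0\,d\cP_0\ll\cP_0$, so $\cP_0$-null sets lift to $\nu$-null rectangles. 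Both arguments use exactly the two inputs (A8) plus countable generation of $\cB(\bbG)$; the paper's version is shorter and more elementary (it only needs the easy direction ``countably generated $\Rightarrow$ $L^2$ separable''), whereas yours is cleaner conceptually but also requires the converse implication (``$L^2(\cP_0)$ separable $\Rightarrow$ $\cF_0$ countably generated mod null sets''), a standard fact but not the one the paper actually cites from \cite{Br}. You are right to be careful about the reference measure $d\cP_0(\o)\,d\hat\o(z)$ not being a product and possibly not $\sigma$-finite; your fallback of reasoning directly with $\nu$ and its $\o$-marginal resolves that cleanly.
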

\begin{proof}
By the separability of $L^2(\cP_0)$ there exists a countable dense set $\{ f_j\}$  in $L^2(\cP_0)$. At cost  to  approximate, in $L^2(\cP_0)$, $f_j $ by $[f_j]_M$  as $M\to \infty$
(cf. \eqref{taglio}), we can suppose that  $f_j$ is bounded.  Let $\{B_k\}$ be the countable family of closed balls in $\bbR^d$ with rational radius and with center in  $\bbQ^d$.  It is then trivial to check that the zero function is the only  function in  $L^2(\nu)$ orthogonal to all functions $ f_j(\o) \mathds{1}_{B_k}(z)$ (which belong indeed to $L^2(\nu)$).
\end{proof}
%

In the  construction of the functional sets  presented below,  we will use the separability of $L^2(\cP_0)$ and $L^2(\nu)$ without further mention.
The definition of these functional sets and  the typical environments (cf. Definition  \ref{budda}) consists of a list of technical assumptions, which are necessary to justify several steps in the next sections (there, we will indicate explicitly which  technical assumption we are  using).

\smallskip
Recall  the sets $\cA_1[b]$ and $\cA_d[b]$  introduced  respectively in Definition \ref{artic} and \eqref{eq_tav}. Recall Definition \ref{ometto} of $\tilde b$. Recall  \eqref{tildino}. 
Recall the set $\cA[f]$  in Proposition \ref{prop_ergodico} for a  \rrr{measurable} function  $f: \O_0\to \bbR$ with $\|f\|_{L^1(\cP_0)}<+\infty$. 

\begin{Definition}\label{defA}
Given a function $f: \O_0\to [0,+\infty]$ such that  $\|f\|_{L^1(\cP_0)}<+\infty$,  we define $\cA[f]$ as $\cA[f_0]$, where $f_0: \O_0 \to \bbR$ is defined as $f$ on $\{ f <+\infty\}$ and as $0$ on $\{ f =+\infty\}$. 
\end{Definition}

\noindent
$\odot$ {\bf The functional sets $\cG_1,\cH_1$}. 
We fix a countable set $\cH_1$ of \rrr{measurable} functions $b: \O_0\times \bbG\to \bbR$ such that 
$\|b \|_{L^2(\nu)}<+\infty$ for any $b \in \cH_1$ and such that   $\{ {\rm div} \,b\,:\, b \in \cH_1\}$ is a dense subset of $\{ w \in L^2(\cP_0)\,:\, \bbE_0[ w]=0\}$ when thought of as set of $L^2$--functions (recall Lemma \ref{santanna}). 
For each $b \in \cH_1$ we define  the \rrr{measurable} function $g_b : \O_0 \to \bbR$ as 
\be \label{lupetto}
g_b (\o):= 
\begin{cases}
{\rm div}_* b (\o) & \text{ if } \o \in \cA_1[b]\cap \cA_1[\tilde b]\,,\\
0 & \text{ otherwise}\,.
\end{cases}
\en
Note that by Lemma \ref{arranco} $g_b={\rm div}\, b$ $\cP_0$--a.s.
We set $\cG_1:=\{ g_b \,:\, b \in \cH_1\}$.

\smallskip

\noindent
$\odot$ {\bf The functional sets $\cG_2,\cH_2, \cH_3$}.  We fix a countable set $\cG_2$  of  bounded \rrr{measurable} functions 
$g: \O_0\to \bbR$ such that  the set $\{ \nabla g\,:\, g \in \cG_2\}$, thought in $L^2(\nu)$,  is  dense in $L^2_{\rm pot}(\nu)$
 (this is possible by the definition of $L^2_{\rm pot}(\nu)$).  We  define $\cH_2$ as the set of \rrr{measurable} functions $h:\O_0 \times \bbG \to \bbR$ such that $h=\nabla g$ for some $g\in \cG_2$. We define $\cH_3$ as  the set of \rrr{measurable} functions $h: \O_0 \times \bbG \to \bbR$ such that $h(\o,z)= g(\theta_z \o) z_i$ for some $g\in \cG_2$ and some direction $i=1,\dots, d$. Note that, since 
  $\bbE_0[\l_2]<+\infty$ by (A7) and since $g$ is bounded,  $\|h\|_{L^2(\nu)} <+\infty$ for all $h \in \cH_3$.

\smallskip

\noindent
$\odot$ {\bf The functional set $\cW$}. We fix a  countable set $\cW$ of \rrr{measurable} functions $b:\O_0\times \bbG\to \bbR$ such that, thought of as subset of $L^2(\nu)$, $\cW$  is dense in $L^2_{\rm sol} (\nu)$.  By  Corollary \ref{grazioso} and Lemma~\ref{gattonaZZ},  $\tilde b \in L^2_{\rm sol}(\nu)$ for any $b \in L^2_{\rm sol}(\nu)$. Hence, at cost to enlarge $\cW$, we assume that $\tilde b \in \cW$ for any $b \in \cW$.

\smallskip

\noindent
$\odot$ {\bf The functional set $\cG$}. 
We fix a countable set $\cG$ of \rrr{measurable} functions $g:\O_0\to \bbR$ such that: 
\begin{itemize}
\item  $\|g\|_{L^2(\cP_0)}<+\infty$ for any $g\in \cG$ and  $\cG$  is dense in $L^2(\cP_0)$ when thought of as a subset of  $L^2(\cP_0)$;
\item  $ 1\in \cG$, $\cG_1\subset \cG$, $\cG_2\subset \cG$;
\item  $\l_0 \wedge \sqrt{M} \in \cG$ for any $M\in \bbN$;
\item 
for each $b \in \cW$, $M \in \bbN$,  $i\in\{1,\dots,d\}$ and $\ell \in \bbN$,   the function $[f]_\ell: \O_0 \to \bbR$ where (cf. \eqref{taglio})
\be\label{nonfamale}
f(\o):= \begin{cases}
 \int d \hat \o (z) r_{0,z}(\o) z_i \mathds{1}_{\{ |z|\leq \ell\}} [b]_M(\o,z) & \text{if } \int d \hat \o (z) r_{0,z}(\o)|z_i|<+\infty \\
0 &\text{otherwise}
\end{cases}
\en
belongs to $\cG$;
\item at cost to enlarge $\cG$ we assume that $[g]_M\in \cG$ for any $g\in \cG$ and $M\in \bbN$.
\end{itemize}

\smallskip

\noindent
$\odot$ {\bf The functional set $\cH$}. 
We fix a countable set of \rrr{measurable} functions $b : \O_0 \times\bbG \to \bbR$ such that  
\begin{itemize}
\item $\|b\|_{L^2(\nu)}<+\infty$  for any $b\in \cH$ and $\cH$ is dense in $L^2(\nu)$ when  thought  of as a subset of  $L^2(\nu)$;
\item $\cH_1\cup  \cH_2 \cup \cH_3\cup  \cW \subset \cH$ and  $ 1\in \cH$;
\item  $\forall i=1,\dots, d$ the map $(\o,z) \mapsto z_i$ is in $\cH$ (recall: $\l_2 \in L^1(\cP_0) $ by (A7));
\item  at cost to enlarge $\cH$ we assume that $[b]_M\in \cH$ and  that $\tilde b\in \cH$ for any $b\in \cH $ and $M\in \bbN$ ($\tilde b \in L^2(\nu)$ by Lemma \ref{gattonaZZ}).
\end{itemize}


\begin{Definition}\label{budda}
We define $\O_{\rm typ}$ as the intersection of the following sets:
\begin{itemize}
\item $\cA[gg']$ for all  $g,g'\in \cG$ (recall that $1\in \cG$);
\item $\cA_1[bb']\cap \cA [\widehat{b b'}]$ for all  $b,b'\in \cH$ (recall that $1\in \cH$, $\tilde b \in \cH$ $\forall b\in \cH$ and recall Lemma \ref{cavallo}); 
\item $ \tilde A \cap\cA[\l_i]$ where $A:=\{\l_i<+\infty\}$ and $i=0,1,2$ (recall  (A7) and \eqref{tildino});
\item $\tilde A\cap  \cA[\l_0\mathds{1}_{\{\l_0 > \sqrt{M}\}}] $ with $A=\{\l_0<+\infty\}$ and $M\in \bbN$ (recall  (A7));
\item $ \cA_d[b]$ for all $b \in \cW$ (recall \eqref{eq_tav});
\item $ \cA[h_\ell ]$ where $\ell \in \bbN$ and $h_\ell (\o):=\int d\hat \o (z) r_{0,z}(\o) |z|^2 \mathds{1}_{\{|z| \geq \ell \}}$;
\item $ \cA[ f-[f]_\ell]$ where $f$  varies among the functions \eqref{nonfamale} with
 $b \in \cW$, $M \in \bbN$,  $i\in\{1,\dots,d\}$ and $\ell \in \bbN$.
\end{itemize}
\end{Definition}

\begin{Remark} \utto{$\O_{\rm typ} \subset \O_*\cap \O_1$ (see our Assumptions and \eqref{alba_chiara} for the definition of $\O_*$ and $\O_1$).}
\end{Remark}
\begin{Proposition}\label{stemino}
The above set $\O_{\rm typ}$ is a translation invariant \rrr{measurable} subset of $\O$ such that  $\cP(\O_{\rm typ})=1$.
\end{Proposition}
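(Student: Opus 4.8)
The proof amounts to checking two things for $\O_{\rm typ}$, which is given as a countable intersection of sets of a few standard shapes: (a) each set in the list is translation invariant and measurable with full $\cP$-measure, and (b) these properties are preserved under countable intersections. Point (b) is immediate: a countable intersection of translation invariant measurable sets is translation invariant and measurable, and a countable intersection of sets of full $\cP$-measure has full $\cP$-measure by countable subadditivity. So the entire content is point (a), checked type by type against the building blocks already established in the paper.

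\medskip

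\textbf{Proof.} Since $\cG$ and $\cH$ are countable, $\cW$ is countable, and the index sets ($i\in\{0,1,2\}$ or $\{1,\dots,d\}$, $M\in\bbN$, $\ell\in\bbN$) are countable, the set $\O_{\rm typ}$ in Definition \ref{budda} is a countable intersection. A countable intersection of translation invariant sets is translation invariant, a countable intersection of measurable sets is measurable, and if each set has $\cP$-measure one then so does the intersection (countable subadditivity of $\cP$ applied to the complements). Hence it suffices to verify, for each set $\mathcal{S}$ appearing in the seven bullet points of Definition \ref{budda}, that $\mathcal{S}$ is translation invariant, measurable, and satisfies $\cP(\mathcal{S})=1$. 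I will go through the bullets in order.

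\emph{Sets of the form $\cA[f]$.} For any measurable $f:\O_0\to\bbR$ with $\|f\|_{L^1(\cP_0)}<\infty$, Proposition \ref{prop_ergodico} furnishes a translation invariant measurable set $\cA[f]$ with $\cP(\cA[f])=1$; by Definition \ref{defA} the same holds for $f:\O_0\to[0,+\infty]$ with $\|f\|_{L^1(\cP_0)}<\infty$, since $\cA[f]:=\cA[f_0]$ and $\|f_0\|_{L^1(\cP_0)}=\|f\|_{L^1(\cP_0)}<\infty$. This immediately disposes of: $\cA[gg']$ with $g,g'\in\cG$ (here $gg'\in L^1(\cP_0)$ since $g,g'\in L^2(\cP_0)$, by Cauchy--Schwarz); $\cA[\widehat{bb'}]$ with $b,b'\in\cH$, because $bb'\in L^1(\nu)$ (as $b,b'\in L^2(\nu)$) and then $\widehat{bb'}\in L^1(\cP_0)$ by Lemma \ref{cavallo}(i); $\cA[\l_i]$ for $i=0,1,2$, which is legitimate because $\l_0,\l_2\in L^1(\cP_0)$ by (A7) and hence $\l_1\in L^1(\cP_0)$ by \eqref{maracas}; $\cA[\l_0\mathds{1}_{\{\l_0>\sqrt M\}}]$, dominated by $\l_0\in L^1(\cP_0)$; $\cA[h_\ell]$ with $h_\ell(\o)=\int d\hat\o(z)\,r_{0,z}(\o)|z|^2\mathds{1}_{\{|z|\ge\ell\}}\le\l_2(\o)$, hence in $L^1(\cP_0)$; and $\cA[f-[f]_\ell]$ where $f$ is the function \eqref{nonfamale}: there $|f(\o)|\le\int d\hat\o(z)\,r_{0,z}(\o)|z_i|M\le M\l_1(\o)$ on its domain and $f=0$ elsewhere, so $f\in L^1(\cP_0)$, and $f-[f]_\ell$ is measurable with $|f-[f]_\ell|\le|f|\in L^1(\cP_0)$.

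\emph{Sets of the form $\cA_1[b]$, $\cA_1[b]\cap\cA[\widehat b\,]$, $\cA_d[b]$, and $\tilde A$.} By Lemma \ref{cavallo}(iii), for any measurable $b$ with $\|b\|_{L^1(\nu)}<\infty$ (in particular any $b\in L^2(\nu)$, since $\nu$ has finite mass) the set $\cA_1[b]$ is translation invariant, measurable, and $\cP(\cA_1[b])=1$; combined with the previous paragraph (applied to $\widehat b$, which lies in $L^1(\cP_0)$ by Lemma \ref{cavallo}(i)) this handles $\cA_1[bb']\cap\cA[\widehat{bb'}]$ for $b,b'\in\cH$, noting $bb'\in L^2(\nu)$ requires $b,b'$ bounded or more care --- actually one uses that $\cA_1[bb']$ only needs $bb'\in L^1(\nu)$, which holds since $b,b'\in L^2(\nu)$ by Cauchy--Schwarz; and $\widehat{bb'}\in L^1(\cP_0)$ again by Lemma \ref{cavallo}(i). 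For $\cA_d[b]$ with $b\in\cW\subset L^2_{\rm sol}(\nu)$, Lemma \ref{tav} gives exactly that $\cA_d[b]$ is translation invariant with $\cP(\cA_d[b])=1$ (measurability is built into \eqref{eq_tav}). Finally, for the sets $\tilde A\cap\cA[\l_i]$ and $\tilde A\cap\cA[\l_0\mathds{1}_{\{\l_0>\sqrt M\}}]$ with $A=\{\l_i<+\infty\}$ or $A=\{\l_0<+\infty\}$: the relevant $\l_i$ lies in $L^1(\cP_0)$, so $\cP_0(A)=1$, whence by Lemma \ref{matteo} $\cP(\tilde A)=1$; moreover $\tilde A$ is translation invariant and measurable by the remark following \eqref{tildino}. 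Intersecting with the corresponding $\cA[\cdot]$ (already handled) keeps all three properties. This exhausts the list, and the proof is complete. $\qed$

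\medskip

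The main point requiring care --- though it is not really an obstacle, since all the needed lemmas are in place --- is bookkeeping: making sure each function fed into $\cA[\cdot]$ genuinely lies in $L^1(\cP_0)$ and each form fed into $\cA_1[\cdot]$ or $\cA_d[\cdot]$ lies in $L^1(\nu)$ (resp. $L^2_{\rm sol}(\nu)$), which in every case reduces to (A7), the finiteness of the mass of $\nu$, Cauchy--Schwarz, or Lemma \ref{cavallo}(i). Everything else is the soft observation that translation invariance, measurability, and full measure pass to countable intersections.
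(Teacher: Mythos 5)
Your proposal is correct and follows essentially the same route as the paper's (very terse) proof: Proposition~\ref{prop_ergodico} for the $\cA[\cdot]$ sets, Lemma~\ref{cavallo} for the $\cA_1[\cdot]$ sets, Lemma~\ref{tav} for the $\cA_d[\cdot]$ sets, and Lemma~\ref{matteo} for the $\tilde A$ sets (the paper cites Corollary~\ref{eleonora} here, which is itself an immediate consequence of Lemma~\ref{matteo}, so there is no real difference). The extra bookkeeping you supply --- checking the $L^1$-conditions case by case and spelling out that countable intersections preserve translation invariance, measurability and full measure --- is accurate and simply makes explicit what the paper leaves to the reader.
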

\begin{proof}
The claim  follows from Proposition \ref{prop_ergodico} for all sets  of the form $\cA[\cdot ]$, from   Lemma \ref{cavallo} 
for all sets  of the form $\cA_1[\cdot ]$,  from 
 Lemma   \ref{tav}  for all sets  of the form $\cA_d[\cdot ]$ and from Corollary \ref{eleonora} for  all sets of the form $\tilde A$.   \end{proof}

\section{2-scale convergence of $v_\e \in L^2 (\mu^\e_{\tilde\o})$ and  of $w_\e \in L^2 (\nu^\e_{\tilde \o})$}\label{sec_2scale}
In this section we recall the notion of 2-scale convergence in our context.
\begin{Definition}\label{priscilla}
Fix   $\tilde \o\in \O_{\rm typ}$, an $\e$--parametrized  family $v_\e \in L^2( \mu^\e_{\tilde \o})$ and  a function $v \in L^2 \bigl(m dx \times \cP_0 \bigr)$.
\\
$\bullet$ We say that \emph{$v_\e$ is weakly 2-scale convergent to $v$}, and write 
$v_\e \stackrel{2}{\rightharpoonup} v$, 
if the family $\{v_\e\}$ is bounded, i.e.
$
 \limsup_{\e\downarrow 0}  \|v_\e\|_{L^2( \mu^\e_{\tilde \o})}<+\infty$, 
 and 
\begin{equation}\label{rabarbaro}
\lim _{\e\downarrow 0} \int d \mu _{\tilde \o}^\e (x)  v_\e (x) \varphi (x) g ( \theta _{x/\e} \tilde{\o} ) =\int d\cP_0(\o)\int dx\, m v(x, \o) \varphi (x) g (\o)  \,,
\end{equation}
for any $\varphi \in C_c (\bbR^d)$ and any $g \in \cG$.\\
$\bullet$
 We say that \emph{$v_\e$ is strongly 2-scale convergent to $v$}, and write 
$v_\e \stackrel{2}{\to} v$, 
if the family $\{v_\e\}$ is bounded and 
\begin{equation}\label{gingsen}
\lim_{\e\downarrow 0} \int d\mu^\e_{\tilde \o}(x)   v_\e(x) u_\e(x) = \int d\cP_0(\o)\int  dx\,m   v(x, \o) u(x,\o)  
\end{equation}
whenever $u_\e \stackrel{2}{\rightharpoonup} u$. 
\end{Definition}

\begin{Lemma}\label{medaglia} Let $\tilde \o \in \O_{\rm typ}$. Then,
for any $\varphi\in C_c (\bbR^d) $ and $g \in \cG$, setting $ v_\e(x):= \varphi(x) g(\theta_{x/\e} \tilde \omega)$ it holds 
$L^2(\mu^\e_{\tilde \o} )\ni v_\e \stackrel{2}{\rightharpoonup} \varphi(x) g(\o)\in L^2( m dx \times \cP_0)$.
\end{Lemma}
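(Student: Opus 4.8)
The plan is to reduce the statement to two applications of Proposition~\ref{prop_ergodico}, using only that $\O_{\rm typ}\subset \cA[gg']$ for all $g,g'\in\cG$ (first bullet of Definition~\ref{budda}) and that products of functions in $\cG$ lie in $L^1(\cP_0)$ by Cauchy--Schwarz, since $\cG\subset L^2(\cP_0)$.

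First I would verify the boundedness requirement in Definition~\ref{priscilla}. Writing
\be
\|v_\e\|_{L^2(\mu^\e_{\tilde\o})}^2=\int d\mu^\e_{\tilde\o}(x)\,\varphi(x)^2\,g(\theta_{x/\e}\tilde\o)^2
\en
and applying Proposition~\ref{prop_ergodico} with $f:=g^2\in L^1(\cP_0)$ and test function $\varphi^2\in C_c(\bbR^d)$ --- legitimate because $\tilde\o\in\O_{\rm typ}\subset\cA[g^2]$ --- one gets $\lim_{\e\downarrow 0}\|v_\e\|_{L^2(\mu^\e_{\tilde\o})}^2=\bbE_0[g^2]\int dx\,m\,\varphi(x)^2<\infty$, which in particular gives $\limsup_{\e\downarrow 0}\|v_\e\|_{L^2(\mu^\e_{\tilde\o})}<\infty$.

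Next I would check \eqref{rabarbaro}. Fix a test pair $\psi\in C_c(\bbR^d)$ and $h\in\cG$. Then
\be
\int d\mu^\e_{\tilde\o}(x)\,v_\e(x)\,\psi(x)\,h(\theta_{x/\e}\tilde\o)=\int d\mu^\e_{\tilde\o}(x)\,(\varphi\psi)(x)\,(gh)(\theta_{x/\e}\tilde\o)\,,
\en
with $\varphi\psi\in C_c(\bbR^d)$ and $gh\in L^1(\cP_0)$. Since $\tilde\o\in\O_{\rm typ}\subset\cA[gh]$, Proposition~\ref{prop_ergodico} shows that this converges to $\bbE_0[gh]\int dx\,m\,\varphi(x)\psi(x)$, and by Fubini (all integrands are absolutely integrable) the latter equals $\int d\cP_0(\o)\int dx\,m\,\varphi(x)g(\o)\psi(x)h(\o)$, which is exactly the right-hand side of \eqref{rabarbaro} with $v(x,\o):=\varphi(x)g(\o)$; note $v\in L^2(mdx\times\cP_0)$ since $\varphi\in C_c(\bbR^d)$ and $g\in L^2(\cP_0)$. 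This establishes $v_\e\stackrel{2}{\rightharpoonup}\varphi(x)g(\o)$.

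There is essentially no serious obstacle here; the only points needing (routine) care are that $g^2$ and $gh$ indeed lie in $L^1(\cP_0)$, so that $\cA[\cdot]$ is defined for them (via Definition~\ref{defA}, allowing a $\cP_0$-null set where $g=\pm\infty$), and that $\O_{\rm typ}$ is contained in $\cA[g^2]$ and $\cA[gh]$, which is immediate from the first bullet of Definition~\ref{budda}.
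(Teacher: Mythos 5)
Your proof is correct and follows essentially the same route as the paper: verify boundedness via Proposition~\ref{prop_ergodico} applied to $g^2$ (using $\O_{\rm typ}\subset\cA[g^2]$), then verify \eqref{rabarbaro} via Proposition~\ref{prop_ergodico} applied to $gg_1$ (using $\O_{\rm typ}\subset\cA[gg_1]$). The paper's proof is a compressed version of exactly this argument.
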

\begin{proof} Since $\tilde \o \in \O_{\rm typ}\vvv{\subset \cA[g]}$, we get $\lim_{\e\da 0}  \|v_\e\|_{L^2(\mu^\e_{\tilde \o})}^2= \int dx\, m \varphi(x)^2 \ovo{\bbE_0}[g^2]$, hence $\{v_\e\}$ is bounded in $L^2(\mu^\e_{\tilde \o})$.
 Since $g\in \cG\subset  L^2(\cP_0)$, we have $\varphi(x) g(\o) \in L^2(m dx \times \cP_0)$. Take $\varphi_1 \in C_c(\bbR^d)$ and $g_1\in \cG$. Since $\tilde \o \in \O_{\rm typ}\vvv{\subset \cA[g g_1]}$, it holds $\int d \mu _{\tilde \o}^\e (x)  v_\e (x) \varphi _1(x) g_1 ( \theta _{x/\e} \tilde{\o} ) \to\int dx\, m \varphi(x) \varphi_1(x) \bbE_0[g g_1]$.\end{proof}
\begin{Lemma}\label{alba}
Given $\tilde \o \in \O_{\rm typ}$, if $v_\e \stackrel{2}{\toup} v$ then 
\begin{equation}\label{acquario}
\varlimsup _{\e \da 0} \int d\mu^\e_{\tilde \o}(x)\ovo{v_\e(x)^2}   \geq \int d\cP_0(\o)\int dx\,m v(x, \o)^2\,.
\end{equation}
\end{Lemma}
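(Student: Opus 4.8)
The plan is to establish this weak lower-semicontinuity bound by the classical ``square trick'': test $v_\e$ against the finite-dimensional approximations of $v$ that the two-scale convergence relation \eqref{rabarbaro} controls. Let $\cD$ denote the linear span of the functions $(x,\o)\mapsto\varphi(x)g(\o)$ with $\varphi\in C_c(\bbR^d)$ and $g\in\cG$. Fix $\psi=\sum_{j=1}^n\varphi_jg_j\in\cD$ and set $\psi_\e(x):=\sum_{j=1}^n\varphi_j(x)\,g_j(\theta_{x/\e}\tilde\o)$ for $x\in\e\hat{\tilde\o}$; since each $g_j$ is an $\bbR$-valued function on $\O_0$ and $\theta_x\tilde\o\in\O_0$ for $x\in\hat{\tilde\o}$ (Remark \ref{jolie}), while $\e\hat{\tilde\o}$ is locally finite and $\varphi_j$ has compact support, $\psi_\e\in L^2(\mu^\e_{\tilde\o})$. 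Expanding the trivial inequality $0\le\int d\mu^\e_{\tilde\o}(x)\,(v_\e-\psi_\e)^2$ yields
\[
\int d\mu^\e_{\tilde\o}(x)\,v_\e(x)^2 \ge 2\int d\mu^\e_{\tilde\o}(x)\,v_\e(x)\psi_\e(x) - \int d\mu^\e_{\tilde\o}(x)\,\psi_\e(x)^2.
\]

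Next I would pass to the limit $\e\da0$ on the right-hand side. Applying \eqref{rabarbaro} to each pair $(\varphi_j,g_j)$ and summing shows that the cross term converges to $2\la v,\psi\ra_{L^2(m\,dx\times\cP_0)}$. For the last term, expand $\psi_\e(x)^2=\sum_{j,k}\varphi_j(x)\varphi_k(x)\,g_j(\theta_{x/\e}\tilde\o)g_k(\theta_{x/\e}\tilde\o)$; since $\varphi_j\varphi_k\in C_c(\bbR^d)$, since $g_jg_k\in L^1(\cP_0)$ by Cauchy--Schwarz, and since $\tilde\o\in\O_{\rm typ}\subset\cA[g_jg_k]$ by Definition \ref{budda}, Proposition \ref{prop_ergodico} gives $\int d\mu^\e_{\tilde\o}(x)\,\psi_\e(x)^2\to\|\psi\|^2_{L^2(m\,dx\times\cP_0)}$. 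Taking $\varlimsup_{\e\da0}$ in the displayed inequality therefore produces, for every $\psi\in\cD$,
\[
\varlimsup_{\e\da0}\int d\mu^\e_{\tilde\o}(x)\,v_\e(x)^2 \ge 2\la v,\psi\ra_{L^2(m\,dx\times\cP_0)}-\|\psi\|^2_{L^2(m\,dx\times\cP_0)}.
\]

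To conclude I would optimize over $\psi$. Since $\cG$ is dense in $L^2(\cP_0)$ and $C_c(\bbR^d)$ is dense in $L^2(m\,dx)$, the span $\cD$ is dense in the Hilbert space $L^2(m\,dx\times\cP_0)$; choosing $\psi\to v$ in $\cD$ and using $2\la v,\psi\ra-\|\psi\|^2\to\|v\|^2$, we obtain $\varlimsup_{\e\da0}\int d\mu^\e_{\tilde\o}(x)v_\e(x)^2\ge\|v\|^2_{L^2(m\,dx\times\cP_0)}$, which is exactly \eqref{acquario}.

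The argument is essentially routine; the only delicate points are bookkeeping, and they are precisely what the construction of $\O_{\rm typ}$ is designed to supply: one must check that every scalar product and square that appears involves only functions $g$ drawn from the fixed countable family $\cG$ (so that \eqref{rabarbaro} applies) and that each product $g_jg_k$ lies in $L^1(\cP_0)$ with $\tilde\o$ in the corresponding ergodic set $\cA[g_jg_k]$ appearing in Definition \ref{budda}. The remaining ingredient, density of $\cD=\mathrm{span}\{\varphi g\}$ in $L^2(m\,dx\times\cP_0)$, is the standard fact that the algebraic tensor product of two dense subsets is dense in the Hilbert-space tensor product.
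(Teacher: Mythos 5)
Your proof is correct and is exactly the argument the paper refers to: it is the classical ``completing the square'' trick of Zhikov (\cite[Item~(iii),~p.~984]{Z}), with $\Phi$ replaced by a linear combination $\sum_j\varphi_j(x)g_j(\o)$, $\varphi_j\in C_c(\bbR^d)$, $g_j\in\cG$, and the passage to the limit for $\int\psi_\e^2$ justified by $\O_{\rm typ}\subset\cA[g_jg_k]$. The two are the same proof.
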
 
The proof is similar to the proof of \cite[Item~(iii),~p.~984]{Z} and therefore omitted:
replace    $\Phi$ in \cite{Z} with    a linear combination of functions $\varphi(x) g(\o)$ with $\varphi \in C_c(\bbR^d)$ and $g\in \cG$, use  the density of $\cG$ in $L^2(\cP_0)$ and the property that  \vvv{$\O_{\rm typ}\subset \cA[gg']$} for all  $g,g'\in \cG$ 
  (cf.   \cite[Lemma 10.5]{Fhom}).

Using Lemmas \ref{medaglia} and \ref{alba} one gets the following characterization:
\begin{Lemma}\label{fantasia}
Given  $\tilde \o \in \O_{\rm typ}$, $v_\e \stackrel{2}{\to} v$ if  and only if $v_\e \stackrel{2}{\rightharpoonup} v$ and 
\begin{equation}\label{orchino}
\lim _{\e\da 0} \int_{\bbR^d} d\mu^\e_{\tilde \o}(x) v_\e(x)^2 = \int_\O d \cP_0(\o) \int _{\bbR^d} dx \,m v(x,\o)^2 \,.
\end{equation}
\end{Lemma}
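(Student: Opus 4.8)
The plan is to prove the stated equivalence exactly as one proves, in a fixed Hilbert space, that weak convergence together with convergence of norms is equivalent to strong convergence; the only new feature is that the spaces $L^2(\mu^\e_{\tilde\o})$ vary with $\e$ and communicate with $L^2(m\,dx\times\cP_0)$ through the admissible test objects $\varphi(x) g(\theta_{x/\e}\tilde\o)$ ($\varphi\in C_c(\bbR^d)$, $g\in\cG$) furnished by Lemma \ref{medaglia}.

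For the ``only if'' direction I would first check that $v_\e\stackrel{2}{\to} v$ forces $v_\e\stackrel{2}{\rightharpoonup} v$: by Lemma \ref{medaglia}, for $\varphi\in C_c(\bbR^d)$ and $g\in\cG$ the family $u_\e(x):=\varphi(x) g(\theta_{x/\e}\tilde\o)$ satisfies $u_\e\stackrel{2}{\rightharpoonup}\varphi(x) g(\o)$, so applying \eqref{gingsen} to this particular $u_\e$ is precisely the defining relation \eqref{rabarbaro} for $v_\e\stackrel{2}{\rightharpoonup} v$. Once this is known, one may legitimately take $u_\e:=v_\e$ (and $u:=v$) in \eqref{gingsen}, which is exactly \eqref{orchino}.

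For the ``if'' direction, assume $v_\e\stackrel{2}{\rightharpoonup} v$ and \eqref{orchino}, and let $u_\e\stackrel{2}{\rightharpoonup} u$ be arbitrary; the goal is \eqref{gingsen}. For $t\in\bbR$, weak $2$-scale convergence is plainly linear at the level of \eqref{rabarbaro}, so $v_\e+tu_\e\stackrel{2}{\rightharpoonup} v+tu$, and this family is bounded since $\{v_\e\}$ and $\{u_\e\}$ are. Lemma \ref{alba} then gives
\[
\varlimsup_{\e\da 0}\int d\mu^\e_{\tilde\o}(x)\,(v_\e(x)+t u_\e(x))^2 \;\ge\; \int d\cP_0(\o)\int dx\, m\,(v(x,\o)+t u(x,\o))^2 .
\]
Expanding the left-hand side, using \eqref{orchino} to replace the genuinely convergent term $\lim_\e\int v_\e^2\,d\mu^\e_{\tilde\o}$ by $\int d\cP_0(\o)\int dx\,m\,v(x,\o)^2$, bounding $\varlimsup_\e\int u_\e^2\,d\mu^\e_{\tilde\o}$ by a finite constant $C^2$, expanding the right-hand side and discarding the nonnegative term $t^2\int d\cP_0\int dx\, m\,u^2$, one obtains for every $t>0$
\[
2t\,\varlimsup_{\e\da 0}\int d\mu^\e_{\tilde\o}(x)\, v_\e(x) u_\e(x)\;+\;t^2 C^2 \;\ge\; 2t\int d\cP_0(\o)\int dx\, m\, v(x,\o)u(x,\o).
\]
Dividing by $2t$ and letting $t\da 0$ yields $\varlimsup_{\e\da 0}\int v_\e u_\e\,d\mu^\e_{\tilde\o}\ge \int d\cP_0(\o)\int dx\, m\,v(x,\o)u(x,\o)$. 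Replacing $u_\e$ by $-u_\e$ (still admissible) produces the matching bound $\varliminf_{\e\da 0}\int v_\e u_\e\,d\mu^\e_{\tilde\o}\le\int d\cP_0(\o)\int dx\, m\,v(x,\o)u(x,\o)$, hence the limit exists and equals the right-hand side, which is \eqref{gingsen}.

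The argument is essentially routine. The only points that need a little care are the bookkeeping of $\varlimsup$ of sums — handled by the observation that $\int v_\e^2\,d\mu^\e_{\tilde\o}$ genuinely converges, so it splits off cleanly — and the quadratic-in-$t$ manipulation that upgrades the one-sided inequality of Lemma \ref{alba} to the two-sided identity \eqref{gingsen}. I do not anticipate any serious obstacle beyond this.
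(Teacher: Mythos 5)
Your ``only if'' direction is correct and coincides with the paper's proof: weak $2$-scale convergence follows by testing \eqref{gingsen} against the families from Lemma \ref{medaglia}, and \eqref{orchino} follows by taking $u_\e:=v_\e$ in \eqref{gingsen}.

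The ``if'' direction has a genuine gap. Your quadratic-in-$t$ argument with Lemma \ref{alba} correctly yields, for $t>0$,
\begin{equation*}
2t\,\varlimsup_{\e\da 0}\int d\mu^\e_{\tilde\o}\, v_\e u_\e\;+\;t^2 C^2\;\ge\;2t\int m\,dx\,d\cP_0\, v\,u\,,
\end{equation*}
hence $\varlimsup_{\e}\langle v_\e,u_\e\rangle_{\mu^\e_{\tilde\o}}\ge\int m\,dx\,d\cP_0\,vu$; and the substitution $u_\e\mapsto-u_\e$ gives $\varliminf_{\e}\langle v_\e,u_\e\rangle_{\mu^\e_{\tilde\o}}\le\int m\,dx\,d\cP_0\,vu$. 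But these two bounds only say that $\int m\,dx\,d\cP_0\,vu$ lies between the $\varliminf$ and the $\varlimsup$ -- i.e.\ they are the \emph{wrong} sandwich; they are perfectly compatible with $\langle v_\e,u_\e\rangle_{\mu^\e_{\tilde\o}}$ oscillating and not converging at all. You would need $\varlimsup\le\int vu$ and $\varliminf\ge\int vu$, and Lemma \ref{alba} alone can never produce such upper bounds for a $\varlimsup$: it is a lower-semicontinuity statement and all manipulations of it yield one-sided estimates of the same sign (taking $t<0$ reproduces exactly the $-u_\e$ inequality). The missing ingredient, which is what \cite[Item (iv), p.~984]{Z} actually supplies, is to approximate $v$ in $L^2(m\,dx\times\cP_0)$ by a test object $\Phi(x,\o)=\sum_i a_i\varphi_i(x)g_i(\o)$ (with $\varphi_i\in C_c(\bbR^d)$, $g_i\in\cG$) so that $\Phi_\e(x):=\Phi(x,\theta_{x/\e}\tilde\o)$ lives in $L^2(\mu^\e_{\tilde\o})$; then expanding $\|v_\e-\Phi_\e\|^2_{L^2(\mu^\e_{\tilde\o})}=\|v_\e\|^2-2\langle v_\e,\Phi_\e\rangle+\|\Phi_\e\|^2$ and using \eqref{orchino}, the weak $2$-scale convergence of $v_\e$, and the ergodic theorem ($\tilde\o\in\O_{\rm typ}\subset\cA[g_ig_j]$) one gets $\|v_\e-\Phi_\e\|^2\to\|v-\Phi\|^2$, after which $\langle v_\e,u_\e\rangle=\langle\Phi_\e,u_\e\rangle+\langle v_\e-\Phi_\e,u_\e\rangle$ is handled by weak $2$-scale convergence of $u_\e$ (first term) and Cauchy--Schwarz plus density of such $\Phi$ (second term). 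Your purely quadratic route bypasses this approximation step and therefore cannot close the argument.
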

\begin{proof}
 If $v_\e \to v$, then
 $v_\e \stackrel{2}{\rightharpoonup} v$ by 
    Lemma \ref{medaglia}.  By then applying \eqref{gingsen} with $u_\e:=v_\e$, we get \eqref{orchino}. The opposite  implication corresponds to \cite[Item~(iv),~p.~984]{Z} and the proof there can be easily adapted to our setting due to Lemma \ref{alba}.
(cf.   \cite[Lemma 10.4]{Fhom}). 
\end{proof}


\begin{Lemma}\label{compatto1}
 Let $\tilde \o\in \O_{\rm typ}$.    Then, given a bounded family of functions $v_\e\in L^2 ( \mu^\e_{\tilde{\o}})$,  there exists a  sequence $\e_k \downarrow 0 $ such that
 $ v_{\e_k}
  \stackrel{2}{\rightharpoonup}   v $ for some 
 $  v \in L^2(m dx \times \cP_0 )$ with $\|  v\|_{   L^2(m dx \times \cP_0 )}\leq \limsup_{\e\da 0} \|v_\e\|_{L^2( \mu^\e_{\tilde{\o}})}$.
 \end{Lemma}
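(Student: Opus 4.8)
The plan is to run the standard diagonal compactness argument for weak $2$-scale convergence, testing against the countable family of functions $\varphi(x)\,g(\theta_{x/\e}\tilde\o)$ with $\varphi\in C_c(\bbR^d)$ and $g\in\cG$, and then to identify the limit via the Riesz representation theorem in $L^2(m dx\times\cP_0)$. Throughout I would use that $\tilde\o\in\O_{\rm typ}\subset\cA[gg']$ for all $g,g'\in\cG$ (Definition \ref{budda}); in particular $\tilde\o\in\cA[1]$ and $\tilde\o\in\cA[g^2]$ for every $g\in\cG$.

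First I would fix a countable set $\{\varphi_j\}\subset C_c(\bbR^d)$ such that every $\varphi\in C_c(\bbR^d)$ is a uniform limit of functions $\varphi_j$ whose supports lie in a fixed compact set. For a fixed pair $(\varphi_j,g_k)$, Cauchy--Schwarz in $L^2(\mu^\e_{\tilde\o})$ together with Proposition \ref{prop_ergodico} (applied with $f=g_k^2\in L^1(\cP_0)$ and test function $\varphi_j^2\in C_c(\bbR^d)$) gives
\[
\Bigl|\int d\mu^\e_{\tilde\o}(x)\,v_\e(x)\varphi_j(x)g_k(\theta_{x/\e}\tilde\o)\Bigr|\le \|v_\e\|_{L^2(\mu^\e_{\tilde\o})}\Bigl(\int d\mu^\e_{\tilde\o}(x)\,\varphi_j(x)^2 g_k(\theta_{x/\e}\tilde\o)^2\Bigr)^{1/2},
\]
and the right-hand side stays bounded as $\e\downarrow 0$, since the square root converges to $\bigl(m\|\varphi_j\|_{L^2(dx)}^2\bbE_0[g_k^2]\bigr)^{1/2}$ and $\{v_\e\}$ is bounded. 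A diagonal extraction over the countably many pairs $(j,k)$ then yields a sequence $\e_m\downarrow 0$ along which $\int d\mu^{\e_m}_{\tilde\o}v_{\e_m}\varphi_j g_k(\theta_{\cdot/\e_m}\tilde\o)$ converges for all $j,k$.

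Next I would upgrade this to convergence for all $\varphi\in C_c(\bbR^d)$ and $g\in\cG$. Fixing $\varphi$ and $g$, picking $\varphi_j\to\varphi$ uniformly with supports in a common compact $K$ and a fixed $\chi\in C_c(\bbR^d)$ with $\chi\ge\mathds{1}_K$, Cauchy--Schwarz bounds $|\int d\mu^\e_{\tilde\o}v_\e(\varphi-\varphi_j)g(\theta_{\cdot/\e}\tilde\o)|$ by $\|v_\e\|_{L^2(\mu^\e_{\tilde\o})}\,\|\varphi-\varphi_j\|_\infty\,(\int d\mu^\e_{\tilde\o}\chi^2 g^2(\theta_{\cdot/\e}\tilde\o))^{1/2}$, the last factor being bounded in $\e$ again by Proposition \ref{prop_ergodico}; hence the error is $O(\|\varphi-\varphi_j\|_\infty)$ uniformly in small $\e$. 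A standard $3\delta$ argument then shows that $\int d\mu^{\e_m}_{\tilde\o}v_{\e_m}\varphi g(\theta_{\cdot/\e_m}\tilde\o)$ is Cauchy; denote its limit by $L(\varphi,g)$, a bilinear form in $(\varphi,g)$. Passing to the limit in the Cauchy--Schwarz estimate and invoking Proposition \ref{prop_ergodico} once more,
\[
|L(\varphi,g)|\le C\,\bigl(m\|\varphi\|_{L^2(dx)}^2\bbE_0[g^2]\bigr)^{1/2}=C\,\|\varphi\otimes g\|_{L^2(m dx\times\cP_0)},\qquad C:=\limsup_{\e\downarrow 0}\|v_\e\|_{L^2(\mu^\e_{\tilde\o})}<+\infty,
\]
where $(\varphi\otimes g)(x,\o):=\varphi(x)g(\o)$.

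Finally I would identify $L$ with an element of $L^2(m dx\times\cP_0)$. Since $\cG$ is dense in $L^2(\cP_0)$ and $C_c(\bbR^d)$ is dense in $L^2(dx)=L^2(m dx)$, the linear span of $\{\varphi\otimes g:\varphi\in C_c(\bbR^d),\,g\in\cG\}$ is dense in $L^2(m dx\times\cP_0)$; by the displayed bound, $L$ therefore extends to a bounded linear functional on $L^2(m dx\times\cP_0)$ of norm at most $C$. Riesz representation then gives $v\in L^2(m dx\times\cP_0)$ with $\|v\|_{L^2(m dx\times\cP_0)}\le C$ such that $L(\varphi,g)=\int m\,dx\int d\cP_0(\o)\,v(x,\o)\varphi(x)g(\o)$, which is precisely \eqref{rabarbaro} along $\e_m$ for every $\varphi\in C_c(\bbR^d)$ and $g\in\cG$; since $\{v_{\e_m}\}$ is bounded this means $v_{\e_m}\stackrel{2}{\rightharpoonup}v$, with the required norm bound. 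The only delicate point — and the main, though mild, obstacle — is keeping all estimates uniform in $\e$ when passing from the countable dense family $\{\varphi_j\}$ to arbitrary $\varphi\in C_c(\bbR^d)$; this is exactly where the inclusion $\tilde\o\in\O_{\rm typ}\subset\bigcap_{g\in\cG}\cA[g^2]$ and Proposition \ref{prop_ergodico} are essential. (The same scheme appears in Zhikov's treatment, cf.\ \cite{Z}.)
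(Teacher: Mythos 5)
Your argument follows the paper's strategy (diagonal extraction over a countable family of test products, then Riesz representation), but the last step contains a genuine gap. You prove the bound $|L(\varphi,g)|\le C\,\|\varphi\otimes g\|_{L^2(m\,dx\times\cP_0)}$ on elementary tensors and conclude that ``by the displayed bound, $L$ therefore extends to a bounded linear functional on $L^2(m\,dx\times\cP_0)$.'' This inference is false for a general bilinear form: a bound on elementary tensors does not control the induced functional on finite sums $\Phi=\sum_i a_i\,\varphi_i\otimes g_i$. Concretely, on a separable Hilbert space $H$ with orthonormal basis $(e_n)$, the form $L(u,v)=\langle u,v\rangle$ satisfies $|L(u,v)|\le\|u\otimes v\|$, yet $\Phi_N=\sum_{n\le N}e_n\otimes e_n$ has $\|\Phi_N\|=\sqrt N$ while $\sum_{n\le N}L(e_n,e_n)=N$, so no bounded linear extension exists. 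What Riesz actually requires is a bound $|L(\Phi)|\le C\|\Phi\|_{L^2(m\,dx\times\cP_0)}$ on the whole span, not merely on rank-one tensors.

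The gap is readily filled, and this is exactly why the paper works with finite rational linear combinations $\Phi(x,\o)=\sum_i a_i\varphi_i(x)g_i(\o)$ from the very start. Since $\sum_i a_i L(\varphi_i,g_i)=\lim_m\int d\mu^{\e_m}_{\tilde\o}v_{\e_m}(x)\,\Phi(x,\theta_{x/\e_m}\tilde\o)$, Cauchy--Schwarz applied to this single integral gives $|L(\Phi)|\le C\varlimsup_{\e}\bigl(\int d\mu^\e_{\tilde\o}\Phi(\cdot,\theta_{\cdot/\e}\tilde\o)^2\bigr)^{1/2}$. Expanding the square produces mixed terms $\int d\mu^\e_{\tilde\o}\varphi_i\varphi_j\,(g_ig_j)(\theta_{\cdot/\e}\tilde\o)$, and it is precisely the requirement $\tilde\o\in\cA[g\,g']$ for \emph{all pairs} $g,g'\in\cG$ (not merely $\cA[g^2]$) built into $\O_{\rm typ}$ that makes these converge to $\int m\,dx\,\varphi_i\varphi_j\,\bbE_0[g_ig_j]$, so that the bracket tends to $\|\Phi\|^2_{L^2(m\,dx\times\cP_0)}$; this is the paper's display \eqref{CL}. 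Once this estimate on combinations is inserted before invoking Riesz, your proof coincides with the paper's (the remaining uniform-approximation argument used to pass from the countable family of test functions to all of $C_c(\bbR^d)$ is essentially the one in the paper's Appendix \ref{app_compatto}).
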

 The proof of the above lemma  is similar to the proof of \cite[Prop.~2.2]{Z}, but in \cite{Z} some  density in uniform norm is used. Since that density is absent here, we provide the proof in Appendix \ref{app_compatto} to explain how to fill the gap.


 \begin{Definition}\label{foodgang}
Given  $\tilde \o\in \O_{\rm typ}$, a family $w_\e \in L^2( \nu ^\e_{\tilde \o})$ and  a function  $w \in L^2 \bigl( m dx \times d\nu\bigr)$, we say that \emph{$w_\e$ is weakly 2-scale convergent to $w$}, and write $  w_\e \stackrel{2}{\rightharpoonup}w   $,  if   $\{w_\e\}$ is bounded
 in $L^2( \nu ^\e_{\tilde \o})$, i.e. $
 \varlimsup_{\e \da 0} \|  w_\e\|_{L^2( \nu ^\e_{\tilde \o})}<+\infty$,
   and
\begin{multline}\label{yelena}
 \lim _{\e\downarrow 0} \int   d \nu_{\tilde \o}^\e (x,z) w_\e (x,z ) \varphi (x) b ( \theta _{x/\e} \tilde{\o},z )\\
=\int  dx\,m \int d \nu (\o, z) w(x, \o,z ) \varphi (x) b (\o,z )  \,,
\end{multline}
for any $\varphi \in C_c (\bbR^d)$ and  any $b \in \cH $. 
\end{Definition}



\begin{Lemma}\label{compatto2} Let $\tilde \o\in \O_{\rm typ}$.  Then, given a bounded family of functions $w_\e\in L^2 ( \nu^\e_{\tilde{\o}})$,  there exists a \rrr{sequence} $\e_k\downarrow 0$   such that  $w_{\e_k} \stackrel{2}{\rightharpoonup} w$ for some 
 $ w \in L^2(  m\,dx \times \nu )$ with  $\|  w\|_{   L^2(m dx \times \nu )}\leq \limsup_{\e\da 0} \|w_\e\|_{L^2( \nu^\e_{\tilde{\o}})}$.
\end{Lemma}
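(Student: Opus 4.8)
The plan is to follow the proof of Lemma \ref{compatto1} given in Appendix \ref{app_compatto}, replacing everywhere the measure $\mu^\e_{\tilde\o}$, the family $\cG$ and the pairing in \eqref{rabarbaro} by $\nu^\e_{\tilde\o}$, the family $\cH$ and the pairing in \eqref{yelena}. Starting from an arbitrary sequence $\e\da0$, I would first pass to a subsequence along which $\|w_\e\|_{L^2(\nu^\e_{\tilde\o})}$ converges to $C:=\limsup_{\e\da0}\|w_\e\|_{L^2(\nu^\e_{\tilde\o})}<+\infty$.

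The engine of the argument is a uniform size estimate for the oscillating test objects. For $\varphi,\psi\in C_c(\bbR^d)$ and $b,b'\in\cH$, Lemma \ref{cavallo}(ii) (cf.\ \eqref{rino}) applied to the form $(\o,z)\mapsto b(\o,z)\,b'(\o,z)$, together with $\tilde\o\in\O_{\rm typ}\subset\cA_1[bb']\cap\cA[\widehat{bb'}]$ (these sets appear in Definition \ref{budda} precisely for this use), gives
\[
\int d\nu^\e_{\tilde\o}(x,z)\,\varphi(x)\psi(x)\,b(\theta_{x/\e}\tilde\o,z)\,b'(\theta_{x/\e}\tilde\o,z)=\int d\mu^\e_{\tilde\o}(x)\,\varphi(x)\psi(x)\,\widehat{bb'}(\theta_{x/\e}\tilde\o)\,,
\]
and Proposition \ref{prop_ergodico} (applied to $f=\widehat{bb'}\in L^1(\cP_0)$, cf.\ Lemma \ref{cavallo}(i)) makes the right-hand side converge to $\int dx\,m\,\varphi(x)\psi(x)\,\la b,b'\ra_\nu$, since $\bbE_0[\widehat{bb'}]=\nu[bb']=\la b,b'\ra_\nu$. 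Hence, for a finite combination $u=\sum_j c_j\,\varphi_j\otimes b_j$ ($\varphi_j\in C_c(\bbR^d)$, $b_j\in\cH$), the oscillating function $\Phi^\e_u(x,z):=\sum_j c_j\,\varphi_j(x)\,b_j(\theta_{x/\e}\tilde\o,z)$ satisfies $\|\Phi^\e_u\|_{L^2(\nu^\e_{\tilde\o})}\to\|u\|_{L^2(m\,dx\times\nu)}$, and by Cauchy--Schwarz the linear functional $L_\e(u):=\int d\nu^\e_{\tilde\o}(x,z)\,w_\e(x,z)\,\Phi^\e_u(x,z)$ obeys $|L_\e(u)|\le\|w_\e\|_{L^2(\nu^\e_{\tilde\o})}\,\|\Phi^\e_u\|_{L^2(\nu^\e_{\tilde\o})}$, whose right side tends to $C\,\|u\|_{L^2(m\,dx\times\nu)}$.

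Next I would fix a countable $\cD\subset C_c(\bbR^d)$ that is dense in $L^2(m\,dx)$ and has the property that every $\varphi\in C_c(\bbR^d)$ is a uniform limit of a sequence in $\cD$ whose supports lie in a common compact set. Since $\cH$ is countable, a diagonal extraction produces $\e_k\da0$ along which $L_{\e_k}(\varphi\otimes b)$ converges for all $\varphi\in\cD$, $b\in\cH$; denote the limit by $\Lambda(\varphi\otimes b)$ and extend $\Lambda$ linearly to the span $S$ of $\{\varphi\otimes b:\varphi\in\cD,\ b\in\cH\}$. The size estimate gives $|\Lambda(u)|\le C\,\|u\|_{L^2(m\,dx\times\nu)}$ on $S$; since $S$ is dense in $L^2(m\,dx\times\nu)$ (the latter being the Hilbert tensor product of $L^2(m\,dx)$ and $L^2(\nu)$), $\Lambda$ extends to a bounded functional of norm $\le C$, and the Riesz representation theorem provides $w\in L^2(m\,dx\times\nu)$ with $\|w\|_{L^2(m\,dx\times\nu)}\le C$ representing it.

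The hard part — and the only step requiring real care — is upgrading the convergence from $\varphi\in\cD$ to arbitrary $\varphi\in C_c(\bbR^d)$, i.e.\ proving \eqref{yelena} for all $\varphi\in C_c(\bbR^d)$ and $b\in\cH$. This is exactly where the classical compactness proof relies on sup-norm density of the full oscillating test object, which is unavailable here since $b\in\cH$ is only an $L^2(\nu)$-function. The remedy is to approximate only the macroscopic factor: given $\varphi\in C_c(\bbR^d)$ and $b\in\cH$, choose $\varphi_n\in\cD$ with $\varphi_n\to\varphi$ uniformly and $\mathrm{supp}\,\varphi_n$ inside a fixed compact $K$, and $\psi\in C_c(\bbR^d)$ with $\psi\ge\mathds{1}_K$. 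Then Cauchy--Schwarz and the size estimate (used with $\psi$ in place of $\varphi\psi$ and $b'=b$) yield
\[
\limsup_k\bigl|L_{\e_k}\bigl((\varphi-\varphi_n)\otimes b\bigr)\bigr|\le C\,\|\varphi-\varphi_n\|_\infty\Bigl(m\int\psi(x)\,dx\Bigr)^{1/2}\|b\|_{L^2(\nu)}\,,
\]
which vanishes as $n\to\infty$, while $\Lambda(\varphi_n\otimes b)=\la w,\varphi_n\otimes b\ra\to\la w,\varphi\otimes b\ra$ because $\varphi_n\to\varphi$ in $L^2(m\,dx)$; a routine $\e/3$ argument then gives $L_{\e_k}(\varphi\otimes b)\to\la w,\varphi\otimes b\ra$, which is \eqref{yelena}. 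Together with $\limsup_k\|w_{\e_k}\|_{L^2(\nu^{\e_k}_{\tilde\o})}\le C$ and the norm bound on $w$, this establishes $w_{\e_k}\stackrel{2}{\rightharpoonup}w$ with the asserted properties.
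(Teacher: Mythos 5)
Your proof is correct and follows essentially the same route as the paper's (Appendix F): both reduce the $\nu^\e_{\tilde\o}$-norm of the oscillating test objects to a $\mu^\e_{\tilde\o}$-integral of $\widehat{b_ib_j}$ via Lemma \ref{cavallo}, invoke $\O_{\rm typ}\subset\cA_1[bb']\cap\cA[\widehat{bb'}]$ and Proposition \ref{prop_ergodico} to get the norm limit, then apply Cauchy--Schwarz, a diagonal extraction, and Riesz representation, and finally upgrade from the countable dense set of $\varphi$'s to all of $C_c(\bbR^d)$ by the same uniform-approximation device used for Lemma \ref{compatto1}. The only cosmetic differences are that you pass first to a subsequence realizing the $\limsup$ and that you approximate only the macroscopic factor, which is precisely the point the paper also relies on.
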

We postpone  a sketch of the proof of Lemma \ref{compatto2} to Appendix \ref{app_compatto}.


\section{\ovo{Roadmap of the proof of Theorem \ref{teo1}}}\label{mappa}
The main strategy to prove Theorem \ref{teo1} is the same  e.g. of the one  in \cite{ZP} to prove  Theorem 
6.1 there. Below we provide a list of the main steps to arrive to some key structure  results concerning 
 the solutions $u_\e$ of \eqref{eq1} and allowing then to easily conclude the proof. The details are provided in the next sections. 
 
We  fix a typical  environment $\tilde \o\in \O_{\rm typ}$. The first step is to  prove the following structure result: given a bounded  family  of functions $u_\e$ in  $ H^1_{\tilde \o, \e} $,  along a subsequence $\e_k$ it holds 
\begin{align*}
& L^2(\mu^\e_{\tilde \o}) \ni u_\e \stackrel{2}{\toup} u\in  L^2(  m dx\times \cP_0)\,,\\
& L^2  (\nu^\e_{\tilde \o}) \ni \nabla u_\e  (x,z) \stackrel{2}{\toup} w(x,\o,z):=  \nabla_*  u(x) \cdot z + u_1 (x,\o,z)\in L^2( m dx \times \nu)\,,
\end{align*}
for suitable functions $u, u_1$ with  $u=u(x)\in H^1 _*( m dx )$,  $u_1\in L^2\bigl( \bbR^d, L^2_{\rm pot} (\nu)\bigr )$.
We devote Sections \ref{cut-off1}, \ref{sec_risso}, \ref{cut-off2} and \ref{sec_oro} to the above  result.

 Suppose now that  $\{f_\e\}$ is a bounded family in $L^2(\mu^\e_{\tilde \o})$. Let   $u_\e\in H^{1,{\rm f}}_{{\tilde \o}, \e}$ be the  weak solution of \eqref{eq1}, i.e.  
\be\label{blanco}
\frac{1}{2} \la \nabla_\e v, \nabla _\e u_\e \ra _{\nu_{\tilde \o} ^\e}+ \la v, u_\e \ra _{\mu_{\tilde \o}^\e}= \la v, f_\e \ra_{\mu_{\tilde \o}^\e}\,,\qquad \forall v \in  H^{1,{\rm f}}_{{\tilde \o}, \e}\,.
\en It is then standard to get from \eqref{blanco}   that the family $\{u_\e\}$ is bounded in   $H^1_{\tilde \o, \e}$. Hence  we can apply the above structure result. 

The next step is to prove that, for $dx$--a.e.~$x$,   the map $(\o,z) \mapsto w(x,\o,z)$ belong to $L^2_{\rm sol}(\nu)$ (this is the first part of the proof of Claim \ref{vanity} in Section \ref{robot} and relies on a suitable choice of $v$ in \eqref{blanco}).

As $ w(x,\o,z):=  \nabla_*  u(x) \cdot z + u_1 (x,\o,z)$,  $w(x,\cdot,\cdot)\in L^2_{\rm sol}(\nu)$ and $u_1 (x, \cdot,\cdot) \in L^2_{\rm pot} (\nu)$ for $dx$--a.e.~$x$, from \eqref{jung} we get that $u_1(x,\cdot,\cdot)= v^a(\cdot,\cdot)$ with $a= \nabla_* u(x)$ for $dx$--a.e.~$x$ (i.e. $u_1(x,\cdot,\cdot)$ is the orthogonal projection  onto $L^2_{\rm pot}(\nu)$ of the form $(\o,z) \mapsto - \nabla_* u(x) \cdot z$).  As a byproduct with \eqref{solare789} we then get that  $\int d \nu(\o, z)  w(x,\o, z)  z= 2 D  \nabla_* u (x)$ for  $dx$--a.e.~$x$ (see Claim \ref{vanity}).   The effective  homogenized matrix $D$   has finally emerged. 

From  this point the conclusion of the proof of Theorem \ref{teo1} becomes  relatively simple and is detailed in Section \ref{robot} after the proof of Claim \ref{vanity}.


\section{Cut-off for functions  $v_\e\in L^2(\mu^\e_{\tilde \o})$}\label{cut-off1}
 $\bbN_+$ denotes the  set of positive integers. Recall \eqref{taglio}.
\begin{Lemma}\label{lemma1}
Let $\tilde \o \in \O_{\rm typ}$ and let $\{v_\e\}$ be a family of functions such that   $v_\e\in L^2(\mu^\e_{\tilde \o})$ and 
$\limsup_{\e\da 0} \| v_\e\|_{L^2(\mu^\e_{\tilde \o})}<+\infty$. Then  there exist functions $v, v_M\in L^2 (m dx \times \cP_0) $ with $M$ varying in $\bbN_+$ such that 
\begin{itemize}
\item[(i)]  $v_\e \stackrel{2}{ \toup}  v$ and $[v_\e ]_M\stackrel{2}{ \toup}  v_M$ for all $M\in \bbN_+$, along a \rrr{sequence} $\e_k\downarrow 0$;
\item[(ii)] for any $\varphi \in C_c(\bbR^d) $ and $u\in \cG$ it holds
\begin{multline}\label{queen1}
\lim_{M \to \infty}\int dx \,m \int d\cP_0 (\o) v_M(x,\o) \varphi (x) u(\o)\\
=
\int dx\,  m \int d \cP_0(\o) v(x,\o) \varphi(x) u(\o)\,.
\end{multline}
\end{itemize}
\end{Lemma}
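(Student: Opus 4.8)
The plan is to extract the weakly 2-scale convergent subsequences first and then control the $M\to\infty$ limit. For item (i), I would proceed as follows. By hypothesis $\limsup_\e\|v_\e\|_{L^2(\mu^\e_{\tilde\o})}<+\infty$, and since $|[v_\e]_M|\le|v_\e|$ pointwise (by \eqref{taglio}), the family $\{[v_\e]_M\}$ is also bounded in $L^2(\mu^\e_{\tilde\o})$, uniformly in $M$; moreover $\|[v_\e]_M\|_{L^2(\mu^\e_{\tilde\o})}\le\|v_\e\|_{L^2(\mu^\e_{\tilde\o})}$. Applying Lemma \ref{compatto1} to $\{v_\e\}$ gives a sequence $\e_k\downarrow 0$ with $v_{\e_k}\stackrel{2}{\toup}v$ for some $v\in L^2(m\,dx\times\cP_0)$. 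Then, for each $M\in\bbN_+$ in turn, I apply Lemma \ref{compatto1} again to $\{[v_{\e_k}]_M\}$ and pass to a further subsequence; a standard diagonal argument over $M\in\bbN_+$ produces a single sequence $\e_k\downarrow 0$ (relabelled) along which $v_{\e_k}\stackrel{2}{\toup}v$ and $[v_{\e_k}]_M\stackrel{2}{\toup}v_M$ for every $M$, with $\|v_M\|_{L^2(m\,dx\times\cP_0)}\le\limsup_\e\|v_\e\|_{L^2(\mu^\e_{\tilde\o})}$ uniformly in $M$. This last uniform bound is what makes the $M\to\infty$ analysis tractable.

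For item (ii), fix $\varphi\in C_c(\bbR^d)$ and $u\in\cG$. By the definition of weak 2-scale convergence \eqref{rabarbaro} (note $u\in\cG$ and $\tilde\o\in\O_{\rm typ}$, so $\tilde\o\in\cA[u^2]$ etc., making the pairing legitimate), we have for each $M$
\[
\int dx\,m\int d\cP_0(\o)\,v_M(x,\o)\varphi(x)u(\o)=\lim_{k\to\infty}\int d\mu^\e_{\tilde\o}(x)\,[v_{\e_k}(x)]_M\,\varphi(x)u(\theta_{x/\e_k}\tilde\o),
\]
and similarly with $v_M$ replaced by $v$ and $[v_{\e_k}]_M$ replaced by $v_{\e_k}$. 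The difference of the two integrands on the right is $\bigl(v_{\e_k}-[v_{\e_k}]_M\bigr)\varphi(x)u(\theta_{x/\e_k}\tilde\o)$, whose $\mu^\e_{\tilde\o}$-integral is bounded, by Cauchy--Schwarz, by
\[
\Bigl(\int d\mu^\e_{\tilde\o}\,\bigl|v_{\e_k}-[v_{\e_k}]_M\bigr|^2\Bigr)^{1/2}\Bigl(\int d\mu^\e_{\tilde\o}\,\varphi(x)^2 u(\theta_{x/\e_k}\tilde\o)^2\Bigr)^{1/2}.
\]
The second factor converges (by \eqref{rabarbaro} with test pair $\varphi^2\in C_c$, $u^2\in\cG$, using $[g]_M\in\cG$ and $\cG\ni 1$ — here I use closure of $\cG$ under the relevant operations, or more simply $\cA[u^2]$) to a finite constant $c(\varphi,u)$. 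For the first factor, on the region $\{|v_{\e_k}|\le M\}$ the integrand vanishes, while on $\{|v_{\e_k}|>M\}$ it is $\le|v_{\e_k}|^2$; hence $\int d\mu^\e_{\tilde\o}\,|v_{\e_k}-[v_{\e_k}]_M|^2\le\int_{\{|v_{\e_k}|>M\}}|v_{\e_k}|^2\,d\mu^\e_{\tilde\o}$. I therefore need a uniform (in small $\e$) tail estimate: an inequality of the form $\limsup_{k}\int_{\{|v_{\e_k}|>M\}}|v_{\e_k}|^2\,d\mu^{\e_k}_{\tilde\o}\le\rho(M)$ with $\rho(M)\to 0$ as $M\to\infty$. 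Assuming such an estimate, we get $\limsup_k\bigl|\int d\mu^\e_{\tilde\o}(v_{\e_k}-[v_{\e_k}]_M)\varphi u(\theta_{\cdot/\e_k}\tilde\o)\bigr|\le\sqrt{\rho(M)}\cdot\sqrt{c(\varphi,u)}$, and letting first $k\to\infty$ then $M\to\infty$ yields \eqref{queen1}.

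\textbf{The main obstacle.} The delicate point is the uniform $L^2$-tail bound $\limsup_k\int_{\{|v_{\e_k}|>M\}}|v_{\e_k}|^2\,d\mu^{\e_k}_{\tilde\o}\to 0$. This does \emph{not} follow from mere boundedness of $\{v_\e\}$ in $L^2(\mu^\e_{\tilde\o})$; it is a uniform integrability statement and the families here are not obviously uniformly integrable. I expect the resolution to be a soft one rather than a quantitative one: since $[v_{\e_k}]_M\stackrel{2}{\toup}v_M$ and $v_{\e_k}\stackrel{2}{\toup}v$, Lemma \ref{alba} gives $\varlimsup_k\int|[v_{\e_k}]_M|^2\,d\mu^{\e_k}_{\tilde\o}\ge\|v_M\|^2_{L^2(m\,dx\times\cP_0)}$ and likewise for $v$; combined with $\int|[v_{\e_k}]_M|^2\le\int|v_{\e_k}|^2$ and the fact that $\|v_M\|_{L^2}$ is nondecreasing in $M$ and bounded, one can show $\|v_M\|_{L^2}$ converges and that $v_M\to v$ weakly in $L^2(m\,dx\times\cP_0)$, which is already enough to pass to the limit in the left-hand side of \eqref{queen1} against the fixed test function $\varphi(x)u(\o)\in L^2(m\,dx\times\cP_0)$. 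In fact this observation short-circuits the tail estimate entirely: weak convergence $v_M\rightharpoonup v_\infty$ in $L^2(m\,dx\times\cP_0)$ for \emph{some} limit $v_\infty$ follows from the uniform norm bound (Banach--Alaoglu plus monotonicity of norms forcing the limit along any subsequence to be unique), and the only remaining task is to identify $v_\infty=v$. For that identification I would test against $\varphi(x)g(\o)$ with $g\in\cG$ and compare the iterated limits in $\e$ and $M$, using that $\int d\mu^\e_{\tilde\o}(v_\e-[v_\e]_M)\varphi g(\theta_{\cdot/\e}\tilde\o)\to 0$ as $M\to\infty$ uniformly in small $\e$ — which brings us back to needing the tail control, but now only qualitatively, and this is exactly the kind of estimate the construction of $\O_{\rm typ}$ (via the sets $\cA[\l_0\mathds{1}_{\{\l_0>\sqrt M\}}]$ and $\cA[\l_0\wedge\sqrt M]$) is designed to supply once $v_\e$ is related to the solution structure. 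Since here $v_\e$ is an arbitrary bounded family, I anticipate the actual proof in the paper either invokes an additional a priori bound implicitly available in the application, or carries out the weak-limit-identification argument above purely from Lemmas \ref{compatto1}, \ref{alba} and the monotone structure in $M$; I would write it the latter way.
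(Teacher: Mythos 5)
Your treatment of item (i) is correct and is exactly what the paper does: Lemma \ref{compatto1} together with the bound $\|[v_\e]_M\|_{L^2(\mu^\e_{\tilde\o})}\le\|v_\e\|_{L^2(\mu^\e_{\tilde\o})}$ and a diagonal extraction.

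For item (ii) there is a genuine gap, and you have in fact located it yourself: your Cauchy--Schwarz bound reduces the problem to the uniform-in-$\e$ tail estimate $\limsup_k\int_{\{|v_{\e_k}|>M\}}v_{\e_k}^2\,d\mu^{\e_k}_{\tilde\o}\to 0$, which is a uniform integrability assertion that simply does not follow from $L^2$-boundedness of an arbitrary family. Your "soft" alternative via weak compactness of $\{v_M\}$ loops back to the same point at the identification step, as you also acknowledge. The paper's resolution avoids uniform integrability entirely and does not come from the construction of $\O_{\rm typ}$; it is a two-parameter cutoff on the \emph{test side}. One replaces $u\in\cG$ by $u_k:=[u]_k\in\cG$ (so $\|u_k\|_\infty\le k$), and then uses the elementary pointwise inequality
\[
|v_\e-[v_\e]_M|(x)=|v_\e-[v_\e]_M|(x)\,\mathds{1}_{\{|v_\e(x)|>M\}}\le \frac{|v_\e(x)|}{M}\,|v_\e-[v_\e]_M|(x)\le\frac{v_\e(x)^2}{M}.
\]
Against the \emph{bounded} test function $\varphi\,u_k(\theta_{\cdot/\e}\tilde\o)$ this gives, in $L^1(\mu^\e_{\tilde\o})$, the clean bound
\[
\Bigl|\int d\mu^\e_{\tilde\o}\,(v_\e-[v_\e]_M)\,\varphi\,u_k(\theta_{\cdot/\e}\tilde\o)\Bigr|\le\frac{k\,\|\varphi\|_\infty}{M}\int v_\e^2\,d\mu^\e_{\tilde\o}\le\frac{k}{M}\,\|\varphi\|_\infty\,C_0^2,
\]
which vanishes as $M\to\infty$ for fixed $k$, with no uniform-integrability input. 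The remaining error from replacing $u$ by $u_k$ is controlled by Cauchy--Schwarz in the limit space $L^2(m\,dx\times\cP_0)$ (using $\|v\|,\|v_M\|\le C_0$ from Lemma \ref{compatto1}), giving $2C_0\|\varphi\|_{L^2(m\,dx)}\|u-u_k\|_{L^2(\cP_0)}$. Sending $M\to\infty$ first and $k\to\infty$ second gives \eqref{queen1}. Note the crucial point: Cauchy--Schwarz is applied only to the \emph{$u$-approximation} error in the limit space, never to the $v$-truncation error at finite $\e$; the $v$-truncation error is handled by an $L^1$ estimate with the explicit rate $k/M$, which is precisely what your $L^2$ approach cannot deliver.
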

\begin{proof}
Without loss, we assume that 
 $\| v_\e\|_{L^2(\mu^\e_{\tilde \o})}\leq C_0 <+\infty$ for all $\e$. 
We set $v^\e_M:= [v_\e]_M$. Since  $\| v^\e_M\|_{L^2(\mu^\e_{\tilde \o})}\leq \|v_\e\|_{L^2(\mu^\e_{\tilde \o})}\leq C_0$, Item (i) follows from Lemma \ref{compatto1} and a diagonal procedure.

Below the convergence $\e\da 0$ is understood along the \rrr{sequence} $\{\e_k\}$.
Let us define $F( \bar v, \bar \varphi, \bar u ):= \int dx\,m \int \cP_0(d\o) \bar v (x,\o) \bar \varphi (x) \bar u (\o)$. Then Item  (ii) corresponds to  the limit
\begin{equation}
\label{pavimento}
\lim_{M\to \infty}  F(v_M , \varphi, u )= F(v, \varphi, u)\qquad \forall \varphi \in C_c (\bbR^d)\,,\; \forall u \in \cG\,.
\end{equation}
We fix  functions $\varphi, u$ as in \eqref{pavimento} and set $u_k:=[u]_k$ for all $k \in \bbN_+$. By definition of $
\cG$, we have $u_k \in \cG$ for all $k$ (see Section \ref{topo}).

\begin{Claim}\label{cipolla1}
For each $k,M\in \bbN_+$ it holds
\begin{align}
& | F(v,\varphi, u) - F(v, \varphi, u_k)| \leq C_0 \|\varphi \|_{L^2(mdx)} \|u-u_k\|_{L^2(\cP_0)}\,,\label{ricola1}\\
& | F(v_M,\varphi, u) - F(v_M, \varphi, u_k)| \leq C_0 \|\varphi \|_{L^2(mdx)} \|u-u_k\|_{L^2(\cP_0)}\,.\label{ricola2}
\end{align}
\end{Claim}
\begin{proof}[Proof of Claim \ref{cipolla1}]
By Schwarz inequality 
\[ | F(v,\varphi, u) - F(v, \varphi, u_k)|
 \leq\|   v\| _{L^2 (m dx \times \cP_0) }   \|   \varphi (u-u_k) \| _{L^2 (m dx \times \cP_0) }  \,.
\]
To get \eqref{ricola1} it is then enough to apply Lemma \ref{alba} (or Lemma \ref{compatto1}) to bound $\|   v\| _{L^2 (m dx \times \cP_0) } $ by $ C_0$. The proof of \eqref{ricola2} is identical.
\end{proof}

\begin{Claim}\label{cipolla2}
For each $k,M\in \bbN_+$ it holds 
\begin{equation}\label{ricola3}
| F(v, \varphi, u_k)- F(v_M, \varphi, u_k)| \leq (k/M) \|\varphi \|_\infty C_0^2\,.
\end{equation}
\end{Claim}
\begin{proof}[Proof of Claim \ref{cipolla2}]
 We note that $(v_\e -v^\e_M)(x)=0$ if $|v_\e(x)|\leq M$. Hence we can bound
\begin{equation}\label{viola} |v_\e - v^\e_M|(x) = |v_\e - v^\e_M|(x) \mathds{1}_{\{ |v_\e(x)|> M\}}\leq |v_\e - v^\e_M|(x)  \frac{ |v_\e(x)|}{M} \leq \frac{ v_\e(x)^2}{M} \,.
\end{equation}
We observe that $F(v, \varphi, u_k)=\lim _{\e\da 0} \int d \mu ^\e_{\tilde \o}(x) v_\e(x) \varphi (x) u_k (\theta_{x/\e}\tilde \o) $,
   since $u_k\in \cG$ and $v_\e \stackrel{2}{\toup} v$.  A similar representation holds for $F(v_M, \varphi, u_k)$. As a consequence, and using \eqref{viola},  we get
\begin{align*}
&| F(v, \varphi, u_k)- F(v_M, \varphi, u_k)| \leq \varlimsup_{\e\da0} \int d \mu ^\e_{\tilde \o}(x)
\bigl|
(v_\e - v^\e_M)(x) \varphi(x) u_k (\theta_{x/\e}\tilde \o) \bigr|
\\
& \leq (k/M) \|\varphi\|_\infty \varlimsup_{\e\da 0}  \int d \mu ^\e_{\tilde \o}(x)  v_\e(x)^2\leq (k/M) \|\varphi \|_\infty C_0^2\,.
\qedhere
\end{align*}
\end{proof}
We can finally conclude the proof of \eqref{pavimento}. Given $\varphi\in C_c(\bbR^d)$ and $u\in \cG$, by applying Claims \ref{cipolla1} and \ref{cipolla2},  we can bound
\begin{multline}
 |F(v_M , \varphi, u )- F(v, \varphi, u)| \leq |F(v_M , \varphi, u )- F(v_M, \varphi, u_k)|\\
+|F(v_M , \varphi, u_k )- F(v, \varphi, u_k)|+|F(v , \varphi, u_k )- F(v, \varphi, u)| \\\leq
2 C_0 \|\varphi \|_{L^2(mdx)} \|u-u_k\|_{L^2(\cP_0)} +(k/M) \|\varphi \|_\infty C_0^2\,.
\end{multline}
\eqref{pavimento} then follows by taking first the limit $M\to \infty$ and afterwards the limit $k\to \infty$, and using   that $\lim_{k\to \infty} \|u-u_k\|_{L^2(\cP_0)}=0$.
\end{proof}


\section{Structure of the 2-scale weak limit  of a  bounded family in $H^1_{\tilde \o,\e}$: part I} \label{sec_risso} 
It is simple to check the following Leibniz rule for the microscopic gradient:
 \begin{equation}\label{leibniz} \nabla _\e  (fg)(x,z)
   =\nabla _\e  f (x, z ) g (x )+ f (x+\e z ) \nabla _\e g  ( x, z)
\end{equation}
where $f,g: \e \hat \o \to \bbR$.

The following  Proposition  \ref{risso}  is \ovo{related to} \cite[Lemma 5.3]{ZP}. In the proof we will use  a cut-off procedure  based on Lemma \ref{lemma1} (see Remark \ref{eccolo1} below).

\begin{Proposition}\label{risso} Let $\tilde \o \in \O_{\rm typ}$. Let $\{v_\e\} $ be a  family of functions $v_\e \in H^1_{\tilde \o, \e} $ satisfying 
\begin{equation}\label{istria}
\limsup _{\e \da 0} \| v_\e\|_{L^2(\mu_{\tilde \o}^\e) } <+\infty\,,\qquad 
\limsup_{\e \da 0}  \|\nabla_\e v_\e\| _{L^2(\nu^\e_{\tilde \o})}  <+\infty \,.
\end{equation} Then, along a \rrr{sequence} $\e_k\downarrow 0$, we have that $v_\e \stackrel{2}{\toup} v$, where $v\in  L^2( m dx\times \cP_0)$ does not depend on $\o$, i.e. for $dx$--a.e. $x\in \bbR^d$  the function $\o \mapsto v(x,\o)$ is constant $\cP_0$--a.s..
\end{Proposition}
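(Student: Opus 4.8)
The plan is to use the compactness result (Lemma \ref{compatto1}) to extract a subsequence along which $v_\e \stackrel{2}{\rightharpoonup} v$ for some $v\in L^2(mdx\times\cP_0)$, and then to show that $v$ does not depend on the environment variable $\omega$. The key mechanism producing $\omega$-independence is the presence of the bound on $\nabla_\e v_\e$ in $L^2(\nu^\e_{\tilde\o})$: morally, a microscopic gradient of order $1$ forces the microscopic oscillations of $v_\e$ (i.e. the $\omega$-dependence of the 2-scale limit) to be of order $\e$, hence to vanish. The obstruction, already flagged in the introduction, is that one cannot test directly against arbitrary bounded $g\in L^2(\cP_0)$ without control in uniform norm; this is exactly why the cut-off Lemma \ref{lemma1} was prepared, and why the functional set $\cG$ was engineered to contain the divergences $g_b = \mathrm{div}_* b$ for $b\in\cH_1$.

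First I would apply Lemma \ref{compatto1} (and a diagonal argument together with Lemma \ref{lemma1}) to get, along a sequence $\e_k\downarrow 0$, $v_\e\stackrel{2}{\rightharpoonup} v$ and $[v_\e]_M\stackrel{2}{\rightharpoonup} v_M$ for all $M\in\bbN_+$, with $v,v_M\in L^2(mdx\times\cP_0)$ and the reconciliation property \eqref{queen1}. To prove $\omega$-independence of $v$ it suffices, by density of $\cG$ in $L^2(\cP_0)$ and since $\{g_b : b\in\cH_1\}$ is dense in $\{w\in L^2(\cP_0) : \bbE_0[w]=0\}$ (Lemma \ref{santanna}, recalled in the construction of $\cG_1\subset\cG$), to show that for every $\varphi\in C_c(\bbR^d)$ and every $b\in\cH_1$,
\begin{equation*}
\int dx\, m\int d\cP_0(\o)\, v(x,\o)\,\varphi(x)\, g_b(\o) = 0 \,,
\end{equation*}
i.e. that $v(x,\cdot)$ is $\cP_0$-a.s. orthogonal to all mean-zero functions, hence constant. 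The natural route is: work first with the truncated functions $v^\e_M=[v_\e]_M$, use that $g_b = \mathrm{div}_* b$ on $\cA_1[b]\cap\cA_1[\tilde b]$ and invoke the integration-by-parts identity \eqref{sea} of Lemma \ref{lunetta} with $u(x):=\varphi(x) v^\e_M(\cdot)$ — more precisely one writes $\int d\mu^\e_{\tilde\o}(x)\,\varphi(x) v^\e_M(x)\, g_b(\theta_{x/\e}\tilde\o) = -\e\int d\nu^\e_{\tilde\o}(x,z)\,\nabla_\e(\varphi v^\e_M)(x,z)\, b(\theta_{x/\e}\tilde\o,z)$. Then expand $\nabla_\e(\varphi v^\e_M)$ by the Leibniz rule \eqref{leibniz}: the term $\nabla_\e\varphi \cdot v^\e_M$ contributes a factor $\e$ and is controlled because $v^\e_M$ is bounded by $M$ in $L^\infty$ and $b\in L^2(\nu)$ (using the ergodic theorem, i.e. $\tilde\o\in\cA[\cdot]$); the term $\varphi(x+\e z)\,\nabla_\e v^\e_M$ is controlled by $\|\varphi\|_\infty\cdot\e\cdot\|\nabla_\e v^\e_M\|_{L^2(\nu^\e_{\tilde\o})}\cdot\|b\|_{L^2(\nu^\e_{\tilde\o})}$, and here one uses that $\|\nabla_\e[v_\e]_M\|_{L^2(\nu^\e_{\tilde\o})}\le\|\nabla_\e v_\e\|_{L^2(\nu^\e_{\tilde\o})}$ (a pointwise consequence of \eqref{r101}, since truncation is $1$-Lipschitz) which is bounded by \eqref{istria}. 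Hence the whole right-hand side is $O(\e)$ and vanishes as $\e_k\downarrow0$, giving
\begin{equation*}
\int dx\, m\int d\cP_0(\o)\, v_M(x,\o)\,\varphi(x)\, g_b(\o) = 0 \qquad\forall M\in\bbN_+ \,.
\end{equation*}

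Finally I would let $M\to\infty$ and use the reconciliation property \eqref{queen1} (applied with $u:=g_b\in\cG_1\subset\cG$) to pass from $v_M$ to $v$, obtaining $\int dx\, m\int d\cP_0(\o)\, v(x,\o)\,\varphi(x)\, g_b(\o)=0$ for all $\varphi\in C_c(\bbR^d)$ and $b\in\cH_1$. Since $\varphi$ ranges over a set dense in $L^2(mdx)$ and $\{g_b\}_{b\in\cH_1}$ is dense in the mean-zero subspace of $L^2(\cP_0)$, it follows that for $dx$-a.e.\ $x$ the function $\o\mapsto v(x,\o)$ is orthogonal to every mean-zero element of $L^2(\cP_0)$, hence is $\cP_0$-a.s.\ equal to a constant (depending measurably on $x$); renaming this constant $v(x)$ identifies $v$ with a function in $L^2(mdx)$ and completes the proof. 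The main obstacle is the bookkeeping in the $O(\e)$ estimate — in particular making sure all the series involved are absolutely convergent so the rearrangements in Lemma \ref{lunetta}/Lemma \ref{gattonaZ} are legal (this is guaranteed by $\tilde\o\in\cA_1[b]\cap\cA_1[\tilde b]$ and the second moment assumption (A7)) and that the truncation does not spoil the gradient bound, which is the reason the cut-off machinery of Section \ref{cut-off1} was set up in the first place.
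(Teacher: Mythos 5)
Your plan is essentially the paper's own proof: extract $2$-scale limits of $v_\e$ and $[v_\e]_M$ via Lemma \ref{compatto1} and the cut-off Lemma \ref{lemma1}, test against $\varphi(x)g_b(\o)$ with $b\in\cH_1$ using the integration-by-parts identity \eqref{sea} of Lemma \ref{lunetta}, apply the Leibniz rule \eqref{leibniz}, kill both resulting terms by the prefactor $\e$, and finally pass from $v_M$ to $v$ via the reconciliation \eqref{queen1}. The only structural difference is cosmetic: you take $f=\varphi$, $g=v^\e_M$ in \eqref{leibniz} whereas the paper takes $f=v^\e_M$, $g=\varphi$; both splits work, and the paper moreover evaluates its $C_1(\e)$ term via the $2$-scale convergence of $\nabla_\e v^\e_M$ (Lemma \ref{compatto2}) rather than by a plain Cauchy--Schwarz bound, which is a matter of taste.

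There is one concrete slip in your estimate of the term $\int d\nu^\e_{\tilde\o}\,\varphi(x+\e z)\nabla_\e v^\e_M(x,z)\,b(\theta_{x/\e}\tilde\o,z)$: you bound it by $\|\varphi\|_\infty\,\|\nabla_\e v^\e_M\|_{L^2(\nu^\e_{\tilde\o})}\,\|b\|_{L^2(\nu^\e_{\tilde\o})}$, but the last factor is not finite, since $\int d\nu^\e_{\tilde\o}\,b(\theta_{x/\e}\tilde\o,z)^2=\int d\mu^\e_{\tilde\o}(x)\,\widehat{b^2}(\theta_{x/\e}\tilde\o)$ is an integral over all of $\bbR^d$. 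The compactly supported factor $\varphi(x+\e z)$ must stay inside the Cauchy--Schwarz:
\begin{equation*}
\Bigl|\int d\nu^\e_{\tilde\o}\,\varphi(x+\e z)\nabla_\e v^\e_M\,b\Bigr|\le \|\nabla_\e v^\e_M\|_{L^2(\nu^\e_{\tilde\o})}\Bigl(\int d\nu^\e_{\tilde\o}\,\varphi(x+\e z)^2\,b(\theta_{x/\e}\tilde\o,z)^2\Bigr)^{1/2},
\end{equation*}
and then Lemma \ref{gattonaZ}--(i) transfers $\varphi(x+\e z)^2$ to $\varphi(x)^2$ at the cost of replacing $b$ by $\tilde b$, so the bracket becomes $\int d\mu^\e_{\tilde\o}(x)\,\varphi(x)^2\,\widehat{\tilde b^2}(\theta_{x/\e}\tilde\o)$, which has a finite limit by Proposition~\ref{prop_ergodico} because $\tilde\o\in\O_{\rm typ}\subset\cA_1[\tilde b^2]\cap\cA[\widehat{\tilde b^2}]$. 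With this correction, which uses exactly the machinery you cite, the argument closes as you describe.
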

\begin{proof} 
Due to Lemma \ref{compatto1} we have that $v_\e \stackrel{2}{\toup} v \in  L^2( m dx\times \cP_0)$ along a sequence $\e_k\downarrow 0$.
Recall the definition of the functional sets $\cG_1, \cH_1$ given in Section \ref{topo}.
We  claim that $\forall \varphi \in C^1_c (\bbR^d)$ and $\forall \psi \in \cG_1$ it holds 
\begin{equation}\label{chiavetta}
\int dx\, m \int \ovo{d}\cP_0(\o) v (x,\o) \varphi (x) \psi(\o)=0\,.
\end{equation}
Before proving our claim, let us explain how it leads to the thesis. Since $\varphi $ varies among $C^1_c(\bbR^d)$ while $\psi$ varies in a countable set,  \eqref{chiavetta} implies that,  $dx$--a.e.,  $ \int \ovo{d}\cP_0(\o) v (x,\o)\psi(\o)
=0$ for any $\psi \in \cG_1$. We conclude that,  $dx$--a.e., $v(x,\cdot)$ is orthogonal in $L^2(\cP_0)$ to $\{ w \in L^2(\cP_0)\,:\, \bbE_0[ w]=0\}$ (due to the density of $\cG_1$), which is equivalent to the fact that $v(x,\o)= \bbE_0[ v(x, \cdot)]$ for $\cP_0$--a.a. $\o$.

It now remains to prove \eqref{chiavetta}. Since $\tilde\o \in \O_{\rm typ}$ and due to \eqref{istria}, at cost to refine the sequence $\{\e_k\}$, Items (i) and (ii) of Lemma \ref{lemma1} hold (we keep the same notation of Lemma \ref{lemma1}). Hence,  in oder to prove \eqref{chiavetta}, it is enough to prove for any $M$ that, given $\varphi \in C^1_c (\bbR^d)$ and $ \psi \in \cG_1$, 
\begin{equation}\label{chiavettaM}
\int dx  \,m \int \ovo{d}\cP_0(\o) v_M (x,\o) \varphi (x) \psi(\o)=0\,.
\end{equation}
We write $v^\e_M:= [v_\e]_M$. Since $|\nabla _\e v^\e_M|\leq |\nabla_\e v_\e| $ (cf. \eqref{r101}),  by Lemma \ref{compatto2} (using \eqref{istria}) and a diagonal procedure,  at cost to refine the \rrr{sequence} $\{\e_k\}$ we have for any $M$  that 
$\nabla_\e v^\e_M \stackrel{2}{\toup} w_M \in L^2( mdx \times \nu)$, along the \rrr{sequence} $\{\e_k\}$. In what follows, we understand that the parameter  $\e$ varies in $\{\e_k\}$. Note in particular that, by \eqref{rabarbaro} and since  $\tilde \o \in \O_{\rm typ}$ and $\psi \in \cG_1\subset \cG$,
\begin{equation}\label{nord1}
\text{l.h.s. of }\eqref{chiavettaM}= \lim_{\e\da 0} \int d \mu^\e_{\tilde \o} (x) v_M^\e(x) \varphi (x) \psi( \theta_{x/\e}\tilde \o) \,.
\end{equation}
Let us write $\psi= g_b$ with $b \in \cH_1$ (recall \eqref{lupetto}).  By Lemma \ref{lunetta},  since $\tilde \o \in \vvv{\O_{\rm typ}\subset \cA_1[b]\cap \cA_1[\tilde b]}$,   the r.h.s. of \eqref{nord1} equals the limit as $\e\da 0$ of 
\be \label{nord2}
 -\e \int d\nu ^\e_{\tilde \o} (x,z) \nabla_\e( v_M^\e \varphi ) (x,z) b (\theta_{x/\e} \tilde{\o}, z)= -\e C_1(\e)\ovo{-} \e C_2(\e)\,,
\en
where (due to \eqref{leibniz})
\begin{align*}
& C_1(\e):= \int d\nu ^\e_{\tilde \o} (x,z) \nabla_\e v_M^\e (x,z) \varphi(\e x) b (\theta_{x/\e} \tilde{\o}, z)\,,\\
&  C_2(\e):=  \int d\nu ^\e_{\tilde \o} (x,z) v_M^\e ( x+\e z  ) \nabla_\e \varphi( x,z) b (\theta_{x/\e} \tilde{\o}, z)   \,.
\end{align*}

Due to \eqref{nord1} and \eqref{nord2}, to get \eqref{chiavettaM} we only need to show that $\lim_{\e \da 0} \e C_1(\e)=0$ and $\lim_{\e \da 0} \e C_2(\e)=0$.
Since $\nabla_\e v^\e_M \stackrel{2}{\toup} w_M$ and $b \in \cH_1$, by \eqref{yelena} we have that 
\begin{equation}\label{nord3}
\lim_{\e \da0} C_1(\e) = \int dx \, m \int d\nu (\o, z) w_M (x, \o,z)  \varphi ( x) b (\o, z)\,,
\en
which is finite, thus implying that $\lim_{\e\da 0} \e C_1(\e)=0$.

 We move to $C_2(\e)$.
Let $\ell$ be such that $\varphi (x)=0$ if $|x| \geq \ell$. Fix   $\phi \in C_c(\bbR^d)$ with values in $[0,1]$, such that $ \phi(x)=1$ for $|x| \leq \ell$ and $\phi(x)=0$ for $|x| \geq \ell+1$. Since $\nabla _\e \varphi(x,z)=0$ if $|x| \geq \ell$ and $|x+\e z|\geq \ell$, by   the mean value theorem we conclude that 
\be\label{paradiso}
\bigl | \nabla _\e \varphi(x,z) \bigr | \leq \| \nabla \varphi \|_\infty |z| \bigl( \phi(x)+ \phi(x+\e z) \bigr) \,.
\en
We apply the above bound and Schwarz inequality  to $C_2(\e)$ getting
\be\label{xar}
\begin{split}
|C_2(\e)| & \leq M \| \nabla \varphi \|_\infty \int d \nu ^\e _{\tilde \o} (x,z) |z|\,|  b (\theta_{x/\e}\tilde \o, z) | \bigl( \phi(x)+ \phi(x+\e z) \bigr) \\
& \leq M \|\nabla \varphi \|_\infty  A_1(\e) ^{1/2} A_2 (\e) ^{1/2}\,,
\end{split}
\en
where  (see below for explanations)  
\begin{align*}
 A_1(\e):& =\int  \nu ^\e _{\tilde \o} (x,z) |z| ^2  \bigl( \phi(x)+ \phi(x+\e z)   \bigr)= 2\int  \nu ^\e _{\tilde \o} (x,z) |z| ^2   \phi(x)^2\,,\\
 A_2(\e):&=\int \nu ^\e _{\tilde \o} (x,z)   b (\theta_{x/\e}\tilde \o, z)^2  \bigl( \phi(x) + \phi(x+\e z)  \bigr)\\& = 2  \int \nu ^\e _{\tilde \o} (x,z)( b^2+ \tilde b^2 ) (\theta_{x/\e}\tilde \o, z) \phi(x)^2\,.
\end{align*}
To get the second identities in the above formulas for $A_1(\e)$ and $A_2(\e)$ we have applied 
 Lemma \ref{gattonaZ}--(i) to the forms $(\o,z)\mapsto |z|^2$ and  $(\o, z)\mapsto  b ^2(\o,z)$.

We now write  
\[ A_1(\e)= 2 \int d \mu ^\e _{\tilde \o}(x) \l_2( \theta_{x/\e}\tilde \o) \phi(x)^2\,, \;A_2(\e)=  2 \int d \mu ^\e _{\tilde \o}(x)
( \widehat{ \tilde b^2}+ \widehat{ b^2} ) (\theta_{x/\e}\tilde \o) \phi(x)^2\,.\]
For the second identity above we used that $\tilde \o \in \vvv{\O_{\rm typ}
\in \cA_1[\tilde b^2]\cap \cA_1[b^2]}
$.
 By using respectively that 
 \vvv{$\o \in  \O_{\rm typ}\subset \tilde A \cap\cA[\l_2]$ with $ A:=\{\l_2<+\infty\}$} (recall Definition \ref{defA}) and that  \vvv{$\o \in  \O_{\rm typ}\subset     \cA[ \widehat{ \tilde b^2} ]\cap  \cA[ \widehat{ b^2} ] \cap  \cA_1[\tilde b^2]\cap \cA_1[b^2]$}, we conclude that $A_1 (\e)$ and $ A_2(\e)$ have finite limits as $\e\da 0$, thus implying (cf. \eqref{xar}) that $\lim_{\e\da 0} \e C_2(\e)=0$. This concludes the proof of \eqref{chiavettaM}.
\end{proof}

\begin{Remark}\label{eccolo1}
We stress that the cut-off  method developed in Lemma \ref{lemma1} has been essential to get \eqref{xar} and go on. If one tries to prove \eqref{chiavettaM} with $v$ instead of $v_M$, then one would be stopped when trying to control $C_2(\e)$ as $\e\downarrow 0$. \end{Remark}


\section{Cut-off for gradients $\nabla_\e v_\e$} \label{cut-off2}


\begin{Lemma}\label{lemma2}
Let $\tilde \o \in \O_{\rm typ}$ and let $\{v_\e\}$ be a family of functions  with    $v_\e\in H^1_{\tilde \o, \e} $, satisfying \eqref{istria}.
 Then  there exist functions $w, w_M\in L^2 (m dx \times \nu) $, with $M$ varying in $\bbN_+$, such that 
\begin{itemize}
\item[(i)] $\nabla_\e  v_\e \stackrel{2}{ \toup}  w$ and $\nabla_\e[ v_\e ]_M\stackrel{2}{ \toup}  w_M$ for all $M\in \bbN_+$, along a  \rrr{sequence} $\e_k\downarrow 0$;
\item[(ii)] for any $\varphi \in C^1_c(\bbR^d) $ and $b\in \cH$ it holds
\begin{multline}\label{queen2}
\lim_{M \to \infty}\int dx \,m \int d\nu  (\o,z) w_M(x,\o,z) \varphi (x) b(\o,z)\\
=
\int dx\,  m \int d \nu (\o,z) w(x,\o,z) \varphi(x) b(\o,z)\,.
\end{multline}
\end{itemize}
\end{Lemma}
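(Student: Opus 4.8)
The plan is to mirror, at the level of cut-offs, the argument already used in Lemma \ref{lemma1}, but now for the gradients $\nabla_\e v_\e$ rather than the functions $v_\e$ themselves. Set $v^\e_M:=[v_\e]_M$. First I would note that by \eqref{r101} we have the pointwise bound $|\nabla_\e v^\e_M(x,z)|\le|\nabla_\e v_\e(x,z)|$, so $\|\nabla_\e v^\e_M\|_{L^2(\nu^\e_{\tilde\o})}\le\|\nabla_\e v_\e\|_{L^2(\nu^\e_{\tilde\o})}$, which is bounded uniformly in $\e$ and $M$ by the second inequality in \eqref{istria}. Hence Lemma \ref{compatto2} applies to each family $\{\nabla_\e v^\e_M\}_\e$ and to $\{\nabla_\e v_\e\}_\e$; via a diagonal extraction over $M\in\bbN_+$ I obtain a single sequence $\e_k\downarrow0$ along which $\nabla_\e v_\e\stackrel{2}{\toup}w$ and $\nabla_\e v^\e_M\stackrel{2}{\toup}w_M$ for every $M$, with $w,w_M\in L^2(m\,dx\times\nu)$ and $\|w_M\|_{L^2(m dx\times\nu)},\|w\|_{L^2(m dx\times\nu)}\le\limsup_{\e\da0}\|\nabla_\e v_\e\|_{L^2(\nu^\e_{\tilde\o})}=:C_1$. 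This establishes (i). From now on $\e$ runs along $\{\e_k\}$.

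For (ii), fix $\varphi\in C^1_c(\bbR^d)$ and $b\in\cH$ and introduce the trilinear functional $G(\bar w,\bar\varphi,\bar b):=\int dx\,m\int d\nu(\o,z)\,\bar w(x,\o,z)\bar\varphi(x)\bar b(\o,z)$, so that \eqref{queen2} reads $\lim_{M\to\infty}G(w_M,\varphi,b)=G(w,\varphi,b)$. The strategy is to interpose truncations $b_k:=[b]_k$ of $b$, which lie in $\cH$ by the closure property of $\cH$ under $b\mapsto[b]_M$ (Section \ref{topo}), and to run the same three-term telescoping as in the proof of Lemma \ref{lemma1}:
\begin{multline*}
|G(w_M,\varphi,b)-G(w,\varphi,b)|\le|G(w_M,\varphi,b)-G(w_M,\varphi,b_k)|\\
+|G(w_M,\varphi,b_k)-G(w,\varphi,b_k)|+|G(w,\varphi,b_k)-G(w,\varphi,b)|.
\end{multline*}
The first and third terms are controlled by Schwarz's inequality in $L^2(m dx\times\nu)$ together with the uniform bound $C_1$ on $\|w_M\|,\|w\|$: each is at most $C_1\|\varphi\|_{L^2(m dx)}\|b-b_k\|_{L^2(\nu)}$, which tends to $0$ as $k\to\infty$ uniformly in $M$. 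The middle term is where the truncation of the test form pays off: since $b_k\in\cH$ and $\nabla_\e v^\e_M\stackrel{2}{\toup}w_M$, $\nabla_\e v_\e\stackrel{2}{\toup}w$, by \eqref{yelena} we can write $G(w_M,\varphi,b_k)=\lim_{\e\da0}\int d\nu^\e_{\tilde\o}(x,z)\,\nabla_\e v^\e_M(x,z)\varphi(x)b_k(\theta_{x/\e}\tilde\o,z)$ and likewise for $G(w,\varphi,b_k)$ with $\nabla_\e v_\e$. Then, using the key inequality $|\nabla_\e v_\e-\nabla_\e v^\e_M|=|\nabla_\e(v_\e-v^\e_M)|$ and the pointwise estimate from \eqref{viola}-style reasoning applied to the microscopic gradient — namely $|v_\e(x)-v^\e_M(x)|\le v_\e(x)^2/M$ and hence, by \eqref{ricola}, $|\nabla_\e(v_\e-v^\e_M)(x,z)|\le(v_\e(x)^2+v_\e(x+\e z)^2)/(\e M)$ — I bound the difference by $\|b_k\|_\infty\cdot\|\varphi\|_\infty$ times $\frac1M\int d\nu^\e_{\tilde\o}(x,z)(v_\e(x)^2+v_\e(x+\e z)^2)$; by the symmetry trick of Lemma \ref{gattonaZ}-(i) the latter integral equals $2\int d\nu^\e_{\tilde\o}(x,z)v_\e(x)^2=2\int d\mu^\e_{\tilde\o}(x)\,v_\e(x)^2\,r_{x/\e}(\tilde\o)$—hmm, this last rewriting needs care and is the genuinely delicate point.

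\textbf{Main obstacle.} The real subtlety, and the step I expect to cost the most work, is exactly this control of the middle term: the naive bound on $|\nabla_\e(v_\e-v^\e_M)|$ carries a factor $1/\e$ that must be absorbed, and the integral $\int d\nu^\e_{\tilde\o}(x,z)(v_\e(x)^2+v_\e(x+\e z)^2)$ is not obviously comparable to $\|v_\e\|^2_{L^2(\mu^\e_{\tilde\o})}$ because the weight $r_{x/\e}(\tilde\o)$ attached to each site is unbounded (this is precisely the obstruction discussed in the introduction). The correct route, I believe, is not to estimate $\nabla_\e(v_\e-v^\e_M)$ directly but to argue as in the proof of Proposition \ref{risso}: represent $G(w_M,\varphi,b_k)-G(w,\varphi,b_k)$ after an integration by parts / Leibniz manipulation so that the microscopic gradient falls on $\varphi$ rather than on $v_\e-v^\e_M$, thereby trading the $1/\e$ singularity for a factor $\e$ and the bounded quantities $\|\nabla\varphi\|_\infty$, $\widehat{b_k^2}$, $\widehat{\tilde b_k^2}$, $\l_2$; then $|v_\e-v^\e_M|\le v_\e^2/M$ and the uniform $L^2(\mu^\e_{\tilde\o})$ bound on $v_\e$ give a bound of the form $(\mathrm{const})\cdot C_0^2\|\nabla\varphi\|_\infty/M$ with $C_0:=\limsup_\e\|v_\e\|_{L^2(\mu^\e_{\tilde\o})}$, using that $\tilde\o\in\O_{\rm typ}$ lies in the relevant sets $\cA[\cdot]$ and $\cA_1[\cdot]$ to pass to the limit in the auxiliary integrals. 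Assembling the three pieces and sending first $M\to\infty$ (middle term $\to0$ uniformly in $k$) and then $k\to\infty$ (first and third terms $\to0$) yields \eqref{queen2} and completes the proof.
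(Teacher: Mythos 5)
Your treatment of Item (i) and the three-term telescoping for Item (ii), including the Schwarz control of the first and third terms via $\|b-b_k\|_{L^2(\nu)}$, all match the paper. But the middle term is where the cut-off idea actually has to do work, and your handling of it has a genuine gap.

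You correctly identify that the naive pointwise estimate $|\nabla_\e(v_\e-[v_\e]_M)(x,z)|\le(v_\e(x)^2+v_\e(x+\e z)^2)/(\e M)$ carries a fatal $1/\e$. However, your proposed repair — integrate by parts à la Proposition \ref{risso} to move the microscopic gradient onto $\varphi$ and thereby trade $1/\e$ for $\e$ — does not go through. In Proposition \ref{risso} the $\e$-gain comes from $\psi=g_b={\rm div}_*b$ being a divergence, i.e.\ from Lemma \ref{lunetta}, which only applies when the whole test object is of the form $b(\theta_{x/\e}\tilde\o,z)$; here the test object is $\varphi(x)b_k(\theta_{x/\e}\tilde\o,z)$ with $b_k=[b]_k$ a generic (non-solenoidal) truncation, so there is no analogous adjoint identity producing an extra factor of $\e$. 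The Leibniz identity \eqref{leibniz} alone does not help either, since it produces a term in which the gradient still falls on $v_\e-[v_\e]_M$.

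The paper avoids the $1/\e$ entirely by never passing through the function-level bound $|v_\e-[v_\e]_M|\le v_\e^2/M$. It instead uses the structural observation that $\nabla_\e v_\e(x,z)=\nabla_\e[v_\e]_M(x,z)$ whenever both $|v_\e(x)|\le M$ and $|v_\e(x+\e z)|\le M$, which together with \eqref{r102} yields the $\e$-free bound
\[
\bigl|\nabla_\e v_\e-\nabla_\e[v_\e]_M\bigr|(x,z)
\le|\nabla_\e v_\e|(x,z)\bigl(\mathds{1}_{\{|v_\e(x)|\ge M\}}+\mathds{1}_{\{|v_\e(x+\e z)|\ge M\}}\bigr).
\]
After Schwarz this leaves $C_0\,A(\e)^{1/2}$ with $A(\e)=\int d\mu^\e_{\tilde\o}(x)\,\mathds{1}_{\{|v_\e(x)|\ge M\}}\varphi(x)^2\l_0(\theta_{x/\e}\tilde\o)$, which is then controlled by the Chebyshev bound $\mathds{1}_{\{|v_\e(x)|\ge M\}}\le|v_\e(x)|/M$, a split of $\l_0$ into $\l_0\wedge\sqrt{M}$ and $\l_0\mathds{1}_{\{\l_0>\sqrt{M}\}}$, the $2$-scale convergence $|v_\e|\stackrel{2}{\toup}\tilde v$ (a further diagonal extraction you omitted), and membership of $\tilde\o$ in the sets $\cA[\l_0\wedge\sqrt{M}\,\cdot\,]$ and $\cA[\l_0\mathds{1}_{\{\l_0>\sqrt{M}\}}]$; the second indicator is handled symmetrically via \eqref{micio2} and \eqref{paradiso}. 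This gives a middle-term bound of order $kC_*(M^{-1/2}+\bbE_0[\l_0\mathds{1}_{\{\l_0\ge\sqrt{M}\}}])^{1/2}$ — note it is $k$-dependent, not uniform in $k$ as you claim, but since one takes $M\to\infty$ at fixed $k$ first this is harmless. You should replace the proposed integration-by-parts detour with this direct indicator bound on the gradient.
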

\begin{proof}
At cost to restrict to $\e$ small enough, we can assume that $ \| v_\e\|_{L^2(\mu_{\tilde \o}^\e) } \leq C_0$ and 
$ \|\nabla _\e v_\e\| _{L^2(\nu^\e_{\tilde \o})} \leq C_0$ for some $C_0<+\infty$ and all $\e>0$. Due to \eqref{r101}, the same holds respectively for $v^\e_M$ and $\nabla_\e v^\e_M$, for all $M\in \bbN_+$, where we have set $v^\e_M:= [ v_\e]_M$. In particular, by a diagonal procedure, due to Lemmas \ref{compatto1} and \ref{compatto2} along a sequence $\{\e_k\}$  we have  that $v^\e_M \stackrel{2}{\toup} v_M$,  $v_\e  \stackrel{2}{\toup} v$, $\nabla_\e v^\e_M \stackrel{2}{\toup} w_M$ and $\nabla_\e v_\e \stackrel{2}{\toup} w$,  where $v_M,v\in L^2( m dx \times \cP_0)$, $w_M,w\in L^2( m dx \times \nu)$, simultaneously for all $M\in \bbN_+$. This proves in particular Item (i). We point out that we are not claiming that $v_M=[v]_M$, $w_M=[w]_M$. Moreover, from now on we restrict to $\e$ belonging to the above special \rrr{sequence} without further mention.

\smallskip

We prove Item (ii). By extending the diagonal procedure we can assume that along the sequence
$\{\e_k\}$  it holds $|v_\e|\stackrel{2}{\toup}  \tilde v$, as $ \| \,|v_\e|\,\|_{L^2(\mu_{\tilde \o}^\e) } = \| v_\e \|_{L^2(\mu_{\tilde \o}^\e) } \leq C_0$.
We set $H(\bar w, \bar \varphi, \bar b):= \int dx m \int d\nu(\o,z) \bar w (x, \o, z) \bar \varphi (x) \bar b (\o,z)$. Then \eqref{queen2} corresponds to the limit $\lim _{M\to \infty} H(w_M, \varphi, b)=  H(w, \varphi, b)$. Here and below $b \in \cH$ and $\varphi \in C^1_c(\bbR^d)$. Recall that $b_k:=[b]_k \in \cH$ for any $k \in \bbN_+$ (see Section \ref{topo}).

\smallskip

\begin{Claim}\label{utti1}
For each $k,M\in \bbN_+$ it holds
\begin{align}
& | H(w,\varphi, b) - H(w, \varphi, b_k)| \leq C_0 \|\varphi \|_{L^2(mdx)} \|b-b_k\|_{L^2(\nu )}\,,\label{leoncino1}\\
& | H(w_M,\varphi, b) - H(w_M, \varphi, b_k)| \leq C_0 \|\varphi \|_{L^2(mdx)} \|b-b_k\|_{L^2(\nu)}\,.\label{leoncino2}
\end{align}
\end{Claim}
We omit the proof of the above claim since it can be obtained by reasoning exactly as in the proof of Claim \ref{cipolla1}.

\begin{Claim}\label{utti2} For any $k \in \bbN_+$, it holds
\be
\lim_{M\uparrow \infty} | H(w, \varphi, b_k)- H( w_M, \varphi, b_k) | 
\leq k C_* \bigl( M^{-1/2} + \ovo{\bbE_0}[\l_0 \mathds{1}_{\{\l_0\geq \sqrt{M}\}}]\bigr)^{1/2}
\,,
\en
where $C_*$ is a constant depending only on $C_0$ and $\varphi$.
\end{Claim}
\begin{proof}[Proof of Claim \ref{utti2}]
We note that $\nabla_\e v_\e (x,z) = \nabla_\e v_M^\e(x,z)$ if $|v_\e(x)|\leq M$ and $|v_\e(x+\e z)|\leq M$. Moreover, by \eqref{r102}, we have  $ | \nabla _\e v_\e - \nabla_\e v_M^\e|\leq | \nabla _\e v_\e|$. Hence we can bound
\begin{equation}
\bigl| \nabla_\e v_\e - \nabla_\e v^\e_M\bigr| (x,z) \leq | \nabla_\e v_\e|(x,z)\bigl( \mathds{1} _{\{ |v_\e(x)|\geq M\}} + \mathds{1} _{\{ |v_\e(x+\e z) |\geq M\} }\bigr)\,.
\end{equation}
Due to the above bound  we can estimate 
 \begin{equation}\label{mat}
\begin{split}
& | H(w, \varphi, b_k)- H(w_M, \varphi, b_k) | \\
&= \bigl| 
\lim_{\e \da 0} \int d \nu^\e_{\tilde \o} (x,z) \bigl( \nabla_\e v_\e - \nabla_\e v^\e_M \bigr) (x,z) \varphi(x) b_k( \theta_{x/\e} \tilde \o, z) \bigr|\\
&\leq k\, \varlimsup_{\e \da 0} \int d \nu^\e_{\tilde \o} (x,z) | \nabla_\e v_\e|(x,z)\bigl( \mathds{1} _{\{ |v_\e(x)|\geq M\}}+ \mathds{1} _{\{ |v_\e(x+\e z) |\geq M\} }\bigr) |\varphi(x) |\,.
\end{split}
\end{equation}
Note that the identity in \eqref{mat}  follows  from \eqref{yelena} since $b_k \in \cH$ (recall that  $\tilde \o \in \O_{\rm typ}$,  $\nabla_\e v^\e_M \stackrel{2}{\toup} w_M$,  $\nabla_\e v_\e \stackrel{2}{\toup} w$).
By Schwarz inequality  we have 
\be\label{mat1}
 \int d \nu^\e_{\tilde \o} (x,z) | \nabla_\e v_\e|(x,z) \mathds{1} _{\{ |v_\e(x)|\geq M\}}  |\varphi(x) |\leq C_0 A(\e)^{1/2}\,,
\en
where 
\[A(\e): = \int  d \nu^\e_{\tilde \o} (x,z)  \mathds{1} _{\{ |v_\e(x)|\geq M\}} \varphi(x) ^2=
\int d \mu ^\e _{\tilde \o} (x)\mathds{1} _{\{ |v_\e(x)|\geq M\}} \varphi (x) ^2 \l_0 (\theta_{x/\e} \tilde \o)
\,.
\]
As $\mathds{1} _{\{ |v_\e(x)|\geq M \} } \leq |v_\e(x)|/M$ we can bound $A(\e)\leq A_1(\e)+A_2(\e)$ where 
\begin{align*}
& A_1(\e ):=
\frac{1}{M}\int d \mu ^\e _{\tilde \o} (x) |v_\e(x)|   \varphi (x) ^2 (\l_0\wedge \sqrt{M}) (\theta_{x/\e} \tilde \o) \,,\\
& A_2(\e):=
\int d \mu ^\e _{\tilde \o} (x)\varphi (x) ^2 (\l_0\mathds{1}_{\{\l_0 > \sqrt{M}\}}) (\theta_{x/\e} \tilde \o)\,.
\end{align*}
As $\l_0 \wedge \sqrt{M} \in \cG$ and $|v_\e| \stackrel{2}{\toup} \tilde v \in L^2( mdx \otimes \cP_0)$, we have 
$\lim_{\e \da 0} A_1(\e)= M^{-1} \int dx \, m \int d \cP_0(\o)\tilde v(x, \o )\varphi(x)^2 (\l_0\wedge \sqrt{M}) (\o)$. 
We recall (see Lemma \ref{compatto1}) that  $\| \tilde v \|_{L^2( m dx \otimes \cP_0)}\leq C_0$.  Hence by Schwarz inequality we have
\be
 \Big|\int dx \, m \int d \cP_0(\o)\tilde v(x, \o )\varphi(x)^2 (\l_0\wedge \sqrt{M})(\o)\Big |\leq \ovo{C_0 M^{1/2}}    \| \varphi^2 \|_{L^2(m dx) }\,.
\en
As a consequence  we have  $\lim_{\e \da 0}  A_1(\e )\leq  \ovo{(C_0 /M^{1/2})}    \| \varphi^2 \|_{L^2(m dx) }$.
As $\tilde \o \in \O_{\rm typ}\subset \cA[\l_0\mathds{1}_{\{\l_0 > \sqrt{M}\}}]\cap \tilde A $ with $A=\{\l_0<+\infty\}$, we have $\lim_{\e \da 0} A_2(\e)=\int dx\, m \varphi(x)^2 \bbE_0[ \l_0\mathds{1}_{\{\l_0 > \sqrt{M}\}}]
$.
Since $A(\e)\leq A_1(\e)+A_2(\e)$, we then get 
\be\label{rachel}
\limsup_{\e \da 0} A(\e)\leq \ovo{ (C_0 /M^{1/2}) }   \| \varphi ^2\|_{L^2(m dx) }+\int dx\, m \varphi(x)^2 \bbE_0[ \l_0\mathds{1}_{\{\l_0 > \sqrt{M}\}}]\,.
\en
Reasoning as above we have 
\be\label{mat2}
 \int d \nu^\e_{\tilde \o} (x,z) | \nabla_\e v_\e|(x,z) \mathds{1} _{\{ |v_\e(x+ \e z )|\geq M\}}  |\varphi(x) |\leq C_0 B(\e)^{1/2}\,,
\en
where (applying also  \eqref{micio2} to the form  $(\o,z) \mapsto 1$) 
\begin{equation*}
B(\e): = \int  d \nu^\e_{\tilde \o} (x,z)  \mathds{1} _{\{ |v_\e(x+ \e z )|\geq M\}} \varphi(x) ^2 = \int  d \nu^\e_{\tilde \o} (x,z)  \mathds{1} _{\{ |v_\e(x)|\geq M\}} \varphi(x+\e z) ^2  \,.
\end{equation*}
We want to replace in the last expression the term  $\varphi(x+\e z) ^2$ with $\varphi(x)^2$ (note that this replacement would produce $A(\e)$). To estimate the error we observe that 
by  \eqref{paradiso} (we use the same notation here) we can bound 
\be
| \varphi(x+\e z) ^2 - \varphi (x)^2| \leq  C \e  |z| \bigl( \phi(x)+ \phi(x+\e z) \bigr) \,,
\en
where $C=C(\varphi)$. Hence we have  
$B(\e) \leq A(\e)+C \e B_1(\e)+C\e B_2(\e)$, where
\begin{align*}
& B_1(\e):=  \int  d \nu^\e_{\tilde \o} (x,z)  \mathds{1} _{\{ |v_\e(x)|\geq M\}} \phi(x) |z|\leq 
 \int  d \mu^\e_{\tilde \o} (x )   \phi (x) \l_1( \theta _{x/\e} \tilde\o) \,,\\
& B_2(\e):= \int  d \nu^\e_{\tilde \o} (x,z)  \mathds{1} _{\{ |v_\e(x)|\geq M\}}  |z|
\phi(x+\e z) \leq  
\int  d \nu^\e_{\tilde \o} (x,z)   |z| \phi(x+\e z)\,.
\end{align*}
Note that by \eqref{micio2} we have 
\[
\int  d \nu^\e_{\tilde \o} (x,z)   |z| \phi(x+\e z)= \int  d \nu^\e_{\tilde \o} (x,z)   |z| \phi(x)= \int  d \mu^\e_{\tilde \o} (x )   \phi (x) \l_1( \theta _{x/\e} \tilde\o)\,.\] 
Hence $B_1(\e)+B_2(\e) \leq 2\int  d \mu^\e_{\tilde \o} (x )   \phi (x) \l_1( \theta _{x/\e} \tilde\o)$.
As $\phi\in C_c(\bbR^d)$ and $\tilde \o \in \O_{\rm typ}\subset \cA[\l_1] \cap \tilde A$ with $A:=\{\l_1<+\infty\}$, we conclude that $\limsup_{\e \da 0} (B_1(\e)+B_2(\e))<+\infty$. By using that $B(\e) \leq A(\e)+C \e B_1(\e)+C\e B_2(\e)$, we get  that $\limsup_{\e \da 0} B(\e) \leq \limsup_{\e \da 0} A(\e)$ and the latter has been estimated in \eqref{rachel}. At this point, using also  \eqref{mat}, \eqref{mat1} and \eqref{mat2}, we get the claim.
\end{proof}
We can finally derive \eqref{queen2}, i.e. that $\lim _{M\to \infty} H(w_M, \varphi, b)=  H(w, \varphi, b)$.
By using Claims \ref{utti1} and \ref{utti2} we have 
\begin{equation*}
\begin{split}
| H(w_M, \varphi , b)- H(w, \varphi, b)| \leq |H( w_M, \varphi, b)- H( w_M , \varphi, b_k)| +\\
| H(w_M, \varphi, b_k)- H( w, \varphi, b_k)| + | H(w, \varphi, b_k)- H( w, \varphi, b)|\\
\leq C_0 C(\varphi) \| b-b_k\|_{L^2(\nu) }+ 
k C_* \big( M^{-1/2} + \ovo{\bbE_0}[\l_0 \mathds{1}_{\{\l_0\geq \sqrt{M}\}}]\big)^{1/2}\,.
\end{split}
\end{equation*}
At this point it is enough to take first the limit $M\to \infty$ and afterwards the limit $k\to \infty$ and to use that  $\lim_{k \to \infty}\| b-b_k\|_{L^2(\nu) }=0$.
\end{proof}


\section{Structure of the 2-scale weak limit  of a  bounded family in $H^1_{\tilde \o,\e}$: part II} \label{sec_oro}
We point out that the next result is the analogous of \cite[Lemma 5.4]{ZP}. The proof relies also on a cut-off procedure based on Lemma \ref{lemma1} and   Lemma \ref{lemma2} (see also Remark \ref{eccolo2}  below). Recall Definition \ref{degenerare}.
\begin{Proposition}\label{oro}
Let $\tilde \o \in \O_{\rm typ}$
 and let $\{v_\e\} $ be a  family of functions $v_\e \in H^1_{\tilde \o, \e} $  satisfying \eqref{istria}.
Then, along a \rrr{sequence} $ \e_k\downarrow 0$, we have:
\begin{itemize}
\item[(i)]
$\ovo{L^2(\mu^\e_{\tilde \o})\ni}v_\e \stackrel{2}{\toup} v\ovo{\in  L^2(  m dx\times \cP_0)}$, where \ovo{$v$} does not depend on $\o$.  Writing $v$ simply as $v(x)$ we have that   $v\in H^1 _*( m dx )$;
\item[(ii)] $\ovo{L^2(\nu^\e_{\tilde \o})\ni} \nabla v_\e  (x,z) \stackrel{2}{\toup}  \nabla_*  v (x) \cdot z + v_1 (x,\o,z) \ovo{\in  L^2(  m dx\times \nu)}$,
where $v_1\in L^2\bigl( \bbR^d, L^2_{\rm pot} (\nu)\bigr )$.
\end{itemize}
\end{Proposition}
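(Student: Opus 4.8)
The plan is to combine the two cut-off lemmas (Lemma \ref{lemma1} and Lemma \ref{lemma2}) with the divergence machinery of Section \ref{sec_quadro} in the spirit of \cite[Lemma 5.4]{ZP}. First I would apply Proposition \ref{risso} to obtain, along a subsequence $\e_k\downarrow 0$, that $v_\e\stackrel{2}{\toup}v$ with $v=v(x)$ independent of $\o$, and apply Lemma \ref{compatto2} to get $\nabla_\e v_\e\stackrel{2}{\toup}w$ for some $w\in L^2(mdx\times\nu)$; refining further via Lemma \ref{lemma1} and Lemma \ref{lemma2} I keep also $[v_\e]_M\stackrel{2}{\toup}v_M$ and $\nabla_\e[v_\e]_M\stackrel{2}{\toup}w_M$ with the asymptotic identities \eqref{queen1}, \eqref{queen2} available. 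The structural goal is twofold: (a) identify the component of $w(x,\cdot,\cdot)$ orthogonal to $L^2_{\rm pot}(\nu)$ as the linear form $z\mapsto a(x)\cdot z$ for a suitable $a(x)$, and (b) show that this $a(x)$ is the weak gradient $\nabla_* v(x)$, so that $w(x,\o,z)=\nabla_* v(x)\cdot z+v_1(x,\o,z)$ with $v_1(x,\cdot,\cdot)\in L^2_{\rm pot}(\nu)$ for a.e.\ $x$.

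For part (i), the extra regularity $v\in H^1_*(mdx)$ is obtained by testing $\nabla_\e v_\e$ against gradients of the form $\nabla_\e\varphi$ with $\varphi\in C^1_c(\bbR^d)$ and integrating by parts at the discrete level via \eqref{dir_form}/\eqref{sea}: the microscopic integration-by-parts moves $\nabla_\e$ onto $v_\e$, and passing to the 2-scale limit one finds $\int dx\, m\, v(x)\,\partial_{\mathfrak e_i}\varphi(x) = -\int dx\, m\,(\text{component of }w)\cdot\mathfrak e_i\,\varphi(x)$ for the directions $i\le d_*$ that actually appear. Here the point is that only the directions in $\mathrm{Span}(\mathfrak e_1,\dots,\mathfrak e_{d_*})$ survive, because by Corollary \ref{jack} the solenoidal forms only see those directions; the cut-off is needed to control the boundary/error term $C_2(\e)$ exactly as in the proof of Proposition \ref{risso}. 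This gives $\partial_{\mathfrak e_i}v\in L^2(mdx)$ for $i=1,\dots,d_*$, i.e.\ $v\in H^1_*(mdx)$, and simultaneously identifies $a(x)=\nabla_* v(x)$.

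For part (ii), I would show that $w(x,\cdot,\cdot) - u_{a(x)}(\cdot,\cdot)\in L^2_{\rm pot}(\nu)$ for a.e.\ $x$, where $u_a(\o,z)=a\cdot z$ as in \eqref{def_ua}. Equivalently, writing $v_1:=w-u_{\nabla_*v}$, one must check $v_1(x,\cdot,\cdot)\perp L^2_{\rm sol}(\nu)$, i.e.\ $\int d\nu(\o,z)\,v_1(x,\o,z)\,b(\o,z)=0$ for all $b\in\cW$ and a.e.\ $x$. This is done by testing $\nabla_\e[v_\e]_M$ against the environment-dependent forms $b(\theta_{x/\e}\tilde\o,z)$ with $b\in\cW\subset L^2_{\rm sol}(\nu)$, localized by $\varphi\in C_c(\bbR^d)$: using Lemma \ref{lunetta} (applicable since $\tilde\o\in\cA_d[b]$ so that $\mathrm{div}_* b\equiv 0$ along the relevant shifts) the microscopic gradient-against-solenoidal pairing telescopes, producing only a term with $\nabla_\e\varphi$ which vanishes in the limit by the same $C_2(\e)$-type cut-off estimate; on the limiting side this forces $\int d\nu\,w_M\,b = \int dx$-averaged version of $\int d\nu\, u_a\, b$, and then letting $M\to\infty$ via \eqref{queen2} (and \eqref{queen1} to pin down $a$) yields the claim for $w$. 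The auxiliary functions \eqref{nonfamale} in the definition of $\cG$ and the sets $\cA_d[b]$, $\cA[f-[f]_\ell]$ in Definition \ref{budda} are precisely what make the $M\to\infty$ and $\e\to0$ limits interchangeable here.

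The main obstacle I anticipate is exactly the one flagged in Remark \ref{eccolo1} and Remark \ref{eccolo2}: the amorphous gradients $\nabla_\e\varphi$ are not bounded in sup-norm (only the second-moment bound $\bbE_0[\l_2]<\infty$ is available), so one cannot directly pass to the 2-scale limit in the product $\nabla_\e v_\e\cdot\nabla_\e\varphi\cdot b(\theta_{x/\e}\tilde\o,z)$ without first truncating $v_\e$. Thus every limit must be taken in the order ``$\e\to0$, then $M\to\infty$, then (for the test forms) $k\to\infty$'', with the intermediate quantities $A_1(\e),A_2(\e),B_1(\e),B_2(\e)$ (or their analogues) kept finite using $\tilde\o\in\O_{\rm typ}$; keeping track of which membership in the long list of Definition \ref{budda} licenses each individual passage is the real bookkeeping burden of the proof.
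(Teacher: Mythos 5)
Your overall template — cut off $v_\e$ and $\nabla_\e v_\e$ via Lemmas \ref{lemma1}/\ref{lemma2}, use Proposition \ref{risso} to kill the $\o$-dependence of $v$, and then test against shifted solenoidal forms $\varphi(x)\,b(\theta_{x/\e}\tilde\o,z)$ with $b\in\cW$, exploiting $\tilde\o\in\cA_d[b]$ and the auxiliary quantities \eqref{nonfamale} to justify all interchanges — is the right one and matches the paper's strategy of establishing the single master identity \eqref{kokeshi}. However there are two genuine gaps in how you argue from this identity.

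First, your route to Item (i) does not work as stated. You propose to prove $v\in H^1_*(mdx)$ ``by testing $\nabla_\e v_\e$ against $\nabla_\e\varphi$ and integrating by parts.'' But the discrete IBP sends $\nabla_\e$ \emph{off} $v_\e$ onto $\varphi$ (not onto $v_\e$, as you wrote), giving $2\la v_\e,-\bbL^\e_{\tilde\o}\varphi\ra_{\mu^\e_{\tilde\o}}$, and $\bbL^\e_{\tilde\o}\varphi$ has no useful 2-scale limit without already knowing the corrector; alternatively, pairing the 2-scale limit $w$ directly against $\nabla\varphi(x)\cdot z$ only produces $\int dx\,\nabla\varphi(x)\cdot\bigl(\int d\nu\,z\,w(x,\cdot,\cdot)\bigr)$, whose components are not a priori weak derivatives of $v$. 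What makes Item (i) work is the choice, via Corollary \ref{jack}, of solenoidal $b_i$ with $\eta_{b_i}=\mathfrak e_i$: plugging $b=b_i$ into \eqref{kokeshi} reads $\int\varphi\,g_i = -\int v\,\partial_{\mathfrak e_i}\varphi$ with $g_i(x)=\int d\nu\,w(x,\o,z)\,b_i(\o,z)\in L^2(dx)$, and this simultaneously pins down $\partial_{\mathfrak e_i}v=g_i$ and $\nabla_*v$. You do mention Corollary \ref{jack} in passing, but it must enter precisely here; the direct IBP you describe cannot substitute for it.

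Second, you write that after the solenoidal telescoping ``a term with $\nabla_\e\varphi$ ... vanishes in the limit by the same $C_2(\e)$-type cut-off estimate.'' That is not what happens and would in fact destroy the conclusion. In the proof of Proposition \ref{risso} the $\nabla_\e\varphi$ contribution came with an extra prefactor $\e$ (from Lemma \ref{lunetta}), which is why $\e C_2(\e)\to 0$ there. In the present proposition there is no such prefactor: the solenoidality of $b$ kills the term $\int d\nu^\e\,\nabla_\e v^\e_M\,\varphi\,b$ plus the Leibniz cross term, and what survives is exactly $\int d\nu^\e\, v^\e_M(x)\,\nabla_\e\varphi(x,z)\,\tilde b(\theta_{x/\e}\tilde\o,z)$, whose limit is the nonzero quantity $-\int dx\,m\,v_M(x)\,\nabla\varphi(x)\cdot\eta_b$ on the right of \eqref{kokeshiM}. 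Only the \emph{error} of replacing $\nabla_\e\varphi$ by $\nabla\varphi(x)\cdot z$ (estimate \eqref{cuoricino1}) and the error from truncating the test form $\tilde b\to\tilde b_k$ (estimate \eqref{provence}) vanish; the $\nabla_\e\varphi$ term itself is the whole point. Relatedly, your ``structural goal (a)'' — to identify the $L^2_{\rm sol}$-component of $w(x,\cdot,\cdot)$ with the linear form $u_{a(x)}(\o,z)=a(x)\cdot z$ — is not what you want, since $u_a\notin L^2_{\rm sol}(\nu)$ in general (cf.\ \eqref{jung}: only $u_a+v^a$ is solenoidal). The correct statement, which you do write later, is that $w-u_{\nabla_*v}\in L^2_{\rm pot}(\nu)$; but then the solenoidal part of $w$ is $u_{\nabla_*v}+v^{\nabla_*v}$, not $u_{\nabla_*v}$. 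These slips suggest the precise bookkeeping of what is cancelled by solenoidality and what survives has not yet been nailed down.
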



The property $v_1\in L^2\bigl( \bbR^d, L^2_{\rm pot} (\nu)\bigr )$ means that 
for $dx$--almost every $x$ in $\bbR^d$ the map $(\o,z)\mapsto v_1(x, \o, z) $ is a potential form, hence in  $L^2_{\rm pot} (\nu)$, moreover the map  $\bbR^d \ni x \to v_1(x, \cdot, \cdot) \in L^2_{\rm pot} (\nu)$ is measurable and 
\begin{equation}
\int  dx \| v_1(x, \cdot, \cdot)\|_{L_2(\nu) }^2=
\int dx  \int  \nu (\o,z) v_1 (x, \o, z)^2 <+\infty\,.
\end{equation}

\begin{proof}[Proof of Prop.~\ref{oro}] 
At cost to restrict to $\e$ small enough, we can assume that $ \| v_\e\|_{L^2(\mu_{\tilde \o}^\e) } \leq C_0$ and 
$ \|\nabla _\e v_\e\| _{L^2(\nu^\e_{\tilde \o})} \leq C_0$ for some $C_0<+\infty$ and all $\e>0$.  We can assume the same bounds for $v^\e_M:=[v_\e]_M$.  
Along a sequence  $\e_k \downarrow 0$ the $2$-scale convergences in Item (i) of Lemma \ref{lemma1} and in Item (i) of Lemma \ref{lemma2} take place.   We keep here the same notation. In particular, $v_\e \stackrel{2}{ \toup}  v$,  $v^\e _M\stackrel{2}{ \toup}  v_M$, 
 $\nabla_\e  v_\e \stackrel{2}{ \toup}  w$ and $\nabla_\e  v^\e _M\stackrel{2}{ \toup}  w_M$.
 By Lemmas \ref{compatto1} and \ref{compatto2}  the norms $\| v_M\|_{L^2(m dx \times \cP_0)}$, $\| v\|_{L^2(mdx \times \cP_0)}$, $\| w_M\|_{L^2(m dx \times \nu)}$ and $\| w\|_{L^2(m dx \times \nu)}$ are upper bounded by $C_0$. 

Due to Prop.~\ref{risso} $v=v(x)$ and $v_M=v_M(x)$. We claim that 
 for   each  solenoidal form $b  \in L^2_{\rm sol}(\nu) $ and each  function $\varphi \in C^2_c(\bbR^d)$, it holds 
 \begin{equation}\label{kokeshi}
 \int  dx   \varphi(x) \int  d \nu (\o ,z) w(x,\o,z) b(\o, z) =  
  -\int   dx  v(x) \nabla \varphi(x) \cdot \eta_b\,, 
\end{equation}
where  $\eta_b := \int d\nu (\o,z) z b(\o,z)$.
Note that $\eta_b$ is well defined since both $b$ and the map $(\o,z) \mapsto z$ are in $ L^2(\nu)$. Moreover, by  applying Lemma \ref{lemma_siou} with $f$ such that $f(\o, \theta_z \o)=  z b(\o,z) $  \ovo{$\forall z\in \hat \o$} for $\cP_0$--a.a. $\o$ \utto{(use  (A3) to define $f$)}, we get that $\eta_b = -\eta_{\tilde b}$ (cf. Def.~\ref{ometto}).

Before proving \eqref{kokeshi} we show how to conclude the proof of Prop.~\ref{oro} starting with Item (i).
Due to Corollary \ref{jack} for each $i=1,\dots, d_*$ there exists  $b_i \in L^2_{\rm sol}(\nu)$ such that $\eta_{b_i}=\mathfrak{e}_i$.
 Consider the measurable function 
  \begin{equation}
  g_i (x):=  \int   d \nu (\o ,z) w(x,\o,z) b_i(\o, z)\,, \qquad  1\leq i \leq d_*\,.
  \end{equation}
We have that $g_i \in L^2(dx) $ since, by Schwarz inequality, 
\begin{multline}
\int g_i(x) ^2 dx =
   \int dx \left[ \int  d \nu (\o ,z) w(x,\o,z) b_i(\o, z)\right]^2\\
\leq  \|b _i \|^2_{L^2(\nu)}\int dx   \ \int  d \nu (\o ,z) w(x,\o,z) ^2 <\infty\,.
\end{multline}
   Moreover, by \eqref{kokeshi} we have that $\int dx \varphi(x) g_i(x) =  -\int   dx\, v(x) \partial_{\mathfrak{e}_i} \varphi(x) $ for $1\leq i \leq d_*$.  This proves that $v(x)\in H^1_*(mdx)$ and   $\partial_{\mathfrak{e}_i} v(x)= 
 g_i(x)$  for $1\leq i \leq d_*$.
  This concludes the proof of Item (i).
  
  We move to Item (ii) (always assuming 
  \eqref{kokeshi}). By Item (i) and Corollary \ref{jack} implying that $\eta_b \in \text{span}\{\mathfrak{e}_1, \dots, \mathfrak{e}_{d_*}\} $ for all $b\in L^2_{\rm sol}(\nu)$,  we can  replace  the r.h.s. of \eqref{kokeshi}  by $ \int  dx  (\nabla_* v(x) \cdot \eta_b ) \varphi(x)$. Hence   \eqref{kokeshi} can be rewritten as 
 \begin{equation}\label{cocco}
 \int  dx  \varphi(x) \int   d \nu (\o ,z)
 \left[ w(x,\o,z) -\nabla_* v(x)  \cdot z
 \right] b(\o, z) =  
  0\,.
  \end{equation}
By the arbitrariness of $\varphi$ we conclude that $dx$--a.s. 
\begin{equation}\label{criceto13}
\int  d \nu (\o ,z)
 \left[ w(x,\o,z) -\nabla_* v(x)  \cdot z
 \right] b(\o, z) =  
  0\,, \qquad \forall b \in L^2 _{\rm sol}(\nu)\,.
\end{equation}
Let us now show that the map 
$w(x,\o,z) -\nabla_* v(x) \cdot z $ belongs to $L^2( dx, L^2(\nu) )$. Indeed,  we have 
$
\int dx \|w(x, \cdot, \cdot)\|_{L^2(\nu) }^2 =
 \| w\|^2_{ L^2( mdx \times d\nu)}<+\infty$ and also
 \begin{equation}
\int dx \| \nabla_* v(x) \cdot z \|_{L^2(\nu) }^2 \leq \int dx | \nabla_* v(x) |^2  \int d\nu (\o, z) |z|^2 
<\infty\,,
\end{equation}
  by Schwarz inequality
 and since $ \nabla_* v\in L^2(dx)$ and $\bbE_0[\l_2]<\infty$.

As the map 
$w(x,\o,z) -\nabla_*v(x) \cdot z $ belongs to $L^2( dx, L^2(\nu) )$, for $dx$--a.e. $x$ we have that the map 
  $(\o,z) \mapsto w(x,\o,z) -\nabla_* v(x) \cdot z$ belongs to $ L^2(\nu)$ and therefore, by \eqref{criceto13}, to   $ L^2_{\rm pot} (\nu)$.
This concludes the proof of Item (ii).

\smallskip
It remains to prove \eqref{kokeshi}. Here is a roadmap: (i) we reduce  \eqref{kokeshi}  to \eqref{kokeshiM}; (ii) we prove  \eqref{provence}; (iii) 
by \eqref{provence} we reduce 
\eqref{kokeshiM} to \eqref{tramonto}; (iv) we prove \eqref{cuoricino1}; (v) by \eqref{cuoricino1} we reduce \eqref{tramonto} to \eqref{tramontoA}; (vi) we prove \eqref{tramontoA}. 

Since both sides of \eqref{kokeshi} are continuous as functions of $b \in L^2_{\rm sol}(\nu)$, it is enough to prove it for $b\in \cW$ (see Section~\ref{topo}). We apply Lemma \ref{lemma2}--(ii) (recall that $b\in \cW\subset \cH$) to approximate the l.h.s. of \eqref{kokeshi} and Lemma \ref{lemma1}--(ii) with $u:=1\in \cG$ to approximate the r.h.s. of \eqref{kokeshi}. Then to prove \eqref{kokeshi}
it is enough to show that 
  \begin{equation}\label{kokeshiM}
 \int  dx \,m  \varphi(x) \int  d \nu (\o ,z) w_M(x,\o,z) b(\o, z) =  
  -\int   dx \,m v_M(x) \nabla \varphi(x) \cdot \eta_b\,, 
\end{equation}
 for any $\varphi \in C_c^2(\bbR^d)$, $b \in \cW$ and $M\in \bbN_+$. From now on $M$ is fixed.

 \smallskip
  Since $\tilde \o \in \O_{\rm typ}$, $\nabla_\e v^\e_M\stackrel{2}{\toup} w_M$ and $b \in \cW \subset \cH$ we can write (cf.~\eqref{yelena}) 
  \be\label{bruna1}
  \text{l.h.s. of }\eqref{kokeshiM}= \lim _{\e \da 0} \int d \nu ^\e _{\tilde \o}(x,z) \nabla_\e v^\e _M(x,z) \varphi (x) b ( \theta_{x/\e} \tilde \o, z)\,.
  \en
  Since $b \in L^2_{\rm sol}(\nu)$ and \vvv{$\tilde \o \in \O_{\rm typ}\subset \cA_d[b]$} (cf.  Lemmata \ref{tav} and    \ref{lunetta}), we get
  \[
  \int d \nu ^\e _{\tilde \o}(x,z) \nabla_\e ( v^\e  _M \varphi ) (x,z)  b ( \theta_{x/\e} \tilde \o, z)=0\,.
  \]
Using the above identity, \eqref{leibniz}  and finally \eqref{micio3} in Lemma \ref{gattonaZ} as \vvv{$\tilde \o \in \O_{\rm typ}\subset \cA_1[b]\cap \cA_1[\tilde b]
$}, we conclude that 
\be\label{bruna2}
\begin{split}
\int d \nu ^\e _{\tilde \o}(x,z) &\nabla_\e v^\e _M(x,z) \varphi (x) b ( \theta_{x/\e} \tilde \o, z)\\
&=- \int d \nu ^\e _{\tilde \o}(x,z)  v^\e _M(x+ \e z) \nabla_\e \varphi (x,z) b ( \theta_{x/\e} \tilde \o, z)
\\
&  =\int d \nu ^\e _{\tilde \o}(x,z)  v^\e _M(x) \nabla_\e \varphi (x,z) \tilde b ( \theta_{x/\e} \tilde \o, z)\,.
\end{split}
\en
Up to now we have obtained that 
\be\label{alba1}
  \text{l.h.s. of }\eqref{kokeshiM}= \lim _{\e \da 0} \int d \nu ^\e _{\tilde \o}(x,z)  v^\e _M(x) \nabla_\e \varphi (x,z) \tilde b ( \theta_{x/\e} \tilde \o, z)\,.
    \en
We now set $b_k:=[b]_k$,
 $\tilde b_k:= [\,\tilde b \,]_k= \widetilde{[b]_k}= \widetilde{\,b_k\,}$. 
We  want to prove that 
\be\label{provence}
\varlimsup_{k \uparrow \infty} \varlimsup  _{\e\da 0}
 \bigl| \int d \nu ^\e _{\tilde \o}(x,z)  v^\e _M(x) \nabla_\e \varphi (x,z)( \tilde b- \tilde b_k) ( \theta_{x/\e} \tilde \o, z)\bigr|
 =0\,.
\en
To this aim let $\ell$ be such that $\varphi (x)=0$ if $|x| \geq \ell$. Fix   $\phi \in C_c(\bbR^d)$ with values in $[0,1]$, such that $ \phi(x)=1$ for $|x| \leq \ell$ and $\phi(x)=0$ for $|x| \geq \ell+1$. 
  Using \eqref{paradiso} and Schwarz inequality  we can bound
\be \label{acqua}
\begin{split}
& \bigl| \int d \nu ^\e _{\tilde \o}(x,z)  v^\e _M(x) \nabla_\e \varphi (x,z)( \tilde b- \tilde b_k) ( \theta_{x/\e} \tilde \o, z)\bigr|
\\ 
&\leq  M \| \nabla \varphi \|_\infty \int d \nu ^\e _{\tilde \o}(x,z) |z| \bigl( \phi (x) + \phi (x+ \e z) \bigr) | \tilde b- \tilde b_k| ( \theta_{x/\e} \tilde \o, z)\\
& \leq M \| \nabla \varphi \|_\infty [ 2 A(\e) ] ^{1/2} [ B(\e,k)+ C(\e,k)]^{1/2}\,
\end{split}
\en
where (using  \eqref{micio2} in Lemma \ref{gattonaZ}  for $A(\e)$ and $C(\e)$)
\begin{align*}
 A(\e): &= \int d \nu ^\e _{\tilde \o}(x,z) |z|^2 \phi(x)= \int d \nu ^\e _{\tilde \o}(x,z) |z|^2 \phi(x+ \e z)\,,\\
 B(\e,k): & =  \int d \nu ^\e _{\tilde \o}(x,z)  ( \tilde b- \tilde b_k)^2 ( \theta _{x/\e} \tilde \o, z) \phi (x) \,,\\
 C(\e,k):& =  \int d \nu ^\e _{\tilde \o}(x,z)  ( \tilde b- \tilde b_k)^2 ( \theta _{x/\e} \tilde \o, z) \phi (x+\e z)\\
& =  \int d \nu ^\e _{\tilde \o}(x,z)  ( b-   b_k)^2 ( \theta  _{x/\e} \tilde \o, z) \phi (x)\,.
\end{align*}
\vvv{As $\O_{\rm typ}\subset  \tilde A \cap\cA[\l_2]$ with $ A:=\{\l_2<+\infty\}$} (cf. Def.~\ref{defA}),
    $A(\e)= \int d \mu^\e_{\tilde \o} (x) \phi(x) \l_2 ( \theta_{x/\e} \tilde \o) $ has finite limit as $\e\da 0$. 
Hence to get \eqref{provence} we only need to show that $\lim_{k \uparrow  \infty, \e \da 0 } B(\e,k)= \lim _{k \uparrow  \infty, \e \da 0 }C(\e, k)=0$. Setting $ d:=   | \tilde b- \tilde b_k|
$ we can write
$
B(\e,k)= 
\int d \mu^\e _{\tilde \o} (x) \phi(x)
\widehat{d^2}  (\theta_{x/\e} \tilde \o )$ as \vvv{$\o\in  \O_{\rm typ}\subset \cA_1[d^2]$ (recall that $\tilde b, \tilde b_k \in \cH$)}.  As in addition \vvv{$\o\in  \O_{\rm typ}\subset \cA[\widehat{d^2}]$}, we conclude that
$
\lim_{\e \da 0} B(\e,k)= \int dx \, m \phi (x)  \|  \tilde b- \tilde b_k\|_{L^2(\nu)}^2 $.
Similarly we get that $
\lim_{\e \da 0} C(\e,k)= \int dx \, m \phi (x)  \|   b-  b_k\|_{L^2(\nu)}^2 $.  As the above limits go to zero as $k\to \infty$, we get  \eqref{provence}.

Due to \eqref{alba1}, \eqref{provence} and since, by Schwarz inequality, $\lim _{k \to \infty} \eta_{\tilde b_k}= \eta _{\tilde b}= - \eta_b$, to prove \eqref{kokeshiM} we only need to show, for fixed $M,k$,  that 
 \begin{equation}\label{tramonto}
 \lim _{\e \da 0} \int d \nu ^\e _{\tilde \o}(x,z)  v^\e _M(x) \nabla_\e \varphi (x,z) \tilde b _k( \theta_{x/\e} \tilde \o, z)=
 \int   dx \, m v_M(x) \nabla \varphi(x) \cdot \eta_{\tilde b_k}\,.
 \end{equation}
 To prove \eqref{tramonto} we first show that 
 \begin{equation}\label{cuoricino1} \lim_{\e\da 0} \Big |
  \int d \nu ^\e _{\tilde \o}(x,z)  v^\e _M(x) \bigl[ \nabla_\e \varphi (x,z) -\nabla \varphi(x) \cdot z\bigr]\tilde b _k( \theta_{x/\e} \tilde \o, z)
  \Big |=0\,.
 \end{equation}
 Since $\|v^\e _M\|_\infty \leq M $ and $\| \tilde b_k \| _\infty \leq k$, it is enough to show that 
 \be \label{cuoricino2}
 \lim _{\e\da 0}  \int d \nu ^\e _{\tilde \o}(x,z) \bigl| \nabla_\e \varphi (x,z) -\nabla \varphi(x) \cdot z\bigr| =0\,.
 \en
Since $\varphi \in C_c^2(\bbR^d)$,  by Taylor expansion we have $\nabla_\e \varphi (x,z) -\nabla \varphi(x) \cdot z=\frac{1}{2} \sum_{i,j} \partial^2 _{ij} \varphi(\, \z_\e(x,z)\,) z_i z_j \e$, where $\z_\e(x,z)$ is a point between $x$ and $x+\e z$. Moreover we note that 
$\nabla_\e \varphi (x,z) -\nabla \varphi(x) \cdot z=0$ if $|x| \geq \ell$ and $|x+\e z| \geq \ell$. All these observations imply that 
\be \label{mirra}
\bigl| \nabla _\e \varphi (x,z) - \nabla \varphi (x) \cdot z\bigr | \leq \e C(\varphi) |z|^2 \bigl( \phi(x) + \phi(x+\e z) \bigr)\,.
\en
Due to \eqref{micio2}  we can write
\[ \int d \nu ^\e _{\tilde \o}(x,z)  |z|^2  \phi(x+\e z)=
 \int d \nu ^\e _{\tilde \o}(x,z)  |z|^2  \phi(x)= \int d \mu^\e_{\tilde \o} (x) \phi(x) \l_2 ( \theta_{x/\e} \tilde \o)\,.
 \]
\vvv{As $\o \in \O_{\rm typ}\subset  \tilde A \cap\cA[\l_2]$ with $ A:=\{\l_2<+\infty\}$,} we conclude that the  above r.h.s. has a finite limit as $\e \da 0$.  Due to \eqref{mirra}, we get \eqref{cuoricino2} and hence
 \eqref{cuoricino1}.
 
 Having \eqref{cuoricino1}, to get \eqref{tramonto} it is enough to show that 
  \begin{equation}\label{tramontoA}
 \lim _{\e \da 0} \int d \nu ^\e _{\tilde \o}(x,z)  v^\e _M(x) \nabla \varphi (x) \cdot z \tilde b _k( \theta_{x/\e} \tilde \o, z)=
 \int   dx \,m v_M(x) \nabla \varphi(x) \cdot \eta_{\tilde b_k}\,.
 \end{equation}
 To this aim we observe that
 \be\label{favorita1}
\int d \nu^\e_{\tilde \o} (x,z) v_M^\e (x) \partial _i \varphi (x) z_i \tilde{b}_k (\theta_{x/\e} \tilde\o, z)=
\int d \mu^\e_{\tilde \o}(x)  v^\e_M(x) \partial _i \varphi (x) u_k( \theta_{x/\e} \tilde \o)\,,
\en
where $u_k(\o):= 
\int d \hat{\o} (z) r_{0,z}(\o) z_i \tilde{b}_k ( \o, z)$. Note that $u_k \in L^1(\cP_0)$ as $\l_1 \in L^1(\cP_0)$ by \eqref{maracas}. Given $\ell\in \bbN$ we consider $u_k -[u_k]_\ell$ (cf. \eqref{taglio}).
  As  $\tilde \o \in  \O_{\rm typ}\subset \tilde A\cap \cA[ |u_k - [u_k ]_\ell|  ]$ where $A:=\{\l_1<+\infty\}$,  we have 
\be
\begin{split}
 & \Big | \int d \mu^\e_{\tilde \o}(x)  v^\e_M(x) \partial _i \varphi (x) (u_k-[u_k]_\ell)( \theta_{x/\e} \tilde \o)\Big | \\
 &  \leq M  \int d \mu^\e_{\tilde \o}(x)  |\partial _i \varphi (x)| \,|u_k-[u_k]_\ell|( \theta_{x/\e} \tilde \o) \\
 &
 \stackrel{\e \da 0}{\to} M \int dx m |\partial _i \varphi (x)| \bbE_0[ |u_k-[u_k]_\ell|  ]\,.
\end{split}
\en
Note that the last  expectation goes  to zero when $\ell \uparrow \infty$ by dominated convergence as $\l_1\in L^1(\cP_0)$ by \eqref{maracas}.
Since $\tilde \o \in \O_{\rm typ}$, $v^\e_M\stackrel{2}{\toup} v_M$ and   $[u_k]_\ell \in \cG$ (cf. \eqref{nonfamale} and recall that $\tilde b \in \cW$  $\forall b\in \cW$), by \eqref{rabarbaro} we conclude that 
\begin{equation*}
 \begin{split}
& \lim_{\e \da 0}
  \int d \mu^\e_{\tilde \o}(x)  v^\e_M(x) \partial _i \varphi (x) u_k( \theta_{x/\e} \tilde \o) =
  \lim_{\ell \uparrow \infty}\lim_{\e \downarrow 0}\int d \mu^\e_{\tilde \o}(x)  v^\e_M(x) \partial _i \varphi (x)  [u_k]_\ell( \theta_{x/\e} \tilde \o)\\
 &=\lim_{\ell \uparrow \infty} \int dx \, m v_M(x) \partial_i \varphi (x) 
 \bbE_0[ [u_k]_\ell ]
 =\int dx \, m v_M(x) \partial_i \varphi (x) 
 \bbE_0[ u_{k} ]\\
&  =  \int dx \, m v_M(x) \partial_i \varphi (x) 
 (\eta_{\tilde b_k} \cdot e_i)\,.
 \end{split}
 \end{equation*}
  Our target \eqref{tramontoA} then follows from the above equation  and \eqref{favorita1}.
 \end{proof}
 
 \begin{Remark}\label{eccolo2} We stress that, without the cut-off trick allowing to move from \eqref{kokeshi} to \eqref{kokeshiM}, one would be blocked when trying to study the limit in the r.h.s. of \eqref{alba1} with $v^\e_M$ replaced by $v_\e$. 
    \end{Remark}
\section{Proof of Theorem \ref{teo1}}\label{robot}

Without loss of generality, we prove Theorem \ref{teo1}
 with $\l=1$ to simplify the notation.
Due to Prop.~\ref{stemino} we only need to prove Items (i), (ii) and (iii). 
  Some arguments below are taken from \cite{ZP}, others are intrinsic to the possible presence of long jumps.
 We start with two results  (Lemmas \ref{ascoli} and \ref{blocco}) concerning the microscopic gradient $\nabla_\e\varphi$ for $\varphi \in C_c(\bbR^d)$.
 
\begin{Lemma}\label{ascoli} Let $\o \in \O_{\rm typ}$.
 Then 
 $\varlimsup_{\e\da 0}\|\nabla_\e \varphi \|_{L^2(\nu_\o ^\e) }<\infty$ for any $\varphi \in C^1_c (\bbR^d)$.
\end{Lemma}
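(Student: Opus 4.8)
The plan is to expand $\nabla_\e\varphi(x,z)^2$ using the mean value theorem and then recognize the resulting expression as an integral against $\mu^\e_\o$ of a function of the form $\varphi_\#(\theta_{x/\e}\o)$, to which Proposition~\ref{prop_ergodico} applies since $\o\in\O_{\rm typ}\subset\cA[f]$ for the relevant $f$. Concretely, fix $\ell$ with $\varphi(x)=0$ for $|x|\ge\ell$ and pick $\phi\in C_c(\bbR^d)$ with values in $[0,1]$, equal to $1$ on $\{|x|\le\ell\}$ and vanishing on $\{|x|\ge\ell+1\}$; as in \eqref{paradiso} one has $|\nabla_\e\varphi(x,z)|\le\|\nabla\varphi\|_\infty|z|\bigl(\phi(x)+\phi(x+\e z)\bigr)$. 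Squaring and using $(a+b)^2\le 2a^2+2b^2$ gives
\be\label{ascoli_bound}
\|\nabla_\e\varphi\|_{L^2(\nu^\e_\o)}^2=\int d\nu^\e_\o(x,z)\,\nabla_\e\varphi(x,z)^2\le 2\|\nabla\varphi\|_\infty^2\int d\nu^\e_\o(x,z)\,|z|^2\bigl(\phi(x)^2+\phi(x+\e z)^2\bigr)\,.
\en

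Next I would apply Lemma~\ref{gattonaZ}--(i) to the nonnegative form $(\o,z)\mapsto|z|^2$ with the pair of functions $\phi,\phi$, which yields $\int d\nu^\e_\o(x,z)\,|z|^2\phi(x+\e z)^2=\int d\nu^\e_\o(x,z)\,|z|^2\phi(x)^2$ (here I use $\phi^2$ in place of $\phi$, or equivalently note $\tilde{(|z|^2)}(\o,z)=|z|^2$ on $\O_*$). Hence the right-hand side of \eqref{ascoli_bound} is bounded by $4\|\nabla\varphi\|_\infty^2\int d\nu^\e_\o(x,z)\,|z|^2\phi(x)^2$. By the definition \eqref{nunu} of $\nu^\e_\o$ and of $\l_2$ in \eqref{altino15}, this equals $4\|\nabla\varphi\|_\infty^2\int d\mu^\e_\o(x)\,\phi(x)^2\,\l_2(\theta_{x/\e}\o)$.

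Finally, since $\l_2\in L^1(\cP_0)$ by (A7) and $\o\in\O_{\rm typ}\subset\tilde A\cap\cA[\l_2]$ with $A=\{\l_2<+\infty\}$ (recall Definition~\ref{defA} and the list in Definition~\ref{budda}), Proposition~\ref{prop_ergodico} applied with $f=\l_2$ and test function $\phi^2\in C_c(\bbR^d)$ gives
\be\label{ascoli_limit}
\lim_{\e\da 0}\int d\mu^\e_\o(x)\,\phi(x)^2\,\l_2(\theta_{x/\e}\o)=\int dx\,m\,\phi(x)^2\,\bbE_0[\l_2]<+\infty\,.
\en
Combining, $\varlimsup_{\e\da 0}\|\nabla_\e\varphi\|_{L^2(\nu^\e_\o)}^2\le 4\|\nabla\varphi\|_\infty^2\,\bbE_0[\l_2]\int\phi^2\,m\,dx<+\infty$, which proves the claim. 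No genuine obstacle arises here: the only points requiring care are the correct invocation of Lemma~\ref{gattonaZ}--(i) to symmetrize the $\phi(x+\e z)$ term and checking that $\O_{\rm typ}$ was built to contain the sets $\tilde A$ and $\cA[\l_2]$ needed to pass to the limit via Proposition~\ref{prop_ergodico}.
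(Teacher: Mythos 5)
Your proof is correct and follows essentially the same route as the paper's: bound $|\nabla_\e\varphi|$ via the mean value theorem as in \eqref{paradiso}, symmetrize the $\phi(x+\e z)$ term using Lemma~\ref{gattonaZ}--(i) (equivalently \eqref{micio2}) for the form $b(\o,z)=|z|^2$, rewrite the resulting integral in terms of $\l_2(\theta_{x/\e}\o)$, and pass to the limit via Proposition~\ref{prop_ergodico} using $\O_{\rm typ}\subset\tilde A\cap\cA[\l_2]$. The only cosmetic difference is that you bound $(\phi(x)+\phi(x+\e z))^2\le 2\phi(x)^2+2\phi(x+\e z)^2$ whereas the paper uses $\le 2(\phi(x)+\phi(x+\e z))$ (valid since $\phi$ takes values in $[0,1]$); both give the same structure.
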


\begin{proof} 
Let $\phi$ be as in \eqref{paradiso}. By \eqref{paradiso} and since $ \o \in \O_{\rm typ}$ 
 (apply \eqref{micio2} with $b(\o,z):=|z|^2$), we get
\begin{equation*}
\begin{split}
\| \nabla_\e \varphi \|_{L^2(\nu^\e_\o)}^2 &  \leq C(\varphi) \int d \nu^\e_\o (x,z)  |z|^2\bigl( \phi(x)+ \phi(x+\e z) \bigr) \\
&=
2C(\varphi) \int d \nu^\e_\o (x,z)  |z|^2 \phi(x) = 2 C(\varphi) \int d \mu^\e_\o(x)\phi(x) \l_2( \theta_{x/\e} \o)\,.
\end{split}
\end{equation*}
The thesis then follows from Prop. \ref{prop_ergodico}  \vvv{as $\o\in \O_{\rm typ}\subset  \tilde A \cap\cA[\l_2]$ with  $ A:=\{\l_2<+\infty\}$} (recall Def.~\ref{defA}).
\end{proof}

\begin{Lemma}\label{blocco} Given $ \o \in \O_{\rm typ}$ and $\varphi \in C_c^2(\bbR^d)$  it holds 
\begin{equation}\label{football}
\lim  _{\e \da 0}  \int d \nu^\e_{\o}(x,z)  \bigl[\nabla_\e \varphi (x,z) - \nabla  \varphi (x) \cdot z \bigr]^2   =0\,.
\end{equation}
\end{Lemma}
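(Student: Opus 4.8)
The plan is to combine the two pointwise estimates on $\nabla_\e\varphi(x,z)-\nabla\varphi(x)\cdot z$ that were already produced inside the proof of Proposition~\ref{oro}, namely the crude ``linear in $z$'' bound $\bigl|\nabla_\e\varphi(x,z)-\nabla\varphi(x)\cdot z\bigr|\le 2\|\nabla\varphi\|_\infty|z|\bigl(\phi(x)+\phi(x+\e z)\bigr)$ coming from \eqref{paradiso} (using that $\nabla\varphi$ is supported where $\phi\equiv1$) and the sharp ``quadratic in $z$'' Taylor bound $\bigl|\nabla_\e\varphi(x,z)-\nabla\varphi(x)\cdot z\bigr|\le \e\, C(\varphi)\,|z|^2\bigl(\phi(x)+\phi(x+\e z)\bigr)$ from \eqref{mirra}, where $\phi\in C_c(\bbR^d)$ with values in $[0,1]$ is $1$ on the support of $\varphi$ and $0$ outside a slightly larger ball. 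Since $\phi\in[0,1]$ one has $(\phi(x)+\phi(x+\e z))^2\le 2(\phi(x)+\phi(x+\e z))$, so after squaring both bounds contain a single factor $\phi(x)+\phi(x+\e z)$.

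First I would fix $k\in\bbN$ and split the integral in \eqref{football} over $\{|z|<k\}$ and $\{|z|\ge k\}$. On $\{|z|<k\}$ I use the Taylor bound, estimate $|z|^4\le k^2|z|^2$, and reduce, via \eqref{micio2} in Lemma~\ref{gattonaZ}--(i) applied to the form $(\o,z)\mapsto|z|^2$ (which equals its own $\tilde{\,\cdot\,}$, being even in $z$ and $\o$--independent) with $\psi:=\phi$, to a multiple of $\e^2 k^2\int d\mu^\e_\o(x)\,\phi(x)\,\l_2(\theta_{x/\e}\o)$; since $\o\in\O_{\rm typ}\subset\tilde A\cap\cA[\l_2]$ with $A:=\{\l_2<+\infty\}$ (recall Definition~\ref{defA}), Proposition~\ref{prop_ergodico} shows this integral converges to the finite quantity $\int dx\,m\,\phi(x)\,\bbE_0[\l_2]$, so the whole $\{|z|<k\}$--contribution is $O(\e^2 k^2)\to0$ as $\e\downarrow0$ for each fixed $k$. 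On $\{|z|\ge k\}$ I use instead the linear bound; squaring leaves $|z|^2$, and \eqref{micio2} applied to the $\o$--independent, $z$--even form $(\o,z)\mapsto|z|^2\mathds{1}_{\{|z|\ge k\}}$ reduces this piece to a multiple of $\int d\mu^\e_\o(x)\,\phi(x)\,h_k(\theta_{x/\e}\o)$, where $h_k(\o):=\int d\hat\o(z)\,r_{0,z}(\o)\,|z|^2\mathds{1}_{\{|z|\ge k\}}$. Because $h_k\le\l_2\in L^1(\cP_0)$ and $\o\in\O_{\rm typ}\subset\cA[h_k]$ (these sets were put into the definition of $\O_{\rm typ}$ precisely for this purpose), Proposition~\ref{prop_ergodico} gives $\int d\mu^\e_\o(x)\,\phi(x)\,h_k(\theta_{x/\e}\o)\to\int dx\,m\,\phi(x)\,\bbE_0[h_k]$.

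Putting the two pieces together yields $\varlimsup_{\e\downarrow0}\int d\nu^\e_\o(x,z)\bigl[\nabla_\e\varphi(x,z)-\nabla\varphi(x)\cdot z\bigr]^2\le C\,\bbE_0[h_k]$ for every $k\in\bbN$, with $C$ depending only on $\varphi$. Letting $k\to\infty$ and using $0\le h_k\le\l_2\in L^1(\cP_0)$ together with $h_k\downarrow0$ pointwise, dominated convergence gives $\bbE_0[h_k]\to0$; since the integrand in \eqref{football} is nonnegative, the limit is $0$, which is the claim.

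The step I expect to be the genuine obstacle is controlling the region of \emph{large} jumps $z$: Assumption (A7) provides only $\l_0,\l_2\in L^1(\cP_0)$ and \emph{not} $\l_4\in L^1(\cP_0)$, so the natural Taylor estimate (whose square carries a factor $|z|^4$) cannot be integrated against $\nu^\e_\o$ globally. The resolution is exactly the cut-off in $|z|$ above: on bounded $|z|$ the $\e^2$ gain from Taylor expansion beats the loss $|z|^4\le k^2|z|^2$, while on $|z|\ge k$ one forgoes the Taylor bound, pays a factor $|z|^2$ only, and makes the contribution uniformly small in $\e$ by the tail smallness $\bbE_0[h_k]\to0$ — which is the sole role of the sets $\cA[h_\ell]$ in $\O_{\rm typ}$.
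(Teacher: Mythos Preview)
Your proof is correct and follows essentially the same route as the paper's: split at a threshold $|z|=\ell$, use the Taylor bound \eqref{mirra} on $\{|z|<\ell\}$ and the linear bound \eqref{paradiso} on $\{|z|\ge\ell\}$, symmetrize via \eqref{micio2}, and invoke $\o\in\cA[h_\ell]$ together with $\bbE_0[h_\ell]\to0$ for the tail. The only cosmetic difference is that on $\{|z|<\ell\}$ you estimate $|z|^4\le\ell^2|z|^2$ and land on $\l_2$, whereas the paper estimates $|z|^4\le\ell^4$ and lands on $\l_0$; both are available in $\O_{\rm typ}$ and either choice works.
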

\begin{proof}
 Let $\ell$ be  such that $\varphi (x)=0$ if $|x| \geq \ell$. Let    $\phi \in C_c(\bbR^d)$ be as in   \eqref{paradiso}. 
The  upper bound given by   \eqref{paradiso}  with $\nabla_\e\varphi (x,z)$ replaced by  $\nabla \varphi (x) \cdot z $ is also true. We will apply the above bounds for $|z| \geq  \ell$.  On the other hand, we apply \eqref{mirra} for $|z|<\ell$.
As a result,  we  can bound
\be \label{piano}
 \int d \nu^\e_{ \o}(x,z)  \bigl[\nabla_\e \varphi (x,z) - \nabla  \varphi (x) \cdot z \bigr]^2   \leq 
C(\varphi) [A(\e, \ell )+B(\e, \ell)]\,,
\en
 where (cf. \eqref{micio2} and the Definition \ref{budda} for $h_\ell$) 
\begin{align*}
 A(\e, \ell ):& =   \int d \nu^\e_{ \o}(x,z) |z|^2  (\phi(x)  + \phi (x+ \e z)  )  \mathds{1}_{\{ |z| \geq  \ell\}} \\
& = 2 \int d \nu^\e_{ \o}(x,z) |z|^2  \phi(x) \mathds{1}_{\{ |z| \geq  \ell\}}  =2 \int d \mu ^\e_{ \o} (x)  \phi(x) h_\ell  (\theta_{x/\e} \o) \,,  \\
 B(\e, \ell):& = \e ^2 \ell^4 \int d \nu^\e_{ \o}(x,z) (\phi(x)  + \phi (x+ \e z)  )  \\
& =2 \e^2  \ell^4 \int d \nu^\e_{ \o}(x,z)  \phi(x)  = 2  \e^2 \ell^4 \int d \mu ^\e_{ \o} (x)  \phi(x) \l_0 (\theta_{x/\e} \o)
  \,.
\end{align*}
 Since  $\o\in \O_{\rm typ}\subset \left(\cap _{\ell\in \bbN} \cA[h_\ell ]\right) \cap \tilde A$ with $A:=\{\l_2<+\infty\}$ (recall Def.~\ref{defA}), we get that 
     $\lim_{\e \da 0}  \int d \mu ^\e_{ \o} (x)  \phi(x) h_\ell  (\theta_{x/\e} \o) = \int dx\,m \phi(x) \bbE_0[h_\ell]$.   By dominated convergence
    and (A7) we then get  that $\lim_{\ell\uparrow  \infty,\e \da 0} A(\e, \ell)=0$.
As \vvv{$\o \in \O_{\rm typ} \subset \tilde A  \cap \cA[\l_0]$ with $A=\{ \l_0<+\infty\}$},  the integral $ \int d \mu ^\e_{ \o} (x)  \phi(x) \l_0 (\theta_{x/\e} \o)$ converges to $ \int dx\,m \phi(x) \bbE_0[\l_0]$ as $\e \da 0$. As a consequence,  $\lim_{\e \da 0} B(\e, \ell)=0$. Coming back to \eqref{piano} we finally get \eqref{football}.
\end{proof}

\smallskip

From now on we denote by $\tilde \o$ the environment in $\O_{\rm typ}$ for which we want to prove Items (i), (ii) and (iii) of Theorem \ref{teo1}. 

\smallskip

\noindent
$\bullet$ {\bf Convergence of solutions}. We start by proving Item (i).

\smallskip

We consider \eqref{limite1}.  We recall that the  weak solution $u_\e\ovo{\in H^{1,{\rm f}}_{{\tilde \o}, \e}}$ satisfies (cf. \eqref{salvoep})
\begin{equation}\label{salvoepino}
\frac{1}{2} \la \nabla_\e v, \nabla _\e u_\e \ra _{\nu_{\tilde \o} ^\e}+ \la v, u_\e \ra _{\mu_{\tilde \o}^\e}= \la v, \ovo{f_\e} \ra_{\mu_{\tilde \o}^\e}\qquad \forall v \in \ovo{H^{1,{\rm f}}_{{\tilde \o}, \e}}\,.
\end{equation} 
 Due to \eqref{salvoepino} with $v:=u_\e$  we get that $\|u_\e  \|_{L^2(\mu_{\tilde \o}^\e)}^2 \leq \la u_\e , f_\e\ra_{\mu_{\tilde \o}^\e}$ and therefore $ \|u_\e\| _{L^2(\mu^\e_{\tilde \o})}\leq \|f_\e\| _{L^2(\mu^\e_{\tilde \o})}$ by Schwarz inequality. Hence, by  \eqref{salvoepino}  it holds   
$ 
\frac{1}{2} \| \nabla_\e u_\e  \| _{L^2(\nu^\e_{\tilde \o})} ^2 \leq \| f_\e \| _{ L^2(\mu^\e_{\tilde \o})}^2$.
Since $f_\e \toup f$, the family $\{f_\e\}$ is bounded and therefore there exists $C>0$ such that, for $\e$ small enough as we assume below, 
\begin{equation}\label{salvezza}
\|u_\e  \|_{L^2(\mu_{\tilde \o}^\e)} \leq C \,, \qquad  \| \nabla_\e u_\e  \| _{L^2(\nu^\e_{\tilde \o})}  \leq C \,.
\end{equation}
Due to \eqref{salvezza} and  by Proposition \ref{oro}, 
 along a \rrr{sequence} $\e_k\downarrow 0$ we have:
\begin{itemize}
\item[(i)]
$u_\e \stackrel{2}{\toup} u$, where $u$  is of the form $u=u(x)$ and  $u\in H^1 _*(m dx )$;
\item[(ii)] $\nabla _\e u_\e  (x,z) \stackrel{2}{\toup} w(x,\o,z):=  \nabla_*  u  (x)\cdot z + u_1(x,\o,z)$,  
$u_1\in  L^2\bigl( \bbR^d, L^2_{\rm pot} (\nu)\bigr )$.
\end{itemize}
Below,  convergence  for  $\e\downarrow 0$ is understood along the \rrr{sequence} $\{\e_k\}$.
 
 \begin{Claim}\label{vanity} For $dx$--a.e. $x\in \bbR^d$ it holds 
\begin{equation}\label{mattacchione}
\int d \nu(\o, z)  w(x,\o, z)  z= 2 D  \nabla_* u (x)\,.
\end{equation}
 \end{Claim}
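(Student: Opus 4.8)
\textbf{Plan for the proof of Claim \ref{vanity}.}
The plan is to exploit the weak form \eqref{salvoepino} of the Poisson equation with suitably chosen test functions, combined with the two-scale convergences in Items (i) and (ii) above, to identify the limiting form $w(x,\o,z)$ as solenoidal in the $(\o,z)$--variables and then to apply the structural description of $L^2_{\rm sol}(\nu)$ together with \eqref{jung} and \eqref{solare789}. First I would show that, for $dx$--a.e.~$x$, the map $(\o,z)\mapsto w(x,\o,z)$ belongs to $L^2_{\rm sol}(\nu)$. To this end I would test \eqref{salvoepino} with a function of the form $v(y):= \e\, \varphi(y)\, g(\theta_{y/\e}\tilde\o)$, where $\varphi\in C^1_c(\bbR^d)$ and $g\in\cG_2$ (so that $\nabla g$ ranges over a dense subset of $L^2_{\rm pot}(\nu)$); such $v$ lies in $\cC(\e\hat\o)\subset H^{1,\rm f}_{\tilde\o,\e}$ since $\tilde\o\in\O_1$. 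The extra factor $\e$ kills the lower--order terms $\la v,u_\e\ra_{\mu^\e_{\tilde\o}}$ and $\la v,f_\e\ra_{\mu^\e_{\tilde\o}}$ in the limit (both are $O(\e)$ by Schwarz, using \eqref{salvezza}, boundedness of $\{f_\e\}$, and $\tilde\o\in\cA[g^2]$), while on the gradient side the Leibniz rule \eqref{leibniz} gives $\nabla_\e v(x,z)= \varphi(x)\,\nabla g(\theta_{x/\e}\tilde\o)\cdot\!1 + (\text{terms carrying }\nabla_\e\varphi)$; the $\nabla_\e\varphi$--terms are $O(\e)$ again after a Schwarz estimate against $A_1(\e)$-type quantities controlled via Lemma \ref{ascoli} and $\tilde\o\in\cA[\l_2]$. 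Passing to the limit along $\{\e_k\}$ using $\nabla_\e u_\e\stackrel{2}{\toup} w$ and $\nabla g\in\cH_2\subset\cH$ (so \eqref{yelena} applies), the leading term yields
\begin{equation*}
\int dx\,m\,\varphi(x)\int d\nu(\o,z)\,w(x,\o,z)\,\nabla g(\o,z)=0 \qquad \forall\,\varphi\in C^1_c(\bbR^d),\ g\in\cG_2.
\end{equation*}
By arbitrariness of $\varphi$ and density of $\{\nabla g:g\in\cG_2\}$ in $L^2_{\rm pot}(\nu)$, for $dx$--a.e.~$x$ the form $w(x,\cdot,\cdot)$ is orthogonal to $L^2_{\rm pot}(\nu)$, i.e.~$w(x,\cdot,\cdot)\in L^2_{\rm sol}(\nu)$.

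Next I would exploit the decomposition $w(x,\o,z)=\nabla_* u(x)\cdot z + u_1(x,\o,z)$ from Item (ii), where $u_1(x,\cdot,\cdot)\in L^2_{\rm pot}(\nu)$ for $dx$--a.e.~$x$. Setting $a:=\nabla_* u(x)$, the form $(\o,z)\mapsto a\cdot z$ is just $u_a$ (cf.~\eqref{def_ua}), and the identity $u_a + u_1(x,\cdot,\cdot)=w(x,\cdot,\cdot)\in L^2_{\rm sol}(\nu)$ together with $u_1(x,\cdot,\cdot)\in L^2_{\rm pot}(\nu)$ is exactly the characterization \eqref{jung} of $v^a$. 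Hence $u_1(x,\cdot,\cdot)=v^{a}=v^{\nabla_* u(x)}$ for $dx$--a.e.~$x$, so $w(x,\cdot,\cdot)=u_a+v^a$ with $a=\nabla_* u(x)$. Then by \eqref{solare789},
\begin{equation*}
\int d\nu(\o,z)\,w(x,\o,z)\,z=\int d\nu(\o,z)\,z\bigl(a\cdot z + v^a(\o,z)\bigr)=2Da=2D\,\nabla_* u(x),
\end{equation*}
which is precisely \eqref{mattacchione}. One should note here that $\nabla_* u(x)\in\mathrm{span}\{\mathfrak{e}_1,\dots,\mathfrak{e}_{d_*}\}$ by definition of $\nabla_*$, so $a\in\bbR^d$ is a legitimate argument of $a\mapsto v^a$ and of $D$.

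\textbf{Main obstacle.} The delicate point is justifying the vanishing of the error terms in the limit $\e_k\downarrow 0$ when testing with $v(y)=\e\varphi(y)g(\theta_{y/\e}\tilde\o)$ — in particular the cross terms coming from $\nabla_\e(\varphi\cdot g)$ via \eqref{leibniz}, where the amorphous gradient $\nabla g$ cannot be bounded in $L^\infty$ and $\nabla_\e\varphi$ is only controlled in an $L^2(\nu^\e_{\tilde\o})$ averaged sense. Handling these requires the same cut-off-flavoured Schwarz estimates developed in Sections \ref{cut-off1}--\ref{sec_oro} (bounding via $A_1(\e),A_2(\e)$-type integrals, using $\phi$ as in \eqref{paradiso}, membership of the relevant functions in $\cG$ or $\cH$, and the ergodic convergences guaranteed by $\tilde\o\in\O_{\rm typ}$), rather than the naive manipulations that would suffice in the uniformly elliptic bounded-range setting of \cite{ZP}. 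Once these estimates are in place, the identification of $w$ as solenoidal and the passage through \eqref{jung}, \eqref{solare789} are essentially formal.
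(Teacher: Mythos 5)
Your proposal is correct and follows essentially the same route as the paper: test \eqref{salvoepino} with $v:=\e\varphi(\cdot)g(\theta_{\cdot/\e}\tilde\o)$, identify $w(x,\cdot,\cdot)\in L^2_{\rm sol}(\nu)$ via two--scale convergence and density of $\{\nabla g: g\in\cG_2\}$, then conclude through \eqref{jung} and \eqref{solare789}. The only difference is in how you dispose of the $\nabla_\e\varphi$--term coming from the Leibniz rule: you bound it by a direct Schwarz estimate $\e\|g\|_\infty\|\nabla_\e\varphi\|_{L^2(\nu^\e_{\tilde\o})}\|\nabla_\e u_\e\|_{L^2(\nu^\e_{\tilde\o})}=O(\e)$ using Lemma \ref{ascoli} (which works with $\varphi\in C^1_c$), whereas the paper goes through the replacement $\nabla_\e\varphi\rightsquigarrow\nabla\varphi\cdot z$ via Lemma \ref{blocco}/\eqref{ghiaccio} (needing $\varphi\in C^2_c$) and the two--scale limit \eqref{bisonte} to show the same term is $O(\e)$. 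Your shortcut is legitimate and slightly more economical for this particular claim; the paper's heavier machinery is set up once there because \eqref{ghiaccio} (with $g\equiv 1$) is reused immediately afterwards in the analysis of \eqref{cioccorana}, so nothing is lost either way.
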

 \begin{proof}[Proof of Claim \ref{vanity}] 
We apply \eqref{salvoepino} to the test function $v(x):= \e \varphi (x) g(\theta_{x/\e}\tilde \o)$, where 
$\varphi \in C^2_c(\bbR^d)$ and $g \in \cG_2$ (cf. Section \ref{topo}). Recall that $\cG_2$ is given by bounded functions.   \ovo{Note that   $v\in \ovo{\cC(\e \widehat{\tilde\o}) \subset H^{1,{\rm f}}_{\tilde \o, \e}}$.}

Due to \eqref{leibniz} we have
 \begin{equation}\label{aquilotto}
\nabla_\e  v (x,z)
= \e \nabla _\e \varphi (x,z) g(\theta_{z+x/\e}\tilde \o)+  \varphi(x) \nabla  g ( \theta_{x/\e} \tilde \o, z)\,.
\end{equation}
In the above formula, 
the gradient $\nabla g$ is the one defined in \eqref{cantone}.
\ovo{Note that both the expressions in the r.h.s. belongs to $ L^2( \nu ^\e_{\tilde \o})$ (as this holds for the l.h.s., it is enough to check it for the first expression by using \eqref{aldo85}).}
 Due to \eqref{aquilotto},
\eqref{salvoepino} can be rewritten as 
\begin{equation}\label{cioccolata}
\begin{split}
& \frac{\e}{2}\int d  \nu ^\e_{\tilde \o} (x,z)  \nabla _\e \varphi (x,z) g(\theta_{z+x/\e}\tilde \o) \nabla _\e u_\e (x,z) +\\
&  \frac{1}{2}\int d  \nu ^\e_{\tilde \o}  (x,z)  \varphi(x) \nabla g ( \theta_{x/\e} \tilde \o, z)\nabla_\e  u_\e (x,z) +\\
&\e \int d\mu^\e_{\tilde \o} (x)  \varphi (x) g(\theta_{x/\e}\tilde \o)u_\e(x) =\e \int d\mu^\e_{\tilde \o} (x)  \varphi (x) g(\theta_{x/\e}\tilde \o)f_\e(x)\,.
\end{split}
\end{equation}
Since the families of functions $\{u_\e(x) \}$, $\{f_\e(x)\}$,  $\{ \varphi (x) g(\theta_{x/\e}\tilde \o)\}$ are bounded families in    $L^2(\mu^\e_{\tilde \o})$,  the expressions in the third line of \eqref{cioccolata} go to zero as $\e \da 0$.

We now claim that 
\be \label{ghiaccio}
\lim  _{\e \da 0}  \int d \nu^\e_{\tilde \o}(x,z)   \nabla _\e u_\e (x,z)  \bigl[\nabla_\e \varphi (x,z) - \nabla  \varphi (x) \cdot z \bigr] g (\theta_{z+x/\e}\tilde \o) =0\,.
\en
This follows by  using that $\|g\|_\infty<+\infty$, applying Schwarz inequality and afterwards Lemma \ref{blocco} (recall that  $\|  \nabla _\e u_\e \| _{ L^2 ( \nu^\e_{\tilde \o})}\leq C$ \ovo{for $\e$ small}).
The above limit \eqref{ghiaccio},  the 2-scale convergence $\nabla_\e u_\e\stackrel{2}{ \toup }w$ and the fact that \eqref{yelena} holds for all functions in $\cH_3\subset \cH$ (cf. Section \ref{topo}), imply that 
\begin{multline}\label{bisonte}
\lim  _{\e \da 0}  \int d \nu^\e_{\tilde \o}(x,z)   \nabla_\e u_\e (x,z)  \nabla _\e \varphi (x,z)   g (\theta_{z+x/\e}\tilde \o)=\\
\lim  _{\e \da 0}  \int d \nu^\e_{\tilde \o}(x,z)   \nabla_\e u_\e (x,z)  \nabla  \varphi (x)\cdot z   g (\theta_{z+ x/\e}\tilde \o)=\\
\int dx \, m \int d \nu( \o,z) w(x,\o,z) \nabla \varphi(x) \cdot z g(\theta_z \o) \,.
\end{multline}
Due to \eqref{bisonte} also the expression in the first line of \eqref{cioccolata} goes to zero as $\e\downarrow 0$. We conclude therefore that also the   expression in the second  line of \eqref{cioccolata} goes to zero as $\e\downarrow 0$. Hence 
\[\lim_{\e \da 0}\int d  \nu ^\e_{\tilde \o}  (x,z) \nabla_\e  u_\e (x,z)  \varphi(x) \nabla g ( \theta_{x/\e} \tilde \o, z)=0\,.\]
Due to the 2-scale convergence $\nabla_\e  u_\e\stackrel{2}{ \toup} w$  and since \eqref{yelena} holds for all gradients $\nabla g$, $g\in \cG_2$ (since $\cH_2\subset \cH$), we conclude that 
\begin{equation*}
\int dx \, m \varphi(x) \int d\nu (\o, z) w(x,\o,z) \nabla g (\o,z) =0\,.
\end{equation*}
Since $\{\nabla g \,:\, g \in \cG_2\}$ is dense in $L^2_{\rm pot}(\nu)$,  the above identity implies that, for $dx$--a.e. $x$, the map  $(\o,z) \mapsto w(x,\o,z)$ belongs to $L^2_{\rm sol}(\nu)$.
 On the other hand, we know that 
$w(x,\o,z)= \nabla_*  u (x) \cdot z + u_1(x,\o,z)$, 
where $u_1\in L^2\bigl( \bbR^d, L^2_{\rm pot} (\nu)\bigr )$. Hence, by \eqref{jung}, for $dx$--a.e. $x$ we have that 
\[ u_1 (x,\cdot,\cdot ) = v^a(\cdot,\cdot) \,, \qquad a:= \nabla _* u (x)\,. \]
As a consequence (using also \eqref{solare789}),  for $dx$--a.e. $x$, we have
\begin{equation*}
\int d \nu(\o, z)  w(x,\o, z)  z= \int d \nu(\o,z) z [  \nabla_* u (x)\cdot z + v ^{  \nabla _* u (x)}(\o,z)]= 2 D \nabla_* u (x)\,.
\end{equation*}
This concludes the proof of  Claim \ref{vanity}.
\end{proof}
We now reapply \eqref{salvoepino} but with $v(x):= \varphi(x) \ovo{\in C^2_c(\bbR^d)}$ \ovo{(note that  $v \in \cC(\e \widehat{\tilde\o}) \subset  H^{1,{\rm f}}_{\tilde \o, \e}$).}
We get
\begin{equation}\label{cioccorana}
 \frac{1}{2}\int d  \nu ^\e_{\tilde \o} (x,z) \nabla _\e \varphi (x,z) \nabla _\e u_\e (x,z) +
  \int d\mu^\e_{\tilde \o} (x)  \varphi (x) u_\e(x) =\int d\mu^\e_{\tilde \o} (x)  \varphi (x) f_\e(x)\,.
\end{equation}
Let us analyze the first term in \eqref{cioccorana}.
By  \eqref{ghiaccio} which holds also with $g\equiv 1$,   the expression $ \int d  \nu ^\e_{\tilde \o} (x,z) \nabla_\e \varphi (x,z) \nabla_\e  u_\e (x,z) $ equals  $ \int d \nu^\e_{\tilde \o}(x,z)   \nabla _\e u_\e (x,z) \nabla  \varphi (x) \cdot z +o(1)$ as $\e\da 0$. Since the function  $(\o,z) \mapsto z_i$  is in $\cH$ and since $\tilde \o \in \O_{\rm typ}$, by  the 2-scale convergence  $\nabla _\e u_\e
 \stackrel{2}{\toup} w$ we obtain that 
\begin{equation*}
\lim_{\e \da 0} \int d  \nu ^\e_{\tilde \o} (x,z) \nabla_\e \varphi (x,z) \nabla _\e u_\e (x,z)  =
\int dx \, m \int d \nu(\o, z)  w(x,\o, z) \nabla  \varphi (x)\cdot z\,.
\end{equation*}
Since   $u_\e \stackrel{2}{\toup} u$ with  $u=u(x)$, $1\in \cG$    and  $f_\e {\toup} f$,  by taking the limit $\e \da 0$ in \eqref{cioccorana} we get 
\begin{multline}
\frac{1}{2}\int dx \, m \nabla \varphi (x)\cdot \int d \nu(\o, z)  w(x,\o, z)  z+\\
\int dx\, m\varphi(x) u(x) =\int dx\, m  \varphi(x) f(x)
\,.\end{multline}
Due to  \eqref{mattacchione} the above identity reads 
\begin{equation}\label{basta180}
 \int dx \nabla  \varphi (x) \cdot D  \nabla_* u (x)
+\\
\int dx \varphi(x) u(x) =\int dx  \varphi(x) f(x)\,,
\end{equation}
i.e. 
 $u$ is a weak solution of \eqref{eq2} (recall  that $C_c^\infty (\bbR^d) $ is dense in $H^1_*(m dx)$ and note  that $\nabla \varphi$ in \eqref{basta180} can be replaced by $\nabla_* \varphi$ due to \ovo{Definitions \ref{divido}, \ref{degenerare}}). This concludes the proof of limit  \eqref{limite1} \vvv{as $\e \downarrow 0$ along the sequence $\{\e_k\}$. Since for each sequence $\{\e_n\}$ converging to zero  we can extract a subsequence $\{\e_{n_k}\}$  for which Items (i) and (ii) stated after \eqref{salvezza} hold and for which the limit  \eqref{limite1}  holds along $\{\e_{n_k}\}$, we get \eqref{limite1}    as $\e \downarrow 0$}.

It remains to prove \eqref{limite2}. It is enough to apply the same arguments of \cite[Proof of Thm.~6.1]{ZP}.   Since $f_\e\to f$ we have $f_\e\toup f$ and therefore, by \eqref{limite1}, we have $u_\e \toup u$. This implies that $v_\e \toup v$ (again by \eqref{limite1}), where $v_\e$ and $v$ are respectively the weak solution \ovo{in $H^{1,{\rm f}}_{\tilde \o, \e}$ and $H^1_*(m dx)$}  of $-\bbL_{\ovo{\tilde\o}}^\e v_\e+  v_\e =u_\e$ and $-\nabla_*\cdot  D  \nabla_* v+  v  =u$. By taking the scalar product \ovo{in} the weak version of \eqref{eq1} with $v_\e$ (as in \eqref{salvoep}), the scalar product \ovo{in} the weak version of \eqref{eq2} with $v$ (as in \eqref{delizia}), the scalar product \ovo{in}  the weak version of  $-\bbL^\e_{\ovo{\tilde{\o}}} v_\e+  v_\e =u_\e$ with $u_\e\ovo{\in H^{1,{\rm f}}_{\tilde\o, \e}}$ and the scalar product \ovo{in} the weak version of  $-\nabla_* \cdot D \nabla_* v+  v  =u$ with $u$ and comparing the resulting expressions, we get
\begin{equation}\label{adria}
\la u_\e, u_\e \ra _{\mu^\e_\o} = \la v_\e, f_\e \ra _{\mu^\e_\o} \,, \qquad \int  u(x)  ^2   dx=  \int  f(x)v(x)  dx\,.
\end{equation}
Since $f_\e \to f$ and $v_\e \toup v$ we get that $\la v_\e, f_\e \ra _{\mu^\e_\o}\to \int v(x) f(x)  m dx$. Hence, by  \eqref{adria}, we conclude that
$\lim_{\e\da 0}  \la u_\e, u_\e \ra _{\mu^\e_\o}=  \int  u(x)^2 m  dx$. The last limit and the weak convergence $u_\e \toup u$ imply the strong convergence $u_\e \to u$ by Remark \ref{forte}. This concludes the proof of \eqref{limite2} and therefore of Theorem \ref{teo1}--(i).

\smallskip

\noindent
$\bullet$ {\bf Convergence of flows}. We \ovo{now} prove  \eqref{limite3} in   Item (ii), i.e. $\nabla_\e u_\e \toup \nabla_* u$. By \eqref{salvezza} the analogous of bound \eqref{recinto_grad}  \ovo{with $\nabla_\e u_\e$} is satisfied. Suppose that $f_\e \toup f$. Take $\varphi \in C_c^1(\bbR^d)$, then $\la \varphi, f_\e \ra_{ \mu_{\tilde\o}^\e }\to \la \varphi, f \ra_{m dx }$. By Item (i) we know that $u_\e \toup u$ and therefore $\la \varphi, u_\e \ra_{\mu_{\tilde \o}^\e }\to \la \varphi, u \ra_{m dx }$. The above convergences and  \eqref{salvoepino} with $v$ given by  $\varphi $ restricted to $\e \widehat{ \tilde \o}$ (\ovo{note that   $v\in\cC(\e\widehat{ \tilde \o})\subset H^{1,{\rm f}}_{\tilde \o,\e}$}), we conclude that \[
\lim _{\e\da 0} \frac{1}{2} \la \nabla_\e \varphi, \nabla _\e u_\e\ra_{ \nu^\e _{\tilde \o} } = \lim_{\e \da 0} \Big[
\la \varphi, f_\e \ra_{ \mu_{\tilde \o}^\e }-\la \varphi, u_\e \ra_{\mu_{\tilde \o}^\e }\Big]= \la  \varphi, f-u  \ra_{ m dx }\,.\]
Due to \eqref{eq2} and \eqref{delizia}, the r.h.s. equals $\int dx \, m D(x) \nabla_* \varphi(x) \cdot\nabla_* u(x) $.
This proves the analogous of \eqref{deboluccio_grad} and therefore  \eqref{limite3}.

Take now $f_\e \to f$. Then, by \eqref{limite2}, $u_\e \to u$.  Reasoning as above we get  that, given $\ovo{g_\e \in H^{1,{\rm f}}_{ \tilde \o,\e}}$ and $g\in H^1_*(m dx)$ with  $L^2(\mu^\e_{\tilde \o}) \ni g_\e \toup g\in L^2(m dx)$, it holds 
\[
\lim _{\e\da 0} \frac{1}{2} \la \nabla_\e g_\e, \nabla _\e u_\e\ra_{\nu^\e _{\tilde \o} } = \lim_{\e \da 0} \Big[
\la g_\e , f_\e \ra_{\mu_{\tilde \o}^\e}-\la g_\e, u_\e \ra_{\mu_{\tilde\o}^\e}\Big]=
 \la  g, f-u   \ra_{m dx }\,.\]
 Since $g\in H^1_*(m dx)$, due to \eqref{eq2}, the r.h.s. equals $ \int dx \, m D(x)  \nabla_*g(x) \cdot \nabla_* u(x) $.
This proves \eqref{limite4}.

\smallskip

\noindent
$\bullet$ {\bf Convergence of energies}. We prove  Item (iii). Since $f_\e \to f$, we have $u_\e \to u$ by \eqref{limite2} and  $\nabla_\e u_\e \to  \nabla_* u $ by \eqref{limite4}.  It is enough to apply \eqref{fortezza_grad} with \ovo{$v_\e$ replaced by $u_\e$, $w$ replaced by $\nabla_* u$}, $g_\e := u_\e$ and $g:= u$ and one gets \eqref{limite5}.



\section{Proof of Theorem \ref{teo2}}\label{dim_teo2}
{Limit \eqref{vinello} corresponds to Remark \ref{gelatino}.
Limit \eqref{marvel0} follows from 
\eqref{vinello} and 
\cite[Thm.~9.2]{ZP}.
To treat \eqref{marvel1}, \eqref{marvel2}, \eqref{ondinoA}, \eqref{ondinoB} we claim  that it is enough to prove them \iva{for a  fixed $f$ and  for all $t\geq 0$, $\l>0$ as $\o$ varies in a translation invariant set of full $\cP$--probability}.  To prove this  claim, given $n \in \bbN$ we set $B_n:=\{ x\in \bbR^d\,:\, |x| \leq n\}$ and \iva{we restrict to $\o\in \cA[1]$ (cf. Prop. \ref{prop_ergodico}), thus implying  that $\lim _{\e \da 0} \mu_\o^\e (B_n ) = m \ell ( B_n) $ (recall that $\cA[1]$ is translation invariant and measurable  and that $\cP(\cA[1])=1$)}. Then, given $f\in C_c(\bbR^d)$ with support in $B_n$,  we have (since $\mu^\e_\o$ is reversible for $  P^\e_{\o,t} $)
\be
\begin{split}
 \| P^\e_{\o,t} f \|^2_{L^2(\mu^\e_\o) } & = \int d \mu^\e_\o(x) \bigl( P^\e_{\o,t} f\bigr)^2 (x) \leq \int d \mu^\e_\o(x)  P^\e_{\o,t} f^2  (x)\\
 &  =\mu^\e_\o( f^2) \leq \| f\|_\infty^2 \mu^\e_\o(B_n)\,.
\end{split}
\en
Hence  $\| P^\e_{\o,t} f \|_{L^2(\mu^\e_\o) }\leq \| f\|_\infty \mu^\e_\o(B_n)^{1/2}$.
Similarly, $ \| P^\e_{\o,t} f \|_{L^1(\mu^\e_\o) }\leq  \| f\|_\infty \mu^\e_\o(B_n)$. 
As $R^\e _{\o ,\l} =\int_0^\infty e^{- \l s} P^\e _{\o ,s} ds $, we also have 
$\| R^\e_{\o,\l} f \|_{L^2(\mu^\e_\o) }\leq \l^{-1} \| f\|_\infty \mu^\e_\o(B_n)^{1/2}$ and
 $\| R^\e_{\o,\l} f \|_{L^1(\mu^\e_\o) }\leq \l^{-1} \| f\|_\infty \mu^\e_\o(B_n)$.
 The same bounds hold for the Brownian motion with diffusion matrix $2D$ and for the measure $m dx$. Hence, by a density argument with functions in $C_c(\bbR^d)$,  one gets our claim.

To prove \eqref{marvel1},\dots,\eqref{ondinoB} for fixed $f$
we need the following fact (proved at the end of the section):
\begin{Lemma}\label{pre_teo2}
Suppose that Assumption (A9) is  satisfied.
 Fix 
a weakly decreasing  function $\psi: [0,+\infty) \to [0,+\infty)$  such that $\bbR^d \ni x \mapsto \psi(|x|) \in [0,\infty)$ is  Riemann integrable.
Then
$\cP$--a.s.  it holds 
\be \label{claudio2}
 \lim _{\ell \uparrow \infty} \varlimsup_{\e \da 0} \int d \mu^\e _\o (x)\psi(|x|)  \mathds{1}_{\{ |x| \geq \ell\}}=0\,. \en
\end{Lemma}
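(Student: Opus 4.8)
The plan is to reduce the claim, via the decomposition of $\bbR^d$ into the fundamental cells $\t_k\D$, $k\in\bbZ^d$, to an estimate on weighted averages of the cell masses, and then to treat the two alternatives of Assumption (A9) separately: the first is elementary, while the second calls for a second‑moment argument combined with Borel--Cantelli, exploiting the covariance bound \eqref{intinto}. First I would rewrite, with $\Psi_\ell(x):=\psi(|x|)\mathds{1}_{\{|x|\ge\ell\}}$, the quantity of interest as $\int d\mu_\o^\e(x)\,\Psi_\ell(x)=\e^d\sum_{y\in\hat\o}n_y(\o)\,\Psi_\ell(\e y)$ and regroup the sum over the cells. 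Since the cells $\{\t_k\D\}_{k\in\bbZ^d}$ partition $\bbR^d$ and have bounded diameter, for $y\in\t_j\D$ one has $\kappa^{-1}|j|-\kappa\le|y|\le\kappa|j|+\kappa$ for a constant $\kappa=\kappa(V,d)\ge1$, and monotonicity of $\psi$ then gives, for $\ell$ large and $\e\le1$, the pointwise bound $\Psi_\ell(\e y)\le\psi(c\e|j|)\,\mathds{1}_{\{\e|j|\ge c'\ell\}}$ with absolute $c,c'>0$. Summing over the points $y$ inside each cell leads to $\int d\mu_\o^\e\,\Psi_\ell\le T_\e(\ell):=\e^d\sum_{j\in\bbZ^d}\psi(c\e|j|)\,\mathds{1}_{\{\e|j|\ge c'\ell\}}\,N_j(\o)$, where $N_j(\o):=\mu_\o(\t_j\D)$ in alternative (i) and $N_j$ is the dominating variable of alternative (ii). A routine comparison of the Riemann sum $\e^d\sum_{j:\,\e|j|\ge c'\ell}\psi(c\e|j|)$ with the integral of $x\mapsto\psi(c|x|/2)\mathds{1}_{\{|x|\ge c'\ell/2\}}$ — legitimate because $\psi$ is non-increasing and $x\mapsto\psi(|x|)$ is integrable on $\bbR^d$ — bounds it by $C\,G(\ell)$, where $G(\ell):=\int_{\{|x|\ge c'\ell/2\}}\psi(c|x|/2)\,dx\downarrow0$ as $\ell\uparrow\infty$. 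It therefore suffices to show that, $\cP$-a.s., $\limsup_{\e\da0}T_\e(\ell)\to0$ as $\ell\uparrow\infty$ (one checks that the corresponding full‑measure set can be taken translation invariant, as required in Theorem \ref{teo2}).

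Under alternative (i) there is essentially nothing more to do: one has $N_j(\o)\le C(\o)$ simultaneously for all $j$, hence $T_\e(\ell)\le C(\o)\,C\,G(\ell)$ uniformly in $\e$, and letting $\e\da0$ and then $\ell\uparrow\infty$ gives \eqref{claudio2}.

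Under alternative (ii) I would fix $\ell\in\bbN$ and work along the geometric sequence $\e_n:=2^{-n}$. Set $\bar N_1:=\sup_k\bbE[N_k]$ and $\bar N_2:=\sup_k\bbE[N_k^2]$, both finite by hypothesis. The first‑moment bound gives $\bbE[T_{\e_n}(\ell)]\le\bar N_1\,C\,G(\ell)$. For the second moment one expands $\mathrm{Var}\big(T_{\e_n}(\ell)\big)=\e_n^{2d}\sum_{j,j'}\psi(c\e_n|j|)\psi(c\e_n|j'|)\,\mathds{1}_{\{\e_n|j|,\e_n|j'|\ge c'\ell\}}\,\mathrm{Cov}(N_j,N_{j'})$, estimates the diagonal with $\bbE[N_j^2]\le\bar N_2$ and the off‑diagonal with $|\mathrm{Cov}(N_j,N_{j'})|\le C_0|j-j'|^{-1}$, and — using $|j-j'|^{-1}=\e_n\,|\e_n j-\e_n j'|^{-1}$, the $\e_n^{2d}$ normalisation and the integrability of $\psi(|\cdot|)$ — checks that $\mathrm{Var}(T_{\e_n}(\ell))=O(\e_n^{\alpha})$ for some dimensional exponent $\alpha>0$, with a constant depending on $\ell$; in particular $\sum_n\mathrm{Var}(T_{\e_n}(\ell))<\infty$. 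Chebyshev's inequality and Borel--Cantelli then yield $T_{\e_n}(\ell)-\bbE[T_{\e_n}(\ell)]\to0$ $\cP$-a.s., hence $\limsup_n T_{\e_n}(\ell)\le\bar N_1\,C\,G(\ell)$ on a set $\O_\ell$ of full $\cP$-measure; intersecting over $\ell\in\bbN$ fixes one full‑measure set. Interpolation between consecutive $\e_n$ is immediate: for $\e\in(\e_{n+1},\e_n)$, since $\psi$ is non-increasing and $\mathds{1}_{\{\e|j|\ge c'\ell\}}\le\mathds{1}_{\{\e_n|j|\ge c'\ell\}}=\mathds{1}_{\{\e_{n+1}|j|\ge c'\ell/2\}}$, one gets $T_\e(\ell)\le2^d\,T_{\e_{n+1}}(\ell/2)$, so that $\limsup_{\e\da0}T_\e(\ell)\le2^d\limsup_n T_{\e_n}(\ell/2)\le2^d\,\bar N_1\,C\,G(\ell/2)$ on $\O_{\lfloor\ell/2\rfloor}$; letting $\ell\uparrow\infty$ and combining with the reduction completes the proof.

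The step I expect to be the main obstacle is the variance estimate in alternative (ii): the hypothesised covariance decay $|j-j'|^{-1}$ is, especially in dimension $d=1$, only just summable after the $\e^{2d}$ rescaling, so the double sum has to be split into its near‑diagonal and far‑diagonal parts. The near‑diagonal part is controlled using the crude inequality $\log|j|\le C_\delta|j|^{\delta}$ for an arbitrarily small $\delta>0$ together with the boundedness of $r\mapsto r^{d}\psi(r)$ (a consequence of $\psi$ being non-increasing and integrable), while the far‑diagonal part uses the trivial bound $|j-j'|^{-1}\le2/|j|$; everything else — the cell decomposition, the Riemann‑sum comparisons, the Borel--Cantelli/interpolation scheme — is routine.
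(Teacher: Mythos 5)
Your proposal is correct and follows essentially the same strategy as the paper's own proof: both reduce to weighted sums of cell masses $N_k$, discretize $\e$ along a geometric (dyadic) sequence, estimate the variance by splitting the covariance double sum into diagonal, near-diagonal, and far-diagonal parts (with the logarithmic correction in $d=1$), apply Chebyshev plus Borel--Cantelli, and use the monotonicity of $\psi$ to interpolate between dyadic scales. The only cosmetic difference is that the paper attaches to each $\e$ the $r=2^a$ with $r^{-1}\le\e<2r^{-1}$ and bounds $X_{\e,\ell}\le 2^d Y_{r,\ell}$ directly, whereas you fix $\e_n=2^{-n}$ and interpolate afterwards — the same idea.
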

\iva{We will apply the above lemma only  with $\psi(r):=1/(1+r^{d+1})$. By this choice it is then simple to check that \eqref{claudio2} holds for $\o$ varying in a translation invariant set, as $\int d \mu^\e_{\theta_g \o} (x) f(x)= \int d\mu^\e_\o(x) f(\t_{-g}x)$ for all $g\in \bbG$ and $\o\in \O_*$.}

By the same arguments used to prove \cite[Lemma 6.1]{F1}, the above lemma implies the following for 
 \iva{$\o$ varying in a translation invariant set with full $\cP$--probability}: given $h\in C(\bbR^d)$ with $|h(x)| \leq C/(1+ |x|^{d+1})$ for all $x\in \bbR^d$ and given $L^2(\mu^\e_\o)\ni h^\e_\o \to h\in L^2(mdx )$, it holds $\lim_{\e \da 0} \int_{\bbR^d} | h^\e_\o (x) - h(x)|^2 d\mu^\e_\o(x) =0$. Then, as $P_t f$ and $R_\l f$ decay exponentially fast, \eqref{marvel0} and \eqref{vinello} imply, respectively, \eqref{marvel1} and \eqref{ondinoA}  by Lemma \ref{pre_teo2}.
It remains to derive \eqref{marvel2} from \eqref{marvel1} and to derive \eqref{ondinoB} from
 \eqref{ondinoA}. We use some manipulations as in the proof of \cite[Corollary 2.5]{F1}. We show the derivation of \eqref{ondinoB} (which is absent in \cite{F1}), since the derivation of \eqref{marvel2} is similar.
To this aim, without loss of generality, we  restrict to $f\geq 0$. For any $n \in \bbN$ we can  bound 
\begin{equation}\label{panna}
\begin{split}
&\| R^\e_{\o,\l} f  - R_\l  f  \|_{L^1(\mu^\e_\o )} \leq \| (R^\e_{\o,\l} f )  
\mathds{1}_{B_n^c}   \|_{L^1(\mu^\e_\o  )}
\\
& +  \|   (R_\l f)\mathds{1}_{B_n^c}\|_{L^1(\mu^\e_\o )}+ \mu^\e_\o ( B_n) ^\frac{1}{2} \| R^\e_{\o,\l} f  - R_\l  f  \|_{L^2(\mu^\e_\o )} \,.
\end{split}
\end{equation} 
We restrict to  $\o \in \cA[1]$ (\iva{$\cA[1]$ is a translation invariant measurable set with $\cP( \cA[1])=1$}) and $\o$ satisfying \eqref{claudio2} with \iva{$\psi(r):=1/(1+r^{d+1})$}. Hence 
it holds  $\lim _{\e \da 0} \mu^\e_\o ( B_n)= m \ell (B_n) $. Then,  by \eqref{ondinoA},  the last addendum in the r.h.s. of   \eqref{panna} goes to zero as $\e\da 0$.
Let us move to the second addendum in the r.h.s. of \eqref{panna}. As $R_\l f$ decays exponentially \ovo{(hence $R_\l f \leq C \psi$)},
by Lemma  \ref{pre_teo2}  it holds $ \lim_{n\uparrow \infty}\lim_{\e \da 0 }\|   (R_\l f)\mathds{1}_{B_n^c}\|_{L^1(\mu^\e_\o )}= 0$.

Let us finally move to the first addendum in the r.h.s. of \eqref{panna}.
Since $R^\e_{\o,\l} f\geq 0$ we can write
\be\label{giostra0} \| (R^\e_{\o,\l} f )  
\mathds{1}_{B_n^c} \|_{L^1(\mu^\e_\o  )}=\| R^\e_{\o,\l} f \|_{L^1(\mu^\e_\o  )} -  \| (R^\e_{\o,\l} f )  
\mathds{1}_{B_n} \|_{L^1(\mu^\e_\o  )}\,.\en
 We   claim that 
\be\label{pig100}
\lim_{\e \da 0} \|R^\e_{\o,\l} f \|_{L^1(\mu^\e_\o )}=\| R_\l f \| _{L^1(m dx)}\,.
\en
To prove our claim 
 we observe  that, as 
 $R^\e _{\o ,\l} =\int_0^\infty e^{- \l s} P^\e _{\o ,s} ds $ and $f\geq 0$,  it holds $R^\e_{\o,\l} f \geq 0$ and 
therefore 
\[  \|  R^\e_{\o,\l} f \|_{L^1(\mu^\e_\o )} =\int_0^\infty ds e^{- \l s}  \int d\mu^\e_\o (x) P^\e _{\o ,s} f (x) =\int_0^\infty ds e^{- \l s}  \mu^\e_\o (f) =\frac{ \mu^\e_\o(f)}{\l}\,.
\] 
As $\o \in \cA[1]$ we have 
 $ \mu^\e_\o(f)   \to \int dx \, m f(x)  $. 
On the other hand, arguing as above, we get $\| R_\l f \|_{L^1(m dx  )} = \l^{-1}\int dx \, m f(x)   $.  By combining the above observations we get  \eqref{pig100}.

Now we claim that 
\be \label{lyon}
\lim_{\e \da 0}  \| (R^\e_{\o,\l} f )  
\mathds{1}_{B_n} \|_{L^1(\mu^\e_\o  )}= 
\| (R_\l f  ) \mathds{1}_{B_n}\| _{L^1(m dx)}\,.
\en Indeed, 
by Schwarz inequality,  \eqref{ondinoA} and since $\o \in \cA[1]$, 
 $\| (R^\e_{\o,\l} f    )\mathds{1}_{B_n} \|_{L^1(\mu^\e_\o  )}-    \| (R_{\l} f  )
\mathds{1}_{B_n} \|_{L^1(\mu^\e_\o  )}$ $ = \la R^\e_{\o,\l} f   -    R_{\l} f  ,  \mathds{1}_{B_n} \ra _{L^2(\mu^\e_\o  )}  $ goes to $0$ as $\e \da 0$.
On the other hand, as $\o \in \cA[1]$ and  by  upper and lower bounding  $(R_{\l} f  )
\mathds{1}_{B_n}$ with nonnegative functions $\varphi \in C_c(\bbR^d)$, we get that $\| (R_{\l} f  )
\mathds{1}_{B_n} \|_{L^1(\mu^\e_\o  )} \to \| (R_\l f  ) \mathds{1}_{B_n}\| _{L^1(m dx)}$ as $\e \da 0$, thus allowing to prove our claim \eqref{lyon}.

By combining \eqref{giostra0}, \eqref{pig100} and \eqref{lyon} we get that
$ \| (R^\e_{\o,\l} f )  
\mathds{1}_{B_n^c} \|_{L^1(\mu^\e_\o  )}$ 
goes to  $ \| (R_\l f  ) \mathds{1}_{B_n^c}\| _{L^1(m dx)}$ as $\e \da 0$. 
By taking then the limit $n\uparrow \infty$ we get that $\lim _{n\uparrow \infty} \lim _{\e \da 0}\| (R^\e_{\o,\l} f )   \mathds{1}_{B_n^c} \|_{L^1(\mu^\e_\o  )}=0$.

\begin{proof}[Proof of Lemma \ref{pre_teo2}] 
\ovo{To simplify the notation, we take $V=\bbI$ in \eqref{trasferta1}, thus implying that $\t_k \D= k+[0,1)^d$ (the arguments below can be easily adapted to the general case). Always to}
simplify the notation\ovo{,} we prove a slightly different version of \eqref{claudio2}, the method can be easily adapted to \eqref{claudio2}. In particular, we now  prove that $\cP$--a.s. it holds 
\be\label{claudio3}
 \lim _{\ell \uparrow \infty} \varlimsup_{\e \da 0}X_{\e,\ell}=0\; \text{ where }\; X_{\e, \ell} (\o):=  \e^d \sum_{
\substack{ k\in \bbZ^d:|k| \geq  \ell/\e } }\psi( | \e k| ) N_k\,.
\en
Trivially  Item (i) in Assumption (A9)  implies \eqref{claudio3}. Let us suppose that Item (ii) is satisfied (we restrict  to  the first case in Item (ii), the second more  general case can be treated similarly).
Given $\e\in(0,1)$ let $r=r(\e) $ be  the positive integer of the form $2^a$, $a 
\in \bbN$, such that $r^{-1} \leq \e <2 r^{-1}$.  Then, since $\psi$ is weakly decreasing,  
\be \label{cenere}
X_{\e, \ell} (\o) \leq2^d    Y_{r, \ell} (\o) \;\text{ where } \;   Y_{r, \ell} (\o):=r^{-d} \sum_{
\substack{ k\in \bbZ^d: |k| \geq  r \ell/2 } } \psi(| k/r|)N_k\,.
\en
In particular, to get \eqref{claudio3} it is enough to show that,  $\cP$--a.s,   $\lim _{\ell \uparrow \infty} \varlimsup_{r  \uparrow \infty }Y_{r,\ell}=0$, where $r$ varies in $\G:=\{2^0,2^1,2^2,\dots\}$.  From now on we understand that $r \in \G$. Since $\bbE[N_k]=m$ and since $\psi(|x|)$ is Riemann integrable,  we have
\be\label{fabietto1}
\lim_{r\uparrow \infty } \bbE[Y_{r, \ell}] = z_\ell:=m \int  \psi( |x|)   \mathds{1}_{\{|x| \geq \ell/2\}}dx <\infty\,.
\en
We now estimate the variance of $Y_{r, \ell}$. Due to the stationarity of $\cP$ and since $\bbE[N_0^2]<+\infty$, it holds $\sup_{k \in\bbZ^d} \text{Var} (N_k)<+\infty$.
 By Condition (ii) we have, for some fixed constant $C_1>0$,  
\begin{align*}
\text{Var} (Y_{r, \ell})& \leq C_1  r^{-2d} \sum_{
\substack{ k\in \bbZ^d:\\|k| \geq  r \ell/2 } }\;
\sum_{
\substack{ k'\in \bbZ^d:\\|k'| \geq  r \ell/2 } }
 \left[|k-k'|^{-1}\mathds{1}_{k\not =k'}+ \mathds{1}_{k=k'}  \right] \psi (| k/r|) \psi( | k'/r|) \\
 &=: I_0(r,\ell)+ I_1(r,\ell)+I_2(r,\ell) \,,
\end{align*}
where $I_0(r, \ell)$, $I_1(r,\ell)$ and $I_2(r,\ell)$  denote   the contribution from addenda as above  respectively with  (a) $k=k'$,  (b)    $|k-k'|\geq r$ and (c) $1\leq |k-k'|<r$.
Then we have 
\begin{align}
&    \lim _{r \uparrow \infty}   r ^{d} I_0(r,\ell)=C_1  \int _{|x| \geq \ell/2} \psi(|x|)^2 dx <+\infty\,, \label{vino0} \\
&  \lim _{r \uparrow \infty}   r  I_1(r,\ell)= C_1 \int _{|x| \geq \ell/2}dx \int _{|y| \geq \ell/2} dy \frac{ \mathds{1}_{\{ |x-y|\geq 1\} } }{  |x-y|}\psi(| x| )\psi (|y|)  <+\infty \,. \label{vino1}
\end{align}
To control $I_2(r, \ell)$  we observe that, for $r\geq 2$,  
\[
 \sum _{\substack{v \in \bbZ^d\,:\\ 1\leq \|v\|_\infty \leq c r} }\|v\|_\infty ^{-1 } \leq C' \sum _{n=1}^{ c r } n^{d-2}\leq \begin{cases}
 C'' r^{d-1} & \text{ if } d\geq 2\\
 C'' \ln r & \text{ if } d=1 
 \end{cases}
  \,.
\]
The above bound implies for $r$ large   that 
\be\label{vino20}
\begin{split}
I_2(r, \ell) & \leq   C_1 \|\psi\|_\infty   r^{-2d} \sum_{
\substack{ k\in \bbZ^d:\\|k| \geq  r \ell/2 } }\psi( | k/r| )
\sum_{
\substack{ k'\in \bbZ^d:\\ 1 \leq |k-k'| 	\leq r  }}
 |k-k'|^{-1}\\
& \leq
 C_2 r^{-1 } \ln r \int_{|x| \geq \ell/2} \psi( | x| ) dx \,.
 \end{split}
\en
Due to \eqref{vino0}, \eqref{vino1} and \eqref{vino20},  $\text{Var} (Y_{r, \ell})\leq C_3(\ell) r^{-1} \ln r $ for $ r\geq C_4(\ell)$. 
Now we write explicitly $r= 2^j$. 
 By Markov's inequality, we have  for $j\geq C_5(\ell)$ that \[   \cP( |Y_{2^j, \ell}-\bbE[Y_{2^j, \ell}]| \geq 1/j ) \leq j^2 \text{Var} (Y_{2^j, \ell})\leq C_3(\ell) j^2 \ln (2^j)  2^{-j}\,.
\]
Since the last term is summable among $j$, by Borel--Cantelli lemma we conclude that, for $\cP$--a.a. $\o$, 
$
|Y_{2^j, \ell}(\o) -\bbE[Y_{2^j, \ell}]| \leq  1/j $ for  all $ \ell \geq 1$ and $ j\geq C_6(\ell, \o)$.
This proves that,  $\cP$--a.s., $\lim _{r\uparrow \infty, r \in \G}Y_{r, \ell}= z_\ell$ (cf. \eqref{fabietto1}). Since $\lim_{\ell \uparrow \infty}z_\ell=0$, we get that $\lim _{\ell \uparrow \infty} \lim_{r  \uparrow \infty,r \in \G }Y_{r,\ell}=0$, $\cP$--a.s.
\end{proof}



\appendix


\section{\utto{Further comments on assumptions (A3),...,(A6)}}\label{misurino78}
  In this appendix we extend our comments concerning the choice of the set $\O_*$ in our main assumptions when $\bbG=\bbR^d$. We recall that all sets $\O_k$ are translation invariant.
  
   We first point out that  $\O_5$ and $\O_6$ are always measurable. Indeed    the points of   the simple point process $\hat \o$ can be  enumerated as  $x_1(\hat\o), x_2 (\hat\o),\dots $   in a measurable way by ordering the points according to their distance from the origin and,  in case of more points at the same distance, ordering these points 
  in lexicographic order   (see \cite[p.~480]{DV} for details). 
 By using the above measurable functions   $x_1(\hat\o), x_2 (\hat\o),\dots $, one can easily express $\O_5$ and $\O_6$ as countable intersection
of measurable sets,  thus leading to  the measurability of $\O_5$ and $\O_6$. As a consequence, in (A5) and (A6), one could replace ``$\forall \o \in \O_*$" by ``for $\cP$--a.a. $\o$". 
 
     We now claim  that $\O_3$ is measurable if \eqref{base} holds for all $\o\in \O$ and $g\in \bbG$. To prove our claim we observe that 
      $\theta_g \o  = \theta_{g'}\o$  for some $ g\not= g'$ in $\bbG$
      if and only if  $\theta_g \o  = \o$  for some $ g\in \bbG\setminus\{0\}$. The last property implies that 
 $\mu_{\theta _g \o}  =\mu_\o$  for some $ g \in \bbG \setminus\{0\}$ and therefore, by \eqref{base},  that   $\t_{g} \mu_{ \o }= \mu_\o$  for some $ g \in \bbG \setminus\{0\}$. If $\t _g\mu_{\o}=\mu_\o$, then $\t_{-g} \hat \o   =\hat \o$ and therefore $g= V^{-1}(x-y)$ for some  $x,y \in \hat \o$. 
  Hence, when \eqref{base} holds for all $\o \in \O$, then  $\O_3$ is given by  the measurable set $\cap _{m\not =n} \{\o\in\O\,:\, \theta_{V^{-1}(x_m(\hat \o)-x_n(\hat \o)) }\o\not= \o \}= \{\o\in \O\,:\, \theta_{V^{-1} x} \o \not = \theta_{ V^{-1} y} \o \; \forall x\not= y \text{ in } \hat\o\}$. This concludes the proof of our claim. As a consequence, if \eqref{base} holds for all $\o\in \O$ and $g\in \bbG$,  in (A3) we can  replace ``$\forall \o \in \O_*$" by ``for $\cP$--a.a. $\o$".

\section{Campbell's identity}\label{app_bell}
In this appendix we consider the general context (instead of  the context of Warning \ref{marinaio}) and we 
recall    Campell's identity for the Palm distribution $\cP_0$.  For what follows, it is enough to require only (A1) and (A2). Recall definition  \eqref{simplesso} of $\D$ and that $ \O_0:=\{\o\in \O\,:\, n_0(\o)>0\}$ when  $\bbG=\bbR^d$ and in the special discrete case, and that $\O_0:=\{(\o, x)\in \O\times \D\,:\,n_x(\o)>0\}$ when $\bbG=\bbZ^d$.

$\bullet$ {\sl Case $\bbG=\bbR^d$}. For any   \rrr{measurable} function $f: \bbR^d\times \O_0 \to[0,+\infty) $ it holds
  \begin{equation}\label{campanello}
 \int_{\bbR^d}dx  \int _{\O_0} d\cP_0 ( \o) f(x, \o) =\frac{1}{m \ell(\D) } \int _{\O}d\cP(\o)\int_{\bbR^d}  d\mu_\o (x) f(g(x) , \theta_{g(x)} \o) 
 \end{equation}
 (cf.  \cite[Eq. (4.11)]{Ge} together with  Appendix \ref{gennaro} \ovo{below},  cf. \cite[Thm.~13.2.III]{DV2}).
 \ovo{As $g(x)=V^{-1} x$ by \eqref{attimino}, by taking $f(x, \o):= \mathds{1}_{V^{-1}U} (x) \mathds{1}_A(\o)$ Campbell's identity \eqref{campanello}  reduces to \eqref{palm_classica}. Moreover, note}
 that, when $V=\bbI$, \ovo{from \eqref{campanello}} we recover the more common Campbell\ovo{'s}  formula 
\begin{equation}
 \int_{\bbR^d}dx  \int _{\O_0} d\cP_0 ( \o) f(x, \o) =\frac{1}{m  } \int _{\O}d\cP(\o)\int_{\bbR^d}  d\mu_\o (x) f(x  , \theta_{x} \o)\;\text{ if } V=\bbI.\label{ciampino}
 \end{equation}

$\bullet$ {\sl Case $\bbG=\bbZ^d$}.
For any  \rrr{measurable} function $f:\bbZ^d \times \O_0 \to [0,+\infty)$ it holds 
\begin{equation}\label{campanelloZ}
\begin{split}
&\sum_{g\in \bbG}\int _{\O_0}d\cP_0 (\o, x) f(g,\o,  x) \\
&= \frac{1}{m\, \ell(\D)}
\int _\O  d\cP( \o) \int _{\bbR^d} d\mu_\o (x) f\bigl(g(x),  \theta_{g(x)} \o, \b(x)\bigr )\\
& = \frac{1}{m\, \ell(\D)}\sum_{g\in \bbG}
\int _\O  d\cP(\o) \int _{\t_g \D} d\mu_\o (x) f\bigl(g,\theta_{g} \o,  \t_{-g} x\bigr )
\end{split}
\end{equation}
(cf.  \cite[Eq. (4.11)]{Ge} together with  Appendix \ref{gennaro} \ovo{below}). \ovo{Note that,  Campbell's identity \eqref{campanelloZ} with   $f(g,\o,x):=\d_{0,g} \mathds{1}_A(\o,x)$ reduces to \eqref{Palm_Z}}.

$\bullet$ {\sl Special discrete case}. As discussed in Section \ref{subsec_palm} we think of $\cP_0$ as a probability measure on $\O_0=\{\o\in \O\,:\, n_0(\o)>0\}$.
Then, due to \eqref{campanelloZ}, 
\be  \label{campanelloZ_sp} 
 \sum_{g\in \bbZ^d} \int _{\O_0}d\cP_0 (\o ) f(g, \o  )=\frac{1}{\bbE[n_0]} \sum_{g\in \bbG} \int _\O  d\cP( \o)  n_g(\o)  f\bigl(g, \theta_g\o   \bigr )\,,
 \en
for any   \rrr{measurable} function $f: \bbZ^d\times \O_0 \to [0,+\infty)$.


\section{Sign choices and  cumulative Palm  measure}\label{gennaro}
In this appendix we consider the general context (instead of  the context of Warning \ref{marinaio}) and   explain  how to derive  our formulas concerning the Palm distribution $\cP_0$ from the present literature. Our main reference is given by \cite{Ge}. When the probability space $(\O,\cF, \cP)$ is such that  $\O$ is a Borel space and $\cF= \cB(\O)$, then one can refer as well to the theory of Palm pairs to get our formulas (cf. \cite[Theorem~4.10]{Ge}, \cite{GL,Km}).  For what follows, it is enough to require (A1) and (A2).

 Our action $(\theta_g)_{g\in \bbG}$ on $\O$  is related to the action $(\theta^{\rm Ge}_g)_{g\in \bbG}$ on $\O$ in \cite{Ge} by the identity $\theta^{\rm Ge}_g=\theta_{-g}$ for all $g\in \bbG$ (we have added here the supfix ``Ge'' in order to distinguish the two actions).  As a consequence, when applying some formulas from \cite{Ge}  sign changes are necessary.
By setting $ \t_g \mathfrak{m} (A):= \mathfrak{m} (\t_{g} A)$ in 
 \eqref{mignolo},  we have followed the convention used in \cite[Section 12.1]{DV2}  and \cite[p.~23]{ZP}. 
By setting  $\mu_{\theta_g\o}= \t_g \mu_\o$
 in \eqref{base}, we have followed the convention of \cite[p.~23]{ZP}.  We stress that in e.g.  \cite{Ge}, \cite{GL}, $ \t_g \mathfrak{m} (A)$ is defined as $\mathfrak{m} (\t_{-g} A)$. On the other hand,  the fundamental relation \eqref{base} is valid also in \cite{Ge} (see Eq.~(2.27) in \cite{Ge}). Indeed, as already observed,  $\theta^{\rm Ge}_g=\theta_{-g}$.

The Palm distribution $\cP_0$ introduced in Section \ref{subsec_palm}
is the normalized version of the cumulative Palm measure   $\bbQ$ \cite[Thm.~4.1, Def.~4.2]{Ge}. We clarify the notation there. As Haar measure  $\l$ of $\bbG$ we take the Lebesgue measure $dx$ if $\bbG=\bbR^d$ and the counting measure on $\bbG$ if $\bbG=\bbZ^d$. As \ovo{a} set of orbit representatives $\cO$ \ovo{for the action of $\bbG$ on $\bbR^d$},  we take  $\cO=\{0\}$ if $\bbG=\bbR^d$ and $\cO=\D$ if $\bbG=\bbZ^d$ (cf. \eqref{simplesso}). Given $x\in \bbR^d$,  we define $\b(x):=a$ and $g(x) := g$ if 
$x= \t_g a$ with $a \in \cO$. Note that this definition incorporates  both \eqref{attimino} and \eqref{attimo}  and that $\b(x)\equiv 0$ when $\bbG=\bbR^d$. Given $x\in \bbR^d$, as in \cite{Ge} 
we define $\pi_x : \bbG \to \bbR^d$ as $\pi_x (g):= \t_g x$ (cf. \cite[Sect.~2.2.2]{Ge}) and we introduce the   measure $\mu_x$ on $\bbR^d$ as 
 $\mu_x:= \l \circ \pi_x ^{-1}$  (cf. \cite[Sect.~2.2.3]{Ge}). In particular, $\mu_x (dy)=  \ell (\D) ^{-1}  dy $  if  $\bbG=\bbR^d$, and $\mu_x$ is the counting measure on $\{\t_g x\,:\, g \in \bbZ^d\}$ if $\bbG=\bbZ^d$.  
 Given  $x,y\in \bbR^d$,  we define  the measure  $\k_{x,y}$  
on $\bbG$ as follows:
if $y$ does not belong to the $\bbG$--orbit of $x$, then $\k _{x,y}$ is the zero measure; if 
$y$  belongs to the $\bbG$--orbit of $x$, then $\k _{x,y}$ is 
 the Dirac atomic measure $\d_g$,  
 $g$ being the unique element of $ \bbG$ such that  $\t_g x=y$, \ovo{i.e.~$g=V^{-1} (y-x)$}.
 Given a \rrr{measurable} set $A \subset \bbG$, the map  $\ovo{\bbR^d\times\bbR^d\ni}(x,y)\mapsto \k_{x,y}(A)= \mathds{1}(  V^{-1}(y-x)\in A)\ovo{\in \{0,1\}}$  is \rrr{measurable}. Hence $\k_{x,y} $ is a kernel from $\bbR^d \times \bbR^d$ to $\bbG$ (cf. \cite[Sect.~2.1.2]{Ge}).
 Note moreover that for any  \rrr{measurable} function $f: \bbR^d \times \bbG \to [0,+\infty)$ and for any $x\in \bbR^d$   it holds
 \be
 \int _{\bbG} f(\t_g x, g) d\l (g)=\int_{\bbR^d}  d \mu_x (y) \int _{\bbG} d\k _{x,y} (g)  f(y,g)\,.
 \en
The above identity is  equivalent  to \cite[Eq.~(3.1)]{Ge} with the notation there (take $S:=\bbR^d$ and $g x:= \t_g x$ there). Due to these observations, it is simple to check that the kernel $\k_{x,y}$ satisfies the properties listed in  \cite[Thm.~3.1]{Ge}
\ovo{(note that the map $\theta_g$ appearing in Item (i) of  \cite[Thm.~3.1]{Ge} refers to the action of $\bbG$ on $\bbG$ itself, hence it is the map $\bbG\ni g' \mapsto g+g'\in \bbG$)}.
 Hence, $\k_{x,y}$ is the kernel entering in  \cite[Thm.~4.1]{Ge}.  To apply now \cite[Eq.~(4.4)]{Ge}  we observe that   $\k_{\b(x),x}=\d_{g(x)}$ for  all $x\in \bbR^d$. It remains to fix the function $w:\bbR^d \to [0,+\infty)$ appearing there.  
Defining $w:=\mathds{1}_{  \D}$, due to  the above  description of $\mu_x$ and $\cO$ we have   that $\mu_x (w)=1$ for any $x\in \cO$. We can finally reformulate  \cite[Eq.~(4.4)]{Ge} in our context: for each \rrr{measurable} $B\subset \O\times \cO$ the  measure $\bbQ$ is given by 
\be
\bbQ(B)= \int _{\O} d \cP (\o)\int_{\bbR^d} \mu_\o(dx)\int _{\bbG} \k _{\b(x),x}(dg) \mathds{1} _B  \left (\theta_g \o, \b(x)\right ) w(x)\,,
\en
which reads 
\be\label{anguria}
\bbQ(B)= \int _{\O} d \cP (\o)\int_{\D} \mu_\o(dx)\mathds{1} _B \left(  \theta_{g(x)} \o, \b(x)\right)\,.
\en
Since $\bbQ( \O \times \cO)=\bbE\left[ \mu_\o(\D)\right]= m \ell(\D)$, we get that  $\cP_0:=\bbQ( \O \times \cO)^{-1} \bbQ= \left( m \ell(\D) \right)^{-1} \bbQ$.
When $\bbG=\bbR^d$, as $\cO=\{0\}$ we identify $\O\times \cO$ with $\O$. Using   the stationarity of $\cP$ and that $\b(x)=0$, \eqref{anguria}  becomes  \eqref{palm_classica} \ovo{for $U=\D$}. When $\bbG=\bbZ^d$, as $\cO=\D$ we have $\b(x)=x$ and $g(x)=0$ for any $x\in \D$, hence \eqref{anguria} becomes  \eqref{Palm_Z}.
\section{Proof of Lemma \ref{lemma_no_TNT}}\label{app_no_TNT}
In this appendix we consider the general context (instead of  the context of Warning \ref{marinaio}). 
Due to the discussion in Section \ref{sec_fluidifico}, to prove Lemma \ref{lemma_no_TNT} when  $\bbG=\bbZ^d$ it is enough to prove the analogous claim   for the random walk $\bar X_t$ with rates \eqref{normale} in the setting $\cS[2]$ introduced in Section \ref{sec_fluidifico}. As a consequence, from now on we restrict to the case $\bbG=\bbR^d$ and to the special discrete case. Moreover, to simplify the notation we take $V=\bbI$ (in the general case it would be enough to use Remark \ref{jolie}). \ovo{Recall the notation introduced in \eqref{tildino}}.

We set  $ A_1:=\{ \o\in  \O_0 \,:\, 0<\l_0(\o) <\infty\}$ and define $\tilde A_1$ according to \eqref{tildino}.  We point out that $\cP_0(\l_0>0)=1$ due (A6) and the property that $|\hat \o|=\infty  $ for $\cP$--a.a. $\o$ and therefore for $\cP_0$--a.a. $\o$ by Lemma \ref{matteo} (see the comments 
\ovo{on the main assumptions} 
before Remark \ref{jolie}).   Using that $\bbE_0[\l_0]<\infty$, as done  for Corollary \ref{eleonora},  we get that  $\cP_0(\tilde A_1)=1$.  Since $r_x(\o)=\l_0(\theta _x \o)$, for each $\o \in \tilde A_1$  it holds $r_x(\o)\in(0,+\infty)$ for all $x\in \hat \o$. 

\ovo{Consider now the translation invariant measurable set $\O_*$. By assumption $\cP(\O_*)=1$ and $ \theta _x \o \not = \theta _{y} \o $ for all $ x\not = y$ in $\bbG$ and $\o\in \O_*$. By Lemma \ref{matteo},  we get $\cP_0(\O_*)=1$}.
Note that, given $\o,\o'\in \ovo{\O_*}\cap \O_0$, if $\o'=\theta_x\o$ for some $x\in \bbG$, then $x$ is unique and in this case we  define $r(\o,\o'):= r_{0,x}(\o)$, otherwise we define $r(\o,\o'):=0$.

We set $\ovo{A_2}:=\tilde A_1\cap \ovo{\O_*} \cap \O_0$\ovo{. Due} to the above properties, we have $\cP_0(\ovo{A_2})=1$. Moreover, given $\o_*\in \ovo{A_2}$
we can introduce    the discrete time  Markov chain $(\o_n)_{n \in \bbN}$ with state space $\ovo{A_2}
$, initial configuration $\o_*$ and jump probabilities $p(\o,\o'):= r(\o,\o')/\l_0(\o)$.  We write $P_{\o_*}$ for its law and $E_{\o_*}$ for the associated expectation.
 $P_{\o_*}$ is a probability measure on $\ovo{A_2^{\bbN}}$. 
 
 We now introduce the distribution $d\cQ_0(\o):=\bbE_0[\l_0]^{-1} \l_0(\o)  d\cP_0(\o) $  on $\ovo{A_2}$.
We claim that $\cQ_0$ is  reversible (hence stationary)  and ergodic for the discrete-time Markov chain $(\o_n)_{n\geq 0}$.
To get reversibility  we observe that, by Lemma \ref{lemma_siou}  and since $\bbE_0[\l_0]<\infty$,
\[ 
\bbE_0\Big[  \int_{\bbR^d} d \hat \o(x)  r (\o, \theta _{x}\o)   f(\o) h (\theta_x \o)\Big] =\bbE_0\Big[  
 \int_{\bbR^d} d \hat \o(x)   r (\o, \theta _{x}\o)   h(\o) f(\theta_{x} \o)\Big]\,,
\]
 for any bounded \rrr{measurable} functions $f,h:\O_0\to \bbR$.  Let us now prove the ergodicity of $\cQ_0$. To this aim let $B\subset \ovo{A_2}$ be a set left invariant by the Markov chain. Due to (A6) we get   $B=\O_0 \cap \tilde B$ (cf. \eqref{tildino}). Since  $\tilde B$ is translation invariant and   $\cP$ is ergodic by  (A1), we get that $\cP(\tilde B)\in\{0,1\}$. By Lemma \ref{matteo} we conclude that $\cP_0( B) = \cP_0( \tilde B \cap \O_0)=\cP_0(\tilde B)=\cP(\tilde B)\in \{0,1\}$.

 By the ergodicity of $\cQ_0$  and  since  $\int _{\ovo{A_2}}d\cQ_0(\o) \l^{-1}_0( \o)=\bbE_0[\l_0]^{-1}$, we have 
 \be\label{sardegna}
 \int_{\ovo{A_2}} d\cQ_0(\o) P_\o \Big(  \lim _{n \to \infty} \frac{1}{n}\sum_{k=0}^n\frac{1}{\l_0(\o_n)}= \frac{1}{\bbE_0[\l_0]}\Big)=1\,.
 \en
 Let $\ovo{A_3}:= \{\o \in \ovo{A_2}\,:\, \lim _{n \to \infty} \sum_{k=0}^n\frac{1}{\l_0(\o_n)}=+\infty\; P_\o\text{--a.s.} \}$.  By \eqref{sardegna} and since $\bbE_0[\l_0]<\infty$, we have $\cQ_0(\ovo{A_3})=1$.   As $\l_0>0$ $\cP_0$--a.s.,  
  $\cP_0$ and $\cQ_0$ are mutually absolutely continuous,  hence we conclude that $\cP_0(\ovo{A_3})=1$.
 
 Then for any $\o_*\in  \ovo{A_3} $ we can define the continuous-time Markov chain $(\o_t)_{t\geq 0}$ starting at $\o_*$ obtained by a random time--change from the Markov chain $(\o_n)_{n \in \bbN}$ starting at $\o_*$, imposing that the waiting time at $\o$ is an exponential random variable with parameter $\l_0(\o)$. Note in particular that $\o_t$ is defined for all $t\geq 0$. 
Given $\o \in \ovo{ \tilde{A}_3}$ and $x_0\in \hat \o$ let $(\o_t)_{t\geq 0}$ be the above  continuous-time Markov chain
 starting now at $\theta_{x_0}\o$.
 For $t\geq 0$ we set 
$X_t^{\o}:=x_0+ 
\sum _{
 s\in [0,t]: 
  \o_{s-}\not= \o_s}
  F(\o_{s-},\o_s)\,,
$
 where  $F(\o,\o'):=x$ if  $\o'=\theta_x \o$. Then $X_t^{\o}
$ coincides with the random walk described in Lemma \ref{lemma_no_TNT}. Setting $\ovo{\cA:= \tilde{A}_3}$ and using Lemma \ref{matteo}, the above construction  implies the content of Lemma \ref{lemma_no_TNT}.

\section{Technical facts concerning  Section \ref{sec_fluidifico}}\label{app_fluidifico}
In this appendix we consider the general context (instead of  the context of Warning \ref{marinaio}). \ovo{Below $\bar{\bbE}[\cdot]$ denotes the expectation w.r.t. $\bar{\cP}$.}
\begin{Lemma}\label{LB1} $\bar \cP$ is $\bar \bbG$--stationary.
\end{Lemma}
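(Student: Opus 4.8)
The plan is to verify property (P4) for the new setting directly, by testing the identity $\bar\cP\circ\bar\theta_x^{-1}=\bar\cP$ against nonnegative measurable functions and performing an explicit change of variables on the fundamental cell $\D$. Recall first that $\b$ and $g$ are Borel measurable on $\bbR^d$, since $g=\sum_{k\in\bbZ^d}k\,\mathds{1}_{\D+Vk}$ (the half-open cells $\D+Vk$ partition $\bbR^d$) and $\b(y)=y-Vg(y)$.

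Fix $x\in\bbR^d$ and a nonnegative measurable $F:\bar\O\to[0,\infty)$. By the definition of $\bar\cP$ and of $\bar\theta_x$ in \eqref{cetriolo},
\[
\bar\bbE\bigl[F\circ\bar\theta_x\bigr]=\frac{1}{\ell(\D)}\int_\O d\cP(\o)\int_\D da\;F\bigl(\theta_{g(x+a)}\o,\;\b(x+a)\bigr).
\]
For $k\in\bbZ^d$ put $A_k:=\{a\in\D:\ g(x+a)=k\}=\D\cap(\D+Vk-x)$. Since $g(x+a)=k$ is equivalent to $x+a\in\D+Vk$, and the cells $\{\D+Vk\}_k$ tile $\bbR^d$, the $A_k$ form a Borel partition of $\D$. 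On $A_k$ one has $\b(x+a)=x+a-Vk$, so the translation $a\mapsto b:=x+a-Vk$ is a measure-preserving bijection of $A_k$ onto $B_k:=\D\cap(\D+x-Vk)$, and the same tiling argument (now applied to $\{\D+x-Vk\}_k$) shows that $\{B_k\}_k$ is again a Borel partition of $\D$.

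Splitting the $a$-integral over the $A_k$, changing variables in each piece, and using Tonelli to move the countable sum and the $db$-integral past $\int_\O d\cP$ (the integrand being nonnegative and jointly measurable by (P3)), we get
\[
\int_\O d\cP(\o)\int_\D da\;F\bigl(\theta_{g(x+a)}\o,\b(x+a)\bigr)=\sum_{k\in\bbZ^d}\int_{B_k}db\int_\O d\cP(\o)\,F(\theta_k\o,b).
\]
By the $\bbG$-stationarity of $\cP$ — property (P4) for $\bbG=\bbZ^d$ — the inner $\o$-integral equals $\int_\O F(\o,b)\,d\cP(\o)$ for every $k$ and $b$; summing over $k$ and using once more that $\{B_k\}_k$ partitions $\D$, the right-hand side becomes $\int_\D db\int_\O F(\o,b)\,d\cP(\o)$. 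Dividing by $\ell(\D)$ yields $\bar\bbE[F\circ\bar\theta_x]=\bar\bbE[F]$, and since $F\geq0$ was arbitrary, $\bar\cP\circ\bar\theta_x^{-1}=\bar\cP$, i.e. $\bar\cP$ is $\bar\bbG$-stationary.

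The only delicate points here are bookkeeping: checking that the $A_k$ and $B_k$ are Borel and that each family tiles $\D$ (which reduces to the elementary fact that the $V\bbZ^d$-translates of the half-open cell $\D$ partition $\bbR^d$), and justifying the interchange of the countable sum with the two integrals via Tonelli. These require only routine measure theory, so the substance of the argument is the change of variables $a\mapsto x+a-Vk$ on each slab $A_k$, followed by the stationarity of $\cP$.
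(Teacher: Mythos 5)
Your proof is correct and follows essentially the same argument as the paper's: you decompose the $a$-integral over $\D$ according to the value of $g(x+a)$, perform the measure-preserving shift $a\mapsto x+a-Vk$ on each slab, apply $\bbZ^d$-stationarity of $\cP$ to kill $\theta_k$, and reassemble using the tiling property of $\D$. The paper merely orders the steps slightly differently (first substituting $a\mapsto a-x$ to move the integration domain to $\D+x$, then decomposing by the cells $\t_g\D$), but the resulting pieces $(\D+x-Vg)\cap\D$ are exactly your $B_k$ and the computation is identical.
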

\begin{proof} 
Let $f: \bar \O \to [0,+\infty)$ be a \rrr{measurable} function and let $x\in \bbR^d$. Then we get (see below for some comments)
\begin{equation*}
\begin{split}
& \ell(\D) \bar\bbE[f\circ \bar \theta_x] =\bbE\Big[ \int_\D da f\bigl( \bar \theta_x( \o,a)\bigr)\Big]=\bbE\Big[ \int_\D da f\bigl( \theta_{g(x+a)} \o, \beta(x+a)
\bigr)\Big]
\\&=\bbE\Big[ \int_{\D+x} da f\bigl( \theta_{g(a)} \o, \beta(a)\bigr)\Big]
=\sum_{g\in \bbG}  \int_{(\D+x)\cap \t_g \D} da\bbE\Big[  f\bigl( \theta_{g} \o, a-Vg\bigr)\Big]
\\&=\sum_{g\in \bbG} \int_{(\D+x)\cap \t_g \D} da \bbE\Big[  f\bigl(  \o, a-Vg\bigr)\Big]
=\sum_{g\in \bbG} \int_{(\t_{-g}(\D+x))\cap  \D} du \bbE\Big[   f\bigl(  \o, u\bigr)\Big]\\
&= \int_{  \D} du \bbE\Big[   f\bigl(  \o, u\bigr)\Big]=\ell(\D) \bar\bbE[f]\,.
\end{split}
\end{equation*}
For the forth identity, we point out that if 
 $a=\t_g y=y+Vg$ with $y\in \D$, then $\b(a)= y= a-Vg$. The firth identity follows from the $\bbG$-stationarity of $\cP$. For the seventh identity  observe that 
 $\bbR^d=\sqcup_{g\in \bbG} (\t_{-g}(\D+x))$.
\end{proof}

    \begin{Lemma}\label{LB5} 
  $\bar \cP$ is ergodic w.r.t. the action $(\bar \theta_g)_{g\in \bar \bbG}$.
  \end{Lemma}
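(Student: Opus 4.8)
The plan is to reduce any $\bar{\bbG}$-invariant measurable subset of $\bar\O=\O\times\D$ to a $\bbG$-invariant ($\bbG=\bbZ^d$) measurable subset of $\O$ with the same probability, and then appeal to Assumption (A1). First I would fix a translation invariant measurable set $\bar A\in\bar\cF$, i.e. $\bar\theta_x\bar A=\bar A$ for every $x\in\bbR^d$, and use the explicit formula \eqref{cetriolo}. For each fixed $a\in\D$, invariance gives
\[
(\o,a)\in\bar A\ \Longleftrightarrow\ \bigl(\theta_{g(x+a)}\o,\beta(x+a)\bigr)\in\bar A\qquad\text{for all }x\in\bbR^d .
\]
As $x$ ranges over $\bbR^d$ the vector $y:=x+a$ ranges over $\bbR^d$, and since $\bbR^d=\bigsqcup_{n\in\bbZ^d}\t_n\D$ the pair $(g(y),\beta(y))$ ranges over all of $\bbZ^d\times\D$, with $(g(y),\beta(y))=(n,b)$ iff $y=\t_n b$ (this is precisely the orbit-representative structure \eqref{simplesso}, \eqref{attimo}). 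Hence
\[
(\o,a)\in\bar A\ \Longleftrightarrow\ (\theta_n\o,b)\in\bar A\quad\text{for all }(n,b)\in\bbZ^d\times\D .
\]

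Since the right-hand side is independent of $a$, I would then set $A_0:=\{\o\in\O:(\o,b)\in\bar A\}$ --- the same set for every choice of $b\in\D$, and measurable since it is a section of the $\cF\otimes\cB(\D)$-measurable set $\bar A$ --- so that $\bar A=A_0\times\D$. Applying the displayed equivalence with $\o$ replaced by $\theta_n\o$, and reindexing $m\mapsto m+n$ inside $\bbZ^d$, shows that the defining condition for $\theta_n\o\in A_0$ coincides with the one for $\o\in A_0$; hence $\theta_n A_0=A_0$ for all $n\in\bbZ^d$, i.e. $A_0$ is a $\bbG$-translation invariant measurable subset of $\O$.

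Finally, ergodicity of $\cP$ (Assumption (A1), with $\bbG=\bbZ^d$) yields $\cP(A_0)\in\{0,1\}$, and since $\bar\cP=\ell(\D)^{-1}\,\cP\otimes\ell$ we obtain $\bar\cP(\bar A)=\bar\cP(A_0\times\D)=\cP(A_0)\in\{0,1\}$, which is the desired ergodicity of $\bar\cP$. The only delicate point in the whole argument is the bookkeeping in the first step --- checking that $y\mapsto(g(y),\beta(y))$ really sweeps out $\bbZ^d\times\D$ bijectively, and that the section $A_0$ is measurable; everything else is formal. If one prefers the ``ergodicity up to null sets'' convention, one first replaces an essentially $\bar\bbG$-invariant set by a genuinely invariant one differing from it by a $\bar\cP$-null set, by the standard argument, and then runs the above.
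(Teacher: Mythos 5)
Your proposal is correct and follows essentially the same route as the paper's proof: you show that a $\bar\bbG$-invariant set must be a cylinder $A_0\times\D$ with $A_0$ a $\bbZ^d$-invariant section, and then appeal to (A1), whereas the paper phrases the identical reduction in terms of $\bar\bbG$-invariant measurable functions $f(\o,a)$ being independent of $a$ and $\bbZ^d$-invariant in $\o$. The only presentational difference is that you observe the bijection $x\mapsto(g(x+a),\beta(x+a))$ onto $\bbZ^d\times\D$ in one stroke, while the paper obtains the same information via the two particular substitutions $x=a'-a$ and $x=Vg$.
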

  \begin{proof} Let $f: \bar \O \to \bbR$ be a \rrr{measurable} function such that $f( \bar \theta _x \bar \o) =f (\bar \o)$ for any $x \in \bar \bbG$ and any $\bar \o \in \bar \O$. We need to prove that $f$ is constant $\bar \cP$--a.s.. 
 By \eqref{cetriolo} the invariance of $f$ reads
  $f\bigl( \theta _{g(x+a)} \o, \b (x+a) \bigr)=f(\o,a)$ for all $(\o, a)\in \bar \O$ and all $x \in \bbR^d$. Fixed $(\o,a) \in \bar \O$, given $a'\in \D$ we take $x:= a'-a$. As $x+a=a'\in \D$ we have $\beta (x+a)=a'$ and $g(x+a)=0$. In particular, the invariance of $f$ implies that $f(\o,a')=f(\o,a) $ for any $\o \in \O$ and $a,a'\in \D$. 
  When   $x= Vg $ for some $g\in \bbG$ and $a \in \D$, we have  $g(x+a)=g$ and $\b(x+a)=a$. The invariance of $f$ then implies that $f( \theta _g \o, a)= f(\o,a)$ for all $g\in \bbG$, $\o\in \O$ and $a\in \D$. As $\cP$ is ergodic w.r.t. the action $(\theta_g)_{g\in \bbG}$, given $a\in \D$ we conclude that $\exists  c\in \bbR$ such that for  $f(\o,a)= c$  for $\cP$--a.a. $\o$.   By combining this identity with the fact that $f(\o,a')=f(\o,a) $ for any $\o \in \O$ and $a,a'\in \D$, we conclude that $f\equiv c$ $\bar \cP$--a.s..
  \end{proof}

\begin{Lemma}\label{LB3}
The intensities   $\bar m $ and $m$  of the random measure $\mu_{\bar \o}$ 
and  $\mu_\o$, respectively, coincide.
\end{Lemma}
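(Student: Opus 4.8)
\textbf{Proof plan for Lemma \ref{LB3}.}
The statement to prove is that the intensity $\bar m$ of $\mu_{\bar\o}$ (in the setting $\cS[2]$ with group $\bar\bbG=\bbR^d$) equals the intensity $m$ of $\mu_\o$ (in the original setting $\cS[1]$ with group $\bbG=\bbZ^d$). Since $\bar\bbG=\bbR^d$, by definition \eqref{puffo1} the intensity $\bar m$ is $\bar\bbE[\mu_{\bar\o}([0,1)^d)]$, while $m$ is given by \eqref{puffo2}, i.e.\ $m=\bbE[\mu_\o(\D)]/\ell(\D)$. So the plan is simply to compute $\bar\bbE[\mu_{\bar\o}([0,1)^d)]$ directly from the definitions of $\bar\cP$ and $\mu_{\bar\o}$ and to show the two quantities agree.

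First I would unfold the definition: $\bar\cP=\ell(\D)^{-1}\cP\otimes\ell$ on $\bar\O=\O\times\D$, and $\mu_{(\o,a)}(\cdot)=\mu_\o(\cdot+a)$ from \eqref{limone}. Hence
\[
\bar m=\bar\bbE\bigl[\mu_{\bar\o}([0,1)^d)\bigr]
=\frac{1}{\ell(\D)}\int_\O d\cP(\o)\int_\D da\,\mu_\o\bigl([0,1)^d+a\bigr)
=\frac{1}{\ell(\D)}\int_\D da\,\bbE\bigl[\mu_\o\bigl([0,1)^d+a\bigr)\bigr].
\]
Now, by the $\bbG$-stationarity of $\cP$ (property (P4)) together with \eqref{base}, for any Borel set $U$ that is an overlap of translated cells $\t_g\D$ with $g\in\bbG$ one has $\bbE[\mu_\o(U)]=m\,\ell(U)$; more generally, since $\bbE[\mu_\o(\cdot)]$ is a translation-invariant (under the $\bbZ^d$-action $\t_g$) locally finite measure on $\bbR^d$, it is absolutely continuous with constant density, and comparing with \eqref{puffo2} that density is exactly $m$. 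Therefore $\bbE[\mu_\o([0,1)^d+a)]=m\,\ell([0,1)^d+a)=m$ for every $a$. Plugging this back gives $\bar m=\ell(\D)^{-1}\int_\D da\,m=m$, which is the claim.

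The one point that needs a little care — and is the only mild obstacle — is justifying that $U\mapsto\bbE[\mu_\o(U)]$ has constant density $m$ on \emph{all} of $\bbR^d$, not merely on unions of lattice cells $\t_g\D$; the cube $[0,1)^d+a$ is generally not such a union. This follows because $\bbE[\mu_\o(\cdot)]$ is invariant under the translations $x\mapsto x+Vg$, $g\in\bbZ^d$, by \eqref{base} and (P4), and a locally finite measure on $\bbR^d$ invariant under a full-rank lattice of translations is a constant multiple of Lebesgue measure; the constant is identified by integrating over one fundamental domain $\D$ and using \eqref{puffo2}. Everything else is a routine application of Fubini (legitimate since $\mu_\o$ is locally finite and all integrands are nonnegative) and the change of variables already displayed. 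No new ideas beyond those of Section \ref{sec_fluidifico} are required, and the argument parallels the computation in Lemma \ref{LB1}.
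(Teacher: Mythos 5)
There is a genuine gap. Your computation hinges on the claim that $\bbE[\mu_\o([0,1)^d+a)]=m$ for every $a\in\D$, which you justify by asserting that a locally finite measure on $\bbR^d$ invariant under a full-rank lattice of translations must be a constant multiple of Lebesgue measure. That assertion is false: the mean measure $\bbE[\mu_\o(\cdot)]$ is only $\{\t_g\}_{g\in\bbZ^d}$-periodic, and a periodic measure can be singular. Indeed, in the special discrete case of this paper $\mu_\o$ is supported on $\bbZ^d$, so $\bbE[\mu_\o(\cdot)]$ is a sum of point masses on the lattice, not Lebesgue. The pointwise claim itself also fails when $V\neq\bbI$: take $d=2$, $V=\mathrm{diag}(2,1)$, $\mu_\o=\sum_{g\in\bbZ^2}\d_{Vg}$; then $\bbE[\mu_\o([0,1)^2+a)]$ equals $1$ for some $a\in\D$ and $0$ for others, whereas $m=\bbE[\mu_\o(\D)]/\ell(\D)=1/2$. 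What is true — and what you actually need — is only the averaged identity $\ell(\D)^{-1}\int_\D da\,\bbE[\mu_\o([0,1)^d+a)]=m$, and that requires an argument.

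The paper's proof supplies exactly the missing ingredient. It works with $\bar\bbE[\mu_{\bar\o}(\D)]=\bar m\,\ell(\D)$ (legitimate because $\bar\cP$ is $\bbR^d$-stationary by Lemma \ref{LB1}, so the intensity formula applies to any Borel set of finite positive Lebesgue measure) and then tiles $\D+a$ by the cells $\t_g\D$:
\begin{equation*}
\bbE\Big[\int_\D da\,\mu_\o(\D+a)\Big]
= \sum_{g\in\bbG}\int_\D da\,\bbE\bigl[\mu_\o\bigl((\D+a)\cap\t_g\D\bigr)\bigr]
= \sum_{g\in\bbG}\int_\D da\,\bbE\bigl[\mu_\o\bigl(\t_{-g}(\D+a)\cap\D\bigr)\bigr],
\end{equation*}
the last step using $\bbG$-stationarity of $\cP$ together with \eqref{base} applied cell by cell, and Tonelli to justify the interchanges (all integrands are nonnegative). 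Since for each fixed $a$ the sets $\{\t_{-g}(\D+a)\}_{g\in\bbG}$ partition $\bbR^d$, the sum over $g$ recombines into $\bbE[\mu_\o(\D)]$, giving $\bar m\,\ell(\D)^2 = m\,\ell(\D)^2$. If you want to keep your starting point $\bar\bbE[\mu_{\bar\o}([0,1)^d)]$, you would have to carry out the analogous cell decomposition of $[0,1)^d+a$ and show that the $a$-average equals $m$; the choice of $\D$ as reference set is precisely what makes the tiling recombine cleanly, so it is simpler to follow the paper's route.
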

\begin{proof}
We have (see comments below) 
\begin{equation*}
\begin{split}
&\bar m \, \ell(\D)^2  =  \bar\bbE \bigl[ \mu_{\bar \o}(\D)\bigr] \ell(\D)=
\bbE\Big[ \int _\D da\, \mu_{\o} ( \D+a)\Big]\\
& = \sum_{g\in \bbG}  \int _\D da  \bbE\Big[ \mu_{\o}\bigl ( (\D +a)\cap \t_g \D \bigr) \Big]= \sum_{g\in \bbG}  \int _\D da  \bbE\Big[\mu_{\theta_{-g} \o}\bigl ( (\D +a)\cap \t_g \D \bigr) \Big] \\
&= \sum_{g\in \bbG}  \int _\D da  \bbE\Big[\mu_{\o}\bigl ( \t_{-g} (\D +a)\cap  \D \bigr)  \Big]=\int _\D da  \bbE\Big[ \mu_{\o} (   \D )\Big]=  m \ell(\D)^2\,.
\end{split}
\end{equation*}
In the third identity we used that $\bbR^d =\sqcup _{g\in \bbG}\t_{g}\D$. In the  forth identity we used the $\bbG$--stationarity of $\cP$. In the fifth identity we used \eqref{base}.
In the sixth identity we used that $\bbR^d =\sqcup _{g\in \bbG} \t_{-g} (\D +a)$.
%
\end{proof}

\begin{Lemma}\label{LB2} \utto{For all $\bar \o \in \bar{\O}_*$}
     it holds 
$\mu_{\bar \theta _x \bar \o}(\cdot) = \mu_{\bar \o}(\bar \t_x \cdot)$  for any $x\in \bar \bbG$.
\end{Lemma}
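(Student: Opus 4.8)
\textbf{Proof plan for Lemma \ref{LB2}.}

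The plan is to unwind the definitions on both sides and match them using the cocycle property of $(\bar\theta_x)_{x\in\bar\bbG}$ together with the identities \eqref{limoncello} and \eqref{arancia} (the latter being precisely what we are proving, so in fact the cleanest route is to avoid circularity and work directly from \eqref{limone} and \eqref{cetriolo}). Write $\bar\o=(\o,a)$ with $\o\in\O_*$ and $a\in\D$. By \eqref{cetriolo} we have $\bar\theta_x\bar\o=\bigl(\theta_{g(x+a)}\o,\ \beta(x+a)\bigr)$. Setting $g:=g(x+a)$ and $b:=\beta(x+a)\in\D$, so that $x+a=\t_g b = b+Vg$, the definition \eqref{limone} gives, for any Borel set $A\subset\bbR^d$,
\[
\mu_{\bar\theta_x\bar\o}(A) \;=\; \mu_{\theta_g\o}(A+b)\,.
\]
First I would apply \eqref{base} (valid for $\o\in\O_*$ by Assumption (A4)), which yields $\mu_{\theta_g\o}(A+b)=\t_g\mu_\o(A+b)=\mu_\o\bigl(\t_g(A+b)\bigr)=\mu_\o\bigl((A+b)+Vg\bigr)=\mu_\o\bigl(A+(b+Vg)\bigr)=\mu_\o(A+x+a)$, using $b+Vg=x+a$.

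On the other side, by \eqref{limone} and the definition \eqref{pinzimonio} of $\bar\t_x$,
\[
\mu_{\bar\o}(\bar\t_x A)\;=\;\mu_{(\o,a)}(A+x)\;=\;\mu_\o\bigl((A+x)+a\bigr)\;=\;\mu_\o(A+x+a)\,.
\]
Comparing the two displays, $\mu_{\bar\theta_x\bar\o}(A)=\mu_\o(A+x+a)=\mu_{\bar\o}(\bar\t_x A)$ for all Borel $A$, which is the claim. The only points requiring care are (i) that $\t_g(A+b)=(A+b)+Vg$, which is just \eqref{trasferta1} applied setwise; and (ii) invoking \eqref{base} legitimately, i.e. checking that $\o\in\O_*$ (equivalently $\o\in\O_4$), which holds by hypothesis $\bar\o\in\bar\O_*=\O_*\times\D$.

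There is no real obstacle here — the lemma is a bookkeeping identity and the substance is entirely contained in the $\bbG$-equivariance \eqref{base} of $\mu_\o$ plus the abelian-group arithmetic $b+Vg=x+a$ coming from the definitions of $g(\cdot)$ and $\beta(\cdot)$. The one thing to be vigilant about is sign conventions: since $\t_g$ acts on measures by \eqref{mignolo} as $\t_g\mathfrak m(A)=\mathfrak m(\t_g A)$ (pullback, not pushforward), one must not accidentally insert a spurious inverse. I would therefore keep every step at the level of evaluating measures on an arbitrary Borel set $A$, as above, rather than manipulating the measures formally, so that the bookkeeping is transparent and the conventions are enforced automatically.
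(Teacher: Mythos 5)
Your proof is correct and follows essentially the same route as the paper's: unwind $\eqref{cetriolo}$, $\eqref{limone}$, $\eqref{base}$, $\eqref{trasferta1}$ in order, use the identity $u=\beta(u)+Vg(u)$, and close with $\eqref{pinzimonio}$. The only superficial difference is that you evaluate both sides independently and meet at $\mu_\o(A+x+a)$, whereas the paper writes a single chain from $\mu_{\bar\theta_x\bar\o}(A)$ to $\mu_{\bar\o}(A+x)$.
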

\begin{proof} Let $\bar \o=(\o,a)$ and  $A\in \cB(\bbR^d)$. We \utto{have} \eqref{base}.
We apply (in order) \eqref{cetriolo}, \eqref{limone}, \eqref{base}, \eqref{trasferta1} to get:
$\mu_{\bar \theta _x \bar \o}(A) = \mu_{(\theta_{g(x+a)} \o, \beta(x+a))}(A)= \mu_{\theta_{g(x+a)} \o}(A+ \beta(x+a))
=\mu_\o\bigl( \t_{ g(x+a)} (A+ \beta(x+a)\bigr)=\mu_\o( A+ \beta(x+a)+Vg(x+a))$.
Since  $\forall u\in\bbR^d$ we have $u= \beta(u)+ V g(u)$,  $\mu_\o( A+ \beta(x+a)+Vg(x+a))$ equals $\mu_\o ( A+x+a)= \mu_{(\o,a) }(A+x)=\mu_{\bar \o}(A+x)$.  \eqref{pinzimonio} then allows to conclude.
\end{proof}
\begin{Lemma}\label{lemma_mont} \utto{For all $\bar \o \in \bar{\O}_*$}
 it holds  $\bar r_{x,y} (\bar \theta_z \bar \o)= \bar r_{\bar \t_z x, \bar \t_z y} (\bar \o)$ for any $x,y\in \bbR^d$, $z\in \bar \bbG$.
\end{Lemma}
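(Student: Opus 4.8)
The plan is to mimic verbatim the proof of Lemma \ref{LB2}, chaining together the defining identities of the new objects of the setting $\cS[2]$. Fix $\bar\o=(\o,a)\in\bar{\O}_*$, so that $\o\in\O_*$ and $a\in\D$; in particular the original relation \eqref{montagna} is available for $\o$. Fix also $x,y\in\bbR^d$ and $z\in\bar\bbG=\bbR^d$.

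First I would unfold the left-hand side. By the definition \eqref{cetriolo} of $\bar\theta_z$ we have $\bar\theta_z\bar\o=(\theta_{g(z+a)}\o,\beta(z+a))$, and then by the definition \eqref{normale} of $\bar r$,
\[
\bar r_{x,y}(\bar\theta_z\bar\o)=r_{x+\beta(z+a),\,y+\beta(z+a)}(\theta_{g(z+a)}\o).
\]
Next I would apply \eqref{montagna} (legitimate since $\o\in\O_*$) with group element $g(z+a)$, rewriting the right-hand side as $r_{\t_{g(z+a)}(x+\beta(z+a)),\,\t_{g(z+a)}(y+\beta(z+a))}(\o)$.

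The key computation is then to simplify the shifted arguments. Using $\t_g w=w+Vg$ from \eqref{trasferta1} together with the identity $u=\beta(u)+Vg(u)$, valid for every $u\in\bbR^d$ (applied with $u=z+a$), one gets $\t_{g(z+a)}(x+\beta(z+a))=x+\beta(z+a)+Vg(z+a)=x+z+a$, and likewise for $y$. Hence $\bar r_{x,y}(\bar\theta_z\bar\o)=r_{(x+z)+a,\,(y+z)+a}(\o)$, which by \eqref{normale} equals $\bar r_{x+z,\,y+z}(\o,a)$, and by the definition \eqref{pinzimonio} of $\bar\t_z$ this is precisely $\bar r_{\bar\t_z x,\,\bar\t_z y}(\bar\o)$, as claimed.

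I do not expect any genuine obstacle: the statement is the rate-analogue of Lemma \ref{LB2}, and the only point requiring a little care is that \eqref{montagna} is invoked only for environments in $\O_*$, which is guaranteed by the hypothesis $\bar\o\in\bar{\O}_*=\O_*\times\D$.
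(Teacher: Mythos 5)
Your proof is correct and follows essentially the same route as the paper: unfold $\bar r_{x,y}(\bar\theta_z\bar\o)$ via \eqref{cetriolo} and \eqref{normale}, apply \eqref{montagna} together with the identity $\t_{g(z+a)}(u+\beta(z+a))=u+z+a$ (coming from $u=\beta(u)+Vg(u)$), and compare with the right-hand side unfolded via \eqref{pinzimonio} and \eqref{normale}. The only cosmetic difference is the order in which you invoke \eqref{montagna} and the simplification of the translation argument.
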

\begin{proof} Let $\bar \o=(\o,a)$. We \utto{have} \eqref{montagna}.
By  \eqref{cetriolo} and \eqref{normale},  we have
$\bar r_{x,y} (\bar \theta_z \bar \o)= \bar r_{x,y}
\bigl(\theta_{g(z+a)} \o, \beta(z+a)\bigr)=   r_{x+\beta(z+a),y+\beta(z+a)}
\bigl(\theta_{g(z+a)} \o\bigr)$.
Note that, for any $u \in \bbR^d$,  $\t_ {g(z+a)} (u+\beta(z+a))=u+\beta(z+a)+ V g(z+a)=u+z+a$. As a byproduct of this observation and  \eqref{montagna}, 
$  r_{x+\beta(z+a),y+\beta(z+a)}
\bigl(\theta_{g(z+a)} \o\bigr)$
can be rewritten as  
  $ r_{x+z+a, y+z+a}(\o)$. Hence, $\bar r_{x,y} (\bar \theta_z \bar \o)=r_{x+z+a, y+z+a}(\o)$. On the other hand, by  \eqref{pinzimonio} and \eqref{normale},
 $
  \bar r_{\bar \t_z x, \bar \t_z y} (\bar \o)= \bar r_{\bar \t_z x, \bar \t_z y} (\o,a)=
   r_{\bar \t_z x+a, \bar \t_z y+a} (\o)=r_{x+z+a, y+z+a}(\o)$.
\end{proof}
\begin{Lemma}\label{LB4}
The Palm distributions $\cP_0$ and $\bar\cP_0$ associated respectively to $\cP$ and $\bar \cP$ coincide.
\end{Lemma}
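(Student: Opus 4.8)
The goal is to show that the Palm distribution $\cP_0$ of the original setting $\cS[1]$ (with $\bbG=\bbZ^d$) coincides with the Palm distribution $\bar\cP_0$ of the enlarged setting $\cS[2]$ (with $\bar\bbG=\bbR^d$). Recall that $\cP_0$ is the probability measure on $\O\times\D$ given by \eqref{Palm_Z}, and since $\bar\bbG=\bbR^d$, $\bar\cP_0$ is the probability measure on $\bar\O_0=\{(\o,x)\in\O\times\D:n_x(\o)>0\}$ obtained from \eqref{palm_classica} applied in the setting $\cS[2]$. The plan is to compute $\bar\cP_0$ from the definition \eqref{palm_classica}, choosing the set $U$ cleverly, and to simplify the resulting expression until it matches \eqref{Palm_Z}.

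First I would spell out $\bar\cP_0$. By \eqref{palm_classica} applied to $\cS[2]$ (using $\bar m=m$ from Lemma \ref{LB3}, and $\bar g=V^{-1}$ since $\bar\t_x z=z+x$ by \eqref{pinzimonio}, so $\bar g(x)=x$), for any Borel $U\subset\bbR^d$ with $0<\ell(U)<\infty$ and any measurable $\bar A\subset\bar\O$,
\be
\bar\cP_0(\bar A)=\frac{1}{m\,\ell(U)}\int_{\bar\O}d\bar\cP(\bar\o)\int_U d\mu_{\bar\o}(x)\,\mathds{1}_{\bar A}(\bar\theta_x\bar\o)\,.
\en
Now unfold $\bar\cP=\ell(\D)^{-1}\cP\otimes\ell$, $\mu_{(\o,a)}(\cdot)=\mu_\o(\cdot+a)$ from \eqref{limone}, and $\bar\theta_x(\o,a)=(\theta_{g(x+a)}\o,\beta(x+a))$ from \eqref{cetriolo}. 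The inner integral becomes, after the change of variables $x\mapsto x-a$ inside $\mu_\o$,
\be
\int_U d\mu_\o(x+a)\,\mathds{1}_{\bar A}(\theta_{g(x+a)}\o,\beta(x+a))=\int_{U+a}d\mu_\o(y)\,\mathds{1}_{\bar A}(\theta_{g(y)}\o,\beta(y))\,.
\en
Integrating over $a\in\D$ and using that $\bigsqcup_{a\in\D}$ together with the lattice structure lets one reassemble, the natural move is to take $U=\D$: then $\int_\D da\int_{\D+a}d\mu_\o(y)(\cdots)$, after Fubini and the substitution that decomposes $\bbR^d=\bigsqcup_{g\in\bbG}\t_g\D$, collapses (using the $\bbG$-stationarity of $\cP$ exactly as in the proof of Lemma \ref{LB3}) to $\ell(\D)\int_\D d\mu_\o(y)\,\mathds{1}_{\bar A}(\o,y)$, since on $\D$ one has $g(y)=0$ and $\beta(y)=y$. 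Putting this together yields
\be
\bar\cP_0(\bar A)=\frac{1}{m\,\ell(\D)}\int_\O d\cP(\o)\int_\D d\mu_\o(y)\,\mathds{1}_{\bar A}(\o,y)\,,
\en
which is precisely \eqref{Palm_Z}, i.e. $\bar\cP_0=\cP_0$ on $\cF\otimes\cB(\D)$.

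The main obstacle — really the only delicate point — is justifying the reassembly step: after integrating the translated integration domains $U+a=\D+a$ over $a\in\D$, one must carefully chop $\bbR^d$ into the cells $\t_g\D$, apply the $\bbG$-stationarity of $\cP$ together with \eqref{base} to absorb the shift $\theta_{g}$, and check that the weight function $g(y)$ and the representative $\beta(y)$ behave correctly under this decomposition (the same bookkeeping that appears in Lemmas \ref{LB1} and \ref{LB3}). Everything else is a direct substitution. I would also remark that one should check $\bar\O_0=\O_0$ as sets, which is immediate from \eqref{limoncello}: $n_0(\o,a)=n_a(\o)$, so $\{(\o,a):n_0(\o,a)>0\}=\{(\o,a)\in\O\times\D:n_a(\o)>0\}$. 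Hence the two Palm measures live on the same space and agree, completing the proof.
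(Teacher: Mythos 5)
Your proposal is correct and follows essentially the same route as the paper: apply \eqref{palm_classica} to $\cS[2]$ with $U=\D$, shift the inner integral, decompose $\D+a$ over the cells $\t_g\D$, absorb $\theta_g$ via $\bbG$-stationarity and \eqref{base}, and reassemble $\bigsqcup_g(\D+a-Vg)\cap\D=\D$ to recover \eqref{Palm_Z}. You correctly identify the reassembly as the one delicate step (which the paper carries out in displays \eqref{tricolore}--\eqref{kindle}), and your remark that $\bar\O_0=\O_0$ matches the paper's observation preceding the lemma.
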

\begin{proof} 
Let $A\in \bar \cF$. Due to \eqref{pinzimonio} the function $\bar g$ analogous to \eqref{attimino}  for $\cS[2]$  is the identity map. Hence,   by \eqref{palm_classica}  and since $\bar m=m$, we have 
\be\label{tricolore}
\begin{split}
\bar \cP_0(A)
&=\frac{1}{m \ell(\D) ^2}\bbE\Big[ \int_{\D} da \, \int _{\D }  d\mu_{(\o,a)} (x) \mathds{1}_A\bigl(\theta_{g(x+a)} \o, \beta(x+a)\bigr)\Big]\\
&=\frac{1}{m \ell(\D)^2 }\bbE\Big[ \int_{\D} da \, \int _{\D+a }  d\mu_{\o} (x) \mathds{1}_A\bigl(\theta_{g(x)} \o, \beta(x)\bigr)\Big]\\
&=\frac{1}{m \ell(\D)^2 } \sum_{g\in \bbG}   \int_{\D} da\bbE\Big[  \int _{\D+a }  d\mu_{\o} (x)  \mathds{1} _{ \{g(x)=g\}} \mathds{1}_A\bigl(\theta_{g} \o, \beta(x)\bigr)\Big]\\
& =\frac{1}{m \ell(\D) ^2 } \sum_{g\in \bbG}   \int_{\D} da \bbE\Big[  \int _{\D+a }  d\mu_{\theta_{-g} \o} (x)  \mathds{1} _{ \{g(x)=g\}}  \mathds{1}_A\bigl( \o, \beta(x)\bigr)\Big]\,.
\end{split}
\en
Note that in the last identity we have  used  the $\bbG$--stationarity of $\cP$.
\utto{For all $\o\in \O_*$ we} can write  (see comments below)
\be\label{kindle}
\begin{split}
& \int _{\D+a }  d\mu_{\theta_{-g} \o} (x) \mathds{1} _{ \{g(x)=g\}}   \mathds{1}_A\bigl( \o, \beta(x)\bigr)=\int _{\D+a }  d\bigl( \t_{-g} \mu_{\o} \bigr)(x)  \mathds{1} _{ \{g(x)=g\}} \mathds{1}_A\bigl( \o, \beta(x)\bigr)\\
 & =\int _{\t_{-g}(\D+a) }  d \mu_{\o} (x) \mathds{1} _{\{x\in \D \}}  \mathds{1}_A\bigl( \o, \beta(\t_g x)\bigr)
 =\int _{(\D+a- V g )\cap \D}  d \mu_{\o} (x)    \mathds{1}_A\bigl( \o, x \bigr)\,.
 \end{split}
\en
Above, 
for the first identity we used \eqref{base}. For the second one   we used that $\t_{-g} \mathfrak{m}[ f]= \int  f( \t_{g} x )d \mathfrak{m}(x)$  observing that $g( \t_{g} x)=g $ if and only $x\in \D$. For the third one, we used that $\beta(\t_g x)=x$ for any $x\in \D$.
As a byproduct of \eqref{tricolore} and \eqref{kindle} we get (see comments below)
 \be
\begin{split}
\bar \cP_0(A)& =\frac{1}{m \ell(\D) ^2} \sum_{g\in \bbG}   \int_{\D} da \bbE\Big[ 
\int _{(\D+a- V g )\cap \D}  d \mu_{\o} (x)    \mathds{1}_A\bigl( \o, x \bigr)\Big]\\
&= \frac{1}{m \ell(\D)^2 }   \int_{\D} da \bbE\Big[
\int _{ \D}  d \mu_{\o} (x)    \mathds{1}_A\bigl( \o, x \bigr)\Big]\\
&=\frac{1}{m \ell(\D) }  \bbE\Big[ 
\int _{ \D}  d \mu_{\o} (x)    \mathds{1}_A\bigl( \o, x \bigr) \Big]=\cP_0(A)\,.
\end{split}
\en
Above, in the second identity we have used that 
 $\bbR^d= \sqcup_{g\in \bbG}(\D +a- Vg )$, while the last identity  follows from \eqref{Palm_Z}.
  \end{proof}
  
 \section{Proof of Lemma   \ref{compatto1} and \ref{compatto2}}\label{app_compatto}
\subsection{Proof of Lemma \ref{compatto1}}
Since $\{v_\e\}$ is bounded in $L^2(\mu_{\tilde \o}^\e)$, there exist $C, \e_0$ such that $\| v_\e\|  _{L^2(\mu_{\tilde \o}^\e)}\leq C $ for $\e \leq \e_0$. We fix a countable set $\cV \subset C_c(\bbR^d)$ such that $\cV$ is dense in $L^2(m dx)$.
We call $\cL$ the family of functions $\Phi$ of the form  $\Phi(x,\o) =  \sum_{i=1}^r a_i \varphi_i(x) g_i(\o)$, where   $r\in \bbN_+$, 
 $g_i \in \cG$, $\varphi_i \in \cV$ and  $a_i \in \bbQ$. $\cL$ is a  dense subset  of $L^2( m dx \times \cP_0)$ as $\cG$ is dense in $L^2(\cP_0)$.  By Schwarz inequality we have  
 \begin{equation}\label{cocchi}
\Big| \int   d \mu _{\tilde \o}^\e (x)  v_\e (x)  \Phi  (x ,  \theta _{x/\e} \tilde{\o} ) \Big | \leq  C 
\Big[ \int d \mu _{\tilde \o}^\e (x)  \Phi  (x ,  \theta _{x/\e} \tilde{\o} )  ^2  \Big]^{1/2}\,.
 \end{equation}
 Since  $\tilde \o \in \O_{\rm typ}\subset \cA[gg']$ for all $g,g'\in \cG$,    we have 
 \begin{multline}\label{CL}
\lim_{\e \da 0}  \int   d \mu _{\tilde \o}^\e (x) \Phi  (x ,  \theta _{x/\e} \tilde{\o} )  ^2  \\
= 
\sum_ i \sum_j a_i a_j \int dx\,m\, \varphi_i(x) \varphi_j(x) \bbE_0[ g_i g_j]= \| 
\Phi\|^2_{ L^2( m dx \times \cP_0)}\,.
\end{multline}
Due to  \eqref{CL} we get that the integral in  l.h.s. of \eqref{cocchi}  admits a convergent subsequence. 
  Since $\cL$ is    countable, by a diagonal procedure we can extract a subsequence $\e_k \da 0 $ such that the limit 
 $
 F( \Phi):=  \lim _{k \to \infty}   \int d \mu _{\tilde \o}^{\e_k} (x) 
 v_{\e_k} (x)  \Phi  (x ,  \theta _{x/\e_k} \tilde{\o} )   $ exists for any $\Phi \in \cL$  and it satisfies $| F(\Phi)| \leq C \| \Phi \|_{ L^2( m dx \times \cP_0) }$ by \eqref{cocchi} and \eqref{CL}. 
 Since $\cL$ is  a dense subset of  $L^2( m dx \times \cP_0)$, by Riesz's representation theorem  there exists a unique $v\in L^2( m dx \times \cP_0)$ such that $F(\Phi)= \int d\cP_0(\o) \int dx \,m \, \Phi(x, \o) v(x,\o)$ for any $\Phi \in \cL$. We also get $\| v\| _{ L^2( m dx \times \cP_0) }\leq C$.
 As $\Phi(x,\o):= \varphi(x) g(\o) $ - with $\varphi \in \cV $ and $b \in \cG$ - belongs to $\cL$, we get
 that \eqref{rabarbaro} is satisfied along the subsequence $\{\e _k\}$ for any  $\varphi \in \cV $, $b \in \cG$. 
 It remains to show that we can indeed take   $\varphi \in \cC_c(\bbR^d) $. To this aim we observe that we can take $\cV$ fulfilling the following properties: (i) for each $N\in \bbN_+$ $\cV$ contains a function  $\phi_N\in C_c(\bbR^d)$ with values in $[0,1]$ and equal to $1$ on $[-N,N]^d$; (ii)
 each $\varphi\in C_c(\bbR^d) $ can be approximated in uniform norm  by functions $\psi_n\in \cV$ such  that   $\psi_n$ has support inside $[-N,N]^d$, 
 where  $N=N(\varphi)$ is the minimal integer for which $\varphi$ has support inside 
 $[-N,N]^d$.
  By Schwarz inequality and the boundedness of $\{v_\e\}$  
 we can bound
$|  \int d \mu _{\tilde \o}^\e (x)  v_\e (x) [\varphi (x)-\psi_n] g ( \theta _{x/\e} \tilde{\o} )
 |^2$ by 
$  \leq C^2 \|\varphi - \psi_n(x)\|  _\infty ^2\int d \mu _{\tilde \o}^\e (x) \phi_N (x)
 g ( \theta _{x/\e} \tilde{\o} )^2$.
 Since $\tilde \o \in \O_{\rm typ}\subset  \cA[g^2]$ for all $g\in \cG$,  the last integral converges as $\e \da 0$ to $(C')^2:=\int dx \, m  \phi_N \ovo{(x)} \bbE_0[g^2]$. 
In particular,  using also  that $\psi_n \in \cV$, along the subsequence $\{ \e_k\}$ we have  
 \be
 \begin{split}  & \varlimsup_{\e \da 0} \int d \mu _{\tilde \o}^\e (x)  v_\e (x) \varphi (x) g ( \theta _{x/\e} \tilde{\o} )\\ & \leq C C'  \|\varphi - \psi_n\|  _\infty 
 + \varlimsup_{\e \da 0}\int d \mu _{\tilde \o}^\e (x)  v_\e (x) \psi _n(x)g ( \theta _{x/\e} \tilde{\o} ) \\
 &=C  C'    \|\varphi - \psi_n\|  _\infty  + \int d\cP_0(\o) \int dx \,m \,  v(x,\o)  \psi_n(x) g(\o)\,.
 \end{split}
 \en
 We now take the limit $n\to \infty$.
 By dominated convergence we conclude that, along the subsequence $\{ \e_k\}$,
 \be 
 \varlimsup_{\e \da 0} \int d \mu _{\tilde \o}^\e (x)  v_\e (x) \varphi (x) g ( \theta _{x/\e} \tilde{\o} ) \leq \int d\cP_0(\o) \int dx \,m \,   v(x,\o) \varphi(x) g(\o)\,.
 \en
 A similar result holds with the liminf, thus implying that \eqref{rabarbaro}  holds along the subsequence $\{ \e_k\}$ for any $\varphi \in  C_c(\bbR^d)$ and $g \in \cG$.

\subsection{Proof of Lemma \ref{compatto2}}
The proof of Lemma \ref{compatto2} is similar  to the proof of Lemma \ref{compatto1}. We only give some comments on some new steps. One has to replace $L^2( m\, dx \times \cP_0)$ with $L^2(  m\,dx \times \nu )$.  Now  $\cL$ is  the family of functions $\Phi$ of the form  $\Phi(x,\o,z) =  \sum_{i=1}^r a_i \varphi_i(x) b_i(\o,z)$, where   $r\in \bbN_+$, 
 $b_i \in \cH $, $\varphi \in \cV$ and  $a_i \in \bbQ$ ($\cV$ is as in the proof of Lemma \ref{compatto1}). Due to   Lemma \ref{cavallo} and since $\tilde \o \in \O_{\rm typ}
\subset \cA_1[bb']$ for all  $b,b'\in \cH$, we can write
 \begin{multline}\label{sirenehead}
 \int d \nu _{\tilde \o}^\e (x,z)   \Phi  (x ,  \theta _{x/\e} \tilde{\o},z  )  ^2  \\ = \sum _i \sum_j a_i a_j 
 \int d \nu _{\tilde \o}^\e (x,z)  \varphi_i(x) \varphi_j (x) b_i (  \theta _{x/\e} \tilde{\o},z) b_j (  \theta _{x/\e} \tilde{\o},z)\\
 = \sum _i \sum_j a_i a_j  \int d \mu _{\tilde \o}^\e (x) \varphi_i(x) \varphi_j (x) \widehat{b_i b_j}(  \theta _{x/\e} \tilde{\o})
  \,.
  \end{multline}
 Since $\tilde \o \in 
\O_{\rm typ}\subset \cA [\widehat{b b'}]$ for all  $b,b'\in \cH$, as $\e \downarrow 0$ the above r.h.s. converges to $\sum_ i \sum_j a_i a_j \int _{\bbR^d} dx\,m\, \varphi_i(x) \varphi_j(x) \bbE_0[ \widehat{b_i b_j}]= \| 
\Phi\|^2_{ L^2( m dx \times \nu)}$. At this point we can proceed as in the proof of Lemma \ref{compatto1} (recalling that $\cH$ is dense in $L^2(\nu)$).

\medskip

\noindent
{\bf Acknowledgements}: I thank G\"unter Last for useful comments on the literature concerning random measures, Pierre Mathieu for useful comments on a preliminary version of this preprint and on the minimality  of  the  assumptions and  Andrey Piatnitski for  useful discussions on 2-scale convergence.  
This work comes from the remains of the  manuscript \cite{Fhom}, which has its own
   story.      I thank my family for the support along that story.




\begin{thebibliography}{99}



\bibitem{A} G.~Allaire. Homogenization and two--scale convergence. \textit{SIAM J. Math. Anal.} \textbf{23} (1992) 1482--1518.


\bibitem{AKM} S.~Armstrong, T.~Kuusi, J.-C.~Mourrat. \textit{Quantitative stochastic homogenization and large-scale regularity}. \textit{Grundlehren der mathematischen Wissenschaften} \textbf{352}, Springer, Berlin, 2019.



\bibitem{Bi} M.\ Biskup. Recent progress on the random conductance model.
\textit{Probability Surveys} \textbf{8}   (2011) 294--373.


\bibitem{BB} N.~Berger, M.~Biskup. Quenched invariance principle for
simple random walk on percolation clusters. \textit{Probab. Theory Rel. Fields}
\textbf{137}  (2007)  83--120.

\bibitem{BCKW} M.~Biskup, X.~Chen, T.~Kumagai, J.~Wang.  Quenched invariance principle for a class of random conductance models with long-range jumps.  \textit{Probab. Theory Relat. Fields} \textbf{180}  (2021) 847--889.


\bibitem{Br}  H.~Brezis.  \textit{Functional Analysis, Sobolev Spaces and Partial Differential Equations}.   Springer, New York, 2010. 






\bibitem{CF2}  P.~Caputo, A.~Faggionato.  Diffusivity of 1--dimensional generalized Mott variable range hopping. \textit{Ann.~Appl. Probab.} \textbf{19}  (2009) 1459--1494.




\bibitem{CFP}
P.~Caputo, A.~Faggionato, T.~Prescott.  Invariance principle for Mott variable range hopping and other walks on point processes. \textit{Ann.~Inst.~H.~Poincar\'{e} Probab.~Statist.} \textbf{49} (2013) 654--697.

\bibitem{DV} D.J.~Daley, D.~Vere-Jones.  \textit{An Introduction to the Theory of Point Processes}. Springer, New York, 1988.

\bibitem{DV1} D.J.~Daley, D.~Vere-Jones.  \textit{An Introduction to the Theory of Point Processes. Volume I: Elementary Theory and Methods}.  Second Edition.   Springer, New York, 2003.


\bibitem{DV2} D.J.~Daley, D.~Vere-Jones.  \textit{An Introduction to the Theory of Point Processes. Volume II: General Theory and Structure}.  Second Edition.   Springer, New York, 2008.






  \bibitem{demasi} A.\ De Masi, P.A.\ Ferrari, S.\ Goldstein, W.D.\ Wick. An invariance principle for reversible Markov processes. Applications to random motions in random environments.  \textit{J. Stat. Phys.} \textbf{55}  (1989) 787--855.


\bibitem{DNS} J.-D.~Deuschel, T.A.~Nguyen, M.~Slowik. Quenched invariance principles for the random
conductance model on a random graph with degenerate
ergodic weights. \textit{Probab. Theory Relat. Fields}  \textbf{170}  (2018) 363--386.

\bibitem{EGMN} A.-C. Egloffe, A.\ Gloria, J.-C.\ Mourrat, T. N.\ Nguyen. Random walk in random environment, corrector equation and homogenized coefficients: from theory to numerics, back and forth. \textit{IMA Journal of Numerical Analysis} \textbf{35}  (2015) 499--545.



\bibitem{F1} A.\ Faggionato.  Random walks and exclusion processes among random conductances on random infinite clusters: homogenization and hydrodynamic limit.  \textit{Electron. J. Probab.} \textbf{13}
  (2008) 2217--2247.


\bibitem{F2} A.\ Faggionato. Hydrodynamic limit of zero range processes among random conductances on the supercritical percolation cluster. \textit{Electron. J. Probab.} \textbf{15}   (2010) 259--291.


\bibitem{Fhom} A.\ Faggionato. Stochastic homogenization   in amorphous media and  applications to exclusion processes. Preprint \textit{arXiv:1903.0731}, unpublished (2019).


 \bibitem{F_SE_RE} A.\ Faggionato. Hydrodynamic limit of simple exclusion processes in symmetric random environments via duality and homogenization. Preprint \textit{arXiv:2011.11361} (2020).

\bibitem{F_resistor} A.\ Faggionato. Scaling limit of the conductivity of random resistor networks on  point processes. Preprint \textit{arXiv:2108.11258} (2021).

 \bibitem{FM}
 A.\ Faggionato, P.\ Mathieu. Mott law as upper bound for a random
walk in a random environment. \textit{Comm. Math. Phys.} \textbf{281} 
(2008)  263--286. 
%
%
%
%
\bibitem{FSS}  A.~Faggionato, H.~Schulz--Baldes, D.~Spehner. Mott law as lower bound for a random walk in a random environment.  \textit{Comm. Math. Phys.}  \textbf{263}
    (2006)   21--64.
%
%

\bibitem{FT} A.\ Faggionato, C.\ Tagliaferri. Homogenization, simple exclusion processes and random resistor networks on Delaunay   triangulations.
 In preparation.
 


\bibitem{FH} F.~Flegel, M.~Heida. The fractional $p$--Laplacian emerging from homogenization of the random conductance model with degenerate ergodic weights and unbounded-range jumps. \textit{Calc. Var. Partial Differ. Equ.} \textbf{59}   (2020) paper no. 8.


\bibitem{FHS} F.~Flegel, M.~Heida,  M.~Slowik. Homogenization theory for the random conductance model with degenerate ergodic weights and unbounded-range jumps. \textit{Ann. Inst. H. Poincar\'e  Probab. Statist.} \textbf{55}
  (2019) 1226--1257.


\bibitem{FOT} M.~Fukushima, Y.~Oshima, M.~Takeda. \textit{Dirichlet forms and symmetric Markov processes}. Second edition. De Gruyter, Berlin, 2010.







\bibitem{Ge} D.\ Gentner.
\textit{Palm theory, mass--transports and ergodic theory for group-stationary processes}. Karlsruhe, KIT Scientific Publishing  2011. Available also online at https://www.ksp.kit.edu/9783866446694 


\bibitem{GL} D.\ Gentner, G.\ Last. Palm pairs and the general mass transport principle. \textit{Math. Z.} \textbf{267}  (2011) 695--716.


\bibitem{H}  M.~Heida. Convergences of the squareroot approximation scheme to the Fokker-Planck operator. \textit{Math. Models Methods Appl. Sci.} \textbf{28}  (2018) 2599--2635.



\bibitem{Km} O.\ Kallenberg. \textit{Random measures, theory and applications}. 
 \textit{Probability Theory and Stochastic Modelling} \textbf{77}. Springer, Berlin, 2010.
 
  \bibitem{KV} C. Kipnis, S.R.S. Varadhan. Central limit theorem for additive functionals of reversible Markov processes and applications to simple exclusion. \textit{Commun. Math. Phys.} \textbf{104}  (1986) 1--19.

\bibitem{KLO} T.\ Komorowski, C.\ Landim, S.\ Olla. \textit{Fluctuations in Markov processes}. \textit{Grundlehren der mathematischen Wissenschaften}  \textbf{345}. Springer, Berlin, 2012.
 
 
\bibitem{Ko85} S.M.~Kozlov. The averaging method and walks in inhomogeneous environments. \textit{Uspekhi Mat. Nauk.} \textbf{40}(2)  (1985) 61--120. English transl. \textit{Russian Math. Surveys} \textbf{40}(2)  (1985) 73--145.
 
 
 
 \bibitem{Ko} S.M.~Kozlov. Averaging of random operators. \textit{Math. USSR Sbornik} \textbf{37} (1980) 167--180.






\bibitem{Kum} T.\ Kumagai. Random walks on disordered media and their scaling limits. In
\textit{\'Ecole d'\'Et\'e de Probabilit\'es de Saint-Flour XL}. \textit{Lecture Notes in Mathematics} \textbf{2101}, 2010.


\bibitem{Ku} R. K\"unnemann. The diffusion limit for reversible jump processes on $\bbZ^d$ with ergodic random bond
conductivities. \textit{Commun. Math. Phys.} \textbf{90}  (1983) 27--68.




\bibitem{MP} P.~Mathieu, A.~Piatnitski.  Quenched invariance principles for random walks on percolation clusters.  \textit{Proceedings of the Royal Society A.} \textbf{463} (2007) 2287--2307.





  
  
  
    
    
%
%


\bibitem{Nu} G.~Nguetseng. A general convergence result for a functional related to the theory of homogenization. \textit{SIAM J. Math. Anal.}  \textbf{20}  (1989) 608--623.



\bibitem{PR} A.~Piatnitski, E.~Remy. Homogenization of elliptic difference operators. \textit{SIAM J. Math. Anal.} \textbf{33} (2001) 53--83.





%
%



\bibitem{PV} G.C.~Papanicolaou, S.R.S.~Varadhan. Boundary value problems with rapidly oscillating random
coefficients. In \textit{Proceedings of Conference on Random Fields, Esztergom, Hungary, 1979. Seria Colloquia Mathematica Societatis Janos Bolyai} {\bf 27}.  North Holland, 1981, 835--873.



\bibitem{POF} M. Pollak, M. Ortu{\~n}o, A. Frydman. \textit{The electron glass}. First edition, Cambridge University Press, Cambridge, 2013.



\bibitem{Rou} A.~Rousselle.  Quenched invariance principle for random walks on Delaunay triangulations. \textit{Electron. J. Probab.} \textbf{20} (2015)  1--32.



\bibitem{T} A.A.~Tempel'man. Ergodic theorems for general dynamical systems. \textit{Trudy Moskov. Mat. Obsc.} \textbf{26}  (1972) 95--132. English transl. in  \textit{Trans. Moscow Math. Soc.} \textbf{26} (1972) 94--132.


 
\bibitem{Z}  V.V.~Zhikov. On an extension of the method of two-scale convergence and its applications. (Russian) \textit{Mat. Sb.} \textbf{191}, no. 7 (2000) 31--72. English transl. in \textit{Sb. Math.} \textbf{191}, no. 7-8 (2000) 973--1014.


\bibitem{ZP} V.V.~Zhikov, A.L.~Pyatnitskii. \textit{Homogenization of random singular structures and random measures}. 
  \textit{Izv. Ross. Akad. Nauk Ser. Mat.} \textbf{70}, no. 1  (2006) 23--74. English  transl. in \textit{Izv. Math.}\textbf{70}, no. 1  (2006) 19--67.
 

\end{thebibliography}
\end{document}